\title[Inversion of adjunction for quotient singularities]
{Inversion of adjunction for quotient singularities}
\author{Yusuke Nakamura}
\address{Graduate School of Mathematical Sciences, 
the University of Tokyo, 3-8-1 Komaba, Meguro-ku, Tokyo 153-8914, Japan.}
\email{nakamura@ms.u-tokyo.ac.jp}
\author{Kohsuke Shibata}
\address{Department of Mathematics, College of Humanities and Sciences, 
Nihon University, 3-25-40 Sakurajosui, Setagaya-ku, Tokyo 156-8550, Japan}
\email{shibata.kohsuke@gmail.com}
\subjclass[2020]{Primary 14E18; Secondary 14E30, 14B05}
\keywords{minimal log discrepancy, arc space, hyperquotient singularity}
\newtheorem{thm}{Theorem}[section]
\newtheorem{lem}[thm]{Lemma}
\newtheorem{cor}[thm]{Corollary}
\newtheorem{prop}[thm]{Proposition}
\newtheorem{claim}[thm]{Claim}
\theoremstyle{definition}
\newtheorem{defi}[thm]{Definition}
\newtheorem{eg}[thm]{Example}
\newtheorem{conj}[thm]{Conjecture}
\theoremstyle{remark}
\newtheorem{rmk}[thm]{Remark}
\newtheorem*{ackn}{Acknowledgements}
\begin{document}
\begin{abstract}
We prove the precise inversion of adjunction formula for quotient singularities and klt Cartier divisors. 
As an application, we prove the semi-continuity of minimal log discrepancies for klt hyperquotient singularities. 
\end{abstract}

\maketitle

\tableofcontents

\section{Introduction}
The minimal log discrepancy is an invariant of singularities defined in birational geometry. 
Shokurov proved that two conjectures on the minimal log discrepancies, the LSC (lower semi-continuity) conjecture 
and the ACC (ascending chain condition) conjecture, imply the conjecture of termination of flips \cite{Sho04}. 
We refer the reader \cite{Kaw14, Kaw15, Nak16b, MN18, Kaw21, HLS} for the recent development related to the ACC conjecture. 
In this paper, we focus on the LSC conjecture. 
We always work over an algebraically closed field of characteristic zero unless otherwise stated.

\begin{conj}[LSC conjecture]\label{conj:LSC}
Let $(X, \mathfrak{a})$ be a log pair, and let $|X|$ be the set of all closed points of $X$ with the Zariski topology. 
Then the function 
\[
|X| \to \mathbb{R}_{\ge 0} \cup \{ - \infty \}; \quad x \mapsto \operatorname{mld}_x (X,\mathfrak{a})
\]
is lower semi-continuous. 
\end{conj}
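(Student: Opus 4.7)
The plan is to separate the conjecture according to how singular the ambient variety $X$ is, and attack each piece through the arc-space characterization of the minimal log discrepancy. The first step is the smooth ambient case: here I would invoke the arc-theoretic formula of Ein--Musta\c{t}\u{a}--Yasuda, which writes $\operatorname{mld}_x(X,\mathfrak{a})$ as the infimum of $\operatorname{codim} C - \operatorname{ord}_C(\mathfrak{a})$ over cylinders $C \subset J_\infty(X)$ whose image meets $x$. Because a cylinder witnessing a small value at a nearby, more general point $x'$ specializes to a cylinder at $x$ of no larger codimension and no smaller order along $\mathfrak{a}$, lower semi-continuity in $x$ is immediate on smooth $X$.

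The second step is to extend the formula to singular ambient $X$ by passing to a smooth cover. For a quotient singularity $X = Y/G$ with $Y$ smooth, the Denef--Loeser--Yasuda change-of-variables formula rewrites $\operatorname{mld}_x(X,\mathfrak{a})$ as a $G$-equivariant infimum on $J_\infty(Y)$ with a correction coming from the conjugacy class (age) of the chosen arc; if this equivariant infimum is itself lower semi-continuous under $Y \to X$, the conjecture follows for $X$. The third step handles the hyperquotient case via the precise inversion of adjunction formula announced in the abstract: realising the hyperquotient as a klt Cartier divisor $D$ inside a quotient singularity, one has $\operatorname{mld}_x(D,\mathfrak{a}|_D) = \operatorname{mld}_x(X, D + \mathfrak{a})$, and semi-continuity on $D$ is then inherited from semi-continuity of the ambient pair obtained in step two.

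The main obstacle is step two. The arc space of a singular variety is no longer of finite type, so the cylinder formula must be replaced by its equivariant analogue on the smooth cover, and the cylinders computing the mld must be compared across points $x' \to x$ whose isotropy groups may jump. Controlling simultaneously the age function and the order of vanishing of the relative Jacobian of $Y \to X$ under specialization of $G$-orbits of arcs is precisely where the difficulty concentrates, and this is what forces the restriction to quotient and hyperquotient singularities. I also expect the equality statement in step three — not merely the easy inequality that always holds in inversion of adjunction — to be essential, so any slack in that equality would obstruct the transfer of semi-continuity from the ambient quotient singularity to the hyperquotient divisor.
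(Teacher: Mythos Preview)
The statement you are asked to prove is Conjecture~\ref{conj:LSC}, the general LSC conjecture for an arbitrary log pair $(X,\mathfrak{a})$. The paper does \emph{not} prove this statement: it is stated as an open conjecture, and the paper establishes only the special case where $X$ is a klt subvariety of a linear quotient $\mathbb{A}^N/G$ that is locally a complete intersection in that quotient (Theorem~\ref{thm:LSC}). Your proposal, despite being framed as a proof of the full conjecture, is in fact a strategy for exactly this restricted class, and you acknowledge as much when you write that the difficulties ``force the restriction to quotient and hyperquotient singularities.'' So the first and most basic gap is that your argument does not address the general conjecture at all: for an arbitrary $\mathbb{Q}$-Gorenstein normal variety $X$ there is no smooth cover $Y\to X$ to which your step two applies, and steps one through three together cover only the cases already listed as known or treated in the paper.

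For the restricted class your outline is essentially the paper's own strategy. Step three matches the paper's reduction: Theorem~\ref{thm:PIA_intro} (the PIA formula) is applied to rewrite $\operatorname{mld}_x(D,\mathfrak{a}\mathcal{O}_D)$ as a minimal log discrepancy on the ambient quotient variety, and lower semi-continuity is then inherited from the quotient case proved in \cite{Nak16}. Your identification of the ``main obstacle'' in step two is accurate but slightly misplaced: the quotient case itself is already known (item (\ref{conj:LSC}.4)), and the genuine work in the paper lies in establishing the PIA equality for hyperquotients, in particular in showing that the relevant twisted arc spaces $\overline{B}^{(\gamma)}_\infty$ are not thin (Claim~\ref{claim:not_thin}), which requires the klt hypothesis via Hacon--M\textsuperscript{c}Kernan. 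Your proposal does not isolate this difficulty, and without it the reduction in step three would only yield the easy inequality of adjunction, not the equality you correctly note is essential.
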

The LSC conjecture is known to be true in the following cases: 
\begin{itemize}
\item[(\ref{conj:LSC}.1)] The case when $\dim X \le 3$ \cite{Amb99}. 
\item[(\ref{conj:LSC}.2)] The case when $X$ is smooth \cite{EMY03}. 
\item[(\ref{conj:LSC}.3)] More generally, the case when $X$ is a normal local complete intersection variety \cite{EM04}. 
\item[(\ref{conj:LSC}.4)] The case when $X$ has only quotient singularities \cite{Nak16}. 
\item[(\ref{conj:LSC}.5)] The case when $X$ is smooth variety over  an algebraically closed field of arbitrary characteristic under some  condition  \cite{Shi}.
\end{itemize}
The main purpose of this paper is to prove the LSC conjecture for varieties with hyperquotient singularities, more generally, 
the quotient of a complete intersection variety by a finite linear group action. 
\begin{thm}[{$=$\ Theorem \ref{thm:LSC_general}}]\label{thm:LSC}
Suppose a finite subgroup $G \subset {\rm GL}_N(k)$ acts on $\mathbb{A}_k^N$ freely in codimension one. 
Let $X := \mathbb{A}_k^N / G$ be the quotient variety. 
Let $Y$ be a subvariety of $X$ of codimension $c$ which has only klt singularities, and 
let $\mathfrak{a}$ be an $\mathbb{R}$-ideal sheaf on $Y$. 
Suppose that $Y$ is locally defined by $c$ equations in $X$. 
Then the function 
\[
|Y| \to \mathbb{R}_{\ge 0} \cup \{ - \infty \}; \quad y \mapsto \operatorname{mld}_y(Y,\mathfrak{a})
\]
is lower semi-continuous, where we denote by $|Y|$ the set of all closed points of $Y$ with the Zariski topology. 
\end{thm}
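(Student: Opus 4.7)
The plan is to reduce lower semi-continuity on $Y$ to the already-known lower semi-continuity on the ambient quotient variety $X$ (case (\ref{conj:LSC}.4)), via the precise inversion of adjunction formula for quotient singularities advertised in the abstract.

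The statement is local on $Y$, so fix a closed point $y_0\in Y$ and, after shrinking $X$, write the ideal sheaf of $Y$ in $X$ as $I_Y=(f_1,\dots,f_c)$. Form the flag
\[
X = X_0 \supset X_1 \supset \cdots \supset X_c = Y, \qquad X_i := V(f_1, \dots, f_i),
\]
in which each $X_i$ is, locally near $y_0$, a Cartier divisor on $X_{i-1}$ because $(f_1,\dots,f_c)$ is a regular sequence. Starting from the hypothesis that $Y=X_c$ is klt and working upward through the flag, a standard inductive argument ensures that every intermediate $X_i$ lies in the class of pairs to which the precise inversion of adjunction applies. Lifting $\mathfrak{a}$ to an $\mathbb{R}$-ideal $\tilde{\mathfrak{a}}$ on $X$ and applying the precise inversion of adjunction successively across each step of the flag then yields, for every closed point $y$ in a neighborhood of $y_0$ in $Y$, a pointwise identity
\[
\operatorname{mld}_y(Y,\mathfrak{a}) \;=\; \operatorname{mld}_y(X,\mathfrak{a}^\ast),
\]
where $\mathfrak{a}^\ast$ is a single $\mathbb{R}$-ideal on $X$ constructed from $\tilde{\mathfrak{a}}$ and $f_1,\dots,f_c$, \emph{independent of $y$}.

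With this identity in hand the proof is essentially complete: since $X=\mathbb{A}^N_k/G$ has only quotient singularities, the function $y\mapsto\operatorname{mld}_y(X,\mathfrak{a}^\ast)$ is lower semi-continuous on $|X|$ by (\ref{conj:LSC}.4); its restriction to the closed subset $|Y|\subset|X|$ therefore inherits lower semi-continuity in a neighborhood of $y_0$, and gluing the local statements across a cover of $Y$ gives the theorem.

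The main obstacle is, unsurprisingly, the inversion of adjunction input itself. What is required is a \emph{precise equality} --- not merely the easy inequality $\ge$ --- between mld's on a klt Cartier divisor and on the ambient quotient variety, generalizing the smooth and lci results of Ein--Mustaţă--Yasuda and Ein--Mustaţă used in (\ref{conj:LSC}.2)--(\ref{conj:LSC}.3). The natural route is to do motivic/arc-space integration on $\mathbb{A}^N_k$ equivariantly with respect to the $G$-action while carefully tracking both the Jacobian of the quotient map $\mathbb{A}^N_k \to X$ and the contribution of the Cartier divisors cut out at each step of the flag; extracting an equality rather than an inequality in that equivariant setting is where the real work lies, and once it is available the LSC application above is largely formal.
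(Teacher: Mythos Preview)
Your overall strategy---reduce LSC on $Y$ to LSC on the ambient quotient $X$ via precise inversion of adjunction, then invoke the known quotient-singularity case (\ref{conj:LSC}.4)---is exactly what the paper does. Two differences are worth flagging.

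First, the paper does not use a flag. Its main technical result (Theorem~\ref{thm:PIA}) is a one-shot PIA: for klt $B$ locally cut out by a regular sequence $f_1,\dots,f_c$ in $A=\mathbb{A}^N/G$, one has $\operatorname{mld}_x\bigl(A,(f_1\cdots f_c)\mathfrak{a}^\delta\bigr)=\operatorname{mld}_x(B,\mathfrak{b}^\delta)$ directly, without passing through intermediate hypersurfaces. The single-divisor statement you invoke (Corollary~\ref{cor:PIA_general}) is in fact \emph{derived} from this by applying the one-shot result twice. Your flag route works in principle, but adds the burden of checking that each intermediate $X_i$ is a normal variety before you can even speak of it being klt; the one-shot formulation sidesteps this.

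Second, and more substantively, you gloss over a step the paper handles explicitly. The PIA theorem is proved only at the image of the origin of $\mathbb{A}^N$; it does not directly yield $\operatorname{mld}_y(Y,\mathfrak{a})=\operatorname{mld}_y(X,\mathfrak{a}^\ast)$ at an arbitrary nearby $y$. The paper fixes this pointwise: for each $y\in Y$, choose a preimage $y'\in\mathbb{A}^N$, pass to the stabilizer quotient $\mathbb{A}^N/G_{y'}$ (which is \'etale over $X$ near $y$, so mld is unchanged), and change linear coordinates to move $y'$ to the origin while keeping the action linear. Only then does Theorem~\ref{thm:PIA} apply at $y$. Without this reduction your identity is established only at $y_0$, which is not enough for semi-continuity.
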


In this paper, we also treat the PIA (precise inversion of adjunction) conjecture. 

\begin{conj}[PIA conjecture, {\cite[17.3.1]{92}}]\label{conj:PIA}
Let $(X, \mathfrak{a})$ be a log pair and let $D$ be a normal Cartier prime divisor. 
Let $x \in D$ be a closed point. 
Suppose that $D$ is not contained in the cosupport of the $\mathbb{R}$-ideal sheaf $\mathfrak{a}$. 
Then 
\[
\operatorname{mld}_x \bigl( X, \mathfrak{a} \mathcal{O}_X(-D) \bigr) = \operatorname{mld}_x (D, \mathfrak{a} \mathcal{O}_D)
\]
holds. 
\end{conj}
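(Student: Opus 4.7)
The plan is to prove the two inequalities separately. The direction $\operatorname{mld}_x(X, \mathfrak{a}\mathcal{O}_X(-D)) \le \operatorname{mld}_x(D, \mathfrak{a}\mathcal{O}_D)$ is the standard precise adjunction inequality: take a common log resolution $f : \tilde X \to X$ of $(X, D)$ and of $\mathfrak{a}$ so that the strict transform $D' := f_*^{-1} D$ is smooth, and combine the adjunction isomorphism $K_{D'} = (K_{\tilde X} + D')|_{D'}$ with the Cartier hypothesis on $D$ to match the log discrepancy of a prime divisor $E \subset D'$ over $D$ with that of a prime divisor $F$ over $X$ having $F \cap D' = E$.

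The substantive inequality is $\operatorname{mld}_x(X, \mathfrak{a}\mathcal{O}_X(-D)) \ge \operatorname{mld}_x(D, \mathfrak{a}\mathcal{O}_D)$. I would attack it via arc spaces, in the style of Ein--Mustata--Yasuda and Ein--Mustata, adapted to the quotient setting as in \cite{Nak16}. Given a prime divisor $F$ over $X$ centered at $x$ that (nearly) computes $\operatorname{mld}_x(X, \mathfrak{a}\mathcal{O}_X(-D))$, associate to it a cylinder $C \subseteq X_\infty$ in the arc space whose codimension, corrected by the orders of $\mathfrak{a}$ and $D$ along generic arcs in $C$, equals $a_F(X, \mathfrak{a}\mathcal{O}_X(-D))$. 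Because the center of $F$ on $X$ is contained in $D$, a generic arc in $C$ factors through $D$, so the restricted cylinder in $D_\infty$ detects a divisor $E$ over $D$ centered at $x$ satisfying $a_E(D, \mathfrak{a}\mathcal{O}_D) \le a_F(X, \mathfrak{a}\mathcal{O}_X(-D))$, which is exactly what is needed.

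In the quotient setting of this paper, I would lift the whole picture along the cover $\mathbb{A}^N \to X = \mathbb{A}^N / G$, replacing ordinary arc spaces by $G$-equivariant (``twisted'') arc spaces in the sense of Denef--Loeser and Yasuda. The klt hypothesis on $D$ guarantees that $\operatorname{mld}$ is finite and that the relevant cylinders are nonempty, while the Cartier hypothesis ensures the lift $\tilde D \subset \mathbb{A}^N$ is a genuine $G$-invariant divisor where the smooth case applies directly. The main obstacle will be the change-of-variables step for the quotient map: extracting a divisor on $D$ from a twisted cylinder upstairs produces correction terms indexed by the conjugacy classes of $G$ (the age/shift function), and the argument hinges on showing that these corrections cancel precisely between the computation on $X$ and the one on $D$, so that the equality of log discrepancies is preserved.
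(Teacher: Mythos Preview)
This statement is Conjecture~\ref{conj:PIA}, and the paper does not prove it in general; it establishes only the special case where $X$ is a quotient of $\mathbb{A}^N$ by a linear group action and $D$ is a klt Cartier divisor inside a local complete intersection subvariety (Theorem~\ref{thm:PIA} and Corollary~\ref{cor:PIA_general}). Your first paragraph, the adjunction direction, is correct and matches the paper's one-line appeal to standard adjunction at the start of the proof of Theorem~\ref{thm:PIA}.

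Your sketch of the hard direction contains a genuine error. The claim ``because the center of $F$ on $X$ is contained in $D$, a generic arc in $C$ factors through $D$'' is false: having center in $D$ only forces $\operatorname{ord}_\gamma(I_D) \ge 1$ for $\gamma \in C$, not $\operatorname{ord}_\gamma(I_D) = \infty$. The actual mechanism, both in Ein--Musta\c{t}\u{a} and in this paper, is to intersect $C$ with $D_\infty$ and bound $\operatorname{codim}_{D_\infty}(C \cap D_\infty)$ via \cite[Lemma~8.4]{EM09} (here Lemma~\ref{lem:EM8.4}), which produces the correction $e - \sum d_i$ relating the Jacobian order to the contact orders along the defining equations. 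Your third paragraph is closer in spirit to the paper's approach, but it glosses over the central difficulty: after passing to the twisted $k[t]$-schemes $\overline{B}^{(\gamma)}$, one must show that $C \cap \overline{B}^{(\gamma)}_\infty$ is not thin so that Lemma~\ref{lem:EM8.4} applies with finite $b_1$. This is Claim~\ref{claim:not_thin}, and its proof is where the klt hypothesis is genuinely used, via the Hacon--McKernan rational connectedness theorem~\cite{HM07} to produce a section of the resolution over the trivial arc. Your proposal does not identify this obstruction or how to overcome it; the vague remark that ``the klt hypothesis guarantees the relevant cylinders are nonempty'' does not capture that the issue is non-thinness inside the twisted scheme, which can fail without klt (Remark~\ref{rmk:LC}).
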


\noindent
The PIA conjecture is known to be true in the following cases: 
\begin{itemize}
\item[(\ref{conj:PIA}.1)] The case when $X$ is smooth \cite{EMY03}. 
\item[(\ref{conj:PIA}.2)] More generally, the case when $X$ is a normal local complete intersection variety \cite{EM04}. 
\end{itemize}

In this paper, we study the PIA conjecture for varieties with quotient singularities.

\begin{thm}[{$=$\ Corollary \ref{cor:PIA_general}}]\label{thm:PIA_intro}
Suppose that a finite subgroup $G \subset {\rm GL}_N(k)$ acts on $\mathbb{A}_k^N$ freely in codimension one. 
Let $X := \mathbb{A}_k^N / G$ be the quotient variety and let $x \in X$ be the image of the origin of $\mathbb{A}_k^N$. 
Let $Y$ be a subvariety of $X$ through $x$ of codimension $c$, and 
let $\mathfrak{a}$ be an $\mathbb{R}$-ideal sheaf on $Y$. 
Suppose that $Y$ is locally defined by $c$ equations at $x$ in $X$. 
Let $D$ be a Cartier prime divisor on $Y$ through $x$ with a klt singularity at $x \in D$. 
Suppose that $D$ is not contained in the cosupport of the $\mathbb{R}$-ideal sheaf $\mathfrak{a}$. 
Then it follows that
\[
\operatorname{mld}_x \bigl( Y, \mathfrak{a} \mathcal{O}_Y (-D) \bigr) = 
\operatorname{mld}_x (D, \mathfrak{a} \mathcal{O}_D). 
\]
\end{thm}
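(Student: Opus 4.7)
My plan is to derive Theorem~\ref{thm:PIA_intro} from Theorem~\ref{thm:LSC} together with the equivariant arc-space machinery needed to establish it, by proving the two inequalities separately. The direction
\[
\operatorname{mld}_x \bigl( Y, \mathfrak{a} \mathcal{O}_Y (-D) \bigr) \le \operatorname{mld}_x (D, \mathfrak{a} \mathcal{O}_D)
\]
is the classical adjunction inequality along the normal Cartier divisor $D$. Given a prime divisor $F$ over $D$ centered at $x$ that computes the right-hand side, I would produce an associated prime divisor $E$ over $Y$ centered at $x$ with $a_E(Y, \mathfrak{a}\mathcal{O}_Y(-D)) = a_F(D, \mathfrak{a}\mathcal{O}_D)$ by choosing a birational model on which $F$ appears as the restriction of $E$ to the strict transform of $D$, and applying the adjunction formula on that model. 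The hypothesis that $D$ is not contained in the cosupport of $\mathfrak{a}$ ensures that the order of $\mathfrak{a}$ along $E$ restricts to the order of $\mathfrak{a}\mathcal{O}_D$ along $F$, and the klt assumption on $D$ at $x$ guarantees that both sides are legitimate log discrepancies.

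For the reverse inequality I would lift to the smooth Galois cover $\pi \colon \mathbb{A}_k^N \to X$. Set $\tilde Y := \pi^{-1}(Y)$ and $\tilde D := \pi^{-1}(D)$; the $c$ local defining equations of $Y$ in $X$ pull back to $c$ local defining equations of $\tilde Y$ in $\mathbb{A}_k^N$, so $\tilde Y$ is a local complete intersection of codimension $c$ and $\tilde D$ is a Cartier divisor in $\tilde Y$. Using the $G$-equivariant description of $\operatorname{mld}_x$ on hyperquotient singularities underlying the proof of Theorem~\ref{thm:LSC_general}, both $\operatorname{mld}_x(Y, \mathfrak{a}\mathcal{O}_Y(-D))$ and $\operatorname{mld}_x(D, \mathfrak{a}\mathcal{O}_D)$ should admit expressions as infima of $G$-invariant quantities attached to cylinders in the arc spaces of $\tilde Y$ and $\tilde D$, respectively. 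The PIA for the lci ambient $\tilde Y$, established in case~(\ref{conj:PIA}.2) above, then matches these equivariant infima at the level of $\tilde Y$ and $\tilde D$, and descent by $G$-invariance delivers the equality on $X$.

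The main obstacle I foresee is that the klt assumption on $D$ does not automatically transfer to the cover $\tilde D$, so the lci PIA cannot be invoked for the pair $(\tilde Y, \tilde D)$ in its literal form; the $G$-equivariant reformulation used for Theorem~\ref{thm:LSC_general} must bridge this gap. The secondary technical point will be to verify that the ideal $\mathcal{O}_Y(-D)$ lifts to $\mathcal{O}_{\tilde Y}(-\tilde D)$ in a manner compatible with the cylinder-codimension formulas, and that the $G$-action is correctly tracked near the possibly special fixed locus containing a preimage of $x$. Once these compatibilities are arranged, the derivation of PIA from the equivariant version of the lci PIA is expected to be formal.
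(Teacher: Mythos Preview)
Your proposal contains a genuine gap in the ``reverse inequality'' step, and it also misidentifies the logical dependence between the two theorems.

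First, on the logic: in the paper, Theorem~\ref{thm:LSC_general} is \emph{derived from} Theorem~\ref{thm:PIA_intro} (see the proof of Theorem~\ref{thm:LSC_general} in Section~\ref{section:maintheorem}), not the other way around. So invoking ``the equivariant arc-space machinery needed to establish Theorem~\ref{thm:LSC_general}'' is circular: that machinery \emph{is} Theorem~\ref{thm:PIA} ($=$ Theorem~\ref{thm:PIA_intro} in the key case).

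Second, and more substantively, the strategy ``lift to the cover $\mathbb{A}^N$, apply lci PIA (\ref{conj:PIA}.2) to $(\tilde Y,\tilde D)$, then descend by $G$-invariance'' does not work as stated. The lci PIA of \cite{EM04} gives you
\[
\operatorname{mld}_{\tilde x}\bigl(\tilde Y, \tilde{\mathfrak a}\,\mathcal{O}_{\tilde Y}(-\tilde D)\bigr)=\operatorname{mld}_{\tilde x}(\tilde D,\tilde{\mathfrak a}\,\mathcal{O}_{\tilde D}),
\]
but there is no simple ``$G$-invariance'' that converts this into the equality on the quotient. The relationship between $\operatorname{mld}_x(Y,\cdot)$ and data on the cover is governed by Theorem~\ref{thm:mld_hyperquot}: it is an infimum over all $\gamma\in G$ of codimensions of cylinders in the arc spaces of the \emph{twisted} $k[t]$-schemes $\overline{B}^{(\gamma)}$, with an additive correction by $\operatorname{age}(\gamma)$ and the Jacobian order $b_1$. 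These twisted schemes are not $\tilde Y$ itself, and the infimum over $\gamma$ does not commute with the lci PIA comparison on the untwisted cover. In particular, matching the two equivariant infima (for $Y$ and for $D$) is exactly the content of Theorem~\ref{thm:PIA}, not a formal consequence of (\ref{conj:PIA}.2).

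What the paper actually does is different. It proves directly (Theorem~\ref{thm:PIA}) that
\[
\operatorname{mld}_x\bigl(A,(f_1\cdots f_c)\mathfrak a^\delta\bigr)=\operatorname{mld}_x(B,\mathfrak b^\delta)
\]
by comparing the arc-space formulas of Corollary~\ref{cor:mld_quot} and Corollary~\ref{cor:mld_hyperquot2} term by term over each $\gamma$, using the codimension inequality Lemma~\ref{lem:EM8.4} on $\overline{A}'_\infty$ versus $\overline{B}^{(\gamma)}_\infty$. The klt hypothesis enters through Claim~\ref{claim:not_thin}, which guarantees that $\overline{B}^{(\gamma)}_\infty$ contains enough arcs (via \cite{HM07}); otherwise the comparison is vacuous. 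The statement you want (Corollary~\ref{cor:PIA_general}) then follows by applying Theorem~\ref{thm:PIA} twice, once with $B=Y$ and once with $B=D$, pivoting through the common ambient quotient $X=A$. Your adjunction argument for the easy inequality is fine and agrees with the paper, but for the hard direction you need the $\gamma$-by-$\gamma$ argument on the twisted arc spaces, not a reduction to the smooth-cover lci PIA.
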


\noindent
By Theorem \ref{thm:PIA_intro}, 
Theorem \ref{thm:LSC} can be reduced to the case when 
$X$ has quotient singularities (\ref{conj:LSC}.4). 
Hence, this paper is mainly devoted to proving Theorem \ref{thm:PIA_intro}.

The main tools of this paper involve 
the theory of the arc space of a quotient singularity established by Denef and Loeser in \cite{DL02} and 
the technique on arc spaces for proving (\ref{conj:PIA}.1) established by Ein, Musta\c{t}\u{a} and Yasuda in \cite{EMY03}. 

By the theory of Denef and Loeser, the arc space of quotient variety can be studied by those of certain $k[t]$-schemes. 
We review their theory here briefly. 
Suppose that a finite group $G \subset GL_{N}(k)$ of order $d$ acts on $\overline{X} = \operatorname{Spec} k[x_1, \ldots , x_N]$. 
Let $\overline{Y} \subset \overline{X}$ be a $G$-invariant closed subvariety and $I \subset k[x_1, \ldots , x_N]$ its defining ideal. 
We denote by $Y := \overline{Y}/G$ its quotient. 
For each $\gamma \in G$, a $k[t]$-scheme $\overline{Y}^{(\gamma)}$ is defined as follows. 
By changing the basis $x_1, \ldots, x_N$ linearly, we may assume that $\gamma$ is a diagonal matrix 
with entries $\xi^{e_1}, \ldots , \xi^{e_N}$ ($0 \le e_i \le d-1$) where $\xi$ is a primitive $d$-th root of unity in $k$. 
Let $\overline{\lambda}_{\gamma}^*$ be the ring homomorphism defined by
\[
\overline{\lambda}_{\gamma}^*: k[x_1, \ldots , x_N]^G \to k[t][x_1, \ldots , x_N]; \quad x_i \mapsto t^{\frac{e_i}{d}}x_i. 
\] 
Then the $k[t]$-scheme $\overline{Y}^{(\gamma)}$ is defined by
\[
\overline{Y}^{(\gamma)} = \operatorname{Spec} k[t][x_1, \ldots , x_N]/ \overline{I}^{(\gamma)}, 
\]
where $\overline{I}^{(\gamma)}$ is the ideal generated by elements of $\overline{\lambda}_{\gamma}^* (I)$. 
We denote by $Y_{\infty}$ the arc space of $Y$, which parametrizes $k$-morphisms $\operatorname{Spec} k[[t]] \to Y$. 
We also denote by $\overline{Y}^{(\gamma)}_{\infty}$ 
the arc space of $\overline{Y}^{(\gamma)}$ which parametrizes $k[t]$-morphisms $\operatorname{Spec} k[[t]] \to \overline{Y}^{(\gamma)}$. 
Denef and Loeser in \cite{DL02} investigate the change of variables formula on the map $\overline{Y}^{(\gamma)}_{\infty} \to Y_{\infty}$, 
and their theory allows us to compare the two spaces $Y_{\infty}$ and $\bigsqcup _{\gamma \in G} \overline{Y}^{(\gamma)}_{\infty}$. 
In \cite{DL02}, they basically work in the case where $I = 0$ since they are interested in the quotient singularity. 
In Sections \ref{section:prelimi} and \ref{section:DL2}, we explain their theory in a more general setting in detail. 
We will give self-contained proofs to most of the propositions. However, we emphasize that they are not our original. 
Most of the propositions in Section \ref{section:prelimi} follow from the existing work by Sebag \cite{Seb04} and Yasuda \cite{Yas} 
(cf.\ \cite{Yas04, Yas06}). 
In \cite{Seb04}, Sebag develops the theory of motivic integration for formal schemes over a complete DVR. 
In \cite{Yas}, Yasuda develops that for formal Deligne-Mumford stacks over $k[[t]]$ of arbitrary characteristic. 
The construction in Section \ref{section:DL2} is a special case of the construction in \cite{Yas16}. 
The construction in \cite{Yas16} is intrinsic and more general, and it works even in positive characteristics. 
See Remarks \ref{rmk:Sebag} and \ref{rmk:Yas16} for more detail.

By the result of Ein, Musta\c{t}\u{a} and Yasuda in \cite{EMY03} (and \cite{EM09}), 
the minimal log discrepancy of $Y$ can be described by the codimension of certain contact loci in $Y_{\infty}$. 
Then by applying the theory of Denef and Loeser above, it can be described 
by the codimension of the corresponding contact loci in $\overline{Y}^{(\gamma)}_{\infty}$. 
This description (Theorem \ref{thm:mld_hyperquot}) is one of the key steps to prove Theorem \ref{thm:PIA_intro}. 

In Section \ref{section:mld_hyperquot}, to prove Theorem \ref{thm:PIA_intro}, we apply the technique by Ein, Musta\c{t}\u{a} and Yasuda in \cite{EMY03}  (and \cite{EM09})
to the contact loci in $\overline{Y}^{(\gamma)}_{\infty}$. 
Their argument works basically well even in our setting 
because $\overline{I}^{(\gamma)}$ is generated by a regular sequence outside $t=0$ when $I$ is generated by a regular sequence. 
However, there are two main difficulties in this step as we discuss below. 

The first difficulty is that $\overline{Y}^{(\gamma)}$ is neither normal nor a complete intersection in general (see Remark \ref{rmk:nonlci}). 
Therefore we have no standard definition of the relative canonical sheaf $\omega _{\overline{Y}^{(\gamma)}/k[t]}$ on $\overline{Y}^{(\gamma)}$. 
We overcome this difficulty by defining an invertible sheaf $L_{\overline{Y}^{(\gamma)}}$ instead. 
Furthermore, Lemma \ref{lem:age2}, which relates the age of $\gamma$ and certain orders of arcs, 
is a key lemma for the argument in \cite{EMY03} to work in our setting. 

The second difficulty is that there may be very few arcs on $\overline{Y}^{(\gamma)}$. 
More precisely, the arc space $\overline{Y}^{(\gamma)}_{\infty}$ may be a thin set of $\overline{Y}^{(\gamma)}_{\infty}$ itself 
(see Definition \ref{def:thin} for the definition of a thin set). For example, 
if $(d,e_1,e_2,e_3)=(3,0,1,2)$ and $I = (x_1^3 + x_2^3 + x_3^3)$, then we have 
\[
\overline{Y}^{(\gamma)} = \operatorname{Spec} k[t][x_1, x_2, x_3]/ (x_1^3 + tx_2^3 + t^2x_3^3). 
\]
In this case, $\overline{Y}^{(\gamma)}_{\infty}$ consists of only one arc, and its order of the Jacobian ideal 
$\operatorname{Jac}_{\overline{Y}^{(\gamma)}/k[t]}$ is infinity. 
If the arc space $\overline{Y}^{(\gamma)}_{\infty}$ is a thin set, 
then any arc $\alpha \in \overline{Y}^{(\gamma)}_{\infty}$ has order $\operatorname{ord}_{\alpha} (\operatorname{Jac}_{\overline{Y}^{(\gamma)}/k[t]}) = \infty$, 
and because of it, the argument in \cite{EMY03} does not work. 
In Claim \ref{claim:not_thin}, we prove that $\overline{Y}^{(\gamma)}_{\infty}$ is not a thin set if $Y$ is klt and show 
that the argument in \cite{EMY03} really works. 
The key idea in the proof of Claim \ref{claim:not_thin} is to apply the result by Hacon and Mckernan \cite{HM07}, 
which states the rational chain connectedness of the fibers of the resolution 
$W \to \overline{Y}^{(\gamma)}$ of singularities of $\overline{Y}^{(\gamma)}$, 
and prove that there are actually many arcs on $W$ using the result by Graber, Harris, and Starr \cite{GHS03}. 
The klt assumption in Theorem \ref{thm:PIA_intro} is essentially used in this argument.

We also prove the Reid-Tai type formula on minimal log discrepancies (Corollary \ref{cor:qmld=cqmld}). 
This gives the affirmative answer to a question by Borisov in \cite{Bor97} whether 
the set of minimal discrepancies of quotient singularities 
with respect to arbitrary groups coincides with that of cyclic quotients of the same dimension. 
We prove Corollary \ref{cor:qmld=cqmld} using the description of 
the minimal log discrepancy in terms of arc spaces of the $k[t]$-schemes. 
Moreover, we give another proof without the theory of arc spaces.
As an application of this result, we prove the ACC conjecture for quotient singularities (Theorem \ref{thm:acc_quot}).

The paper is organized as follows. 
In Section \ref{section:prelimi}, we review some definitions and facts on pairs and arc spaces. 
We also prove some basic results on the arc spaces of $k[t]$-schemes, which are necessary for this paper. 
Especially, we discuss the theory of contact loci and their codimension for the arc spaces of $k[t]$-schemes following \cite{EM09}, 
where the arc spaces of $k$-schemes are dealt with. 
In Section \ref{section:DL2}, we review the theory of arc spaces of quotient varieties established by Denef and Loeser in \cite{DL02}. 
As mentioned previously, most of the propositions in Sections \ref{section:prelimi} and \ref{section:DL2} follow 
from the existing works by Sebag \cite{Seb04} and Yasuda \cite{Yas, Yas16}. 
Readers who are familiar with these papers could skip these sections. 
In Section \ref{section:mld_hyperquot}, we discuss the minimal log discrepancy of quotient singularities of linear action, and 
describe them by the codimension of cylinders in arc spaces of 
the $k[t]$-schemes defined in the previous section (Theorem \ref{thm:mld_hyperquot}). 
In Section \ref{section:PIA}, we prove the PIA conjecture for hyperquotient singularities (Theorem \ref{thm:PIA}). 
In Section \ref{section:maintheorem}, we prove the main theorems Corollary \ref{cor:PIA_general} and Theorem \ref{thm:LSC_general} with some generalizations.

\begin{ackn} 
We would like to thank Professor Takehiko Yasuda for the discussion and many suggestions. 
We also thank the referee, whose comments and suggestions have greatly improved the article. 
The referee pointed out the relevance to the papers \cite{Seb04}, \cite{Yas16}, and \cite{Yas}. 
Remarks \ref{rmk:Sebag} and \ref{rmk:Yas16} are largely owing to the referee's comments. 
The first author is partially supported by the Grant-in-Aid for Young Scientists (KAKENHI No.\ 18K13384).
The second author is partially supported by the Grant-in-Aid for Young Scientists (KAKENHI No.\ 19K14496).
\end{ackn}

\section{Preliminaries}\label{section:prelimi}

\subsection{Notation}\label{subsection:notation}

\begin{itemize}
\item 
We basically follow the notations and the terminologies in \cite{Har77} and \cite{Kol13}.

\item 
Throughout this paper, $k$ is an algebraically closed field  of characteristic zero. 
We say that $X$ is a \textit{variety over} $k$ or a \textit{$k$-variety} if 
$X$ is an integral scheme that is separated and of finite type over $k$. 
\end{itemize}

\subsection{Log pairs}
A \textit{log pair} $(X, \mathfrak{a})$ is a normal $\mathbb{Q}$-Gorenstein variety $X$ and 
an $\mathbb{R}$-ideal sheaf $\mathfrak{a}$ on $X$. 
Here, an $\mathbb{R}$-\textit{ideal sheaf} $\mathfrak{a}$ on $X$ is a formal product 
$\mathfrak{a} = \prod _{i = 1} ^s \mathfrak{a}_i ^{r_i}$, where $\mathfrak{a}_1, \ldots, \mathfrak{a}_s$ are 
non-zero coherent ideal sheaves on $X$ 
and $r_1, \ldots , r_s$ are positive real numbers. 
For a morphism $Y \to X$ and an $\mathbb{R}$-ideal sheaf $\mathfrak{a} = \prod _{i = 1} ^s \mathfrak{a}_i ^{r_i}$, 
we denote by $\mathfrak{a} \mathcal{O}_Y$ the $\mathbb{R}$-ideal sheaf $\prod _{i = 1} ^s (\mathfrak{a}_i \mathcal{O}_Y)  ^{r_i}$ on $Y$. 

Let $(X, \mathfrak{a} = \prod _{i = 1} ^s \mathfrak{a}_i ^{r_i})$ be a log pair. 
For a proper birational morphism $f: X' \to X$ from a normal variety $X'$ and a prime divisor $E$ on $X'$, 
the \textit{log discrepancy} of $(X, \mathfrak{a})$ at $E$ is defined as 
\[
a_E(X, \mathfrak{a}) := 1 + \operatorname{ord}_E (K_{X'} - f^* K_X) - \operatorname{ord}_E ( \mathfrak{a} ), 
\]
where we denote $\operatorname{ord}_E ( \mathfrak{a} ) = \sum _{i=1} ^s r_i \operatorname{ord}_E ( \mathfrak{a}_i )$. 
The image $f(E)$ is called the \textit{center} of $E$ on $X$ and we denote it by $c_X(E)$. 
For a closed point $x \in X$, we define \textit{the minimal log discrepancy} at $x$ as 
\[
\operatorname{mld}_x (X, \mathfrak{a}) := \inf _{c_X(E) = \{ x \}} a_E (X, \mathfrak{a})
\]
if $\dim X \ge 2$, where the infimum is taken over all prime divisors $E$ over $X$ with center $c_X(E) = \{ x \}$. 
It is known that $\operatorname{mld}_x (X, \mathfrak{a}) \in \mathbb{R}_{\ge 0} \cup \{ - \infty \}$ in this case (cf.\ \cite[Corollary 2.31]{KM98}). 
When $\dim X = 1$, we define $\operatorname{mld}_x (X, \mathfrak{a}) := \inf _{c_X(E) = \{ x \}} a_E (X, \mathfrak{a})$ 
if the infimum is non-negative and 
$\operatorname{mld}_x (X, \mathfrak{a}) := - \infty$ otherwise.

\subsection{Jet schemes and arc spaces for $k$-schemes}\label{subsection:jet}
In this subsection, we briefly review the definition and some properties of jet schemes and arc spaces.
The reader is referred to \cite{EM09} for details.

Let $X$ be a scheme of finite type over $k$, let $({\sf Sch}/k)$ be the category of $k$-schemes  
 and $({\sf Sets})$ the category of sets.
Define a contravariant functor  $F_{m}: ({\sf Sch}/k) \to ({\sf Sets})$ by 
\[
 F_{m}(Y) = \operatorname{Hom} _{k}\left( Y\times_{\operatorname{Spec} k} \operatorname{Spec} k[t]/(t^{m+1}), X \right).
\]
Then, the functor $F_{m}$ is representable by a scheme $X_m$ of finite type over $k$, and 
the scheme $X_m$ is called the $m$-th \textit{jet scheme} of $X$.
For $m \ge n \ge 0$, the canonical surjective homomorphism $k[t]/(t^{m+1}) \to k[t]/(t^{n+1})$ induces a morphism $\pi_{mn}:X_m \to X_n$.
There exists the projective limit and projections
\[
X_\infty := \mathop{\varprojlim}\limits_{m} X_m, \qquad \psi_{m}:X_{\infty} \to X_m
\]
and $X_{\infty}$ is called the {\it arc space} of $X$. 
Then there is a bijective map
\[
\operatorname{Hom} _{k}(\operatorname{Spec} K, X_{\infty}) \simeq \operatorname{Hom} _{k}(\operatorname{Spec} K[[t]], X)
\]
for any field $K$ with $k\subset K$. 

For $m \in \mathbb{Z}_{\ge 0} \cup \{ \infty \}$, we denote by $\pi_m: X_m \to X$ the canonical truncation morphism. 
For $m\in\mathbb Z_{\ge 0}\cup\{\infty\}$ and a morphism $f:Y\to X$ of  schemes  of  finite type over $k$,
we denote by $f_m : Y_m \to X_m$ the morphism induced by $f$.

A subset $C \subset X_{\infty}$ is called a \textit{cylinder} if $C = \psi_{m} ^{-1}(S)$ holds for some $m \ge 0$ and 
a constructible subset $S \subset X_m$. Typical examples of cylinders appearing in this paper are the \textit{contact loci} 
$\operatorname{Cont}^{m}(\mathfrak{a})$ and $\operatorname{Cont}^{\geq m}(\mathfrak{a})$ defined as follows. 
\begin{defi}
\begin{enumerate}
\item For an arc $\gamma\in X_\infty$ and an ideal sheaf $\mathfrak{a} \subset \mathcal O_X$, 
the \textit{order} of $\mathfrak{a}$  measured by $\gamma$
is defined as follows:
\[
\operatorname{ord}_\gamma(\mathfrak a)=\sup \{ r\in \mathbb Z_{\geq 0}\mid \gamma^*(\mathfrak a)\subset (t^r)\}, 
\]
where $\gamma^*: \mathcal{O}_{X} \to k[[t]]$ is the induced ring homomorphism by $\gamma$.

\item For $m \in \mathbb{Z}_{\ge 0}$, we define $\operatorname{Cont}^{m}(\mathfrak{a}), \operatorname{Cont}^{\geq m}(\mathfrak{a}) \subset X_{\infty}$ as follows:
\begin{align*}
\operatorname{Cont}^{m}(\mathfrak{a}) &= \{\gamma \in X_\infty \mid \operatorname{ord}_\gamma(\mathfrak a)= m\}, \\
\operatorname{Cont}^{\geq m}(\mathfrak{a}) &= \{\gamma \in X_\infty \mid \operatorname{ord}_\gamma(\mathfrak a)\geq m\}.
\end{align*}
\end{enumerate}
\end{defi}

\noindent
By the definition, we can see that
\[
\operatorname{Cont}^{\geq m}(\mathfrak{a})=\psi_{m-1}^{-1}(Z(\mathfrak{a})_{m-1}),
\]
where $Z(\mathfrak{a})$ is the closed subscheme of $X$ defined by the ideal sheaf $\mathfrak{a}$. 
Therefore $\operatorname{Cont}^{m}(\mathfrak{a})$ and $\operatorname{Cont}^{\geq m}(\mathfrak{a})$ become cylinders. 

For $m \le n+1$, we also define the subsets 
$\operatorname{Cont}^{m}(\mathfrak a)_n$ and $\operatorname{Cont}^{\geq m}(\mathfrak a)_n$ of $X_n$ in the same way.

We shall define the codimension for cylinders. 
For a variety $X$ of dimension $n$, we denote by 
$\operatorname{Jac}_X := \operatorname{Fitt} ^n (\Omega _X)$ the \textit{Jacobian ideal} of $X$, and 
by $X_{\rm{sing}}$ the singular locus of $X$ (see \cite{Eis95} for the definition of the Fitting ideal). 
\begin{defi}\label{defi:codim}
Let $X$ be a variety and let $C \subset X_{\infty}$ be a cylinder. 
\begin{enumerate}
\item 
Assume that $C \subset \operatorname{Cont}^e(\operatorname{Jac}_X)$ for some $e \in \mathbb{Z}_{\ge 0}$.
Then we define the codimension of $C$ in $X_\infty$ as 
\[
\operatorname{codim}(C):=(m+1)\operatorname{dim}X-\operatorname{dim}(\psi_m(C))
\]
for any sufficiently large $m$. This definition is well-defined by \cite[Proposition 4.1]{EM09}.

\item 
In general, we define the codimension of $C$ in $X_\infty$ as follows:
\[
\operatorname{codim}(C):=\min_{e\in\mathbb Z_{\ge 0}} {\operatorname{codim}(C\cap \operatorname{Cont}^e(\operatorname{Jac}_X))}. 
\]
By convention, $\operatorname{codim}(C) = \infty$ if $C \subset (X_{\rm{sing}})_{\infty}$. 
\end{enumerate}
\end{defi}

We recall the definition of the Nash ideals of varieties and morphisms. 

\begin{defi}\label{defi:nash}
\begin{enumerate}
\item 
Let $X$ be a normal $\mathbb{Q}$-Gorenstein variety over $k$ of dimension $n$ and 
let $r$ be a positive integer such that the reflexive power $\omega _X ^{[r]} := (\omega _X ^{\otimes r})^{**}$ is an invertible sheaf. 
Then we have a canonical map
\[
\eta_r \colon (\Omega_X^n)^{\otimes r} \to \omega _X ^{[r]}.
\] 
Since $\omega _X ^{[r]}$ is an invertible sheaf, 
an ideal sheaf $\mathfrak{n}_{r,X} \subset \mathcal{O}_X$ is uniquely determined by 
${\rm Im}(\eta_r) = \mathfrak{n}_{r,X} \otimes \omega _X ^{[r]}$.
The ideal sheaf $\mathfrak n_{r,X}$ is called the $r$-th \textit{Nash ideal} of $X$.

\item Furthermore, let $f:X \to Y$ be a morphism to a variety $Y$ over $k$. Then
we have a canonical map
\[
\theta_r \colon f^*(\Omega_Y ^n)^{\otimes r} \to \omega _X ^{[r]}, 
\] 
and an ideal sheaf $\mathfrak{n}_{r,f} \subset \mathcal{O}_X$ such that 
${\rm Im}(\theta_r)=\mathfrak{n}_{r,f}\otimes \omega _X ^{[r]}$.
The ideal sheaf $\mathfrak n_{r,f}$ is called the $r$-th \textit{Nash ideal} of $f$.
\end{enumerate}
\end{defi}

\subsection{Jet schemes and arc spaces for $k[t]$-schemes}\label{subsection:ktjet}
Following \cite{DL02}, we extend the definition of the arc spaces of $k$-schemes in Subsection \ref{subsection:jet} to 
the case where $X$ is a $k[t]$-scheme, namely a scheme over $\operatorname{Spec} k[t]$. 

Let $X$ be a scheme of finite type over $\operatorname{Spec} k[t]$. 
Define a contravariant functor $F_{m}: ({\sf Sch}/k) \to ({\sf Sets})$ by 
\[
F_{m}(Y)=\operatorname{Hom} _{k[t]} \left( Y \times_{\operatorname{Spec} k} \operatorname{Spec} k[t]/(t^{m+1}), X \right).
\]
Then, $F_{m}$ is representable by a scheme $X_m$ of finite type over $k$, and the scheme $X_m$ is called the \textit{$m$-th jet scheme} of $X$. 
We shall denote by the same symbols $X_{\infty}$, $\pi_{mn}$, $\psi_m$, $\pi_m$ also for this setting. 
Cylinders and the contact loci $\operatorname{Cont}^{m}(\mathfrak{a})$ and $\operatorname{Cont}^{\ge m}(\mathfrak{a})$ are also defined by the same way for this setting.

\begin{rmk}\label{rmk:bc}
Note that $X_0 \simeq X$ holds if $X$ is a scheme over $k$. 
However, this is not true for $k[t]$-schemes. 
Indeed, if $X = \mathbb{A}^1 _{k[t]}$, then $X_0 \simeq \mathbb{A}^1 _{k}$ holds. 
More generally, $X_m \simeq Y_m$ holds for a $k$-scheme $X$ and $Y=X\times_{\operatorname{Spec} k}\operatorname{Spec} k[t]$.
\end{rmk}

In this paper, we basically treat $k[t]$-schemes with the following conditions: 
\begin{quote}
$(\star)_n$ \ $X$ is a scheme of finite type over $\operatorname{Spec} k[t]$. 
Any irreducible component of $X$ has dimension at least $n+1$. 
Furthermore, any irreducible component dominating $\operatorname{Spec} k[t]$ is exactly $(n+1)$-dimensional. 
\end{quote}
\begin{quote}
$(\star\star)_n$ \ $X$ is a $k[t]$-scheme with condition $(\star)_n$. 
Furthermore, any irreducible components of $X$ dominating $\operatorname{Spec} k[t]$ is reduced outside $t=0$. 
\end{quote}
These categories are suitable for defining the Jacobian ideal and the codimension of cylinders. 

For a $k[t]$-schemes $X$ with the condition $(\star)_n$, we denote by 
$\operatorname{Jac}_{X/k[t]} := \operatorname{Fitt} ^{n} (\Omega _{X/k[t]})$ the \textit{Jacobian ideal} of $X$ over $k[t]$. 
Under the condition $(\star\star)_n$, we see in Subsection \ref{subsection:codim} that the codimension of a cylinder 
is also defined by the same way as in Definition \ref{defi:codim}. 

\begin{rmk}\label{rmk:component}
Let $X$ be a $k[t]$-scheme with the condition $(\star)_n$. 
Let $X^{(1)}, \ldots, X^{(\ell)}$ be the irreducible components of $X$ dominating $\operatorname{Spec} k[t]$. 
Then we have $X_{\infty} = \bigcup _{i =1} ^{\ell} X^{(i)}_{\infty}$. 
Therefore, we can reduce some problems on the arc space of $X$ to that for its irreducible component dominating $\operatorname{Spec} k[t]$. 
However, we can not do such a reduction on problems relating to the order of the Jacobian ideal (cf.\ Example \ref{eg:eg1}). 
\end{rmk}

\begin{eg}\label{eg:eg1}
The $k[t]$-schemes 
\[
X = \operatorname{Spec} \bigl( k[t][x,y,z]/(tx,ty) \bigr), \quad
Y = \operatorname{Spec} \bigl( k[t][x,y,z]/(x,y) \bigr)
\]
satisfy the condition $(\star\star)_1$. 
We have a canonical isomorphism $X_{\infty} \simeq Y_{\infty}$ on the arc spaces, 
but corresponding arcs have different orders of the Jacobian ideals because 
$\operatorname{Jac}_{X/k[t]} = (t^2,tx,ty)/(tx,ty)$ and $\operatorname{Jac}_{Y/k[t]} = (1)$. 
\end{eg}

We also define the order of the Jacobian for a morphism. 

\begin{defi}\label{defi:jac_kt}
Let $X$ and $Y$ be $k[t]$-schemes of finite type, and let $f:X \to Y$ be a morphism over $k[t]$. 
Let $\gamma : \operatorname{Spec} k[[t]] \to X$ be an arc, and let $\gamma ' := f_{\infty} (\gamma)$. 
Let $S$ be the torsion part of $\gamma ^{*} \Omega _{X / k[t]}$. 
Then we define the \textit{order of the Jacobian} of $f$ 
$\operatorname{ord}_{\gamma} (\operatorname{jac}_f)$ at $\gamma$ as the length of the $k[[t]]$-module
\[
\operatorname{Coker} \bigl( \gamma ^{'*} \Omega _{Y / k[t]} \to \gamma ^* \Omega _{X / k[t]} /S \bigr).
\]
In particular, if $\operatorname{ord}_{\gamma} (\operatorname{jac}_f) < \infty$, then 
\[
\operatorname{Coker} \bigl( \gamma ^{'*} \Omega _{Y / k[t]} \to \gamma ^* \Omega _{X / k[t]} /S \bigr) 
\simeq 
\bigoplus _i k[t]/(t^{e_i})
\]
holds as $k[[t]]$-modules with some positive integers $e_i$ satisfying $\sum_i e_i = \operatorname{ord}_{\gamma} (\operatorname{jac}_f)$. 

By abuse of notation, we set
\[
\operatorname{Cont}^{e}(\operatorname{jac}_f):=\{\gamma \in X_\infty \mid \operatorname{ord}_\gamma(\operatorname{jac}_f)= e \}
\]
for $e \in \mathbb Z_{\ge 0}$. We note that it is not clear from the definition that $\operatorname{Cont}^{e}(\operatorname{jac}_f)$ is a cylinder. 
\end{defi}

\begin{rmk}
In some papers, the \textit{Jacobian ideal} $\operatorname{Jac}_f$ of $f$ is defined by 
$\operatorname{Jac}_f := \operatorname{Fitt} ^0 (\Omega _{X/Y})$. 
We note that $\operatorname{ord}_{\gamma} (\operatorname{Jac}_f)$ coincides with the length of the $k[[t]]$-module
\[
\operatorname{Coker} \bigl( \gamma ^{'*} \Omega _{Y / k[t]} \to \gamma ^* \Omega _{X / k[t]} \bigr) 
= \gamma ^* \Omega _{X / Y}. 
\]
Therefore, if $\Omega _{X / k[t]}$ is locally free, 
then $\operatorname{ord}_{\gamma} (\operatorname{Jac}_f) = \operatorname{ord}_{\gamma} (\operatorname{jac}_f)$ holds. 
However the equality does not hold in general (cf.\ Example \ref{eg:eg2}). 
\end{rmk}

\begin{eg}\label{eg:eg2}
Let $R=k[t][x,y,z]/(xy+z^2)$ and let $f:R\to R$ be the homomorphism defined by $f(x)=x^2$, $f(y)=xy$ and $f(z)=xz$.
Let $\gamma:R\to k[[t]]$ be the arc defined by $\gamma(x)=t$ and $\gamma(y)=\gamma(z)=0$.
Then $\gamma ^* \Omega_{R/k[t]} = k[[t]] dx \oplus (k[[t]]/(t))dy \oplus k[[t]]dz$.
Note that  $d(f(x))=2xdx$, $d(f(y))=ydx+xdy$ and $d(f(z))=zdx+xdz$.
Therefore we have $\operatorname{ord}_{\gamma} (\operatorname{Jac}_f) = 3$ and  $\operatorname{ord}_{\gamma} (\operatorname{jac}_f)=2$.
\end{eg}

The additivity holds for the orders of the Jacobian of morphisms. 

\begin{lem}\label{lem:additive}
Let $n$ be a non-negative integer, and let $X$, $Y$ and $Z$ be $k[t]$-schemes with the condition $(\star)_n$. 
Let $f : X \to Y$ and $g: Y \to Z$ be morphisms over $k[t]$. 
Let $\gamma \in X_{\infty}$ be an arc and let $\gamma ' := f_{\infty}(\gamma)$. 
Suppose that 
\[
\operatorname{ord}_{\gamma} (\operatorname{Jac}_{X/k[t]}) < \infty, \quad 
\operatorname{ord}_{\gamma'} (\operatorname{Jac}_{Y/k[t]}) < \infty. 
\]
Then we have 
\[
\operatorname{ord}_{\gamma} (\operatorname{jac}_{g \circ f}) = 
\operatorname{ord}_{\gamma} (\operatorname{jac}_{f}) + 
\operatorname{ord}_{\gamma'} (\operatorname{jac}_{g}). 
\]
\end{lem}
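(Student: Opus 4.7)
The plan is to reduce the identity to a simple additivity statement for cokernel lengths of maps between free $k[[t]]$-modules of rank $n$, using the functoriality of Kähler differentials along $X \to Y \to Z$.

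First I would fix notation: write $\gamma'' := g_\infty(\gamma')$ and introduce the three pulled-back cotangent modules
\[
M := \gamma^{*}\Omega_{X/k[t]}, \quad N := \gamma'^{*}\Omega_{Y/k[t]}, \quad P := \gamma''^{*}\Omega_{Z/k[t]}.
\]
The first fundamental exact sequence of differentials for the composition, pulled back along $\gamma$, produces a chain of $k[[t]]$-linear maps $P \to N \to M$ whose composition is the map used to compute $\operatorname{ord}_\gamma(\operatorname{jac}_{g\circ f})$. Let $S \subset M$ and $T \subset N$ denote the torsion submodules, and set $M' := M/S$ and $N' := N/T$.

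The key preparatory step is to verify that $M'$ and $N'$ are free of rank \emph{exactly} $n$. The hypothesis $\operatorname{ord}_\gamma(\operatorname{Jac}_{X/k[t]}) < \infty$ forces the generic point of $\gamma$ to land in the relative smooth locus of $X/\operatorname{Spec} k[t]$, where $\Omega_{X/k[t]}$ is free of rank $n$; here one uses that every arc on a $k[t]$-scheme factors through a dominating component of relative dimension $n$ by $(\star)_n$ combined with the inclusion $k[t] \hookrightarrow k[[t]]$. The same argument applies to $N$. Since any $k[[t]]$-linear map into a torsion-free module kills torsion, the chain $P \to N \to M$ induces well-defined maps $\bar{\phi}: P \to N'$ and $\bar{\psi}: N' \to M'$, and a direct check gives
\begin{align*}
\operatorname{ord}_{\gamma'}(\operatorname{jac}_g) &= \operatorname{length}_{k[[t]]} \operatorname{Coker}(\bar{\phi}), \\
\operatorname{ord}_{\gamma}(\operatorname{jac}_f) &= \operatorname{length}_{k[[t]]} \operatorname{Coker}(\bar{\psi}), \\
\operatorname{ord}_{\gamma}(\operatorname{jac}_{g \circ f}) &= \operatorname{length}_{k[[t]]} \operatorname{Coker}(\bar{\psi}\bar{\phi}).
\end{align*}

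Finally I would conclude via a short exact sequence of cokernels. When $\bar{\psi}$ is injective (equivalently $\operatorname{ord}_\gamma(\operatorname{jac}_f) < \infty$), the sequence
\[
0 \longrightarrow \operatorname{Coker}(\bar{\phi}) \longrightarrow \operatorname{Coker}(\bar{\psi}\bar{\phi}) \longrightarrow \operatorname{Coker}(\bar{\psi}) \longrightarrow 0
\]
is exact, and taking lengths yields the desired identity. In the remaining cases (either $\operatorname{ord}_\gamma(\operatorname{jac}_f) = \infty$ or $\operatorname{ord}_{\gamma'}(\operatorname{jac}_g) = \infty$), the image of $\bar{\psi}\bar{\phi}$ in the rank-$n$ free module $M'$ has rank strictly less than $n$, so $\operatorname{ord}_\gamma(\operatorname{jac}_{g\circ f}) = \infty$ as well, and the equality holds with the convention $x + \infty = \infty$.

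The main obstacle I anticipate is the rank computation in the preparatory step: one must argue that the finiteness of $\operatorname{ord}_\gamma(\operatorname{Jac}_{X/k[t]})$, combined with $(\star)_n$, genuinely forces the free rank of $M$ to equal $n$ rather than merely be $\le n$. Once this is in place, the remainder is essentially linear algebra over the discrete valuation ring $k[[t]]$, where cokernel length coincides with the $t$-adic valuation of the determinant, and multiplicativity of determinants gives the additivity for free.
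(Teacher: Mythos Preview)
Your proposal is correct and follows essentially the same route as the paper: both reduce to showing that $M' = \gamma^{*}\Omega_{X/k[t]}/S$ and $N' = \gamma'^{*}\Omega_{Y/k[t]}/T$ are free of rank $n$ via smoothness at $\gamma(\eta)$, then use injectivity of $N' \to M'$ (when $\operatorname{ord}_\gamma(\operatorname{jac}_f) < \infty$) together with additivity of length, and handle the infinite case separately. Your anticipated obstacle about the rank computation is exactly the point the paper addresses by the line ``$X$ is smooth over $k[t]$ at $\gamma(\eta)$'', which uses that $\gamma(\eta)$ lies in the generic fiber and hence in a dominating component of relative dimension $n$ by $(\star)_n$.
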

\begin{proof}
Set $\gamma '' := g_{\infty}(\gamma')$. 
Let $S$, $T$ and $U$ be the torsion parts of $\gamma ^* \Omega _{X / k[t]}$, ${\gamma'} ^* \Omega _{Y / k[t]}$ and 
${\gamma''} ^* \Omega _{Z / k[t]}$, respectively. 
Since $\operatorname{ord}_{\gamma} (\operatorname{Jac}_{X/k[t]}) < \infty$, $X$ is smooth over $k[t]$ at $\gamma(\eta)$, where $\eta$ is the generic point of $\operatorname{Spec} k[[t]]$. 
Hence we have 
\[
\gamma ^* \Omega _{X / k[t]} /S \simeq k[[t]]^{\oplus n}. 
\]
For the same reason, we have 
\[
{\gamma '} ^* \Omega _{Y / k[t]} /T \simeq k[[t]]^{\oplus n}. 
\]
If 
\[
\operatorname{ord}_{\gamma} (\operatorname{jac}_{f}) = 
\operatorname{length} \Bigl( \operatorname{Coker} \bigl( {\gamma '} ^{*} \Omega _{Y / k[t]} /T \to {\gamma} ^* \Omega _{X / k[t]} /S \bigr) \Bigr) = \infty
\]
holds, then we have 
\[
\operatorname{ord}_{\gamma} (\operatorname{jac}_{g \circ f}) = 
\operatorname{length} \Bigl( \operatorname{Coker} \bigl( {\gamma ''} ^{*} \Omega _{Z / k[t]} /U \to {\gamma } ^* \Omega _{X / k[t]} /S \bigr) \Bigr) = \infty. 
\]
Otherwise, ${\gamma '} ^{*} \Omega _{Y / k[t]} /T \to {\gamma} ^* \Omega _{X / k[t]} /S$ is injective, then the additivity
\[
\operatorname{ord}_{\gamma} (\operatorname{jac}_{g \circ f}) = 
\operatorname{ord}_{\gamma} (\operatorname{jac}_{f}) + 
\operatorname{ord}_{\gamma'} (\operatorname{jac}_{g})
\]
follows from the additivity of the length of modules. 
\end{proof}

The Nash ideals can also be defined in this setting. 

\begin{defi}\label{defi:nash_kt}
Let $X$ be a normal $k[t]$-variety of relative dimension $n$. Suppose that $X$ is smooth over $k[t]$ outside a closed subset of $X$ of codimension two. 
Then the canonical sheaf $\omega _{X/k[t]}$ is defined (cf.\ \cite[Definition 1.6]{Kol13}). 
Suppose that there exists a positive integer $r$ such that $\omega _{X/k[t]} ^{[r]}$ is an invertible sheaf. 

\begin{enumerate}
\item 
Then we have a canonical map
\[
\eta_r \colon (\Omega_{X/k[t]} ^n)^{\otimes r} \to \omega _{X/k[t]} ^{[r]}. 
\] 
Since $\omega _{X/k[t]} ^{[r]}$ is an invertible sheaf, 
an ideal sheaf $\mathfrak{n}_{r,X} \subset \mathcal{O}_X$ is uniquely determined by 
${\rm Im}(\eta_r) = \mathfrak{n}_{r,X} \otimes \omega _{X/k[t]} ^{[r]}$.
The ideal sheaf $\mathfrak n_{r,X}$ is called the $r$-th \textit{Nash ideal} of $X$.

\item Furthermore, let $f:X \to Y$ be a $k[t]$-morphism from a $k[t]$-scheme $Y$. Then
we have a canonical map
\[
\theta_r \colon f^*(\Omega_{Y/k[t]} ^n)^{\otimes r} \to \omega _{X/k[t]} ^{[r]}, 
\] 
and an ideal sheaf $\mathfrak{n}_{r,f} \subset \mathcal{O}_X$ such that 
${\rm Im}(\theta_r)=\mathfrak{n}_{r,f}\otimes \omega _{X/k[t]} ^{[r]}$.
The ideal sheaf $\mathfrak n_{r,f}$ is called the $r$-th \textit{Nash ideal} of $f$.
\end{enumerate}
\end{defi}

\begin{rmk}\label{rmk:nash_kt}
In this paper, we only use this definition for 
$k[t]$-variety $X'$ of the form $X' = X \times _{\operatorname{Spec} k} \operatorname{Spec} k[t]$, where $X$ is a normal $k$-variety. 
In this case, $\omega _{X'/k[t]}$ is just the pulled back of $\omega _X$ to $X'$. 
Therefore $\mathfrak n_{r,X'} = \mathfrak n_{r,X} \mathcal{O}_{X'}$ holds. 
\end{rmk}

\begin{lem}\label{lem:order}
\begin{enumerate}
\item Let $X$ be a $k[t]$-scheme with the condition $(\star)_n$, 
and let $\gamma \in \operatorname{Cont}^e (\operatorname{Jac}_{X/k[t]})$ be an arc. 
Then 
\[
\gamma ^* \Omega _{X / k[t]} \simeq 
k[[t]]^{\oplus n} \oplus \bigoplus _i k[t]/(t^{e_i})
\]
holds as $k[[t]]$-modules with $\sum_i e_i = e$. 

\item Let $r$ be a positive integer and $f:X \to Y$ be a $k[t]$-morphism which satisfy the assumption of Definition \ref{defi:nash_kt}(2). 
Let $\gamma \in X_{\infty}$ be an arc. 
Suppose $\operatorname{ord}_{\gamma} (\operatorname{Jac}_{X/k[t]}) < \infty$. 
Then 
\[
r \operatorname{ord}_{\gamma} (\operatorname{jac}_f) + \operatorname{ord}_{\gamma} (\mathfrak{n}_{r,X}) 
= \operatorname{ord}_{\gamma} (\mathfrak{n}_{r,f})
\]
holds. 
\end{enumerate}
\end{lem}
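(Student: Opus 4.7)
The plan is to reduce both parts to the structure theorem for finitely generated modules over the PID $k[[t]]$, keeping careful track of how torsion interacts with exterior and tensor powers. For (1), the hypothesis $\operatorname{ord}_\gamma(\operatorname{Jac}_{X/k[t]}) = e < \infty$ forces $\gamma$ to send the generic point of $\operatorname{Spec} k[[t]]$ into the relative smooth locus of $X \to \operatorname{Spec} k[t]$; since any arc factors through a dominating irreducible component (cf.\ Remark \ref{rmk:component}), and such components have relative dimension $n$ by $(\star)_n$, the $k((t))$-vector space $\gamma^*\Omega_{X/k[t]} \otimes k((t))$ has dimension $n$. Combining this with the structure theorem yields the decomposition $\gamma^*\Omega_{X/k[t]} \cong k[[t]]^{\oplus n} \oplus \bigoplus_i k[[t]]/(t^{e_i})$. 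The identity $\sum_i e_i = e$ is then immediate from the compatibility of Fitting ideals with base change, which gives $\gamma^*\operatorname{Jac}_{X/k[t]} = \operatorname{Fitt}^n(\gamma^*\Omega_{X/k[t]})$, combined with the direct computation that the Fitting ideal of the right-hand module equals $(t^{\sum_i e_i})$.

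For (2), I would use the factorization $\theta_r = \eta_r \circ (\wedge^n df)^{\otimes r}$ built into the construction of $\theta_r$. Pulling back by $\gamma$ and writing $M = \gamma^*\Omega_{X/k[t]}$, $M' = {\gamma'}^*\Omega_{Y/k[t]}$, and $L = \gamma^*\omega_{X/k[t]}^{[r]} \cong k[[t]]$, the task is to compute the image of
\[
(\wedge^n M')^{\otimes r} \xrightarrow{(\gamma^*\wedge^n df)^{\otimes r}} (\wedge^n M)^{\otimes r} \xrightarrow{\gamma^*\eta_r} L,
\]
which by the defining property of $\mathfrak{n}_{r,f}$ equals $(t^{\operatorname{ord}_\gamma(\mathfrak{n}_{r,f})})$. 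Since $L$ is torsion-free of rank one, both arrows descend to the torsion-free quotients of their sources. By part (1), the torsion-free quotient of $(\wedge^n M)^{\otimes r}$ is canonically $k[[t]]$, and the induced map to $L$ is multiplication by an element of $t$-order $\operatorname{ord}_\gamma(\mathfrak{n}_{r,X})$, directly from the defining property of $\mathfrak{n}_{r,X}$.

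For the first arrow, assuming $j := \operatorname{ord}_\gamma(\operatorname{jac}_f) < \infty$, Definition \ref{defi:jac_kt} says that the induced map $\bar\phi\colon M'/T' \to M/S = k[[t]]^{\oplus n}$ is injective with cokernel of length $j$, where $T'$ and $S$ denote the respective torsion submodules. Hence $M'/T'$ is free of rank $n$, and Smith normal form presents $\bar\phi$ as a diagonal matrix of determinant $t^j$; consequently $\wedge^n\bar\phi$ is multiplication by $t^j$ between rank-one free modules, and its $r$-th tensor power, restricted to the torsion-free quotient of $(\wedge^n M')^{\otimes r}$, is multiplication by $t^{rj}$. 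Composing with the factor handled above, the image of the total map is $(t^{rj + \operatorname{ord}_\gamma(\mathfrak{n}_{r,X})})$, giving the desired equality; the degenerate case $j = \infty$ makes $\wedge^n\bar\phi$ vanish and forces both sides of the claimed identity to be $\infty$.

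The step I expect to be most delicate is the bookkeeping under exterior and tensor powers: verifying that the torsion-free quotient of $\wedge^n(k[[t]]^{\oplus n} \oplus S)$ is precisely the rank-one summand $\wedge^n k[[t]]^{\oplus n}$, and that this persists after forming the $r$-th tensor product. Granting this, the whole argument reduces to an elementary determinant computation on rank-$n$ free modules together with the additivity of $t$-adic orders.
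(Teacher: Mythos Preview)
Your argument is correct and follows essentially the same route as the paper: for (1), the structure theorem over the PID $k[[t]]$ combined with base change of Fitting ideals; for (2), passage to torsion-free quotients and additivity of $t$-adic orders under composition of maps into the rank-one module $\gamma^*\omega_{X/k[t]}^{[r]}$. The paper records the key identity $r\operatorname{ord}_\gamma(\operatorname{jac}_f) = \operatorname{length}\bigl(\operatorname{Coker}\bigl((\wedge^n M')^{\otimes r}/T' \to (\wedge^n M)^{\otimes r}/S'\bigr)\bigr)$ simply as ``by Definition \ref{defi:jac_kt}''; your Smith normal form computation makes this step explicit.

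One small imprecision: you assert that $\bar\phi\colon M'/T' \to M/S$ is injective, but the hypotheses of Definition \ref{defi:nash_kt}(2) place no constraint on $Y$, so ${\gamma'}^*\Omega_{Y/k[t]}$ may have torsion-free rank greater than $n$, in which case $\bar\phi$ has a nontrivial kernel. This does not affect the conclusion, since what is actually needed is only that the image of $(\wedge^n M')^{\otimes r}$ in $(\wedge^n(M/S))^{\otimes r} \cong k[[t]]$ equals $(t^{rj})$. This holds regardless of injectivity, because the image of $\wedge^n M' \to \wedge^n(M/S) \cong k[[t]]$ is the ideal generated by the $n\times n$ minors of the matrix of $\bar\phi$, namely $\operatorname{Fitt}^0(\operatorname{Coker}\bar\phi) = (t^j)$, and the image of the $r$-th tensor power is then $(t^j)^r = (t^{rj})$.
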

\begin{proof}
First we shall prove (1). 
Since $k[[t]]$ is PID, the finitely generated module $\gamma ^* \Omega _{X / k[t]}$ is isomorphic 
to a module of the form $k[[t]]^{\oplus a} \oplus \bigoplus _i k[t]/(t^{e_i})$. 
Since $\operatorname{ord}_{\gamma} (\operatorname{Jac}_{X/k[t]}) = e < \infty$, 
$X$ is smooth over $k[t]$ at $\gamma(\eta)$, where $\eta$ is the generic point of $\operatorname{Spec} k[[t]]$. 
Therefore we have $a = n$. 
Then the assertion follows from the definition of $\operatorname{Jac}_{X/k[t]}$ and 
\[
\bigl( t^{\sum _i e_i} \bigr)
=\operatorname{Fitt}^n \bigl( \gamma ^* \Omega _{X / k[t]} \bigr) 
= \gamma ^* \operatorname{Fitt}^n \bigl(\Omega _{X / k[t]} \bigr)
= \gamma ^* \operatorname{Jac}_{X/k[t]}. 
\]

We shall prove (2). 
Let $S$ be the torsion part of $\gamma ^{*} \Omega _{X / k[t]}$. 
Let $S'$ be the torsion part of $\bigl( \gamma ^* \Omega _{X / k[t]} ^{n} \bigr)^{\otimes r}$. 
Since $\gamma^* \omega_{X/k[t]} ^{[r]}$ is torsion-free, the map 
\[
\eta _r: \left( \gamma ^* \Omega ^n _{X / k[t]} \right)^{\otimes r} \to \gamma^*  \omega_{X/k[t]} ^{[r]}
\]
factors through $\bigl( \gamma ^* \Omega _{X / k[t]} ^{n} \bigr)^{\otimes r} / S'$. 
Hence we have
\[
\operatorname{ord}_{\gamma} (\mathfrak{n}_{r,X}) 
= \operatorname{length} (\operatorname{Coker} \eta_r) 
= \operatorname{length} \Bigl( \operatorname{Coker} \bigl( ( \gamma ^* \Omega _{X / k[t]} ^{n} )^{\otimes r} / S' 
\to \gamma^*  \omega_{X/k[t]} ^{[r]} \bigr) \Bigr). 
\]
In the same way, we have 
\[
\operatorname{ord}_{\gamma} (\mathfrak{n}_{r,f}) 
= \operatorname{length} \Bigl( \operatorname{Coker} \bigl( ( {\gamma '}^* \Omega _{Y / k[t]} ^{n} )^{\otimes r} / T' 
\to \gamma^*  \omega_{X/k[t]} ^{[r]} \bigr) \Bigr), 
\]
where $\gamma'=f_\infty(\gamma) $ and $T'$ is the torsion part of $\bigl( {\gamma '}^* \Omega _{Y / k[t]} ^{n} \bigr)^{\otimes r}$. 

We note that 
\[
\bigl( \gamma ^* \Omega _{X / k[t]} ^{n} \bigr)^{\otimes r} / S' 
\simeq \bigl((\gamma ^* \Omega _{X / k[t]} /S)^{\wedge n} \bigr)^{\otimes r}
\simeq k[[t]], 
\]
and we have 
\[
r \operatorname{ord}_{\gamma} (\operatorname{jac}_f) = 
\operatorname{length} \Bigl( \operatorname{Coker} \bigl( ( {\gamma '}^* \Omega _{Y / k[t]} ^{n} )^{\otimes r} / T' 
\to ( \gamma ^* \Omega _{X / k[t]} ^{n} )^{\otimes r} / S' \bigr) \Bigr)
\]
by Definition \ref{defi:jac_kt}. 
Then the desired formula 
\[
r \operatorname{ord}_{\gamma} (\operatorname{jac}_f) + \operatorname{ord}_{\gamma} (\mathfrak{n}_{r,X}) = \operatorname{ord}_{\gamma} (\mathfrak{n}_{r,f})
\]
follows from the additivity of the length of modules. 
\end{proof}

\begin{rmk}\label{rmk:Sebag}
\begin{enumerate} 
\item In \cite{Seb04}, Sebag extends the theory of motivic integration for $k[t]$-schemes to 
the case of formal schemes over $k[[t]]$ with $k$ a perfect field. 
The reader is also referred to \cite{CLNS} to this theory. 

For a scheme $X$ of finite type over $k[t]$, we can associate the formal scheme $\mathcal{X}$ over $k[[t]]$ by 
\[
\mathcal{X} := \mathop{\varinjlim}\limits_{i \ge 0} \mathcal{X}_i, \quad \text{where\ } \mathcal{X}_i := X \times _{\operatorname{Spec} k[t]} \operatorname{Spec} \bigr( k[[t]]/(t^{i+1}) \bigl )
\]
Then the Greenberg schemes $\operatorname{Gr}_{m}(\mathcal{X})$ and $\operatorname{Gr}(\mathcal{X})$ 
defined in \cite{Seb04} are isomorphic to $X_m$ and $X_{\infty}$ respectively (cf.\ \cite[Ch.4.\ Example 3.3.3]{CLNS}). 
Therefore, the theory of the Greenberg schemes developed in \cite{Seb04} and \cite{CLNS} 
can be applied to the arc spaces $X_{\infty}$ of $k[t]$-schemes $X$. 

When $\mathcal{X}$ is a formal scheme of finite type of relative dimension $d$ over $k[[t]]$, 
then the Jacobian ideal $\operatorname{Jac}_{\mathcal{X}}$  is defined by $\operatorname{Jac}_{\mathcal{X}} 
= \operatorname{Fitt}^d \bigl( \Omega_{\mathcal{X}/k[[t]]} \bigr)$ (\cite[Ch.5.\ Definition 1.3.1]{CLNS}). 
When $\mathcal{X}$ is the associated formal scheme 
with a scheme $X$ of finite type of relative dimension $d$ over $k[t]$, 
this definition is compatible with the definition of $\operatorname{Jac}_{X/k[t]}$ in the sense that
\[
\operatorname{Jac}_{\mathcal{X}} \mathcal{O}_{\mathcal{X}_i} = \operatorname{Jac}_{X/k[t]} \mathcal{O}_{\mathcal{X}_i}
\]
for each $i \ge 0$. 
This follows from the base change properties of the sheaves of differntials (cf.\ \cite[Ch.6.\ Proposition 1.8(a)]{Liu}) and 
the Fitting ideal (cf.\ \cite[Corollary 20.5]{Eis95}). 
For a morphism $h: \mathcal{X} \to \mathcal{Y}$ of formal schemes of finite type over $k[[t]]$, 
and for $\gamma \in \operatorname{Gr}(\mathcal{X})$, 
the order $\operatorname{ord}_t (\operatorname{Jac})_h (\gamma)$ is defined in \cite[Section 5]{Seb04}, 
which is denoted by $\operatorname{ordjac}_h(\gamma)$ in \cite[Ch.5.\ 3.1.1]{CLNS}. 
This corresponds to $\operatorname{ord}_{\gamma} (\operatorname{jac}_h)$ in Definition \ref{defi:jac_kt}
when $h:\mathcal{X} \to \mathcal{Y}$ is induced by a $k[t]$-morphism $h: X \to Y$ of $k[t]$-schemes. 

\item 
Theorems in \cite{Seb04} and \cite{CLNS} often assume that the formal schemes are flat over $k[[t]]$. 
However, many of them can be applied to non-flat formal schemes as well. 
In particular, they can be applied to $k[t]$-schemes with the condition $(\star)_n$, 
as we will see in the next subsection. 

\item 
Let $X$ be a $k[t]$-scheme with the condition $(\star)_n$. 
Let $X'$ be the maximal closed subscheme of $X$ which is flat over $\operatorname{Spec} k[t]$. 
Then we have $X'_{\infty} = X_{\infty}$, 
though the inclusion $X'_m \subset X_m$ is not necessarily equal. 
Let $I_{X'} \subset \mathcal{O}_{X}$ be the defining ideal of $X'$. 
Then there exists a non-negative integer $a$ such that 
$t^{a} I_{X'} = 0$ holds in a neighborhood of $t=0$. 
Then it follows that $\pi_{m+a,m}(X_{m+a}) \subset X'_{m}$. 

Furthermore, 
$\operatorname{ord}_{\gamma}(\operatorname{Jac}_{X'/k[t]}) \not = \operatorname{ord}_{\gamma}(\operatorname{Jac}_{X/k[t]})$
holds for $\gamma \in X_{\infty}$ in general (Example \ref{eg:eg1}). 
However, as we will see below, the difference is bounded. 
Since $t^{a} I_{X'} = 0$ holds, there exists a non-negative integer $a'$ such that 
\[
t^{a '} p^{-1} \bigl( \operatorname{Jac}_{X'/k[t]} \bigr) \subset \operatorname{Jac}_{X/k[t]}
\]
in a neighborhood of $t=0$, where we set $p: \mathcal{O}_{X} \to \mathcal{O}_{X'}$. 
Hence we have 
\[
\operatorname{ord}_{\gamma}(\operatorname{Jac}_{X'/k[t]}) \le 
\operatorname{ord}_{\gamma}(\operatorname{Jac}_{X/k[t]}) \le 
a' + \operatorname{ord}_{\gamma}(\operatorname{Jac}_{X'/k[t]})
\]
for any $\gamma \in X_{\infty}$. 

Let $h: X \to Y$ be a $k[t]$-morphism of $k[t]$-schemes with condition $(\star)_n$. 
Let $Y'$ be the maximal closed subscheme of $Y$ which is flat over $\operatorname{Spec} k[t]$. 
Then $h$ induces a $k[t]$-morphism $h': X' \to Y'$. 
Then it follows that $\operatorname{ord}_{\gamma} (\operatorname{jac}_h) = \operatorname{ord}_{\gamma} (\operatorname{jac}_{h'})$
for any $\gamma \in X_{\infty}$. 
This is because
\[
\bigl( (i \circ \gamma) ^* \Omega_{X/k[t]} \bigr) /S \to \bigl( \gamma ^* \Omega_{X'/k[t]} \bigr)/T
\]
is an isomorphism for the inclusion $i: X' \to X$ and an arc $\gamma : \operatorname{Spec} k[[t]] \to X'$, 
where $S$ and $T$ are the torsion parts of $(\gamma \circ i) ^* \Omega_{X/k[t]}$ and 
$\gamma ^* \Omega_{X'/k[t]}$ respectively. 
\end{enumerate}
\end{rmk}

\subsection{Codimension of cylinders in arc spaces}\label{subsection:codim}
In this subsection, we prove Proposition \ref{prop:EM4.1}, which is necessary for defining the codimension of cylinders in the arc spaces of $k[t]$-schemes. 
Proposition \ref{prop:EM4.1} is a generalization of \cite[Lemma 4.1]{DL99} (cf.\ \cite[Proposition 4.1]{EM09}). 
In their proof, they reduce the problem to that for locally complete intersections. 
The same strategy works in our setting. 

First we state Proposition \ref{prop:EM4.1} for the case of complete intersections with a little generalization. 
Replacing $\operatorname{Jac}_{X}$ by the ideal generated by minors of Jacobian matrix, 
their proof works for non-l.c.i.\ varieties. 
\begin{lem}\label{lem:EM4.1CI}
Let $N$ and $r$ be positive integers with $N \ge r$. 
Let $R = k[t][x_1, \ldots , x_N]$ and let $I = (F_1, \ldots , F_r)$ be the ideal generated by elements $F_1, \ldots , F_r \in R$. 
We denote by $M = \operatorname{Spec} (R/I)$ the $k[t]$-scheme corresponding to $R/I$. 
Let $J \subset R$ be the ideal generated by all the $r$-minors of 
the Jacobian matrix $\left( \partial F_i / \partial x_j \right)_{1 \le i \le r, 1 \le j \le N}$, and
let $\overline{J} = (J+I)/I$. 
For non-negative integers $m$ and $e$ with $m \ge e$, the following hold. 
\begin{enumerate}
\item $\psi _{m} \left( \operatorname{Cont}^{e}(\overline{J}) \right) 
= \pi _{m+e, m} \left( \operatorname{Cont}^{e}(\overline{J})_{m+e} \right)$ holds. 

\item $\pi_{m+1, m}:M_{m+1} \to M_m$ induces a piecewise trivial fibration 
\[
\psi _{m+1} \left( \operatorname{Cont}^e(\overline{J}) \right) \to \psi _{m} \left( \operatorname{Cont}^e(\overline{J}) \right)
\]
with fiber $\mathbb{A}^{N-r}$. 
\end{enumerate}
\end{lem}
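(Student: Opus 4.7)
The plan is to adapt to the $k[t]$-scheme setting the argument of Denef--Loeser \cite[Lemma 4.1]{DL99} (cf.\ \cite[Proposition 4.1]{EM09}) for the complete intersection case, the essential tool being a multivariable Hensel--Newton lift over $k[[t]]$ applied to the system $F_1 = \cdots = F_r = 0$.

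First I would stratify. For each subset $S \subset \{1, \ldots, N\}$ with $|S| = r$, let $\Delta_S$ denote the corresponding $r$-minor of the Jacobian matrix and put $\overline J_S := (\Delta_S + I)/I$. Since $\overline J = \sum_{|S|=r} \overline J_S$, the locus $\operatorname{Cont}^e(\overline J)$ decomposes as a finite disjoint union of locally closed strata on each of which a specified minor $\Delta_S$ attains the order $e$ (and the remaining minors have order $\ge e$), and the analogous decomposition holds at each jet level $m' \ge e$. It is enough to argue (1) and (2) on each stratum separately, so after permuting coordinates I may assume the distinguished minor is $\Delta := \Delta_{\{1,\ldots,r\}}$.

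For (1), the inclusion $\psi_m(\cdots) \subseteq \pi_{m+e,m}(\cdots)$ is immediate. For the reverse inclusion, given a jet $(x_1(t), \ldots, x_N(t))$ in the distinguished stratum at level $m+e$ --- so $F_j(x(t)) \in (t^{m+e+1})$ and $\Delta(x(t)) \in (t^e) \setminus (t^{e+1})$ --- I would apply the multivariable Hensel--Newton theorem to the map
\[
\Phi \colon k[[t]]^r \to k[[t]]^r, \qquad (z_1, \ldots, z_r) \mapsto \bigl( F_j(z_1, \ldots, z_r, x_{r+1}(t), \ldots, x_N(t)) \bigr)_{j=1}^r,
\]
with initial approximation $(x_1(t), \ldots, x_r(t))$. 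Its Jacobian determinant is $\Delta(x(t))$, of $t$-order exactly $e$, while the error $\Phi(x_1, \ldots, x_r)$ lies in $(t^{m+e+1})$. The Hensel--Newton convergence condition $m+e+1 > 2e$ is equivalent to the hypothesis $m \ge e$ and yields a unique $(\tilde x_1, \ldots, \tilde x_r) \in k[[t]]^r$ with $\Phi(\tilde x) = 0$ and $\tilde x_i - x_i \in (t^{m+1})$. Extending by $\tilde x_i := x_i(t)$ for $i > r$ produces an arc on $M$ in the same stratum whose $m$-jet equals the required truncation.

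For (2), I would run the same Hensel construction parametrically: given $\gamma_m \in \psi_m(\operatorname{Cont}^e(\overline J))$ and a chosen lift $\gamma_{m+e}$ in the stratum at level $m+e$, the $(m+1)$-jet lifts of $\gamma_m$ that come from arcs correspond bijectively to free choices of the $t^{m+1}$-coefficients of $x_{r+1}, \ldots, x_N$ in $k^{N-r}$, the $t^{m+1}$-coefficients of $x_1, \ldots, x_r$ being then uniquely determined by the Hensel lift. Piecewise triviality of the fibration follows from the polynomial nature of the Hensel iteration truncated to level $m+1$. The main obstacle, and the point where the contact condition is essential, is that a naive linearized count on $M_{m+1} \to M_m$ does not suffice: when $e > 0$ the Jacobian evaluated at $x(0)$ has rank strictly less than $r$, so the $k$-linear system cutting out $(m+1)$-jet lifts in $M_{m+1}$ has kernel of dimension strictly greater than $N - r$, and only the $\mathbb{A}^{N-r}$-subset consisting of jets that extend to actual arcs --- the excess being killed by higher-order obstructions detected via Hensel --- contributes to $\psi_{m+1}(\operatorname{Cont}^e(\overline J))$.
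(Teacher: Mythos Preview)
Your proposal is correct and follows essentially the same approach as the paper. The paper's own proof is very brief: it cites \cite[Ch.1.\ Lemma 1.3.3]{CLNS} for part (1) and says part (2) follows by the proof of \cite[Lemma 4.1]{DL99} with Hensel's lemma replaced by the same CLNS lemma. Your sketch simply unpacks what this means --- stratifying by which $r$-minor realizes the minimal order, then applying the multivariable Hensel--Newton lift over $k[[t]]$ --- and this is exactly the content of the Denef--Loeser argument adapted to the $k[t]$-setting.

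One minor remark on presentation: your ``disjoint stratification'' by which minor attains order $e$ is slightly imprecise, since several minors may simultaneously have order exactly $e$. What one actually uses is the open cover of $\operatorname{Cont}^e(\overline J)$ by the loci where a fixed $\Delta_S$ has order $e$; this suffices for (1) and, after passing to a constructible refinement, for the piecewise triviality in (2). This is a cosmetic point and does not affect the correctness of the argument.
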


\begin{proof}
The first statement follows from \cite[Ch.1.\ Lemma 1.3.3]{CLNS}.

The second statement for locally complete intersection varieties is proved in the proof of Lemma 4.1 in \cite{DL99} (cf.\ \cite[Proposition 4.1]{EM09}).
We can apply the same proof as in \cite[Lemma 4.1]{DL99} by replacing Hensel's Lemma by \cite[Ch.1.\ Lemma 1.3.3]{CLNS}.
\end{proof}

\begin{rmk}
In \cite[Proposition 4.1]{EM09}, the l.c.i.\ cases (or more generally only the pure dimensional cases) are treated. 
We treat the non-l.c.i.\ cases in Lemma \ref{lem:EM4.1CI} 
because we will treat such case in Section \ref{section:mld_hyperquot}
(cf.\ Remark \ref{rmk:nonlci}). 
\end{rmk}

When $X$ is flat over $k[t]$, 
Proposition \ref{prop:EM4.1}(1) below is proved in \cite[Ch.5.\ Proposition 2.3.4]{CLNS}, and 
Proposition \ref{prop:EM4.1}(2) is proved in \cite[Lemma 4.5.4]{Seb04} (cf.\ \cite[Ch.5.\ Theorem 2.3.11]{CLNS}). 
We note here that Proposition \ref{prop:EM4.1}(2) can be reduced to the flat case by Remark \ref{rmk:Sebag}(3). 

\begin{prop}\label{prop:EM4.1}
Let $X$ be a $k[t]$-scheme with the condition $(\star)_n$ in Subsection \ref{subsection:ktjet}. 
Then there exists a positive integer $c$ such that the following hold 
for non-negative integers $m$ and $e$ with $m \ge ce$. 
\begin{enumerate}
\item $\psi _{m} \left( \operatorname{Cont}^{e}(\operatorname{Jac}_{X/k[t]}) \right) 
= \pi _{m+e, m} \left( \operatorname{Cont}^{e}(\operatorname{Jac}_{X/k[t]})_{m+e} \right)$ holds. 

\item $\pi_{m+1, m}:X_{m+1} \to X_m$ induces a piecewise trivial fibration 
\[
\psi _{m+1} \left( \operatorname{Cont}^e(\operatorname{Jac}_{X/k[t]}) \right) \to 
\psi _{m} \left( \operatorname{Cont}^e(\operatorname{Jac}_{X/k[t]}) \right)
\]
with fiber $\mathbb{A}^n$. 
\end{enumerate}
\end{prop}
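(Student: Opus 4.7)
My approach would follow the stratification argument used in the proof of \cite[Lemma 4.1]{DL99} and \cite[Proposition 4.1]{EM09}, reducing the general case to the complete intersection case already handled by Lemma \ref{lem:EM4.1CI}. Since the statement is local on $X$, I would work on an affine open embedded as $X \hookrightarrow \mathbb{A}^N_{k[t]}$ with defining ideal $I = (F_1, \ldots, F_s) \subset k[t][x_1,\ldots,x_N]$. By condition $(\star)_n$, the Jacobian ideal $\operatorname{Jac}_{X/k[t]}$ is generated by the image in $\mathcal{O}_X$ of the $(N-n)$-minors of the matrix $(\partial F_i/\partial x_j)_{1 \le i \le s,\, 1 \le j \le N}$.

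The key step is to stratify $\operatorname{Cont}^e(\operatorname{Jac}_{X/k[t]})$ by which minor attains the minimum order. For each tuple $\vec{j}$ specifying a choice of $N-n$ generators $F_{j_1}, \ldots, F_{j_{N-n}}$ together with $N-n$ columns, let $M_{\vec{j}}$ denote the corresponding minor, and let $Z_{\vec{j}}$ be the locally closed subset of the contact locus on which $\operatorname{ord}_{\gamma}(M_{\vec{j}}) = e$ with $\vec{j}$ lexicographically minimal. This gives a finite decomposition $\operatorname{Cont}^e(\operatorname{Jac}_{X/k[t]}) = \bigsqcup_{\vec{j}} Z_{\vec{j}}$ into cylinders, so it suffices to prove (1) and (2) on each $Z_{\vec{j}}$.

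Fix $\vec{j}$ and set $Y_{\vec{j}} := V(F_{j_1}, \ldots, F_{j_{N-n}}) \subset \mathbb{A}^N_{k[t]}$; this is a complete intersection of relative dimension $n$ containing $X$ as a closed subscheme. Applying Lemma \ref{lem:EM4.1CI} to $Y_{\vec{j}}$ with $r = N-n$ gives the analogues of (1) and (2) for the contact locus of $\overline{J_{\vec{j}}}$ inside $(Y_{\vec{j}})_\infty$. Since $\operatorname{ord}_{\gamma}(M_{\vec{j}})$ is finite for $\gamma \in Z_{\vec{j}}$, the generic point of $\operatorname{Spec} k[[t]]$ maps into the smooth locus of $Y_{\vec{j}}$, which, by equality of relative dimensions together with $(\star)_n$, lies inside an irreducible component of $Y_{\vec{j}}$ contained in $X$. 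Therefore arcs in $Z_{\vec{j}}$ are genuinely arcs on $X$, and for $m$ large compared to $e$ the image of $Z_{\vec{j}}$ in $X_m$ coincides with its image in $(Y_{\vec{j}})_m$ under the closed immersion $X_m \hookrightarrow (Y_{\vec{j}})_m$, whence both conclusions transfer from $Y_{\vec{j}}$ to $X$. Choosing $c$ uniform over the finitely many tuples $\vec{j}$ finishes the argument.

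I expect the main obstacle to be the comparison of the jet schemes at finite truncation level: one has a strict closed immersion $X_m \subsetneq (Y_{\vec{j}})_m$ coming both from other components of $Y_{\vec{j}}$ and possibly from embedded structure. The heart of the argument should be a uniform Artin--Rees type bound ensuring that, for $m \ge ce$, any jet lifting an arc $\gamma \in Z_{\vec{j}}$ automatically satisfies the remaining equations $F_i$ with $i \notin \vec{j}$ (so that the lifting provided by Lemma \ref{lem:EM4.1CI}(1) on $Y_{\vec{j}}$ really lifts jets of $X$), and symmetrically that the affine-space fibers in (2), constructed on $Y_{\vec{j}}$, stay inside $X_{m+1}$. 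Alternatively, for part (2) alone, one could bypass this comparison by invoking the flat case \cite[Lemma 4.5.4]{Seb04} together with the reduction in Remark \ref{rmk:Sebag}(3).
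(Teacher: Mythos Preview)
Your proposal follows essentially the same strategy as the paper's proof: stratify $\operatorname{Cont}^e(\operatorname{Jac}_{X/k[t]})$ according to which $(N-n)$-minor realises the minimum order, embed $X$ in the corresponding complete intersection, and transfer Lemma~\ref{lem:EM4.1CI} back to $X$. Your remark that part~(2) alone can be obtained from \cite{Seb04} together with Remark~\ref{rmk:Sebag}(3) is exactly the shortcut the paper takes.

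There are two differences worth noting. First, the paper replaces the original generators $f_1,\dots,f_d$ by \emph{general} linear combinations $F_i=\sum_j a_{ij}f_j$ before selecting subsets $\Lambda$; this ensures that every irreducible component of $X$ is already an irreducible component of the complete intersection $M_\Lambda$, which streamlines the comparison argument (your version, taking subsets of the original generators, can also be made to work, but one then has to argue via smoothness over $k[t]$ rather than mere regularity). Second, the paper makes the step you correctly flag as the crux completely explicit: setting $I_{X'_\Lambda}=(I_{M_\Lambda}:I_X)$, one has $(M_\Lambda)_\infty=X_\infty\cup(X'_\Lambda)_\infty$, and the Nullstellensatz inclusion $J_\Lambda\subset\sqrt{I_X+I_{X'_\Lambda}}$ gives an integer $c_\Lambda$ with $J_\Lambda^{c_\Lambda}\subset I_X+I_{X'_\Lambda}$. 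For an arc $\gamma\in\operatorname{Cont}^e(\overline{J}_\Lambda)$ on $M_\Lambda$ whose $m$-jet lies in $X_m$ with $m\ge c_\Lambda e$, comparing orders forces $\operatorname{ord}_\gamma(I_{X'_\Lambda})<\infty$, whence $\gamma\in X_\infty$. This colon-ideal argument is precisely the ``Artin--Rees type bound'' you anticipate.

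A minor wording point: the direction of the key step is not that ``jets lifting an arc satisfy the remaining equations,'' but rather that an \emph{arc} on $M_\Lambda$ produced from an $(m+e)$-jet of $X$ via Lemma~\ref{lem:EM4.1CI}(1) lands back in $X_\infty$.
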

\begin{proof}
In \cite[Lemma 4.5.4]{Seb04} (cf.\ \cite[Ch.5.\ Theorem 2.3.11]{CLNS}), the assertion (2) is proved for flat formal schemes of finite type of pure relative dimension over $k[[t]]$. 
Therefore, The assertion (2) can be reduced to this result by Remark \ref{rmk:Sebag}(1)-(3). 
On the other hand, it seems that the assertion (1) cannot be easily reduced to the flat case (we can only see that 
$\psi _{m} \left( \operatorname{Cont}^{e}(\operatorname{Jac}_{X/k[t]}) \right) 
= \pi _{m+e+a, m} \left( \operatorname{Cont}^{e}(\operatorname{Jac}_{X/k[t]})_{m+e+a} \right)$ by Remark \ref{rmk:Sebag}(3)). 
However, if we assume the condition $(\star)_n$, it turns out that the proof itself is valid for the non-flat case as well. 
For readers' convenience, we give a proof below, following the argument in \cite[Proposition 4.1]{EM09}. 

Since the assertion is local on $X$, we may assume that $X \subset \mathbb{A}_{k[t]} ^N$ is affine. Set $r := N - n$. 
Let $R := k[t][x_1, \ldots , x_N]$ and let $I_X \subset R$ be the defining ideal of $X$. 
Let $f_1, \ldots , f_d$ be generators of $I_X$. 
For $1 \le i \le d$, we set 
\[
F_i = \sum _{j= 1} ^d a_{ij} f_j
\]
for general $a_{ij} \in k$. 
Then for each subset $\Lambda \subset \{1, \ldots , d \}$ with $\# \Lambda = r = N - n$, 
we denote by $M_{\Lambda} \subset \mathbb{A}_{k[t]} ^N$ the subscheme defined by the ideal 
$I_{M_{\Lambda}} := (F_i \mid i \in \Lambda)$ generated by $F_i$ with $i \in \Lambda$. 
We denote by $J_{\Lambda} \subset R$ the ideal generated by the $r$-minors of 
the Jacobian matrix $\left( \partial F_i / \partial x_j \right)_{i \in \Lambda, 1 \le j \le N}$. 
Set $\overline{J}_{\Lambda} := (J_{\Lambda} + I_{M_{\Lambda}})/I_{M_{\Lambda}}$. 

We note that for $\gamma \in X_{\infty}$ we have 
\[
\operatorname{ord}_{\gamma} ( \operatorname{Jac}_{X/k[t]} ) = \min _{\Lambda} \operatorname{ord}_{\gamma} ( J_{\Lambda} ). 
\]
Hence 
\[
U_{\Lambda} := \bigl \{ \gamma \in \operatorname{Cont}^e ( \operatorname{Jac}_{X/k[t]} ) \ \big| \ 
\operatorname{ord}_{\gamma} ( J_{\Lambda} ) = e \bigr \}
\]
is an open subset of $\operatorname{Cont}^e(\operatorname{Jac}_{X/k[t]})$ 
satisfying $\operatorname{Cont}^e(\operatorname{Jac}_{X/k[t]}) = \bigcup _{\Lambda} U_{\Lambda}$. 
Since $X$ is a closed subscheme of $M_{\Lambda}$, we may identify the arc space $X_{\infty}$ 
with a closed subset of $(M_{\Lambda})_{\infty}$. 
Under this identification, we have 
$U_{\Lambda} \subset \operatorname{Cont}^e(\overline{J}_{\Lambda})$. 

Then we claim the following (cf.\ \cite[Lemma 4.2]{EM09}). 
\begin{claim}\label{claim:M}
There exists a positive integer $c_{\Lambda}$ such that 
the following condition holds for any non-negative integers $m$ and $e$ satisfying $m \ge c_{\Lambda} e$. 
\begin{itemize}
\item 
If $\gamma \in \operatorname{Cont}^e(\overline{J}_{\Lambda}) \subset (M_{\Lambda}) _{\infty}$ satisfies 
$\psi _{m} (\gamma) \in X_{m}$, then $\gamma \in X_{\infty}$. 
\end{itemize}
\end{claim}
\begin{proof}
Let $I_{X'_{\Lambda}} := (I_{M_{\Lambda}} : I_X)$ and let $X' _{\Lambda} \subset \mathbb{A}_{k[t]} ^N$ be the corresponding subscheme. 
For a prime ideal $\mathfrak{p}$ of $R$, 
we note that $I_{M_{\Lambda}} \subset \mathfrak{p}$ and $I_X \not \subset \mathfrak{p}$ imply 
$I_{X'_{\Lambda}} = (I_{M_{\Lambda}} : I_X) \subset \mathfrak{p}$. 
Therefore set-theoretically $X'_{\Lambda}$ is the union of the irreducible components of $M_{\Lambda}$ which are not contained in $X$. 
Hence we have $(M_{\Lambda})_{\infty} = X_{\infty} \cup (X'_{\Lambda})_{\infty}$. 
Since any irreducible component of $X$ has dimension at least $n+1$ and
$a_{ij}$ are  general elements  of $k$, for any irreducible component $X_0$ of $X$,
there exists an irreducible component of $M_0$ of  $M_{\Lambda}$ with $X_0=M_0$.
Therefore if $M_{\Lambda}$ is smooth at a point $x\in X$, then $X$ is smooth at $x$ and $\mathcal{O}_{X,x}=\mathcal{O}_{M_{\Lambda},x}$.
Hence, if  $(R/I_{M_{\Lambda}})_{\mathfrak{q}}$ is regular local ring for a prime ideal 
$\mathfrak{q}$ of $R$ with $I_X \subset \mathfrak{q}$, then we have
\[
(I_{X'_{\Lambda}})_{\mathfrak{q}} = (I_{M_{\Lambda}} : I_X)_{\mathfrak{q}} 
= ((I_{M_{\Lambda}})_{\mathfrak{q}} : (I_X)_\mathfrak{q})=R_{\mathfrak{q}}.
\]
This implies that $X'_{\Lambda}\cap ((M_{\Lambda})_{\rm{reg}}\cap X)=\emptyset$.
Hence $M_{\Lambda}$ is singular at any point $x \in X \cap X_{\Lambda}'$. 
Here we claim that 
\begin{itemize}
\item[($\heartsuit$)] $J_{\Lambda} \subset \sqrt{I_X + I_{X'_{\Lambda}}}$ holds. 
\end{itemize}
Let $J_{\Lambda}'$ be the ideal generated by the $r$-minors of 
the Jacobian matrix with respect to $I_{M_{\Lambda}} = (F_i \mid i \in \Lambda)$ and 
derivations $\partial x_j$'s and $\partial t$. 
Then by the definition of $J_{\Lambda}$, we have $J_{\Lambda} \subset J_{\Lambda}'$. 
Let $\mathfrak{p}$ be a prime ideal satisfying $I_X + I_{X'_{\Lambda}} \subset \mathfrak{p}$. 
Since $\operatorname{ht}(I_{M_{\Lambda}} R_{\mathfrak{p}}) \le r$ and the ring 
$R_{\mathfrak{p}}/I_{M_{\Lambda}} R_{\mathfrak{p}}$ is not regular, 
it follows by the Jacobian criterion (cf.\ \cite[Theorem 30.4]{Mat89}) that
$J_{\Lambda}' \subset \mathfrak{p}$, 
which proves the claim ($\heartsuit$).

By ($\heartsuit$), 
\[
J_{\Lambda} ^{c_{\Lambda}} \subset I_X + I_{X'_{\Lambda}}
\]
holds for some $c_{\Lambda}$. 
Suppose $\gamma \in \operatorname{Cont}^e(\overline{J}_{\Lambda}) \subset (M_{\Lambda}) _{\infty}$. 
Since 
\[
\operatorname{ord}_{\gamma} ( J_{\Lambda}^{c_{\Lambda}} ) = c_{\Lambda} e < m+1 \le 
\operatorname{ord}_{\gamma} (I_X), 
\]
we have $\operatorname{ord}_{\gamma} \bigl( I_{X'_{\Lambda}} \bigr) \le c_{\Lambda} e$. 
Hence $\gamma \not \in (X'_{\Lambda})_{\infty}$ and it shows $\gamma \in X_{\infty}$. We complete the proof of the claim. 
\end{proof}
We set $c = \max _{\Lambda} c_{\Lambda}$.
Then the assertions (1) and (2) for $X$ follow from the assertions of Lemma \ref{lem:EM4.1CI} for $M_{\Lambda}$ by Claim \ref{claim:M}. 
\end{proof}

We define cylinders in the arc spaces of $k[t]$-schemes and define their codimension. 
For a $k[t]$-scheme $X$, a subset $C \subset X_{\infty}$ is called a \textit{cylinder} if $C = \psi_{m} ^{-1}(S)$ holds for some $m \ge 0$ and 
a constructible subset $S \subset X_m$. 
\begin{defi}\label{defi:codim_kt}
Let $X$ be a $k[t]$-scheme with the condition $(\star \star)_n$. 
Let $C \subset X_{\infty}$ be a cylinder. 
\begin{enumerate}
\item 
Assume that $C \subset \operatorname{Cont}^e (\operatorname{Jac}_{X/k[t]})$ holds for some $e \in \mathbb{Z}_{\ge 0}$.
Then we define the codimension of $C$ in $X_\infty$ as
\[
\operatorname{codim}(C) := (m+1) n - \operatorname{dim}(\psi_m (C))
\]
for any sufficiently large $m$. This definition is well-defined by Proposition \ref{prop:EM4.1}.

\item 
In general, we define the codimension of $C$ in $X_\infty$ as follows:
\[
\operatorname{codim}(C) := \min_{e \in \mathbb{Z}_{\ge 0}} {\operatorname{codim}(C \cap \operatorname{Cont}^e (\operatorname{Jac}_{X/k[t]}))}. 
\]
\end{enumerate}
\end{defi}

\begin{rmk}
The codimension is well-defined also for $X$ with $(\star)_n$. 
However, in this case, we may have $X_{\infty} \cap \operatorname{Cont}^{e} (\operatorname{Jac}_{X/k[t]}) = \emptyset$ for any $e \ge 0$, 
and $\operatorname{codim}(C) = \infty$ may hold for any cylinder $C$. 
Therefore, we assume $(\star \star)_n$ when we discuss the codimension of cylinder. 
\end{rmk}

\begin{defi}\label{def:thin}
Let $X$ be a $k[t]$-scheme with the condition $(\star \star)_n$. 
A subset $A \subset X_{\infty}$ is called \textit{thin} 
if $A \subset Z_{\infty}$ holds for some closed subscheme $Z$ of $X$ with the condition $(\star \star)_{\ell}$ for some $\ell \le n-1$.
\end{defi}

\begin{rmk}
The arc space $X_{\infty}$ is always not a thin set of $X_{\infty}$ for a $k$-variety $X$. 
However, $X_{\infty}$ can be a thin set of $X_{\infty}$ for a $k[t]$-scheme $X$ even if we assume the condition $(\star \star)_n$. 
See the example in Remark \ref{rmk:LC}. 
\end{rmk}

\begin{lem}[{cf.\ \cite[Ch.6.\ Proposition 2.4.6]{CLNS}}]\label{lem:thin}
Let $X$ be a $k[t]$-scheme with the condition $(\star)_n$, and let $C \subset X_{\infty}$ be a cylinder. 
If $C$ is thin, then $C \cap \operatorname{Cont}^{e} (\operatorname{Jac}_{X/k[t]}) = \emptyset$ holds for any $e \ge 0$. 
\end{lem}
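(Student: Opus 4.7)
The plan is a proof by contradiction based on a dimension count comparing the growth rates of the truncation images. Suppose for a contradiction that $C \cap \operatorname{Cont}^e(\operatorname{Jac}_{X/k[t]})$ is non-empty for some $e \ge 0$; set $C' := C \cap \operatorname{Cont}^e(\operatorname{Jac}_{X/k[t]})$. Being the intersection of two cylinders, $C'$ is itself a non-empty cylinder, and by hypothesis it is contained in $\operatorname{Cont}^e(\operatorname{Jac}_{X/k[t]}) \cap Z_\infty$, where $Z \subset X$ satisfies $(\star\star)_\ell$ for some $\ell \le n-1$. Applying Proposition \ref{prop:EM4.1}(2) to $X$, for all sufficiently large $m$ we obtain $\dim \psi_{m+1}(C') - \dim \psi_m(C') = n$, and hence $\dim \psi_m(C') = (m+1)n - \operatorname{codim}(C')$ with $\operatorname{codim}(C')$ a finite non-negative constant.

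Next I would establish the complementary upper bound $\dim \psi_m(Z_\infty) \le (m+1)\ell + O(1)$. First, every non-dominant irreducible component $W$ of $Z$ satisfies $t|_W = 0$, so any $k[t]$-morphism $\gamma \colon \operatorname{Spec} k[[t]] \to W$ would force the structure parameter $t \in k[[t]]$ to equal $\gamma^\ast(t) = 0$, which is absurd; hence $W_\infty = \emptyset$, and $Z_\infty$ is the union of $W_\infty$ over the dominant components $W$ of $Z$, each of which is a reduced $k[t]$-variety satisfying $(\star\star)_\ell$. For each such $W$, Proposition \ref{prop:EM4.1}(2) applied to $W$ gives $\dim \psi_m(\operatorname{Cont}^f(\operatorname{Jac}_{W/k[t]})) \le (m+1)\ell + O(1)$ for every finite $f$. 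The remaining arcs in $\operatorname{Cont}^\infty(\operatorname{Jac}_{W/k[t]})$ lie in the relative singular locus of $W$ over $k[t]$, which in characteristic zero has relative dimension strictly less than $\ell$; discarding its non-dominant components (which contribute no arcs) and decomposing into pieces satisfying $(\star\star)_{\ell'}$ with $\ell' < \ell$, the induction hypothesis on $\ell$ (with trivial base case $\ell = 0$) completes the bound.

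Finally, combining $\psi_m(C') \subset \psi_m(Z_\infty)$ with the two estimates above yields
\[
(m+1) n - \operatorname{codim}(C') \le (m+1)\ell + O(1),
\]
which rearranges to $(m+1)(n - \ell) \le \operatorname{codim}(C') + O(1)$. Since $n - \ell \ge 1$ and $m$ is arbitrary, the left-hand side grows without bound while the right-hand side is finite, a contradiction. The hard part will be the second step: while the slope-$\ell$ growth on each stratum $\operatorname{Cont}^f(\operatorname{Jac}_{W/k[t]})$ with $f < \infty$ is immediate from Proposition \ref{prop:EM4.1}, controlling the (potentially large) locus $\operatorname{Cont}^\infty(\operatorname{Jac}_{W/k[t]})$ of arcs with infinite Jacobian order requires the induction on $\ell$ together with a careful decomposition of the relative singular locus into $(\star\star)_{\ell'}$-pieces, and this is precisely where the condition $(\star\star)_\ell$ imposed on $Z$ in the definition of \emph{thin} is essentially used.
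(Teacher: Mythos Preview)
The paper does not give a direct proof; it simply cites \cite[Ch.6, Proposition~2.4.6]{CLNS} together with Remark~\ref{rmk:Sebag} (reduction to the flat case). Your proposal is therefore a genuinely different, self-contained approach.

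The overall strategy---comparing the slope-$n$ growth of $\dim\psi_m(C')$ inside $X$ against a slope-$\ell$ upper bound coming from $Z$---is correct and is essentially how the result is proved in the motivic-integration literature. There is, however, a real gap in your second step. You claim that Proposition~\ref{prop:EM4.1}(2) applied to each dominant component $W$ gives $\dim\psi_m\bigl(\operatorname{Cont}^f(\operatorname{Jac}_{W/k[t]})\bigr)\le (m+1)\ell+O(1)$ for every finite $f$, and then pass to the union over all $f$. But Proposition~\ref{prop:EM4.1} only tells you that the \emph{slope} is $\ell$ once $m\ge c_W f$; it says nothing about the constant term, which a priori depends on $f$ through the starting value $\dim\psi_{c_Wf}(\operatorname{Cont}^f)$. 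Bounding the supremum over the countably many strata requires these constants to be uniformly bounded, i.e.\ $\operatorname{codim}_{W_\infty}\bigl(\operatorname{Cont}^f(\operatorname{Jac}_{W/k[t]})\bigr)\ge -K$ for a uniform $K$. This non-negativity is \emph{not} established anywhere in the paper's preliminaries and is essentially the content that the cited result in \cite{CLNS} supplies. So the difficulty lies not only in the $\operatorname{Cont}^\infty$ stratum, as you suggest, but already in making the finite-$f$ bounds uniform.

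There are two clean fixes. First, bypass the uniform bound on $\psi_m(Z_\infty)$ entirely and work with a single stratum of $C'$: if $C'\cap\operatorname{Cont}^f(\operatorname{Jac}_{Z/k[t]})\neq\emptyset$ for some finite $f$, call this intersection $C''$; it is a non-empty cylinder in both $X_\infty$ and $Z_\infty$, and equating $\dim\psi_m(C'')=(m+1)n-\operatorname{codim}_{X_\infty}(C'')=(m+1)\ell-\operatorname{codim}_{Z_\infty}(C'')$ for all large $m$ gives an immediate contradiction since $n>\ell$. If every such intersection is empty then $C'\subset V(\operatorname{Jac}_{Z/k[t]})_\infty$, and your induction on $\ell$ applies. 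Second, you can instead establish the uniform bound via resolution: for each dominant component $W$ take a resolution $g\colon W'\to W$ with $W'$ smooth over $k$; then $W_\infty=g_\infty(W'_\infty)\cup(W\setminus U)_\infty$ for the isomorphism locus $U$, Lemma~\ref{lem:smooth} bounds $\dim\psi_m(W'_\infty)$ by $(m+1)\ell+O(1)$ with a genuinely uniform constant, and your induction handles $(W\setminus U)_\infty$.
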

\begin{proof}
This follows from \cite[Ch.6.\ Proposition 2.4.6]{CLNS} and Remark \ref{rmk:Sebag}. 
\end{proof}

\begin{prop}[{\cite[Lemma 4.3.9]{Seb04}}]\label{prop:const}
Let $X$ be a scheme of finite type over $k[t]$, and let $C$ be a cylinder in $X_{\infty}$. 
Then its image $\psi _m (C) \subset X_m$ is a constructible subset for any $m \ge 0$. 
\end{prop}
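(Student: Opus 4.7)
The plan is to reduce the statement to the combination of Greenberg's approximation theorem and Chevalley's theorem on images of constructible sets. By the definition of a cylinder, $C = \psi_n^{-1}(S)$ for some integer $n \ge 0$ and some constructible subset $S \subset X_n$. If $n < m$, I may replace $n$ by $m$ and $S$ by $\pi_{mn}^{-1}(S) \subset X_m$, which is constructible since $\pi_{mn}: X_m \to X_n$ is a morphism of finite-type schemes over $k$. Thus I may assume $n \ge m$, and a direct chase of the definitions yields
\[
\psi_m(C) = \pi_{nm}\bigl( S \cap \psi_n(X_\infty) \bigr),
\]
because an $m$-jet $\alpha$ lies in $\psi_m(C)$ if and only if it is of the form $\pi_{nm}(\beta)$ for some $n$-jet $\beta \in S$ that itself extends to an honest arc on $X$.

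Granting this identity, the problem reduces to showing that $\psi_n(X_\infty) \subset X_n$ is a constructible subset. Once this is known, $S \cap \psi_n(X_\infty)$ is constructible, and its image under the morphism $\pi_{nm}: X_n \to X_m$ of finite-type $k$-schemes is constructible by Chevalley's theorem. This image is precisely $\psi_m(C)$, so we are done.

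To establish constructibility of $\psi_n(X_\infty)$, I would invoke Greenberg's approximation theorem for $k[t]$-schemes (equivalently, for the associated formal scheme of finite type over $k[[t]]$, as discussed in Remark \ref{rmk:Sebag}; see \cite{Seb04} and \cite{CLNS}). Greenberg's theorem provides an integer $N \ge n$ such that
\[
\psi_n(X_\infty) = \pi_{N,n}(X_N),
\]
and the right-hand side is constructible in $X_n$ by one more application of Chevalley's theorem to the morphism $\pi_{N,n}: X_N \to X_n$. The main obstacle in this approach is thus exactly the Greenberg step: one must show that the descending chain of constructible subsets $\{\pi_{N,n}(X_N)\}_{N \ge n}$ of $X_n$, which automatically stabilizes by noetherianity of the constructible topology on the noetherian scheme $X_n$, in fact stabilizes to the set of jets that lift all the way to arcs. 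The lifting of a jet through arbitrarily high levels is precisely what Greenberg's theorem ensures, via a Hensel-type argument after locally realizing $X$ as a closed subscheme of affine space over $k[t]$; once this input is taken for granted, the remainder of the proof is a routine manipulation of constructible sets.
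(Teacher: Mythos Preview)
Your proof is correct and follows essentially the same approach as the paper: the paper's proof simply cites \cite[Lemma 4.3.9]{Seb04} (cf.\ \cite[Ch.5.\ Corollary 1.5.7]{CLNS}) together with Remark~\ref{rmk:Sebag}, and the content behind those references is precisely the Greenberg-type approximation statement you invoke, combined with Chevalley's theorem. You have merely unpacked the argument that the paper leaves to the cited sources.
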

\begin{proof}
This follows from \cite[Lemma 4.3.9]{Seb04} (cf.\ \cite[Ch.5.\ Corollary 1.5.7]{CLNS}) and Remark \ref{rmk:Sebag}. 
\end{proof}

\begin{prop}[{cf.\ \cite[Th\'{e}or\`{e}me 6.3.5]{Seb04}}]\label{prop:negligible}
Let $X$ be a $k[t]$-scheme with the condition $(\star \star)_n$, 
and let $C$ be a cylinder in $X_{\infty}$. 
Let $\{ C_{\lambda} \} _{\lambda \in \Lambda}$ be a set of countably many disjoint subcylinders $C_{\lambda} \subset C$. 
If $C \setminus (\bigsqcup _{\lambda \in \Lambda} C_{\lambda}) \subset X_{\infty}$ is a thin set, 
then it follows that 
\[
\operatorname{codim}(C) = \min _{\lambda \in \Lambda} \operatorname{codim}(C_{\lambda}). 
\]
\end{prop}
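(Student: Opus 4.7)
The plan is to reduce, via the definition of codimension and the thinness hypothesis, to a comparison of dimensions of constructible images at a fixed finite jet level. Unpacking the definitions gives
\[
\operatorname{codim}(C) = \min_{e \ge 0} \operatorname{codim}\bigl(C \cap \operatorname{Cont}^{e}(\operatorname{Jac}_{X/k[t]})\bigr)
\]
and similarly for each $C_\lambda$. Swapping the two minima on the right-hand side of the target equality, it suffices to show for each fixed $e \ge 0$ that
\[
\operatorname{codim}(D) = \min_{\lambda \in \Lambda} \operatorname{codim}(D_\lambda),
\]
where $D := C \cap \operatorname{Cont}^{e}(\operatorname{Jac}_{X/k[t]})$ and $D_\lambda := C_\lambda \cap \operatorname{Cont}^{e}(\operatorname{Jac}_{X/k[t]})$; these are cylinders since intersections of cylinders are cylinders.

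Next, since $C \setminus \bigsqcup_\lambda C_\lambda$ is thin, Lemma \ref{lem:thin} gives $\bigl(C \setminus \bigsqcup_\lambda C_\lambda\bigr) \cap \operatorname{Cont}^{e}(\operatorname{Jac}_{X/k[t]}) = \emptyset$, hence $D = \bigsqcup_\lambda D_\lambda$. By Proposition \ref{prop:EM4.1}, there exists a constant $c$ depending only on $X$ such that for every integer $m \ge ce$ and every cylinder $E \subset \operatorname{Cont}^{e}(\operatorname{Jac}_{X/k[t]})$,
\[
\operatorname{codim}(E) = (m+1)n - \dim \psi_m(E).
\]
Applying this to $D$ and to each $D_\lambda$, the desired equality reduces to
\[
\dim \psi_m(D) = \max_{\lambda \in \Lambda} \dim \psi_m(D_\lambda) \qquad (m \ge ce).
\]

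For this last equality, the inequality $\ge$ is immediate from $D_\lambda \subset D$. For $\le$, Proposition \ref{prop:const} gives that $\psi_m(D)$ and each $\psi_m(D_\lambda)$ are constructible in $X_m$, and $\psi_m(D) = \bigcup_\lambda \psi_m(D_\lambda)$. Let $Z \subset \overline{\psi_m(D)}$ be an irreducible component of maximal dimension $\dim \psi_m(D)$ with generic point $\eta$. As a constructible set contains a dense open subset of each top-dimensional irreducible component of its closure, $\eta \in \psi_m(D)$, so $\eta \in \psi_m(D_{\lambda_0})$ for some $\lambda_0 \in \Lambda$; since $\psi_m(D_{\lambda_0})$ is itself constructible and contains $\eta$, it contains a dense open subset of $Z$. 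Hence $\dim \psi_m(D_{\lambda_0}) \ge \dim Z = \dim \psi_m(D)$, completing the argument.

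The main technical point is the passage from a countable disjoint union to a finite comparison of dimensions: this is exactly where the constructibility afforded by Proposition \ref{prop:const} is essential, since it underpins the generic-point argument in the last step (which would fail for an arbitrary countable union of subsets). The role of the thinness hypothesis is precisely to discard $C \setminus \bigsqcup_\lambda C_\lambda$ inside each contact locus of the Jacobian ideal via Lemma \ref{lem:thin}, where codimension is actually measured.
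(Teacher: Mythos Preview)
Your reduction to a fixed Jacobian level $e$ is fine, but the step invoking Lemma \ref{lem:thin} is illegitimate: that lemma is stated for \emph{cylinders}, while $C \setminus \bigsqcup_\lambda C_\lambda$ is in general not a cylinder when $\Lambda$ is infinite (a countable union of cylinders need not be one, so neither is its complement in a cylinder). And the conclusion you draw, $D = \bigsqcup_\lambda D_\lambda$, is actually false. For instance, take $X = \mathbb{A}^1_{k[t]}$ (so $n=1$, $\operatorname{Jac}_{X/k[t]} = (1)$), $C = X_\infty$, and $C_b = \operatorname{Cont}^b(x)$ for $b \ge 0$; then $C \setminus \bigsqcup_b C_b$ is the single zero arc, which is thin, yet this zero arc lies in $\operatorname{Cont}^0(\operatorname{Jac}_{X/k[t]}) = X_\infty$, so $D \ne \bigsqcup_b D_b$. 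Your generic-point argument then breaks, since it uses $\psi_m(D) = \bigcup_\lambda \psi_m(D_\lambda)$, which you have not established.

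The argument can be repaired as follows. Choose $m$ large enough that $D = \psi_m^{-1}(D'_m)$ for some constructible $D'_m \subset X_m$ (possible since both $C$ and $\operatorname{Cont}^e(\operatorname{Jac}_{X/k[t]})$ are cylinders) and that $m \ge ce$. For any point $p \in \psi_m(D)$, the singleton $\{p\}$ is locally closed, so $\psi_m^{-1}(p)$ is a nonempty cylinder contained in $D \subset \operatorname{Cont}^e(\operatorname{Jac}_{X/k[t]})$. If $\psi_m^{-1}(p)$ were contained in $Z_\infty$ (where $Z$ witnesses thinness of $C \setminus \bigsqcup_\lambda C_\lambda$), then Lemma \ref{lem:thin} would force $\psi_m^{-1}(p) = \emptyset$, a contradiction. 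Hence some $\gamma \in \psi_m^{-1}(p)$ lies outside $Z_\infty$, so $\gamma \in D_{\lambda_0}$ for some $\lambda_0$, giving $p \in \psi_m(D_{\lambda_0})$. This yields $\psi_m(D) = \bigcup_\lambda \psi_m(D_\lambda)$, after which your generic-point argument goes through verbatim. The paper's own proof sidesteps all of this by citing the motivic-measure machinery of \cite{CLNS}, where countable additivity and negligibility of thin sets are set up systematically.
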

\begin{proof}
This follows from \cite[Ch.6.\ Lemma 3.4.1]{CLNS} and \cite[Ch.6.\ Example 3.5.2]{CLNS}. 
\end{proof}

\begin{lem}\label{lem:smooth}
Let $X$ be a variety over $\operatorname{Spec} k[t]$ which dominates $\operatorname{Spec} k[t]$ and has relative dimension $n$. 
Suppose that $X$ is smooth over $k$. 
Then there exists a non-negative integer $\ell$ such that the following hold. 
\begin{enumerate}
\item $\operatorname{ord}_{\gamma} (\operatorname{Jac}_{X/k[t]}) \le \ell$ holds for any arc $\gamma \in X_{\infty}$. 
\item $\psi_m (X_{\infty}) = \pi _{m + \ell, m} (X_{m + \ell})$ holds for any $m \ge \ell$. 
\item For any $m \ge \ell$, $\pi _{m+1,m}$ induces a piecewise trivial fibration $\psi_{m+1} (X_{\infty}) \to \psi_{m} (X_{\infty})$ with fiber $\mathbb{A}^n$. 
\end{enumerate}
\end{lem}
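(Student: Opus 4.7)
The plan is to show that smoothness of $X$ over $k$ forces every arc, and more generally every jet of positive order, to be centered at a smooth point of $f : X \to \operatorname{Spec} k[t]$; the three assertions then reduce to Proposition \ref{prop:EM4.1} applied with $e = 0$. Throughout, write $T \in \mathcal{O}_X$ for the image of $t$ under the structure map $f^{\sharp}: k[t] \to \mathcal{O}_X$. Since $X$ is smooth over $k$ of dimension $n + 1$, in an étale neighborhood of any point we may pick coordinates $x_1, \ldots, x_{n+1}$ so that $\Omega_{X/k}$ is free on $dx_1, \ldots, dx_{n+1}$. From the right-exact sequence
\[
f^*\Omega_{k[t]/k} \xrightarrow{dT} \Omega_{X/k} \to \Omega_{X/k[t]} \to 0,
\]
the Jacobian ideal $\operatorname{Jac}_{X/k[t]} = \operatorname{Fitt}^n(\Omega_{X/k[t]})$ is étale locally generated by $\partial T/\partial x_1, \ldots, \partial T/\partial x_{n+1}$; consequently $p \in V := V(\operatorname{Jac}_{X/k[t]})$ if and only if $dT|_p = 0$, equivalently $T - T(p) \in \mathfrak{m}_p^2$.

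For assertion (1), I show the stronger statement that $\operatorname{ord}_\gamma(\operatorname{Jac}_{X/k[t]}) = 0$ for every $\gamma \in X_\infty$. Set $\alpha_i := \gamma^*(x_i) \in k[[t]]$. The arc condition gives $\gamma^*(T) = t$; interpreted inside the power series ring $\widehat{\mathcal{O}}_{X, \gamma(0)} \cong k[[x_1 - \alpha_1(0), \ldots, x_{n+1} - \alpha_{n+1}(0)]]$ (a power series ring precisely because $X$ is smooth at $\gamma(0)$), applying $d/dt$ and the chain rule yields
\[
\sum_i \gamma^*(\partial T/\partial x_i) \cdot \alpha_i'(t) = 1 \quad \text{in } k[[t]].
\]
Comparing constant terms, at least one $\gamma^*(\partial T/\partial x_i)$ must be a unit in $k[[t]]$, so $\operatorname{ord}_\gamma(\operatorname{Jac}_{X/k[t]}) = 0$.

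For (2) and (3), I set $\ell = 1$. Since $X$ is a variety of relative dimension $n$ dominating $\operatorname{Spec} k[t]$, it satisfies $(\star)_n$, and Proposition \ref{prop:EM4.1} applies. The first step gives $X_\infty = \operatorname{Cont}^0(\operatorname{Jac}_{X/k[t]})$, and Proposition \ref{prop:EM4.1}(2) with $e = 0$ immediately produces the piecewise trivial $\mathbb{A}^n$-fibration $\psi_{m+1}(X_\infty) \to \psi_m(X_\infty)$ for every $m \ge 0$, proving (3). Proposition \ref{prop:EM4.1}(1) with $e = 0$ gives $\psi_m(X_\infty) = \operatorname{Cont}^0(\operatorname{Jac}_{X/k[t]})_m$, so for (2) it suffices to check $X_{m+1} \subset \operatorname{Cont}^0(\operatorname{Jac}_{X/k[t]})_{m+1}$ for every $m \ge 0$, since then $\pi_{m+1, m}(X_{m+1}) \subset \operatorname{Cont}^0(\operatorname{Jac}_{X/k[t]})_m = \psi_m(X_\infty)$, while the reverse inclusion is automatic from factoring $X_\infty \to X_m$ through $X_{m+1}$. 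If some $\gamma_{m+1} \in X_{m+1}$ had center $p \in V$, then $T \in \mathfrak{m}_p^2$ (since $T(p) = \gamma_{m+1}^*(T)|_{t=0} = 0$ and $dT|_p = 0$) combined with $\gamma_{m+1}^*(T) = t \bmod t^{m+2}$ would force $t \in (t^2) \bmod t^{m+2}$, which is impossible for any $m \ge 0$.

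The main delicate point is interpreting the arc-level identity $\gamma^*(T) = t$ in Step 1 so that formal differentiation is legitimate; the smooth point hypothesis provides the needed power series presentation of $\widehat{\mathcal{O}}_{X, \gamma(0)}$, and the remainder of the argument is essentially a bookkeeping application of Proposition \ref{prop:EM4.1} to the contact locus $\operatorname{Cont}^0(\operatorname{Jac}_{X/k[t]})$.
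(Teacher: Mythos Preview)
Your proof is correct and in fact sharper than the paper's: you show that every arc, and every jet of level at least $1$, is centered at a smooth point of $X \to \operatorname{Spec} k[t]$, so one may take $\ell = 1$. The paper proceeds differently: it invokes generic smoothness to conclude that $X \to \operatorname{Spec} k[t]$ is smooth outside finitely many fibers, so that $(t^\ell) \subset \operatorname{Jac}_{X/k[t]}$ in a neighborhood of $t=0$ for some $\ell \ge 0$; this gives (1) immediately and reduces (2) and (3) to the structure theory for $\bigcup_{e \le \ell}\operatorname{Cont}^e(\operatorname{Jac}_{X/k[t]})$. Your approach avoids generic smoothness altogether and rests on the elementary observation that $\gamma^*(T)=t$ combined with $\gamma^*(\mathfrak{m}_p^2)\subset (t^2)$ forbids $T\in\mathfrak{m}_p^2$; this yields an explicit $\ell$, though note that you effectively run the same computation twice (the chain-rule argument for (1) and the $\mathfrak{m}_p^2$-argument for (2)), when the second already subsumes the first. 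The paper's route gives a non-explicit $\ell$ but requires essentially no computation.
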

\begin{proof}
Since $X$ is smooth over $k$, by the generic smoothness, $X$ is smooth over $\operatorname{Spec} k[t]$ outside finite closed points. 
Therefore we have an inclusion of ideals $(t^{\ell}) \subset \operatorname{Jac}_{X/k[t]}$ in a neighborhood of $t=0$ for some $\ell \ge 0$. 
Hence $\operatorname{ord}_{\gamma} (\operatorname{Jac}_{X/k[t]}) \le \ell$ holds for any arc $\gamma \in X_{\infty}$. 
Then the assertions (2) and (3) follow from Lemma \ref{lem:EM4.1CI}. 
\end{proof}

\begin{lem}\label{lem:thin2}
Let $f: Y \to X$ be a proper birational $k[t]$-morphism of $k[t]$-varieties $X$ and $Y$. 
Suppose that $Y$ is smooth over $k$. Let $C \subset X_{\infty}$ be a cylinder. 
If $C$ is a thin set of $X_{\infty}$, then $f_{\infty}^{-1}(C) = \emptyset$. 
\end{lem}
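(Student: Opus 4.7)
The plan is to prove $f_\infty^{-1}(C) = \emptyset$ by exhibiting $f_\infty^{-1}(C)$ as a thin cylinder in $Y_\infty$ and then deriving the conclusion from Lemma \ref{lem:thin} and Lemma \ref{lem:smooth}. Let $n$ denote the common relative dimension of $X$ and $Y$ over $k[t]$.

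First I would observe that $f_\infty^{-1}(C)$ is a cylinder in $Y_\infty$: writing $C = \psi_m^{-1}(S)$ with $S \subset X_m$ constructible, the relation $\psi_m \circ f_\infty = f_m \circ \psi_m$ gives $f_\infty^{-1}(C) = \psi_m^{-1}(f_m^{-1}(S))$, and $f_m^{-1}(S)$ is constructible. Next I would show that $f_\infty^{-1}(C)$ is thin in $Y_\infty$. By hypothesis there is a closed subscheme $Z \subsetneq X$ satisfying $(\star\star)_\ell$ with $\ell \le n-1$ and $C \subset Z_\infty$, whence $f_\infty^{-1}(C) \subset (f^{-1}(Z))_\infty$. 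Since $f$ is proper and birational hence surjective, $f^{-1}(Z)$ is a proper closed subscheme of $Y$. The subtlety is that $f^{-1}(Z)$ itself may fail to satisfy $(\star\star)$ for any single $\ell'$, as its components can have different dimensions (the strict transform of $Z$ together with possibly exceptional divisors of $f$). To fix this, I would work in an affine open of $Y$ (which is harmless because $f_\infty^{-1}(C) = \emptyset$ can be verified arc by arc and any arc factors through any affine open containing its center), choose a nonzero element $g$ in the defining ideal of $f^{-1}(Z)$, and set $Z'' := V(g)_{\mathrm{red}}$. Since $\mathcal{O}_Y$ is a domain, Krull's principal ideal theorem gives $V(g)$ of pure dimension $n$; hence $Z''$ is reduced with every component of dimension exactly $n$ and so satisfies $(\star\star)_{n-1}$. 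Because $f^{-1}(Z) \subset Z''$, we obtain $f_\infty^{-1}(C) \subset Z''_\infty$, exhibiting $f_\infty^{-1}(C)$ as thin.

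With $f_\infty^{-1}(C)$ a thin cylinder in $Y_\infty$, Lemma \ref{lem:thin} applied to $Y$ yields
\[
f_\infty^{-1}(C) \cap \operatorname{Cont}^e(\operatorname{Jac}_{Y/k[t]}) = \emptyset \qquad \text{for every } e \ge 0.
\]
On the other hand, Lemma \ref{lem:smooth}(1) applied to $Y$ (smooth over $k$, dominating $\operatorname{Spec} k[t]$, of relative dimension $n$) produces $\ell_0 \ge 0$ with $\operatorname{ord}_\beta(\operatorname{Jac}_{Y/k[t]}) \le \ell_0$ for every $\beta \in Y_\infty$, so $Y_\infty = \bigcup_{e=0}^{\ell_0} \operatorname{Cont}^e(\operatorname{Jac}_{Y/k[t]})$. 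Combining these forces $f_\infty^{-1}(C) = \emptyset$.

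The main obstacle I anticipate is the construction of $Z''$: a direct attempt to use $f^{-1}(Z)$ fails because it may not satisfy $(\star\star)_{\ell'}$ for any single $\ell'$. Enlarging to the reduced divisor $V(g)_{\mathrm{red}}$ via Krull's theorem is the simplest workaround, though it forces a local-affine reduction.
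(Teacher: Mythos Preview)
Your proof is correct and follows essentially the same route as the paper: show that $f_\infty^{-1}(C)$ is a thin cylinder in $Y_\infty$, then conclude via Lemma~\ref{lem:thin} and Lemma~\ref{lem:smooth}(1). The paper is terser---it directly uses $Z' = f^{-1}(Z)$ and asserts thinness from the fact that its dominant components have relative dimension at most $n-1$---whereas your enlargement to $V(g)_{\mathrm{red}}$ makes the verification of $(\star\star)_{n-1}$ explicit.
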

\begin{proof}
We may assume that $X$ dominates $\operatorname{Spec} k[t]$. Let $n$ be the relative dimension of $X$. 
Since $C$ is a thin set, there exists a closed subset $Z \subsetneq X$ such that $C \subset Z_{\infty}$. 
Set $Z' := f^{-1} (Z)$. Then we have $f_{\infty}^{-1}(C) \subset Z' _{\infty}$ (cf.\ Lemma \ref{lem:closed}(2)).
Since dominant components of $Z'$ have relative dimension at most $n-1$, 
the cylinder $f_{\infty}^{-1}(C)$ is also a thin set. 
By Lemma \ref{lem:thin} and Lemma \ref{lem:smooth}(1), we have $f_{\infty}^{-1}(C) = \emptyset$. 
\end{proof}

\begin{lem}\label{lem:closed}
\begin{enumerate}
\item Let $Z \subset X$ be a closed subscheme of a $k[t]$-scheme $X$ of finite type. 
Then the induced map $f_{\infty}: Z_{\infty} \to X_{\infty}$ is a closed immersion. 
\item Moreover, for a $k[t]$-morphism $f: Y \to X$, it follows that 
$\left( f^{-1} (Z) \right)_{\infty} \simeq f^{-1}_{\infty} (Z_{\infty})$. 
\end{enumerate}
\end{lem}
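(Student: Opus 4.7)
The plan is to handle both parts by working with the representing functors of the jet schemes and then passing to the arc space in the limit. Throughout, let $i: Z \hookrightarrow X$ denote the closed immersion in part (1).

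For part (1), I would first establish the corresponding statement at each finite level $m \ge 0$: namely, $i_m: Z_m \to X_m$ is a closed immersion. This follows directly from the functor-of-points description in Subsection \ref{subsection:ktjet}. For any $k$-scheme $T$, a $T$-valued point of $X_m$ is a $k[t]$-morphism $\varphi: T \times_{\operatorname{Spec} k} \operatorname{Spec} k[t]/(t^{m+1}) \to X$, and the locus on $T$ where $\varphi$ factors through $Z$ is a closed subscheme because $i$ is a closed immersion. This represents $Z_m$ as a closed subscheme of $X_m$. To pass from the finite levels to the arc space, I would use that $X_\infty = \varprojlim_m X_m$ with affine transition maps $\pi_{m+1,m}$; the compatible family of defining ideal sheaves of $Z_m$ in $X_m$ then produces a quasi-coherent ideal on $X_\infty$ whose zero locus is exactly $Z_\infty$, so $i_\infty$ is again a closed immersion.

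For part (2), I would identify the two sides functorially at each finite level. For every $k$-scheme $T$, a $T$-point of $(f^{-1}(Z))_m = (Y \times_X Z)_m$ is a $k[t]$-morphism from $T \times_{\operatorname{Spec} k} \operatorname{Spec} k[t]/(t^{m+1})$ to $Y \times_X Z$. By the universal property of the fiber product, such a morphism is equivalent to a pair $(\gamma, \delta)$, where $\gamma$ maps into $Y$ and $\delta$ maps into $Z$, with $f \circ \gamma$ and $i \circ \delta$ agreeing as morphisms to $X$. Since $i$ is a closed immersion, giving $\delta$ is the same as requiring that $f \circ \gamma$ factors through $Z$, i.e., that $f_m(\gamma) \in Z_m$. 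This yields a natural isomorphism $(f^{-1}(Z))_m(T) \simeq f_m^{-1}(Z_m)(T)$ of sets, and Yoneda promotes it to the scheme-theoretic identification $(f^{-1}(Z))_m \simeq f_m^{-1}(Z_m)$. Taking the projective limit in $m$ and using part (1) then gives the desired isomorphism $(f^{-1}(Z))_\infty \simeq f_\infty^{-1}(Z_\infty)$.

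The main point requiring attention is the passage to the limit in part (1): one must verify that the defining ideals $\mathcal{I}_m \subset \mathcal{O}_{X_m}$ of $Z_m$ are compatible under the transition maps and that their colimit on $X_\infty$ cuts out exactly $Z_\infty$, both set-theoretically and scheme-theoretically. This is straightforward once one notes that the transition morphisms $\pi_{m+1,m}$ are affine, so no genuine obstacle arises; both parts are essentially formal consequences of the representability of the jet functors combined with the universal property of the fiber product.
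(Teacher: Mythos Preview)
Your proposal is correct and follows the standard functor-of-points argument that the paper simply cites (the paper's proof consists only of the sentence ``The assertions follow from the same argument for $k$-varieties (cf.\ \cite[Remarks 2.7, 2.8]{EM09})''). You have spelled out exactly those details, so there is nothing to add.
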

\begin{proof}
The assertions follow from the same argument for $k$-varieties (cf.\ \cite[Remarks 2.7, 2.8]{EM09}). 
\end{proof}

\subsection{Fundamental properties of the arc spaces of $k[t]$-schemes}\label{subsection:fund}
In this subsection, we prove Proposition \ref{prop:EM6.2}, which is a generalization of \cite[Lemma 1.17]{DL02}
to $k[t]$-scheme with the condition $(\star \star)_n$. 
Actually in \cite[Remark 1.19]{DL02}, 
it is mentioned that \cite[Lemma 1.17]{DL02} can be generalized to 
separated reduced schemes of finite type over $k[t]$. 
In \cite[Lemma 10.20]{Yas}, Yasuda proves Proposition \ref{prop:EM6.2} in more general setting. 
For readers' convenience, we give a proof of Proposition \ref{prop:EM6.2} following the argument in \cite{EM09}.

\begin{prop}\label{prop:EM4.4}
Let $X$ be a $k[t]$-scheme with the condition $(\star)_n$. 
Let $p,m$ be non-negative integers with $2p+1 \ge m \ge p$. 

\begin{enumerate}
\item Let $\gamma \in X_p$ with $\pi _{m,p}^{-1} (\gamma) \not = \emptyset$. 
Then scheme-theoretically we have 
\[
\pi _{m,p}^{-1} (\gamma) \simeq \operatorname{Hom}_{k[t]/(t^{p+1})} \left(\gamma ^* \Omega _{X/k[t]}, (t^{p+1})/(t^{m+1}) \right). 
\]

\item Let $\gamma' \in X_{\infty}$ and let $e := \operatorname{ord}_{\gamma '} (\operatorname{Jac}_{X/k[t]})$. 
Let $c$ be a positive integer appearing in Proposition \ref{prop:EM4.1}. 
Let $T$ be the torsion part of $\gamma ^{'*} \Omega_X$. 
Suppose that $2p+1-e \ge m \ge ce$ and $p \ge e$. 
For $\gamma = \psi _p (\gamma ')$, it follows that 
\begin{align*}
&\pi _{m,p}^{-1}  (\gamma)  \cap \psi _m \left( \operatorname{Cont}^e  (\operatorname{Jac}_{X/k[t]}) \right)  =
\pi_{m+e, m} (\pi ^{-1}_{m+e, p} (\gamma)) \\ 
&\simeq 
\operatorname{Hom}_{k[t]/(t^{p+1})} 
\left(   (\gamma ^{'*} \Omega _{X/k[t]}) /T  \otimes _{k[[t]]} k[t]/(t^{p+1}) , 
(t^{p+1})/(t^{m+1}) \right). 
\end{align*}
\end{enumerate}
\end{prop}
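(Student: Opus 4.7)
The plan is to handle (1) by standard deformation theory of lifts of jets along a square-zero ideal, and then to deduce (2) by combining (1) with Proposition \ref{prop:EM4.1}(1) and a direct calculation on the cyclic decomposition of $\gamma'^*\Omega_{X/k[t]}$ provided by Lemma \ref{lem:order}(1). The key inequality $2p+1 \ge m$ is used crucially to guarantee that $(t^{p+1})^2 \subset (t^{m+1})$, so that $I := (t^{p+1})/(t^{m+1})$ is a square-zero ideal in $k[t]/(t^{m+1})$.

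For (1), by hypothesis we may fix a $k[t]$-lift $\tilde{\gamma} \colon \operatorname{Spec} k[t]/(t^{m+1}) \to X$ of $\gamma$. Any other lift $\tilde{\gamma}'$ agrees with $\tilde{\gamma}$ modulo $I$, and since $I$ is square-zero the difference $\tilde{\gamma}'^{\,*} - \tilde{\gamma}^*\colon \mathcal{O}_X \to I$ is a $k[t]$-derivation into $I$ (regarded as an $\mathcal{O}_X$-module via $\gamma$), which is the same as an element of $\operatorname{Hom}_{k[t]/(t^{p+1})}(\gamma^*\Omega_{X/k[t]}, I)$. Conversely, adding any such derivation to $\tilde{\gamma}^*$ produces a new lift. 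This identification is functorial in base change, so it promotes to the scheme-theoretic isomorphism claimed, both sides being representable by affine spaces over the residue field of $\gamma$.

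For (2), the first step is the set-theoretic equality
\[
\pi_{m,p}^{-1}(\gamma)\,\cap\,\psi_m\!\left(\operatorname{Cont}^e(\operatorname{Jac}_{X/k[t]})\right)=\pi_{m+e,m}\!\left(\pi_{m+e,p}^{-1}(\gamma)\right).
\]
Because $p\ge e$, the $e$-jet truncation of every $\tilde{\delta}\in \pi_{m+e,p}^{-1}(\gamma)$ equals $\pi_e(\gamma)$, hence $\operatorname{ord}_{\tilde{\delta}}(\operatorname{Jac}_{X/k[t]})=e$ and $\tilde{\delta}\in\operatorname{Cont}^e(\operatorname{Jac}_{X/k[t]})_{m+e}$; the claimed equality then follows from Proposition \ref{prop:EM4.1}(1) using $m\ge ce$. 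Now apply (1) with $m$ replaced by $m+e$, which is legitimate since $2p+1\ge m+e$, obtaining
\[
\pi_{m+e,p}^{-1}(\gamma)\simeq \operatorname{Hom}_{k[t]/(t^{p+1})}\!\left(\gamma^*\Omega_{X/k[t]},\,(t^{p+1})/(t^{m+e+1})\right),
\]
and observe that $\pi_{m+e,m}$ corresponds to post-composition with the natural surjection $(t^{p+1})/(t^{m+e+1})\twoheadrightarrow (t^{p+1})/(t^{m+1})$.

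It remains to compute the image of this post-composition. Write $\gamma^*\Omega_{X/k[t]}=\gamma'^{\,*}\Omega_{X/k[t]}\otimes_{k[[t]]}k[t]/(t^{p+1})$ and decompose $\gamma'^{\,*}\Omega_{X/k[t]}\simeq k[[t]]^{\oplus n}\oplus\bigoplus_i k[t]/(t^{e_i})$ with $\sum_i e_i=e$ via Lemma \ref{lem:order}(1); since $p\ge e\ge e_i$, each torsion summand survives tensoring by $k[t]/(t^{p+1})$ unchanged. For each free summand $k[[t]]$, the Hom to $(t^{p+1})/(t^{m+e+1})$ projects surjectively onto the Hom to $(t^{p+1})/(t^{m+1})$. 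For a cyclic summand $k[t]/(t^{e_i})$, a homomorphism is $1\mapsto t^{p+1}y$ with $y\in (t^{m+e-p-e_i})/(t^{m+e-p})$, and post-composition pushes $t^{p+1}y$ into $(t^{m+e+1-e_i})$, which vanishes in $(t^{p+1})/(t^{m+1})$ because $e\ge e_i$. Hence the image is precisely $\operatorname{Hom}_{k[t]/(t^{p+1})}\!\left((\gamma'^{\,*}\Omega_{X/k[t]}/T)\otimes_{k[[t]]}k[t]/(t^{p+1}),\,(t^{p+1})/(t^{m+1})\right)$. The main technical hurdle is this last step: one must track the $t$-adic shifts on each cyclic summand to verify that the torsion contributions are killed and that the surjection onto the free-part Hom is in fact an isomorphism of $k$-schemes (not merely a bijection on points), using the functoriality of the identification in (1) under base change.
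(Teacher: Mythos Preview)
Your proof is correct and follows essentially the same route as the paper: part (1) is proved by the standard square-zero deformation argument identifying lifts with derivations, and part (2) combines (1) at level $m+e$ with the cyclic decomposition of $\gamma'^*\Omega_{X/k[t]}$ and Proposition \ref{prop:EM4.1}(1). The only cosmetic differences are that the paper packages your explicit torsion-killing computation into Lemma \ref{lem:keisan}(2), and establishes the Hom description first before deducing the equality with $\psi_m(\operatorname{Cont}^e(\operatorname{Jac}_{X/k[t]}))$, whereas you do it in the opposite order.
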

\begin{proof}
We shall prove (1). 
We may assume that $X = \operatorname{Spec} A$. Let $\gamma ^* : A \to k[t]/(t^{p+1})$ be the corresponding $k[t]$-ring homomorphism to $\gamma$. 
Suppose that $\alpha \in \pi _{m,p}^{-1} (\gamma)$ and $\alpha ^* : A \to k[t]/(t^{m+1})$ is the corresponding $k[t]$-ring homomorphism to $\alpha$. 
Then we have 
\[
\pi _{m,p}^{-1} (\gamma) \simeq \operatorname{Der}_{k[t]} \left( A, (t^{p+1})/(t^{m+1}) \right); \quad  \beta \mapsto \beta^* - \alpha^*, 
\]
here $(t^{p+1})/(t^{m+1})$ has an $A$-module structure via $\gamma ^*$ (cf.\ \cite[Proposition 4.4]{EM09}). 
Then the assertion follows from the following isomorphisms. 
\begin{align*}
\operatorname{Der}_{k[t]} \left( A, (t^{p+1})/(t^{m+1}) \right) 
& \simeq 
\operatorname{Hom}_{A} \left( \Omega _{A/k[t]}, (t^{p+1})/(t^{m+1}) \right) \\
& \simeq
\operatorname{Hom}_{k[t]/(t^{p+1})} \left(\gamma ^* \Omega _{X/k[t]}, (t^{p+1})/(t^{m+1}) \right). 
\end{align*}

We shall prove (2). 
Note that 
\begin{align*}
\gamma ^* \Omega _{X/k[t]} 
&= \gamma ^{'*} \Omega _{X/k[t]} \otimes _{k[[t]]} k[t]/(t^{p+1}) \\
&\simeq \left( (\gamma ^{'*} \Omega _{X/k[t]})/T \otimes _{k[[t]]} k[t]/(t^{p+1}) \right) \oplus \left( T \otimes _{k[[t]]} k[t]/(t^{p+1}) \right). 
\end{align*}
Since $T$ is the form of $\bigoplus _i k[t]/(t^{e_i})$ with $\sum _i e_i = e$, especially $e_i \le e \le p$, 
it follows that $T \otimes _{k[[t]]} k[t]/(t^{p+1}) \simeq \bigoplus _i k[t]/(t^{e_i})$. 
Note also that $\lambda : \gamma ^* \Omega _{X/k[t]} \to (t^{p+1})/(t^{m+1})$ lifts to 
$\gamma ^* \Omega _{X/k[t]} \to (t^{p+1})/(t^{m+e+1})$ if and only if 
$\lambda (T \otimes k[t]/(t^{p+1})) = 0$ holds (Lemma \ref{lem:keisan}(2)). 
This equivalence and (1) show 
\begin{align*}
& \pi_{m+e, m} (\pi ^{-1}_{m+e, p} (\gamma)) \\
& \simeq \operatorname{Im}\left( \begin{array}{l} 
\operatorname{Hom}_{k[t]/(t^{p+1})} \left(\gamma ^* \Omega _{X/k[t]}, (t^{p+1})/(t^{m+e+1}) \right) \\
\hspace{10mm} \longrightarrow \operatorname{Hom}_{k[t]/(t^{p+1})} \left(\gamma ^* \Omega _{X/k[t]}, (t^{p+1})/(t^{m+1}) \right)  
\end{array} \right) \\
& \simeq \operatorname{Hom}_{k[t]/(t^{p+1})} 
\left(  (\gamma ^{'*}  \Omega _{X/k[t]}) /T   \otimes _{k[[t]]} k[t]/(t^{p+1}), (t^{p+1})/(t^{m+1}) \right). 
\end{align*}
Since $\operatorname{ord}_{\gamma} (\operatorname{Jac}_{X/k[t]}) = e$, it follows from Proposition \ref{prop:EM4.1}(1) that 
\[
\pi_{m+e, m} (\pi ^{-1}_{m+e, p} (\gamma)) =
\pi _{m,p}^{-1} (\gamma) \cap \psi _m (\operatorname{Cont}^e (\operatorname{Jac}_{X/k[t]})). 
\]
We complete the proof. 
\end{proof}

\begin{lem}\label{lem:keisan}
Let $m,p,\ell$ be non-negative integers with $m \ge p$. Then the following hold. 
\begin{enumerate}
\item $\operatorname{Hom}_{k[t]} \left( k[t]/(t^{\ell}), (t^{p+1})/(t^{m+1}) \right)$ is isomorphic to $\mathbb{A}^{\ell}$ 
if $\ell \le m - p$; otherwise, this is isomorphic to $\mathbb{A}^{m-p}$. 

\item If $\ell \le e$, only the zero map $k[t]/(t^{\ell}) \to (t^{p+1})/(t^{m+1})$ can lift to $k[t]/(t^{\ell}) \to (t^{p+1})/(t^{m+e+1})$. 
\end{enumerate}
\end{lem}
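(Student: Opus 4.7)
The key observation is that for any $k[t]$-module $M$, a homomorphism $k[t]/(t^{\ell}) \to M$ is determined by the image of $1$, which must be annihilated by $t^{\ell}$. So I would compute, in both parts, the $t^{\ell}$-torsion of a truncated polynomial module.

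For part (1), I would write $M = (t^{p+1})/(t^{m+1})$ as a $k$-vector space with basis $t^{p+1}, t^{p+2}, \ldots, t^m$ (so $\dim_k M = m - p$). For $x = \sum_{i=p+1}^{m} a_i t^i \in M$, multiplying by $t^{\ell}$ kills those $a_i t^i$ with $i + \ell > m$, so the condition $t^{\ell} x = 0$ amounts to $a_i = 0$ whenever $p+1 \le i \le m - \ell$. Hence the free parameters are the $a_i$ with $\max(p+1, m-\ell+1) \le i \le m$. Counting this range splits naturally into the two cases: if $\ell \le m - p$, the range has $\ell$ elements, giving $\mathbb{A}^{\ell}$; otherwise it is the full range $p+1, \ldots, m$, giving $\mathbb{A}^{m-p}$.

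For part (2), suppose $\phi: k[t]/(t^{\ell}) \to (t^{p+1})/(t^{m+1})$ lifts to $\widetilde{\phi}: k[t]/(t^{\ell}) \to (t^{p+1})/(t^{m+e+1})$. Let $y := \widetilde{\phi}(1) \in (t^{p+1})/(t^{m+e+1})$; then $t^{\ell} y = 0$ in $(t^{p+1})/(t^{m+e+1})$, which by the analysis above forces $y \in (t^{m+e-\ell+1})/(t^{m+e+1})$. Since $\ell \le e$, we have $m+e-\ell+1 \ge m+1$, so $y$ lies in the image of $(t^{m+1})/(t^{m+e+1})$, and its reduction $\phi(1)$ modulo $t^{m+1}$ is zero, giving $\phi = 0$.

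Both parts are elementary linear-algebra / truncation computations; the main (very mild) obstacle is just keeping the inequalities straight between the two regimes $\ell \le m-p$ and $\ell > m-p$ in (1), and the role of the hypothesis $\ell \le e$ in (2), where the latter is precisely what ensures the lifting condition pushes the image entirely out of $(t^{p+1})/(t^{m+1})$.
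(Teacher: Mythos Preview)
Your proof is correct and is precisely the straightforward computation the paper has in mind; the paper's own proof reads only ``The proof is straightforward.'' Your explicit identification of $\operatorname{Hom}_{k[t]}(k[t]/(t^{\ell}),M)$ with the $t^{\ell}$-torsion of $M$ and the ensuing count of free coefficients is exactly the expected argument, and your bookkeeping for both the case split in (1) and the inequality $m+e-\ell+1 \ge m+1$ in (2) is accurate.
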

\begin{proof}
The proof is straightforward. 
\end{proof}

For an arc $\delta$ we  denote by $\delta_m$ its image in the space of the $m$-th jets.

\begin{lem}\label{lem:EM6.3}
Let $X$ and $Y$ be $k[t]$-schemes with the condition $(\star)_n$, and let $f: Y \to X$ be a morphism over $k[t]$. 
Let $e, e', e'', q \in \mathbb{Z}_{\ge 0}$. 
Let $c_X$ and $c_Y$ be positive integers for $X$ and $Y$ appearing in Proposition \ref{prop:EM4.1}. 
Suppose  $\max \{ e+e', e+e'', c_X e', c_Y e'' \}\le q - e$.
Let $\alpha \in \operatorname{Cont}^{e'}(\operatorname{Jac}_{X/k[t]})$ and 
$\beta \in \operatorname{Cont}^{e''}(\operatorname{Jac}_{Y/k[t]})$ with $f_q(\beta_q)=\alpha_q$ and $\operatorname{ord}_{\beta}(\operatorname{jac}_f) = e$.
Then there is $\delta \in \operatorname{Cont}^{e''}(\operatorname{Jac}_{Y/k[t]})$ 
with $f_{q+1}(\delta_{q+1})=\alpha_{q+1}$ such that $\beta_{q-e}=\delta_{q-e}$ and $\operatorname{ord}_{\delta}(\operatorname{jac}_f) = e$.
\end{lem}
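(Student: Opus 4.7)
The plan is to apply Proposition \ref{prop:EM4.4}(2) with $p = q-e$ and $m = q+1$, identifying the fibers of the truncation map over $\beta_{q-e}$ (in $Y_{q+1}$) and over $\alpha_{q-e}$ (in $X_{q+1}$), restricted to the relevant contact loci, with Hom spaces over $k[t]/(t^{q-e+1})$. Finding $\delta_{q+1}$ with $f_{q+1}(\delta_{q+1}) = \alpha_{q+1}$ will then reduce to a linear-algebra lifting problem. A key choice is to use $\tilde\alpha := f_\infty(\beta) \in X_\infty$ (rather than $\alpha$) as the extending arc for the $X$-side in Proposition \ref{prop:EM4.4}(2), which makes the map $f_{q+1}$ linear in the Hom-space picture.

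First I would check that all the inequalities required to apply Proposition \ref{prop:EM4.4}(2) hold. For $\beta$ with $(e, c) = (e'', c_Y)$ these are $2p+1-e'' \ge m$, $p \ge e''$, and $m \ge c_Y e''$. For $\tilde\alpha$ with $(e', c_X)$ they are the analogous inequalities. Since $\tilde\alpha_q = f_q(\beta_q) = \alpha_q$ and $e' \le q-e < q+1$, the order $\operatorname{ord}_{\tilde\alpha}(\operatorname{Jac}_{X/k[t]})$ equals $\operatorname{ord}_\alpha(\operatorname{Jac}_{X/k[t]}) = e'$, so Proposition \ref{prop:EM4.4}(2) is indeed applicable to $\tilde\alpha$. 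Each inequality follows from $\max\{e+e', e+e'', c_X e', c_Y e''\} \le q-e$; for instance, $2(q-e)+1-e'' \ge q+1$ reduces to $q \ge 2e+e''$, which is immediate from $e+e'' \le q-e$.

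Writing $M := (t^{q-e+1})/(t^{q+2})$, $\tilde A := (\tilde\alpha^*\Omega_{X/k[t]})/T_{\tilde\alpha} \otimes_{k[[t]]} k[t]/(t^{q-e+1})$, and $B := (\beta^*\Omega_{Y/k[t]})/T_\beta \otimes_{k[[t]]} k[t]/(t^{q-e+1})$, Proposition \ref{prop:EM4.4}(2) identifies $F_X := \pi_{q+1,q-e}^{-1}(\alpha_{q-e}) \cap \psi_{q+1}(\operatorname{Cont}^{e'}(\operatorname{Jac}_{X/k[t]}))$ with $\operatorname{Hom}(\tilde A, M)$ using basepoint $\tilde\alpha_{q+1} \leftrightarrow 0$, and the analogous $F_Y$ with $\operatorname{Hom}(B, M)$ using basepoint $\beta_{q+1}$. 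The element $\alpha_{q+1} \in F_X$ then corresponds to the derivation $\mu := \alpha^*_{q+1} - \tilde\alpha^*_{q+1}$, which since $\alpha_q = \tilde\alpha_q$ has image in the submodule $(t^{q+1})/(t^{q+2}) \subset M$. Since $\delta_{q-e} = \beta_{q-e}$ forces $f_{q-e}(\delta_{q-e}) = \alpha_{q-e}$, $f_{q+1}$ restricts to a map $F_Y \to F_X$; because $f_{q+1}(\beta_{q+1}) = \tilde\alpha_{q+1}$, it sends basepoint to basepoint and is therefore the \emph{linear} map $\phi^* : \operatorname{Hom}(B, M) \to \operatorname{Hom}(\tilde A, M)$ induced by the natural map $\phi : \tilde A \to B$ coming from $f^*\Omega_{X/k[t]} \to \Omega_{Y/k[t]}$.

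The lifting step solves $\phi^*(\psi) = \mu$. By Lemma \ref{lem:order}(1) and the definition of $\operatorname{ord}_\beta(\operatorname{jac}_f) = e$, the map $(\tilde\alpha^*\Omega_{X/k[t]})/T_{\tilde\alpha} \to (\beta^*\Omega_{Y/k[t]})/T_\beta$ between free $k[[t]]$-modules of rank $n$ has Smith normal form $\operatorname{diag}(t^{a_1}, \ldots, t^{a_n})$ with $\sum a_i = e$ and each $a_i \le e$. Since $e \le q-e$, this diagonal form is preserved modulo $t^{q-e+1}$, so $\phi$ is given by the same diagonal matrix. The equation $\phi^*(\psi) = \mu$ then reduces to $t^{a_i} w_i = \mu(e_i)$ in $M$, which is solvable because $\mu(e_i) \in (t^{q+1})/(t^{q+2}) \subset (t^{q-e+1+a_i})/(t^{q+2}) = t^{a_i}M$ (using $a_i \le e$). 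This produces $\delta_{q+1} \in F_Y$ with $f_{q+1}(\delta_{q+1}) = \alpha_{q+1}$, and by definition $\delta_{q+1}$ lifts to an arc $\delta \in \operatorname{Cont}^{e''}(\operatorname{Jac}_{Y/k[t]})$ with $\delta_{q-e} = \beta_{q-e}$. Finally, $\operatorname{ord}_\delta(\operatorname{jac}_f) = e$ holds because $\delta \equiv \beta$ modulo $t^{q-e+1}$ with $q-e \ge e \ge a_i$, so the Fitting ideals of minors determining the Smith elementary divisors $t^{a_i}$ are preserved when $\beta$ is replaced by $\delta$. The main obstacle will be carefully verifying, via the derivation interpretation of Proposition \ref{prop:EM4.4}, that $f_{q+1}|_{F_Y}$ coincides with the linear map $\phi^*$ under the basepoint convention $\gamma' = \tilde\alpha$ on the $X$-side.
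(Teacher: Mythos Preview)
Your proposal is correct and follows essentially the same route as the paper's proof: apply Proposition~\ref{prop:EM4.4}(2) with $p=q-e$ and $m=q+1$ on both sides, translate $f_{q+1}$ into the linear map induced by $\phi:\tilde A\to B$, and solve the lifting problem using the Smith normal form $\operatorname{diag}(t^{a_1},\dots,t^{a_n})$ together with $\operatorname{Im}(\mu)\subset (t^{q+1})/(t^{q+2})$. Your write-up is in fact a bit more careful than the paper's: you make explicit the choice of $\tilde\alpha=f_\infty(\beta)$ as the reference arc on the $X$-side (so that the basepoints match and $f_{q+1}$ is genuinely linear rather than merely affine), and you address the final condition $\operatorname{ord}_\delta(\operatorname{jac}_f)=e$, which the paper leaves implicit.
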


\begin{proof}
Let  $S$ and $T$ be the torsion parts of $\alpha^*\Omega_{X/k[t]}$ and $\beta^*\Omega_{Y/k[t]}$, respectively.
By Lemma \ref{prop:EM4.4}(2),
 we have
\begin{align*}
& \pi_{q+1,q-e}^{-1}  (\alpha_{q-e})\cap \psi_{q+1}(\operatorname{Cont}^{e'}(\operatorname{Jac}_{X/k[t]})) \\
& \simeq 
\operatorname{Hom}_{k[t]/(t^{q-e+1})}  \bigl( ( \alpha^*\Omega_{X/k[t]})/S \otimes_{k[[t]]} k[t]/(t^{q-e+1}),(t^{q-e+1})/(t^{q+2}) \bigr)
\end{align*}
and
\begin{align*}
&\pi_{q+1,q-e}^{-1}(\beta_{q-e})\cap\psi_{q+1}(\operatorname{Cont}^{e''}(\operatorname{Jac}_{Y/k[t]})) \\
&\simeq 
\operatorname{Hom}_{k[t]/(t^{q-e+1})}\bigl( (\beta^*\Omega_{Y/k[t]})/T \otimes_{k[[t]]} k[t]/(t^{q-e+1}),(t^{q-e+1})/(t^{q+2}) \bigr).
\end{align*}
We may assume that $\beta _{q+1}$ corresponds to the zero map via this isomorphisms. 
Let 
\[
w:\alpha^*\Omega_{X/k[t]}/S\otimes_{k[[t]]} k[t]/(t^{q-e+1})\to(t^{q-e+1})/(t^{q+2})
\] 
be the morphism corresponding to $\alpha_{q+1}$ via this isomorphism. 
Then it is sufficient to show the existence of 
\[
u:\beta^*\Omega_{Y/k[t]}/T\otimes_{k[[t]]} k[t]/(t^{q-e+1})\to (t^{q-e+1})/(t^{q+2})
\]
such that $u \circ h_{q-e}=w$, where
\[
h_{q-e}:\alpha^*\Omega_{X/k[t]}/S\otimes_{k[[t]]} k[t]/(t^{q-e+1})\to\beta^*\Omega_{Y/k[t]}/T\otimes_{k[[t]]} k[t]/(t^{q-e+1})
\] 
is the natural morphism.

By definition of $\operatorname{ord}_{\beta}(\operatorname{jac}_f)$, we have 
\[
\operatorname{Coker}({\beta '}^*\Omega_{X/k[t]} \to \beta^*\Omega_{Y/k[t]}/T) \simeq
k[t]/(t^{a_1})\oplus\cdots\oplus k[t]/(t^{a_n})
\] 
with $a_i \ge 0$ and $\sum_i a_i=e$, where $\beta' := f_{\infty} (\beta)$. 
Since $\beta' _{q-e} = \alpha _{q-e}$ and $a_i \le q-e+1$, we have 
\[
\operatorname{Coker} (h_{q-e}) \simeq k[t]/(t^{a_1})\oplus\cdots\oplus k[t]/(t^{a_n}). 
\]
Hence, we can regard $h_{q-e}$ as the morphism given by the diagonal matrix with entries $t^{a_1},\ldots,t^{a_n}$. 
Furthermore, since $\alpha_q=f_q(\beta_q)$, it follows that $\operatorname{Im} (w) \subset (t^{q+1})/(t^{q+2})$. 
Since $q \ge e + a_i$ holds for each $i$, we can find a desired $u$. 
\end{proof}

\begin{lem}\label{lem:EM6.2'}
Let $X$ and $Y$ be $k[t]$-schemes with the condition $(\star)_n$ and let $f: Y\to X$ be a morphism over $k[t]$.
Let $e,e',e'', m \in \mathbb{Z}_{\ge 0}$. 
Let $c_X$ and $c_Y$ be  positive integers for $X$ and $Y$ appearing in Proposition \ref{prop:EM4.1}. 
Suppose $\max \{e+e', e+e'', c_X e', c_Y e'' \} \le m-e$.
Let  $\alpha \in \operatorname{Cont}^{e'}(\operatorname{Jac}_{X/k[t]})$ and 
$\beta \in \operatorname{Cont}^{e''}(\operatorname{Jac}_{Y/k[t]})$ with $f_m(\beta_m)=\alpha_m$ and $\operatorname{ord}_{\beta} (\operatorname{jac}_f) = e$.
Then there is $\delta \in \operatorname{Cont}^{e''}(\operatorname{Jac}_{Y/k[t]})$ 
with $\beta_{m-e}=\delta_{m-e}$ such that $f_{\infty}(\delta)=\alpha$ and $\operatorname{ord}_{\delta} (\operatorname{jac}_f) = e$.
\end{lem}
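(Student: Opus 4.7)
The plan is to construct $\delta$ as the $t$-adic limit of a sequence of arcs obtained by iterating Lemma \ref{lem:EM6.3}. Set $\beta^{(0)} := \beta$, and note that the assumption $\max \{ e+e', e+e'', c_X e', c_Y e'' \} \le m - e$ is precisely the hypothesis of Lemma \ref{lem:EM6.3} with $q = m$. Applying that lemma produces $\beta^{(1)} \in \operatorname{Cont}^{e''}(\operatorname{Jac}_{Y/k[t]})$ satisfying $f_{m+1}(\beta^{(1)}_{m+1}) = \alpha_{m+1}$, $\beta^{(1)}_{m-e} = \beta^{(0)}_{m-e}$, and $\operatorname{ord}_{\beta^{(1)}}(\operatorname{jac}_f) = e$. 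Inductively, at step $j \ge 0$, since $m + j - e \ge m - e$, the hypothesis of Lemma \ref{lem:EM6.3} is still satisfied with $q = m + j$, yielding $\beta^{(j+1)} \in \operatorname{Cont}^{e''}(\operatorname{Jac}_{Y/k[t]})$ with $f_{m+j+1}(\beta^{(j+1)}_{m+j+1}) = \alpha_{m+j+1}$, $\beta^{(j+1)}_{m+j-e} = \beta^{(j)}_{m+j-e}$, and $\operatorname{ord}_{\beta^{(j+1)}}(\operatorname{jac}_f) = e$.

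For each fixed level $\ell$, once $j$ is large enough that $\ell \le m + j - e$, the relation $\beta^{(j+1)}_{m+j-e} = \beta^{(j)}_{m+j-e}$ forces $\beta^{(j+1)}_{\ell} = \beta^{(j)}_{\ell}$. Hence the sequence $(\beta^{(j)})_{j \ge 0}$ stabilizes at every finite level and therefore converges in $Y_\infty \simeq \varprojlim_m Y_m$ to a unique arc $\delta$. By construction we have $\delta_{m-e} = \beta^{(0)}_{m-e} = \beta_{m-e}$, and $f_{m+j}(\delta_{m+j}) = \alpha_{m+j}$ for every $j \ge 0$, which gives $f_\infty(\delta) = \alpha$. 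Moreover, $\delta_{e''+1}$ agrees with $\beta^{(j)}_{e''+1}$ for $j$ sufficiently large, and the order of $\operatorname{Jac}_{Y/k[t]}$ along $\delta$ is determined by a bounded truncation, so $\delta \in \operatorname{Cont}^{e''}(\operatorname{Jac}_{Y/k[t]})$.

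The only subtle point is the verification that $\operatorname{ord}_\delta(\operatorname{jac}_f) = e$, which I expect to be the main technical obstacle. By Lemma \ref{lem:order}(1), the torsion part of $\delta^*\Omega_{Y/k[t]}$ has total length $e''$ and its isomorphism class is determined by a bounded truncation of $\delta$; the same holds for $(f_\infty(\delta))^*\Omega_{X/k[t]}$. Therefore the cokernel module whose length defines $\operatorname{ord}_\delta(\operatorname{jac}_f)$ depends only on a bounded truncation of $\delta$, which matches that of some $\beta^{(j)}$ once $m + j - e$ exceeds $e + e' + e''$. Consequently $\operatorname{ord}_\delta(\operatorname{jac}_f) = \operatorname{ord}_{\beta^{(j)}}(\operatorname{jac}_f) = e$, completing the proof.
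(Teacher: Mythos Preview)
Your proof is correct and follows essentially the same approach as the paper: iterate Lemma~\ref{lem:EM6.3} to produce a compatible system of arcs, pass to the limit in $Y_\infty$, and read off the required properties from the finite-level agreements. The paper's proof is terser and does not spell out the verification of $\operatorname{ord}_\delta(\operatorname{jac}_f) = e$ (which you correctly identify as depending only on a bounded truncation), but the argument is the same.
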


\begin{proof}
By Lemma \ref{lem:EM6.3},
we can construct recursively $\delta^{(q)} \in \operatorname{Cont}^{e''}(\operatorname{Jac}_{Y/k[t]})$ for $q\ge m$ such that 
$\delta^{(m)}=\beta$,  $\delta_{q-e}^{(q+1)}=\delta_{q-e}^{(q)}$  and
$f_q(\delta_q^{(q)})=\alpha_q$ for every $q \ge m$.
The sequence $\bigl( \delta_{q-e}^{(q)} \bigr) _{q\in\mathbb Z_{\ge m}}$ defines an element 
$\delta \in \operatorname{Cont}^{e''}(\operatorname{Jac}_{Y/k[t]})$ such that $\delta_{q-e}=\delta_{q-e} ^{(q)}$ for every $q\ge m$.
By the construction of $\delta$, it follows that $\beta_{m-e}=\delta_{m-e}$ and $f_\infty(\delta)=\alpha$.
\end{proof}

When $Y$ is smooth over $k[t]$, Proposition \ref{prop:EM6.2} below is proved in \cite[Lemma 7.1.3]{Seb04} (cf.\ \cite[Ch.5.\ Theorem 3.2.2]{CLNS}). 
In \cite[Lemma 10.19, 10.20]{Yas}, Yasuda proves Proposition \ref{prop:EM6.2} in more general setting (for formal Deligne-Mumford stacks of arbitrary characteristic). 
\begin{prop}[{cf.\ \cite[Lemma 1.17, Remark 1.19]{DL02}}]\label{prop:EM6.2}
Let $X$ and $Y$ be $k[t]$-schemes with the condition $(\star \star)_n$ and let $f: Y\to X$ be a morphism over $k[t]$.
Let $e,e',e''\in\mathbb Z_{\ge 0}$.
Let $B$ be a cylinder of $Y_{\infty}$ and let $A = f_\infty(B)$.
Assume that 
\[
B \subset \operatorname{Cont}^{e''}(\operatorname{Jac}_{Y/k[t]}) \cap \operatorname{Cont}^{e}(\operatorname{jac}_f), \quad 
A \subset \operatorname{Cont}^{e'}(\operatorname{Jac}_{X/k[t]}). 
\]
Then $A$ is a cylinder of $X_{\infty}$. 
Moreover, if $f_{\infty}|_{B}$ is injective, then it follows that
\[
\operatorname{codim}(B)+e=\operatorname{codim}(A).
\]
\end{prop}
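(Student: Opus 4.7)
The plan is to establish the two assertions in turn, working at a sufficiently large level $m$ and combining Lemma~\ref{lem:EM6.2'} with Proposition~\ref{prop:EM4.4}. First I would handle the cylinder claim. Since $B$ is a cylinder, write $B = \psi_{m_0}^{-1}(\psi_{m_0}(B))$ for some $m_0 \ge 0$, and fix $m$ large enough that (i) Lemma~\ref{lem:EM6.2'} applies to the given parameters $e, e', e''$ and (ii) $m - e \ge m_0$. By Proposition~\ref{prop:const} the image $\psi_m(B)$ is constructible, hence so is $T_m := f_m(\psi_m(B)) \subset X_m$ by Chevalley's theorem. The key claim is
\[
A = \psi_m^{-1}(T_m) \cap \operatorname{Cont}^{e'}(\operatorname{Jac}_{X/k[t]}),
\]
whose $\subset$ direction is immediate. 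For $\supset$, take $\alpha$ in the right-hand side, pick $\beta \in B$ with $f_m(\psi_m(\beta)) = \alpha_m$, and apply Lemma~\ref{lem:EM6.2'} to obtain $\delta \in \operatorname{Cont}^{e''}(\operatorname{Jac}_{Y/k[t]})$ with $\operatorname{ord}_\delta(\operatorname{jac}_f) = e$, $f_\infty(\delta) = \alpha$, and $\delta_{m-e} = \beta_{m-e}$. Since $m - e \ge m_0$, we have $\delta_{m_0} = \beta_{m_0} \in \psi_{m_0}(B)$, hence $\delta \in B$ and $\alpha \in A$. Thus $A$ is the intersection of two cylinders.

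For the codimension equality (assuming $f_\infty|_B$ is injective), it suffices by Definition~\ref{defi:codim_kt} to prove
\[
\dim \psi_m(B) - \dim \psi_m(A) = e
\]
for $m \gg 0$. I would deduce this by showing that the map $f_m|_{\psi_m(B)} \colon \psi_m(B) \to \psi_m(A)$, which is surjective by the first step, has every fiber isomorphic to $\mathbb{A}^e$. Fix $\alpha \in A$ with its unique preimage $\delta \in B$. For any $\beta_m$ in the fiber $F := f_m^{-1}(\alpha_m) \cap \psi_m(B)$, lifting to some $\beta \in B$ and invoking Lemma~\ref{lem:EM6.2'} with target $\alpha$ produces $\delta' \in B$ with $f_\infty(\delta') = \alpha$ and $\delta'_{m-e} = \beta_{m-e}$; injectivity forces $\delta' = \delta$, so $\beta_{m-e} = \delta_{m-e}$. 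Hence $F \subset \pi_{m, m-e}^{-1}(\delta_{m-e}) \cap \psi_m(B)$.

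To compute $F$ I would now set $p := m - e$ and apply Proposition~\ref{prop:EM4.4}(2) both to $Y$ at $\delta_p$ and to $X$ at $\alpha_p$. Letting $S$ and $T$ denote the torsion parts of $\alpha^* \Omega_{X/k[t]}$ and $\delta^* \Omega_{Y/k[t]}$, the sets $\pi_{m,p}^{-1}(\delta_p) \cap \psi_m(B)$ and $\pi_{m,p}^{-1}(\alpha_p) \cap \psi_m(A)$ become identified with the corresponding Hom groups of shape $\operatorname{Hom}_{k[t]/(t^{p+1})}(\,\cdot\,, (t^{p+1})/(t^{m+1}))$. A derivation chase of the sort used in the proof of Proposition~\ref{prop:EM4.4}(1) shows that under these identifications the restriction of $f_m$ is precomposition with the natural map
\[
h \colon (\alpha^* \Omega_{X/k[t]})/S \otimes_{k[[t]]} k[t]/(t^{p+1}) \longrightarrow (\delta^* \Omega_{Y/k[t]})/T \otimes_{k[[t]]} k[t]/(t^{p+1}).
\]
By Definition~\ref{defi:jac_kt}, $\operatorname{Coker}(h) \simeq \bigoplus_i k[t]/(t^{a_i})$ with $\sum_i a_i = \operatorname{ord}_\delta(\operatorname{jac}_f) = e$, provided $p \ge e$ so that tensoring with $k[t]/(t^{p+1})$ does not truncate. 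A direct computation then gives
\[
F \simeq \operatorname{Hom}_{k[t]/(t^{p+1})}\bigl(\operatorname{Coker}(h),\, (t^{p+1})/(t^{m+1})\bigr) \simeq \mathbb{A}^e,
\]
uniformly in $\alpha$, so $f_m|_{\psi_m(B)} \to \psi_m(A)$ is a piecewise trivial fibration with fiber $\mathbb{A}^e$, yielding the dimension equality. The main obstacle is exactly this last paragraph: matching $f_m$ with precomposition by $h$ in the Hom description of Proposition~\ref{prop:EM4.4}, and then recognizing $\operatorname{Coker}(h)$ as the module computing $\operatorname{jac}_f$, require careful tracking of the torsion submodules $S$ and $T$ and the lower bound $p \ge e$.
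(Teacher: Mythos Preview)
Your proposal is correct and follows essentially the same route as the paper's proof: both arguments use Lemma~\ref{lem:EM6.2'} to show $A$ is a cylinder and then invoke Proposition~\ref{prop:EM4.4}(2) at level $p=m-e$ to identify each fiber $f_m^{-1}(\alpha_m)\cap\psi_m(B)$ with the kernel of the precomposition map by $h$, which is $\operatorname{Hom}\bigl(\operatorname{Coker}(h),(t^{p+1})/(t^{m+1})\bigr)\simeq\mathbb{A}^e$ via Lemma~\ref{lem:keisan}. The only cosmetic differences are that your intersection with $\operatorname{Cont}^{e'}(\operatorname{Jac}_{X/k[t]})$ in the description of $A$ is redundant (for $m\ge e'$ it is automatic from $\alpha_m\in f_m(\psi_m(B))$), and the claim that $f_m$ is a \emph{piecewise trivial} fibration is a mild overstatement---constant fiber dimension $e$ is what you actually establish and is already enough for the codimension identity.
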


\begin{proof}
The second statement is obtained by specializing \cite[Lemma 10.20]{Yas} to the case where $\Phi = \Psi = k$, 
and $\mathcal{Y}$ and $\mathcal{X}$ are the formal schemes over $k[[t]]$ associated to $Y$ and $X$ respectively (cf.\ Remark \ref{rmk:Sebag}). 
For readers' convenience, we give a proof in our setting below.

First, we prove that $A$ is a cylinder. 
Let $B_m \subset Y_m$ be a constructible subset such that $B=\psi_m^{-1}(B_m)$. 
By Proposition \ref{prop:const}, we may assume that $B_m = \psi _m (B)$. 
Furthermore, we may assume that $m$ is sufficiently large, and hence $B = \psi^{-1} _{m-e} (\pi_{m,m-e}(B_m))$ also holds.
It is enough to show that $A= \psi_m^{-1}(A_m)$ for $A_m = f_m(B_m)$. 
The inclusion $A \subset \psi_m^{-1}(A_m)$ is obvious. We shall see the opposite inclusion. 
Suppose that $\alpha \in X_{\infty}$ satisfies $\psi _m (\alpha) \in A_m$. 
Then by the definition of $A_m$, there exists $\beta \in Y_{\infty}$ such that its image in $X_m$ coincides with $\alpha _m$. 
Therefore by Lemma \ref{lem:EM6.2'}, there exists $\gamma \in Y_{\infty}$ such that $f_{\infty}(\gamma) = \alpha$ and 
$\psi _{m-e} (\gamma) = \psi _{m-e} (\beta)$. Since $\gamma \in \psi _{m-e} ^{-1} (\pi_{m,m-e} (\beta_{m})) \subset B$, 
it follows that $\alpha \in f_{\infty}(B)=A$. Therefore $A$ is a cylinder.

Next we shall prove that $\operatorname{codim}(B)+e=\operatorname{codim}(A)$. 
For this, it is sufficient to show that $\dim (f^{-1}_m (\alpha _m) \cap B_m) = e$ for each $\alpha _m \in A_m$. 
Let $\alpha \in A$ be a lift of $\alpha _m$ and let $\beta \in B$ be an arc satisfying $f_{\infty}(\beta) = \alpha$. 

We claim that 
\[
\pi_{m,m-e} \bigl(f^{-1}_m(\alpha _m) \cap B_m \bigr) = \{ \beta _{m-e} \}. 
\]
Take $\beta' _{m} \in f^{-1}_m(\alpha _m) \cap B_m$. 
Then by Lemma \ref{lem:EM6.2'}, there exists $\gamma \in B$ such that $\gamma _{m-e} = \pi_{m,m-e}(\beta' _m)$ and 
$f_{\infty} (\gamma) = \alpha$. Since $f_{\infty}|_{B}$ is injective, it follows that $\beta = \gamma$. 
Therefore, $\pi_{m,m-e}(\beta' _m) = \beta _{m-e}$. 

Hence we have 
\begin{align*}
f^{-1}_m(\alpha _m) \cap B_m 
&= f^{-1}_m(\alpha _m) \cap \pi^{-1}_{m,m-e}(\beta _{m-e}) \\
&\subset \pi^{-1}_{m,m-e}(\beta _{m-e}) \cap \psi _m \bigl( \operatorname{Cont}^{e''}(\operatorname{Jac}_{Y/k[t]}) \bigr). 
\end{align*}
Therefore by Proposition \ref{prop:EM4.4}, $f^{-1}_m(\alpha _m) \cap B_m$ is isomorphic to the kernel of 
\begin{align*}
&\operatorname{Hom}_{k[t]/(t^{m-e+1})}\bigl( (\beta^*\Omega_{Y/k[t]})/T\otimes_{k[[t]]} k[t]/(t^{m-e+1}),(t^{m-e+1})/(t^{m+1}) \bigr) \\
&\to \operatorname{Hom}_{k[t]/(t^{m-e+1})}\bigl( (\alpha^*\Omega_{X/k[t]})/S\otimes_{k[[t]]} k[t]/(t^{m-e+1}),(t^{m-e+1})/(t^{m+1}) \bigr), 
\end{align*}
where $S$ and $T$ be the torsion parts of $\alpha^*\Omega_{X/k[t]}$ and $\beta^*\Omega_{Y/k[t]}$, respectively. 
By the definition of $\operatorname{ord}_{\beta} (\operatorname{jac}_f)$, this is isomorphic to 
\[
\operatorname{Hom}_{k[t]/(t^{m-e+1})}\Bigl( \bigoplus _{i} k[t]/(t^{a_i}) \otimes_{k[[t]]} k[t]/(t^{m-e+1}),(t^{m-e+1})/(t^{m+1}) \Bigr), 
\]
with $a_i > 0$ and $\sum _i a_i = e$. This is isomorphic to $\mathbb{A}^e$ by Lemma \ref{lem:keisan}(1) and we complete the proof. 
\end{proof}

The following lemma is a generalization of Lemma 8.4 in \cite{EM09} to $k[t]$-schemes.
This lemma plays an important role in the proof of Theorem \ref{thm:PIA}. 
\begin{lem}\label{lem:EM8.4}
Let $A = \operatorname{Spec} k[t][x_1,\ldots,x_N]$ and let 
$X \subset A$ be a closed subscheme with the condition $(\star \star)_n$. 
Suppose that $c:= N - n \ge 0$.
We denote by $I_X \subset k[t][x_1,\ldots,x_N]$ the defining ideal of $X$ in $A$. 
Suppose that $I_X$ is generated by $c$ elements $f_1, \ldots , f_c \in k[t][x_1,\ldots,x_N]$. 
Let $C \subset A_{\infty}$ be an irreducible locally closed cylinder. 
If 
\begin{itemize}
\item $C \subset \bigcap_{i=1}^c \operatorname{Cont}^{\ge d_i} (f_i)$ and 
\item $C \cap X_{\infty} \cap \operatorname{Cont}^{e}(\operatorname{Jac}_{X/k[t]}) \not = \emptyset$
\end{itemize}
hold for some $d_i \ge 0$ and $e \ge 0$, 
then it follows that
\[
\operatorname{codim}_{X_{\infty}} (C \cap X_{\infty}) \le \operatorname{codim}_{A_{\infty}} (C) + e - \sum _{i=1}^c d_i.
\]
\end{lem}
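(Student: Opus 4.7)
The plan is to prove the inequality via a fiber-dimension argument at a finite jet level, reducing the whole question to bounding the ``Jacobian correction'' by $e$.

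First, I would fix $m$ sufficiently large so that $C = \psi_m^{-1}(\psi_m(C))$ and so that Lemma \ref{lem:EM4.1CI} applies at exponent $e$ to $X$ (viewed as a complete intersection in $A$ cut out by $f_1, \ldots, f_c$). Put $C_m := \psi_m(C) \subset A_m$ and $B := C \cap X_\infty \cap \operatorname{Cont}^e(\operatorname{Jac}_{X/k[t]})$, which is nonempty by hypothesis. Since $A = \mathbb{A}^N_{k[t]}$ is smooth over $k[t]$ of relative dimension $N$ with $\operatorname{Jac}_{A/k[t]} = (1)$, Definition \ref{defi:codim_kt}(1) gives $\operatorname{codim}_{A_\infty}(C) = (m+1)N - \dim C_m$; and since $B \subset \operatorname{Cont}^e(\operatorname{Jac}_{X/k[t]})$, the same definition gives $\operatorname{codim}_{X_\infty}(B) = (m+1)n - \dim \psi_m(B)$. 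Using $\operatorname{codim}_{X_\infty}(C \cap X_\infty) \le \operatorname{codim}_{X_\infty}(B)$, the target inequality reduces to
\[
\dim C_m - \dim \psi_m(B) \;\le\; (m+1)c + e - \sum_i d_i.
\]

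Second, I would introduce the $k$-morphism $F_m \colon C_m \to \prod_{i=1}^c (t^{d_i})/(t^{m+1}) \cong \mathbb{A}^{c(m+1) - \sum d_i}_k$ sending $\beta$ to $(f_i(\beta))_i$; its image lies in the stated subspace because $C \subset \bigcap_i \operatorname{Cont}^{\ge d_i}(f_i)$, and its fiber over $0$ equals $C_m \cap X_m$ set-theoretically since $f_1, \ldots, f_c$ cut out $X$. As $C_m$ is irreducible and this fiber is nonempty, the fiber-dimension inequality gives
\[
\dim(C_m \cap X_m) \;\ge\; \dim C_m - c(m+1) + \sum_i d_i.
\]

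Third, by Lemma \ref{lem:EM4.1CI}(1) and our choice of $m$, we have $\psi_m(\operatorname{Cont}^e(\operatorname{Jac}_{X/k[t]})) = \pi_{m+e,m}(\operatorname{Cont}^e(\operatorname{Jac}_{X/k[t]})_{m+e})$, and combined with the cylinder truncation of $C$ this yields $\psi_m(B) = C_m \cap \psi_m(\operatorname{Cont}^e(\operatorname{Jac}_{X/k[t]}))$. It remains to prove the complementary inequality $\dim \psi_m(B) \ge \dim(C_m \cap X_m) - e$; combined with the previous step this gives the required bound. This complementary inequality is the main obstacle, and I expect it to follow from a local tangent-space analysis at $\beta_0 := \psi_m(\gamma_0)$ for a chosen arc $\gamma_0 \in B$: by Lemma \ref{lem:order}(1) the torsion part of $\gamma_0^*\Omega_{X/k[t]}$ has total length exactly $e$, and Proposition \ref{prop:EM4.4}(2) together with the iterated piecewise trivial $\mathbb{A}^n$-fibration of Lemma \ref{lem:EM4.1CI}(2) identifies this length as precisely the codimension drop from $\pi_{m,p}^{-1}(\gamma_{0,p}) \cap C_m \cap X_m$ to $\pi_{m,p}^{-1}(\gamma_{0,p}) \cap \psi_m(\operatorname{Cont}^e(\operatorname{Jac}_{X/k[t]}))$ at a suitable truncation level $p$. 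Patching this local tangent comparison into a global dimension bound is the delicate part: irreducible components of $C_m \cap X_m$ not containing $\beta_0$ may lie outside $\psi_m(\operatorname{Cont}^e(\operatorname{Jac}_{X/k[t]}))$, but they can be handled by stratifying $C_m \cap X_m$ according to the order of $\operatorname{Jac}_{X/k[t]}$ and reapplying the same fiber-dimension reasoning to each stratum.
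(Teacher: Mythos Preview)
Your very first reduction already loses too much, and the ``complementary inequality'' you aim for in Step~3 is false in general. You replace the target bound on $\operatorname{codim}_{X_\infty}(C\cap X_\infty)$ by the stronger bound $\operatorname{codim}_{X_\infty}(B)\le \operatorname{codim}_{A_\infty}(C)+e-\sum_i d_i$, with $B=C\cap X_\infty\cap\operatorname{Cont}^e(\operatorname{Jac}_{X/k[t]})$. But the lemma allows \emph{any} $e$ for which the intersection is nonempty, not just the minimal one, and for non-minimal $e$ this stronger bound fails. Concretely, take $N=3$, $c=1$, $f_1=xy-z^2$, so $X$ is an $A_1$-surface fibred trivially over $\operatorname{Spec} k[t]$ and $\operatorname{Jac}_{X/k[t]}=(x,y,z)$; take $C=A_\infty$, $d_1=0$, and $e=5$ (nonempty since the arc $(t^5,t^5,t^5)$ lies in $\operatorname{Cont}^5$). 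Then $\operatorname{codim}_{A_\infty}(C)+e-d_1=5$, while $B=\operatorname{Cont}^5(\operatorname{Jac}_{X/k[t]})$ has $\operatorname{codim}_{X_\infty}(B)=10$. Equivalently, your complementary inequality $\dim\psi_m(B)\ge\dim(C_m\cap X_m)-e$ reads $2(m-4)\ge 2(m+1)-5$, which is false. The stratification you propose cannot repair this: the unique component of $C_m\cap X_m=X_m$ through $\gamma_m$ generically has Jacobian order $0$, so almost none of it lies in $\psi_m(\operatorname{Cont}^5)$.

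The argument the paper invokes (following \cite{EM09}) avoids this by never passing to $B$. One fixes $p$ with $C=\psi_p^{-1}(C_p)$ and $m=2p$, takes the irreducible component $W$ of $C_m\cap X_m$ through $\gamma_m$ (which, as a component of a fibre of your map $F_m$, satisfies $\dim W\ge\dim C_m-c(m+1)+\sum_i d_i$), and then bounds $\dim\pi_{m,p}(W)$ from below using the full-fibre formula
\[
\pi_{m,p}^{-1}(\delta_p)\simeq\mathbb{A}^{\,e'+(m-p)n}\qquad(\delta_p\in X_p,\ \operatorname{ord}_{\delta_p}(\operatorname{Jac}_{X/k[t]})=e',\ \pi_{m,p}^{-1}(\delta_p)\neq\emptyset),
\]
which is exactly the analogue of \cite[Proposition~4.4(ii)]{EM09} that the paper isolates, and which needs $\operatorname{Fitt}^{n-1}(\Omega_{X/k[t]})=0$ (guaranteed here because $I_X$ is generated by $c=N-n$ elements). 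Since $W$ contains a point with Jacobian order $e$, the order is $\le e$ on a dense open of $W$, so all relevant fibres have dimension $\le e+(m-p)n$, giving $\dim\pi_{m,p}(W)\ge\dim W-e-(m-p)n$. Finally, every point of $\pi_{m,p}(W\cap\operatorname{Cont}^{\le e})$ lifts to an arc by Lemma~\ref{lem:EM4.1CI}(1), so $\pi_{m,p}(W\cap\operatorname{Cont}^{\le e})\subset\psi_p(C\cap X_\infty)$; unwinding codimensions yields the lemma. The key difference from your outline is that the dimension bound is transferred to $\psi_p(C\cap X_\infty)$, which may be witnessed at some Jacobian level $e'\le e$ rather than at $e$ itself.
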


\begin{proof}
The same proof as in \cite[Lemma 8.4]{EM09} works by replacing $\operatorname{Jac}_{M}$ in \cite{EM09} by $\operatorname{Jac}_{X/k[t]}$. 
We note that \cite[Proposition 4.4(ii)]{EM09}, which is used in the proof, is still true for our $k[t]$-scheme $X$:
\begin{itemize}
\item 
Let $p,m$ and $e$ be non-negative integers with $2p \ge m \ge p+e$. 
Let $\gamma \in X_p$ with $\pi _{m,p}^{-1} (\gamma) \not = \emptyset$ and $\operatorname{ord}_{\gamma} (\operatorname{Jac}_{X/k[t]}) = e$. 
Then it follows that $\pi _{m,p}^{-1} (\gamma) \simeq \mathbb{A}^{e+(m-p)n}$. 
\end{itemize}
In \cite[Proposition 4.4(ii)]{EM09}, the assertion above is proved for locally complete intersection varieties. 
This l.c.i.\ assumption is used only for proving $\operatorname{Fitt}^{n-1} (\Omega _{X/k}) = 0$. 
In our case, $\operatorname{Fitt}^{n-1} (\Omega _{X/k[t]}) = 0$ holds by the assumption that $I_X$ is generated by $c = N -n$ elements. 
\end{proof}

\subsection{Dimension of the arc spaces of quotient varieties}\label{subsection:quot}
In this subsection, we prove Proposition \ref{prop:DL2}, 
which is a generalization of \cite[Lemma 3.5]{DL02} to singular $k[t]$-schemes. 

Let $Y$ be a $k[t]$-scheme with the condition $(\star \star)_n$. 
Suppose that a finite group acts on $Y$ over $k[t]$. 
We denote by $X := Y/G$ its quotient, and by $h:Y \to X$ the quotient map. 
Let $B \subset Y_\infty$ be a $G$-invariant cylinder and $A = h_\infty (B)$. 
Let $e,e',e''\in\mathbb Z_{\ge 0}$.
Assume that 
\[
B \subset \operatorname{Cont}^{e''}(\operatorname{Jac}_{Y/k[t]}) \cap \operatorname{Cont}^{e}(\operatorname{jac}_f), \quad 
A \subset \operatorname{Cont}^{e'}(\operatorname{Jac}_{X/k[t]}). 
\]
We have the following diagram. 
\[
  \xymatrix{
Y_\infty \ar[r] \ar@/^20pt/[rr]^{h_\infty} \ar[d]_{\psi_m} & Y_\infty/G \ar[d] \ar[r] & X_\infty \ar[d]^{\psi_m} \\
Y_m \ar[r]  \ar[d]& Y_m/G \ar[r] \ar[d] & X_m \ar[d]\\
Y_{m-e} \ar[r] \ar@/_20pt/[rr]_{h_{m-e}} & Y_{m-e}/G \ar[r] & X_{m-e}
  }
\]

\begin{prop}[{cf.\ \cite[Lemma 3.5]{DL02}}]\label{prop:DL2}
In the setting above, the following hold. 
\begin{enumerate}
\item $A$ is a cylinder of $X_{\infty}$. 
\item $\operatorname{codim}(B)+e=\operatorname{codim}(A)$.
\end{enumerate}
\end{prop}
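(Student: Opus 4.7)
The plan is to reduce assertion (2) to the injective case of Proposition \ref{prop:EM6.2}(2) by carving out a ``fundamental domain'' $B' \subset B$ on which $h_\infty$ becomes injective. Assertion (1) is immediate from Proposition \ref{prop:EM6.2}(1) applied to the quotient morphism $h : Y \to X$: the cylinder part of that proposition does not use injectivity of $h_\infty|_B$, so it directly yields that $A$ is a cylinder of $X_\infty$.

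For the codimension equality, I plan to proceed as follows. Since we work in characteristic zero and the action of $G$ on each dominant irreducible component of $Y$ is faithful (after factoring out the kernel, which acts trivially on the quotient), Galois theory gives a dense open subset $U \subset Y$ on which $G$ acts freely; then $B^\circ := B \cap U_\infty$ is an open subcylinder of $B$, and $B \setminus B^\circ \subset (Y \setminus U)_\infty$ is thin in the sense of Definition \ref{def:thin}. On $B^\circ$ the fibers of $h_\infty$ are precisely the $G$-orbits of arcs. At a sufficiently large jet level $m$, the induced $G$-action on $\psi_m(B^\circ)$ is free, so $\psi_m(B^\circ)$ may be stratified into finitely many $G$-invariant locally closed subsets, each a disjoint union of $|G|$ components permuted freely by $G$. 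Selecting one component from each $G$-orbit produces a locally closed subset of $Y_m$ which lifts to a locally closed cylinder $B' \subset B^\circ$ satisfying $B^\circ = \bigsqcup_{g \in G} g \cdot B'$ and on which $h_\infty|_{B'}$ is injective by construction. Note that $B'$ inherits all the contact-loci and Jacobian-order hypotheses from $B$, since these loci are $G$-invariant.

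Applying Proposition \ref{prop:EM6.2}(2) to $B'$ yields $\operatorname{codim}(B') + e = \operatorname{codim}(h_\infty(B'))$. By the $G$-equivariance of the jet maps $\psi_m$, each translate $g \cdot B'$ has the same codimension as $B'$; since $B$ differs from $\bigsqcup_{g \in G} g \cdot B'$ by the thin set $B \setminus B^\circ$, Proposition \ref{prop:negligible} gives $\operatorname{codim}(B) = \operatorname{codim}(B')$. Similarly, $A \setminus h_\infty(B')$ is contained in $h_\infty(B \setminus B^\circ)$, which lies in the image of a proper closed subscheme of $Y$ and hence is thin in $X_\infty$, so Proposition \ref{prop:negligible} also gives $\operatorname{codim}(A) = \operatorname{codim}(h_\infty(B'))$. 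Combining these equalities yields the desired $\operatorname{codim}(B) + e = \operatorname{codim}(A)$.

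The principal obstacle is producing $B'$ as a genuine cylinder compatibly with all the contact-locus and Jacobian-order data. One must verify that the stratification can be arranged at a jet level $m$ large enough that it is compatible with the preservation of the orders $e$, $e'$, $e''$, and that the non-free locus of the $G$-action, together with its image under $h_\infty$, really forms thin subsets so that Proposition \ref{prop:negligible} applies in $Y_\infty$ and in $X_\infty$. The condition $(\star\star)_n$ on $Y$ (and the induced $(\star\star)_n$-type property for $X$) is essential here, since it guarantees that proper closed subschemes of $Y$ and $X$ give thin subsets of the respective arc spaces.
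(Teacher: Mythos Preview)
Your reduction of (1) to Proposition \ref{prop:EM6.2}(1) is fine. The gap is in (2): the constructible ``fundamental domain'' $B'$ that you need does not exist in general. You claim that $\psi_m(B^\circ)$ can be stratified into $G$-invariant locally closed pieces, each a disjoint union of $|G|$ components permuted freely by $G$. But a free action of a finite group on an \emph{irreducible} variety is already a counterexample: take $G=\mathbb{Z}/2$ acting on $U=\mathbb{A}^1\setminus\{0\}$ by $x\mapsto -x$. Any constructible $S\subset U$ that surjects onto $U/G$ must contain a nonempty open $V$; then $V\cap(-V)\neq\emptyset$ by irreducibility, so $S$ meets some orbit $\{a,-a\}$ twice. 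Hence there is no constructible section of $U\to U/G$. The same phenomenon occurs at the jet level in the setting of the proposition: for instance, with $\overline{A}=\mathbb{A}^2$ and $G=\mathbb{Z}/2$ acting by $(x_1,x_2)\mapsto(-x_1,-x_2)$, the set $\psi_m(U_\infty)$ is an irreducible open subset of the irreducible $Y_m\simeq\mathbb{A}^{2(m+1)}$, and $G$ acts freely on it without any constructible fundamental domain. So the cylinder $B'$ you want simply cannot be produced this way, and Proposition \ref{prop:EM6.2} in its injective form does not apply.

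The paper avoids this obstruction by not seeking a section at all. Instead it shows directly that every fiber $h_m^{-1}(\alpha_m)\cap B_m$ has dimension $e$: using the injectivity of $Y_\infty/G\to X_\infty$, one proves that all points of such a fiber have the \emph{same image in $Y_{m-e}/G$}, hence the fiber is contained in the finite union $\bigcup_{\gamma\in G}\pi_{m,m-e}^{-1}(\gamma\cdot\beta_{m-e})$, and each piece of this union meets the relevant locus in an affine space of dimension $e$ (by the same Hom-computation as in Proposition \ref{prop:EM6.2}). This fiber-dimension argument replaces your fundamental-domain reduction and is what makes the quotient case go through.
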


\begin{proof}
The second statement is obtained by specializing \cite[Lemma 10.20]{Yas} to the case where $\Phi = \Psi = k$, $g:[\mathcal{Y}/G] \to \mathcal{X}$ and 
$C = B/G$, where $\mathcal{Y}$ and $\mathcal{X}$ are the formal schemes over $k[[t]]$ associated to $Y$ and $X$ respectively (cf.\ Remark \ref{rmk:Sebag}). 
We note that $|\rm{J}_{\infty}([\mathcal{Y}/G])|$ in the notation in \cite{Yas} is equal to $Y_{\infty}/G$, and 
furthermore, $g_{\infty}|_{B/G}$ is injective. 
For readers' convenience, we give a proof in our setting below.

By the same argument of the proof of Proposition \ref{prop:EM6.2}, we can take 
a sufficiently large $m$ and a constructible subset $B_m \subset Y_m$, and $A_m = h_m (B_m)$ such that 
\[
A= \psi_m^{-1}(A_m), \quad B = \psi^{-1} _{m-e} (\pi_{m,m-e}(B_m)). 
\]
In particular $A$ is a cylinder of $X_{\infty}$. 

In order to prove $\operatorname{codim}(B)+e=\operatorname{codim}(A)$, 
it is sufficient to show that $\dim (h^{-1}_m (\alpha _m) \cap B_m) = e$ for each $\alpha _m \in A_m$. 
Let $\alpha \in A$ be a lift of $\alpha _m$ and let $\beta \in B$ be an arc satisfying $h_{\infty}(\beta) = \alpha$. 

We claim that 
\begin{itemize}
\item any arc in $\pi_{m,m-e} \bigl(h^{-1}_m(\alpha _m) \cap B_m \bigr)$ has the same image in $Y_{m-e}/G$. 
\end{itemize}
Take $\beta' _{m} \in h^{-1}_m (\alpha _m) \cap B_m$. 
Then by Lemma \ref{lem:EM6.2'}, there exists $\delta \in B$ such that $\delta _{m-e} = \pi_{m,m-e}(\beta' _m)$ and 
$h_{\infty} (\delta) = \alpha$. Since $Y_\infty/G\to X_\infty$ is injective (cf.\ \cite[Proposition 12.27(2)]{GW10}), 
it follows that $\beta$ and $\delta$ have the same image in $Y_{\infty}/G$.
Therefore the image of $\beta' _{m}$ in $Y_{m-e}/G$ coincides with that of $\beta$.

By the claim above, we have
\begin{align*}
h^{-1}_m (\alpha _m) \cap B_m 
&= h^{-1}_m(\alpha _m) \cap \bigcup _{\gamma \in G} \pi^{-1}_{m,m-e}(\gamma \cdot \beta _{m-e}) \\
&\subset 
\bigcup _{\gamma \in G} 
\left( \pi^{-1}_{m,m-e}(\gamma \cdot \beta _{m-e}) \cap \psi _m 
\bigl( \operatorname{Cont}^{e''}(\operatorname{Jac}_{Y/k[t]}) \bigr) \right). 
\end{align*}
Therefore by Proposition \ref{prop:EM4.4}, $h^{-1}_m (\alpha _m) \cap B_m$ is isomorphic to the union of the kernel $K_{\gamma}$ of 
\begin{align*}
&\operatorname{Hom}_{k[t]/(t^{m-e+1})}\bigl( ((\gamma \cdot \beta)^*\Omega_{Y/k[t]})/T_{\gamma} \otimes_{k[[t]]} k[t]/(t^{m-e+1}),(t^{m-e+1})/(t^{m+1}) \bigr) \\
&\to \operatorname{Hom}_{k[t]/(t^{m-e+1})}\bigl( (\alpha^*\Omega_{X/k[t]})/S\otimes_{k[[t]]} k[t]/(t^{m-e+1}),(t^{m-e+1})/(t^{m+1}) \bigr), 
\end{align*}
where $S$ and $T_{\gamma}$ be the torsion parts of $\alpha^*\Omega_{X/k[t]}$ and $(\gamma \cdot \beta)^*\Omega_{Y/k[t]}$, respectively. 
By definition of $\operatorname{ord}_{\beta}(\operatorname{jac}_h)$, $K_{\gamma}$ is isomorphic to 
\[
\operatorname{Hom}_{k[t]/(t^{m-e+1})}\Bigl( \bigoplus _{i} k[t]/(t^{a_i}) \otimes_{k[[t]]} k[t]/(t^{m-e+1}),(t^{m-e+1})/(t^{m+1}) \Bigr), 
\]
with $a_i > 0$ and $\sum _i a_i = e$. 
Therefore $K_{\gamma} \simeq \mathbb{A}^e$ and hence 
$\dim (h^{-1}_m(\alpha _m) \cap B_m) = e$, which completes the proof. 
\end{proof}

\section{Denef and Loeser's theory for quotient varieties}\label{section:DL2}
In this section, first we review the theory of the arc space of quotient varieties established 
by Denef and Loeser \cite{DL02} with more detail (Proposition \ref{prop:lift} and \ref{prop:quotsing}). 
In Subsection \ref{subsection:singDL}, we study quotients of singular varieties and 
state an analogous statement of Proposition \ref{prop:quotsing} for this setting (Proposition \ref{prop:hyperquot}). 

\subsection{Lifting property of arcs on quotient varieties}\label{subsection:lift}
Let $\overline{X}$ be a variety over $k$, and let $G$ be a finite group with order $d$ acting on $\overline{X}$. 
Let $q: \overline{X} \to X := \overline{X}/G$ be the quotient morphism. 
Let $Z \subset X$ be the minimal closed subset such that $q$ is \'{e}tale outside $Z$. 
Set 
\[
X_{\infty} ^{\rm{g}} := X_{\infty} \setminus Z_{\infty}, \quad
\overline{X}_{\infty} ^{\frac{1}{d}} := 
\operatorname{Hom}_{k} \bigl(\operatorname{Spec} k[[t^{\frac{1}{d}}]], \overline{X} \bigr). 
\]

\begin{lem}\label{lem:lift}
Any $\varphi \in X_{\infty} ^{\rm{g}}$ lifts to $\overline{X}_{\infty} ^{\frac{1}{d}}$. 
That is, the composition $\operatorname{Spec} k[[t^{\frac{1}{d}}]] \to \operatorname{Spec} k[[t]] \xrightarrow{\varphi} X$
factors through $\overline{X}$. 
Moreover, $\varphi$ has exactly $d$ lifts and $G$ acts on them transitively. 
\end{lem}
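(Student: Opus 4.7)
The strategy has two parts: lift the arc generically (over $\operatorname{Spec} k((t))$), then extend along the valuative criterion of properness for the finite morphism $q$. Since $\varphi \notin Z_\infty$, the generic point $\operatorname{Spec} k((t))$ of $\operatorname{Spec} k[[t]]$ maps into $U := X \setminus Z$, over which $q$ is finite \'etale of degree $d$. In characteristic zero, a finite quotient $\overline{X} \to X = \overline{X}/G$ is \'etale precisely where $G$ acts freely on the fiber, so over $U$ we have that $\overline{X}|_{q^{-1}(U)} \to U$ is a $G$-torsor. Pulling back along $\operatorname{Spec} k((t)) \to U$ yields a $G$-torsor $Y := \overline{X} \times_X \operatorname{Spec} k((t))$ over $k((t))$.

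Next I would classify $Y$ using the structure theorem for finite extensions of $k((t))$: since $k$ is algebraically closed of characteristic zero, every finite separable extension is of the form $k((t^{1/e}))$ for some $e \ge 1$ (Puiseux). Write $Y = \operatorname{Spec} \prod_{i=1}^s L_i$ with each $L_i$ a field. The torsor condition $\bigl(\prod L_i \bigr)^G = k((t))$ forces $G$ to permute the factors transitively, so the stabilizer $H \le G$ of $L_1$ has index $s$ and $L_1/k((t))$ is Galois with group $H$ of order $e := d/s$; hence $L_i \cong k((t^{1/e}))$ for every $i$. Since $e \mid d$, each $L_i$ embeds into $k((t^{1/d}))$ as a $k((t))$-subalgebra, and being Galois of degree $e$, admits exactly $e$ distinct such embeddings. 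A $k((t))$-algebra map $\prod L_i \to k((t^{1/d}))$ factors through a unique $L_i$, so there are exactly $s \cdot e = d$ generic lifts $\operatorname{Spec} k((t^{1/d})) \to \overline{X}$ of $\varphi$.

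Each generic lift can then be extended to an arc $\operatorname{Spec} k[[t^{1/d}]] \to \overline{X}$ by the valuative criterion of properness applied to the finite (hence proper) map $q$: the commutative square whose bottom row is $\operatorname{Spec} k[[t^{1/d}]] \to \operatorname{Spec} k[[t]] \xrightarrow{\varphi} X$ and whose top row is a chosen generic lift admits a unique filler into $\overline{X}$. This produces exactly $d$ elements of $\overline{X}_\infty^{1/d}$ over $\varphi$. For transitivity of the $G$-action, the $G$-torsor structure makes $G$ act simply transitively on the $d$ generic lifts: $G$ permutes the $L_i$ transitively, and the stabilizer $H$ acts transitively on the $e$ embeddings $L_1 \hookrightarrow k((t^{1/d}))$ by Galois theory. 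Uniqueness in the valuative criterion then transfers this simply transitive action to the $k[[t^{1/d}]]$-lifts.

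The main obstacle I anticipate is the bookkeeping in the Puiseux decomposition step, namely verifying that the torsor $Y$ decomposes as claimed and that the count of $k((t^{1/d}))$-points comes out to exactly $d$; the extension to $k[[t^{1/d}]]$ via properness of $q$ and the transfer of transitivity are then routine consequences.
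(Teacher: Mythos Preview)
Your proposal is correct and follows essentially the same route as the paper's proof: pass to the generic point, analyze the fiber product $\overline{X}\times_X \operatorname{Spec} k((t))$ as an \'etale $k((t))$-algebra, use Puiseux to identify each factor with $k((t^{1/e}))$, count $k((t))$-algebra maps into $k((t^{1/d}))$ to get exactly $d$, and then extend via the valuative criterion for the finite map $q$. Your torsor formulation (with the stabilizer $H$ realizing the Galois group of each factor) is a slightly cleaner way to organize the same count and the transitivity claim that the paper obtains from $L^G=k((t))$ forcing all the $a_i$ to coincide.
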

\begin{proof}
We denote $\varphi ' : \operatorname{Spec} k((t)) \to \operatorname{Spec} k[[t]] \overset{\varphi}{\to} X$ the decomposition. 
First we see that there are exactly $d$ lifts $\operatorname{Spec} k((t^{\frac{1}{d}})) \to \overline{X}$ of $\varphi '$. 
Let $\operatorname{Spec} L$ be the fiber product of $q: \overline{X} \to X$ and $\varphi': \operatorname{Spec} k((t)) \to X$. 
Since $q: \overline{X} \to X$ is \'{e}tale at the image of the generic point of $\varphi$, 
the extension $L/k((t))$ is \'{e}tale. Furthermore, we have $L^ G = k((t))$. 
Note that $k((t^{\frac{1}{d'}}))$ is the unique finite field extension of 
$k((t))$ of degree $d'$ (cf.\ \cite[Theorem 1.94]{Kol07}). 
Hence $L \simeq  \prod_{i=1}^c k((t^{\frac{1}{a_i}}))$ for some $a_i$ and $c$.
Note that if $a_i\neq a_j$, then $k((t^{\frac{1}{a_i}}))$ is not isomorphic to $k((t^{\frac{1}{a_j}}))$.
This implies that $L^ G$ is not a field if $a_i \neq a_j$ for some $i$ and $j$.
Therefore $L$ is decomposed as 
the product of $c$ copies of $k((t^{\frac{1}{a}}))$ for some $a$ and $c$ with $ac=d$. 
Hence we have $\# \operatorname{Hom}_{k((t))} \bigl( L, k((t^{\frac{1}{d}})) \bigr) = d$ and $G$ acts on 
$\operatorname{Hom}_{k((t))} \bigl( L, k((t^{\frac{1}{d}}))  \bigr)$ transitively. 
\[
  \xymatrix{
  \operatorname{Spec} k ((t^{\frac{1}{d}})) \ar@{..>}[rr]^{\exists} \ar[d] & & \overline{X} \ar[d]  \\
  \operatorname{Spec} k((t)) \ar[r] & \operatorname{Spec} k[[t]] \ar[r]^{\hspace{6mm}\varphi} & X
  }
\]
By the valuative criterion of properness on $\overline{X} \to X$, 
each $\operatorname{Spec} k((t^{\frac{1}{d}})) \to \overline{X}$ factors through
$\operatorname{Spec} k [[t^{\frac{1}{d}}]] \to \overline{X}$. 
We complete the proof. 
\end{proof}

We have two group actions on $\overline{X}_{\infty} ^{\frac{1}{d}}$. 
\begin{itemize}
\item Let $\xi = \xi _{d} \in k$ be a $d$-th primitive root of unity in $k$. 
$\mathbb{Z}/d \mathbb{Z} = \langle \xi \rangle$ acts on $\overline{X}_{\infty} ^{\frac{1}{d}}$ as follows. 
For $\overline{\varphi} = \overline{\varphi} (t^{\frac{1}{d}}) \in \overline{X}_{\infty} ^{\frac{1}{d}}$, 
we define $\overline{\varphi} (\xi t^{\frac{1}{d}} )$ by the composition
$\operatorname{Spec} k [[t^{\frac{1}{d}}]] \to \operatorname{Spec} k [[t^{\frac{1}{d}}]] \xrightarrow{\overline{\varphi}} \overline{X}$, 
where the first map $\operatorname{Spec} k [[t^{\frac{1}{d}}]] \to \operatorname{Spec} k [[t^{\frac{1}{d}}]]$ is a map induced by 
the ring homomorphism $k [[t^{\frac{1}{d}}]] \to k [[t^{\frac{1}{d}}]]; t^{\frac{1}{d}} \mapsto \xi t^{\frac{1}{d}}$. 

\item $G$ acts on $\overline{X}_{\infty} ^{\frac{1}{d}}$ as follows. 
For $\gamma \in G$ and $\overline{\varphi} \in \overline{X}_{\infty} ^{\frac{1}{d}}$, 
we define $\gamma \overline{\varphi}$ to be the composition 
$\operatorname{Spec} k [[t^{\frac{1}{d}}]] \xrightarrow{\overline{\varphi}} \overline{X} \xrightarrow{\gamma} \overline{X}$. 
\end{itemize}

\begin{lem}\label{lem:conj}
Let $\varphi \in X_{\infty} ^{\rm{g}}$ and let $\overline{\varphi} \in \overline{X}_{\infty}^{\frac{1}{d}}$ be its lift. 
Then the following hold: 
\begin{enumerate}
\item There exists the unique $\gamma \in G$ such that 
$\overline{\varphi}(\xi t^{\frac{1}{d}}) = \gamma \overline{\varphi}(t^{\frac{1}{d}})$. 
\item If $\overline{\varphi}'$ is another lift, and $\gamma '$ satisfies $\overline{\varphi}'(\xi t^{\frac{1}{d}}) = \gamma' \overline{\varphi}'(t^{\frac{1}{d}})$, 
then $\gamma$ and $\gamma'$ are in the same conjugacy class. 
\end{enumerate}
\end{lem}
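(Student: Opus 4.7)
The plan is to derive both statements by combining the existence-and-transitivity of lifts from Lemma~\ref{lem:lift} with a direct check that $\overline{\varphi}(\xi t^{1/d})$ is itself a lift of $\varphi$.

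For (1), I would first observe that the automorphism $\sigma : \operatorname{Spec} k[[t^{1/d}]] \to \operatorname{Spec} k[[t^{1/d}]]$ corresponding to the ring map $t^{1/d} \mapsto \xi t^{1/d}$ satisfies $\sigma^{*}(t) = (\xi t^{1/d})^{d} = \xi^{d} t = t$, so it fixes the subring $k[[t]] \subset k[[t^{1/d}]]$ pointwise. Consequently the composition
\[
\operatorname{Spec} k[[t^{1/d}]] \xrightarrow{\sigma} \operatorname{Spec} k[[t^{1/d}]] \to \operatorname{Spec} k[[t]]
\]
equals the natural inclusion, and applying $\varphi$ and comparing with the lifting relation $q \circ \overline{\varphi} = \varphi \circ (\text{incl.})$ shows that $\overline{\varphi}(\xi t^{1/d})$ is again a lift of $\varphi$. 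By Lemma~\ref{lem:lift}, $G$ acts transitively on the set of $d$ lifts, so there exists some $\gamma \in G$ with $\overline{\varphi}(\xi t^{1/d}) = \gamma \overline{\varphi}(t^{1/d})$. Uniqueness of $\gamma$ reduces to the freeness of the $G$-action on the set of lifts, which follows because there are exactly $d = \#G$ lifts; alternatively one argues directly that if $\gamma \overline{\varphi} = \overline{\varphi}$ then $\gamma$ fixes the generic point of $\overline{\varphi}$, which lies over $X \setminus Z$, and so $\gamma$ lies in a trivial stabilizer.

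For (2), I would use transitivity to write $\overline{\varphi}' = \delta \overline{\varphi}$ for a unique $\delta \in G$, and then substitute into the defining relation of $\gamma'$:
\[
\gamma' \overline{\varphi}'(t^{1/d}) = \overline{\varphi}'(\xi t^{1/d}) = \delta \overline{\varphi}(\xi t^{1/d}) = \delta \gamma \overline{\varphi}(t^{1/d}) = \delta \gamma \delta^{-1} \overline{\varphi}'(t^{1/d}).
\]
Uniqueness from part (1) applied to $\overline{\varphi}'$ then forces $\gamma' = \delta \gamma \delta^{-1}$, so $\gamma$ and $\gamma'$ are conjugate.

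The only real subtlety, and the step most worth being careful about, is the verification that $\overline{\varphi}(\xi t^{1/d})$ actually lifts $\varphi$ (so that Lemma~\ref{lem:lift} can be invoked) together with the assertion that the $G$-action on lifts is free; once these are in hand, (1) and (2) follow in a few lines. Everything else is a formal manipulation of the two given actions of $\langle \xi \rangle$ and of $G$ on $\overline{X}_{\infty}^{1/d}$.
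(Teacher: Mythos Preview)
Your proof is correct and follows essentially the same approach as the paper: the paper's proof is a one-line sketch noting that $\overline{\varphi}(\xi t^{1/d})$ is again a lift of $\varphi$ and then deferring both (1) and (2) to Lemma~\ref{lem:lift}, while you have simply filled in the implicit details (the check that $\sigma$ fixes $k[[t]]$, the freeness of the $G$-action via the count $\#G = d$, and the conjugacy computation $\gamma' = \delta\gamma\delta^{-1}$).
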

\begin{proof}
Since $\overline{\varphi}, \overline{\varphi}'$, and $\overline{\varphi}(\xi t^{\frac{1}{d}})$ are lifts of $\varphi$, 
(1) and (2) follow from Lemma \ref{lem:lift}. 
\end{proof}

For $\gamma \in G$, we define $\overline{X}_{\infty} ^{\frac{1}{d}, (\gamma)}$ and 
$X_{\infty} ^{\rm{g}, (\gamma)}$ as follows. 
\begin{itemize}
\item $\overline{X}_{\infty} ^{\frac{1}{d}, (\gamma)} := 
\left \{ \overline{\varphi} \in \overline{X}_{\infty} ^{\frac{1}{d}} 
\ \middle | \  \overline{\varphi}(\xi t^{\frac{1}{d}}) = \gamma \overline{\varphi}(t^{\frac{1}{d}}) \right\}$. 

\item $X_{\infty} ^{\rm{g}, (\gamma)} := \left \{ \varphi \in X_{\infty} ^{\rm{g}} 
\ \middle | \  \text{$\varphi$ lifts to an arc in $\overline{X}_{\infty} ^{\frac{1}{d}, (\gamma)}$} \right \}$. 
\end{itemize}

\begin{lem}\label{lem:conj2}
\begin{enumerate}
\item $X_{\infty} ^{\rm{g}, (\gamma)} = X_{\infty} ^{\rm{g}, (\gamma ')}$ holds 
if $\gamma$ and $\gamma '$ are in the same conjugacy class. 

\item There is a natural map $\rho _{\gamma} : \overline{X}_{\infty} ^{\frac{1}{d}, (\gamma)} \to X_{\infty}$. 

\item $\overline{X}_{\infty} ^{\frac{1}{d}, (\gamma)}$ and $\rho _{\gamma}$ are $C_{\gamma}$-invariant, where $C_{\gamma}$ is the centralizer of $\gamma$. 

\item The $C_{\gamma}$-action on each fiber over $X_{\infty}^{\rm{g}, (\gamma)}$ of $\rho _{\gamma}$ is transitive. 
\end{enumerate}
\end{lem}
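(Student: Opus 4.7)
The plan is to verify the four parts in sequence, each of which rests on the defining relation $\overline{\varphi}(\xi t^{1/d}) = \gamma \overline{\varphi}(t^{1/d})$ together with the transitive $G$-action on lifts of a generic arc (Lemma \ref{lem:lift}). The core computational tool is simply to pre- or post-compose a lift with a group element and track how the relation transforms.

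For (1), given $\gamma' = \delta\gamma\delta^{-1}$ and $\overline{\varphi} \in \overline{X}_\infty^{\frac{1}{d},(\gamma)}$ lifting $\varphi \in X_\infty^{\rm{g},(\gamma)}$, I would consider $\delta \overline{\varphi}$. It still lifts $\varphi$ (since the composition with $q$ is $G$-invariant), and a direct computation
\[
(\delta\overline{\varphi})(\xi t^{\tfrac{1}{d}}) = \delta\gamma\overline{\varphi}(t^{\tfrac{1}{d}}) = (\delta\gamma\delta^{-1})(\delta\overline{\varphi})(t^{\tfrac{1}{d}}) = \gamma'(\delta\overline{\varphi})(t^{\tfrac{1}{d}})
\]
shows $\varphi \in X_\infty^{\rm{g},(\gamma')}$. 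The reverse inclusion is symmetric.

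For (2), I would define $\rho_\gamma(\overline{\varphi})$ as the composite $\operatorname{Spec} k[[t^{1/d}]] \xrightarrow{\overline{\varphi}} \overline{X} \xrightarrow{q} X$, and check that it descends along $\operatorname{Spec} k[[t^{1/d}]] \to \operatorname{Spec} k[[t]]$. At the level of rings, the $(\gamma)$-relation says that after restricting to $\mathcal{O}_X = \mathcal{O}_{\overline{X}}^G$, the induced map $\mathcal{O}_X \to k[[t^{1/d}]]$ is unchanged by the substitution $t^{1/d}\mapsto \xi t^{1/d}$, so its image lies in $k[[t^{1/d}]]^{\langle\xi\rangle} = k[[t]]$. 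For (3), if $\delta\in C_\gamma$ then $\delta\gamma=\gamma\delta$ immediately gives $(\delta\overline{\varphi})(\xi t^{1/d}) = \gamma(\delta\overline{\varphi})(t^{1/d})$, so $\overline{X}_\infty^{\frac{1}{d},(\gamma)}$ is stable, and $\rho_\gamma$ is $C_\gamma$-invariant because it factors through $\overline{X}/G$.

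Part (4) is the main obstacle. Given two lifts $\overline{\varphi},\overline{\varphi}'\in \overline{X}_\infty^{\frac{1}{d},(\gamma)}$ of the same $\varphi \in X_\infty^{\rm{g},(\gamma)}$, Lemma \ref{lem:lift} produces $\delta\in G$ with $\overline{\varphi}' = \delta\overline{\varphi}$, and the task is to show $\delta\in C_\gamma$. Plugging $\overline{\varphi}' = \delta\overline{\varphi}$ into the $(\gamma)$-relation and comparing with the one for $\overline{\varphi}$ yields
\[
\delta\gamma\overline{\varphi}(t^{\tfrac{1}{d}}) = \gamma\delta\overline{\varphi}(t^{\tfrac{1}{d}}),
\]
so that the commutator $\delta^{-1}\gamma^{-1}\delta\gamma$ fixes $\overline{\varphi}$. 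The crucial input is that $\varphi\not\in Z_\infty$, which means that the image of the generic point of $\operatorname{Spec} k[[t^{1/d}]]$ under $\overline{\varphi}$ lies outside the ramification locus of $q$; there the $G$-action is free, so the $G$-stabilizer of $\overline{\varphi}$ is trivial. Hence $\delta^{-1}\gamma^{-1}\delta\gamma = e$, i.e.\ $\delta\in C_\gamma$, which proves transitivity of the $C_\gamma$-action on fibers.
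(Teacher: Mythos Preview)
Your proof is correct and follows essentially the same approach as the paper. The only difference is in part (4): the paper derives $\alpha\gamma\overline{\varphi}_2 = \gamma\alpha\overline{\varphi}_2$ and concludes $\alpha\gamma=\gamma\alpha$ by implicitly invoking the uniqueness in Lemma~\ref{lem:conj}(1) (equivalently, that $G$ acts simply transitively on the $d$ lifts from Lemma~\ref{lem:lift}), whereas you make this step explicit by arguing directly that the stabilizer of $\overline{\varphi}$ is trivial via freeness of the $G$-action outside the ramification locus; the two justifications are equivalent.
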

\begin{proof}
Prove (1). If $\gamma' = \beta \gamma \beta^{-1}$ for some $\beta \in G$ and $\overline{\varphi} (\xi t^{\frac{1}{d}}) = \gamma \overline{\varphi} (t^{\frac{1}{d}})$, 
then $\overline{\varphi}' := \beta \overline{\varphi}$ satisfies 
$\gamma ' \overline{\varphi}' (t^{\frac{1}{d}}) = \overline{\varphi}' (\xi t^{\frac{1}{d}})$.

Prove (2). We may assume that $\overline{X}$ is an affine variety $\operatorname{Spec} R$. 
Let $\overline{\varphi} \in \overline{X}_{\infty} ^{\frac{1}{d}, (\gamma)}$ and 
let $\overline{\varphi}^* : R \to k [[t^{\frac{1}{d}}]]$ be the corresponding ring homomorphism. 
Suppose $a \in R^G$. It is sufficient to show that $\overline{\varphi}^* (a)\in k [[t]]$. 
Since $a = \gamma \cdot a$, it follows that 
\[
(\overline{\varphi}^* (a))(t^{\frac{1}{d}}) = (\overline{\varphi}^* (\gamma \cdot a))(t^{\frac{1}{d}}) = (\overline{\varphi}^* (a))(\xi t^{\frac{1}{d}}), 
\]
which shows $\overline{\varphi}^* (a) \in k [[t]]$. 

The proof of (3) is straightforward. 

Prove (4). Suppose that $\overline{\varphi} _1, \overline{\varphi} _2 \in \overline{X}_{\infty} ^{\frac{1}{d}, (\gamma)}$ 
are lifts of $\varphi \in X_{\infty}^{\rm{g}, (\gamma)}$. 
Then by Lemma \ref{lem:lift}, $\overline{\varphi} _1 = \alpha \overline{\varphi} _2$ holds for some $\alpha \in G$. 
Since $\overline{\varphi}_i(\xi t^{\frac{1}{d}}) = \gamma \overline{\varphi}_i(t^{\frac{1}{d}})$ holds for each $i$, 
it follows that $\alpha \gamma = \gamma \alpha$. 
\end{proof}

\begin{prop}[{\cite[2.1]{DL02}, cf.\ \cite[Section 3]{Yas16}}]\label{prop:lift}
\begin{enumerate}
\item $X_{\infty}^{\rm{g}} = \bigsqcup _{\langle \gamma \rangle \in \operatorname{Conj}(G)} X_{\infty} ^{\rm{g}, (\gamma)}$ holds. 
\item $\rho _{\gamma}$ induces two maps 
\begin{align*}
\rho &: \bigsqcup _{\langle \gamma \rangle \in \operatorname{Conj}(G)} \overline{X}_{\infty} ^{\frac{1}{d}, (\gamma)} \to X_{\infty}, \\
\rho' &: \bigsqcup _{\langle \gamma \rangle \in \operatorname{Conj}(G)} \left( \overline{X}_{\infty} ^{\frac{1}{d}, (\gamma)}/C_{\gamma} \right) \to X_{\infty}, 
\end{align*}
and $\rho'$ is bijective over $X_{\infty}^{\rm{g}}$. 
\end{enumerate}
\end{prop}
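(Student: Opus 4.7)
The plan is to assemble the two statements directly from the four lemmas already established in this subsection, rather than doing any fresh work. First, for part (1), I would verify the decomposition by checking the two directions separately. For coverage: every $\varphi \in X_\infty^{\rm{g}}$ admits some lift $\overline{\varphi} \in \overline{X}_\infty^{\frac{1}{d}}$ by Lemma \ref{lem:lift}, and Lemma \ref{lem:conj}(1) then produces a unique $\gamma \in G$ with $\overline{\varphi}(\xi t^{\frac{1}{d}}) = \gamma \overline{\varphi}(t^{\frac{1}{d}})$, which places $\varphi$ in $X_\infty^{\rm{g},(\gamma)}$. For disjointness: the conjugacy class of such a $\gamma$ is independent of the choice of lift by Lemma \ref{lem:conj}(2), and Lemma \ref{lem:conj2}(1) confirms that $X_\infty^{\rm{g},(\gamma)}$ depends only on the class. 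Hence distinct conjugacy classes yield disjoint pieces.

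For part (2), I would first note that Lemma \ref{lem:conj2}(2) produces, for each chosen representative $\gamma$ of a conjugacy class, a map $\rho_\gamma \colon \overline{X}_\infty^{\frac{1}{d},(\gamma)} \to X_\infty$, and assembling these across classes defines $\rho$. The $C_\gamma$-invariance of $\rho_\gamma$ from Lemma \ref{lem:conj2}(3) lets each $\rho_\gamma$ descend to a map out of $\overline{X}_\infty^{\frac{1}{d},(\gamma)}/C_\gamma$, producing $\rho'$ by assembly.

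The bijectivity of $\rho'$ over $X_\infty^{\rm{g}}$ is the only real content and splits into two steps. Surjectivity onto $X_\infty^{\rm{g}}$ is immediate from part (1): any $\varphi \in X_\infty^{\rm{g}}$ lies in some $X_\infty^{\rm{g},(\gamma)}$ and hence admits, by definition, a lift in $\overline{X}_\infty^{\frac{1}{d},(\gamma)}$ whose $\rho_\gamma$-image is $\varphi$. For injectivity, suppose $[\overline{\varphi}_1] \in \overline{X}_\infty^{\frac{1}{d},(\gamma_1)}/C_{\gamma_1}$ and $[\overline{\varphi}_2] \in \overline{X}_\infty^{\frac{1}{d},(\gamma_2)}/C_{\gamma_2}$ map to the same $\varphi \in X_\infty^{\rm{g}}$. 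Then $\varphi$ lies in both $X_\infty^{\rm{g},(\gamma_1)}$ and $X_\infty^{\rm{g},(\gamma_2)}$, so the disjointness proved in (1) forces $\gamma_1 = \gamma_2$ as chosen representatives. The lifts $\overline{\varphi}_1,\overline{\varphi}_2$ of $\varphi$ in the common $\overline{X}_\infty^{\frac{1}{d},(\gamma_1)}$ then lie in a single $C_{\gamma_1}$-orbit by Lemma \ref{lem:conj2}(4), so the classes coincide.

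I do not anticipate any serious obstacle; the proposition essentially packages the preceding four lemmas. The one point that requires care is the interpretation of $\bigsqcup_{\langle \gamma \rangle \in \operatorname{Conj}(G)}$, which should be read as a disjoint union indexed by a fixed system of conjugacy-class representatives. It is precisely for this choice to be harmless that Lemma \ref{lem:conj2}(1), the well-definedness of $X_\infty^{\rm{g},(\gamma)}$ up to conjugation, is used.
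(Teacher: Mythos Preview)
Your proposal is correct and takes essentially the same approach as the paper's own proof, which simply records that (1) follows from Lemma~\ref{lem:lift}, Lemma~\ref{lem:conj}(2) and Lemma~\ref{lem:conj2}(1), and that (2) follows from Lemma~\ref{lem:conj2}. You have merely unpacked those citations in full detail, including the use of Lemma~\ref{lem:conj}(1) for coverage and Lemma~\ref{lem:conj2}(4) for injectivity, which the paper leaves implicit.
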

\begin{proof}
(1) follows from Lemma \ref{lem:lift}, Lemma \ref{lem:conj}(2) and Lemma \ref{lem:conj2}(1). 
(2) follows from Lemma \ref{lem:conj2}. 
\end{proof}

\subsection{Arc spaces of quotient singularities}\label{subsection:DLquot}
Let $d$ be a positive integer and let $\xi$ be a primitive $d$-th root of unity in $k$. 
Let $G \subset \operatorname{GL}_N( k)$ be a finite group with order $d$ which acts on 
$\overline{A} = \mathbb{A}^N _k = \operatorname{Spec} k[x_1,\ldots,x_N]$. 
Set $A = \overline{A}/G = \operatorname{Spec} k[x_1, \ldots , x_n]^G$. 
Suppose that an element $\gamma \in G$ is the diagonal matrix with entries $\xi ^{e_1}, \ldots , \xi ^{e_n}$ ($0 \le e_i \le d-1$). 

\begin{lem}[{\cite[2.3]{DL02}}]
The $k[t]$-ring homomorphism 
\[
\epsilon_{\gamma} ^*: k [t][x_1, \ldots, x_n] \to k [t^{\frac{1}{d}}][x_1, \ldots, x_n]; \quad x_i \mapsto t^{\frac{e_i}{d}}x_i
\]
induces a bijective map $\epsilon_{\gamma}: \overline{A} _{\infty} \to \overline{A} _{\infty}^{\frac{1}{d}, (\gamma)}$. 
\end{lem}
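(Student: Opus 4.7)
The plan is to describe $\epsilon_\gamma$ explicitly on arcs, verify the equivariance by direct substitution, and then produce a Fourier-type inverse.

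An arc $\overline{\varphi}\in\overline{A}_{\infty}$ is the same data as a tuple of power series $a_i(t):=\overline{\varphi}^*(x_i)\in k[[t]]$ for $i=1,\dots,N$. Unwinding the definition, the map $\epsilon_\gamma$ sends $\overline{\varphi}$ to the arc in $\overline{A}_{\infty}^{1/d}$ whose associated ring homomorphism $k[x_1,\ldots,x_N]\to k[[t^{1/d}]]$ is $x_i\mapsto t^{e_i/d} a_i(t)$, where $a_i(t)=a_i((t^{1/d})^d)\in k[[t^{1/d}]]$.

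The first step is to check that $\epsilon_\gamma(\overline{\varphi})$ lies in $\overline{A}_{\infty}^{1/d,(\gamma)}$, i.e.\ it satisfies $\overline{\psi}(\xi t^{1/d})=\gamma\overline{\psi}(t^{1/d})$. Evaluating on $x_i$, the left-hand side becomes $(\xi t^{1/d})^{e_i}a_i((\xi t^{1/d})^d)=\xi^{e_i}t^{e_i/d}a_i(t)$ using $\xi^d=1$, while the right-hand side is $\xi^{e_i}\cdot t^{e_i/d}a_i(t)$ because $\gamma$ acts on the coordinate function $x_i$ by the scalar $\xi^{e_i}$. The two sides coincide, so $\epsilon_\gamma$ is well defined as a map into $\overline{A}_{\infty}^{1/d,(\gamma)}$.

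The second step is bijectivity. Injectivity is immediate, since $t^{e_i/d}$ is a non-zerodivisor in $k[[t^{1/d}]]$ and so $t^{e_i/d}a_i(t)=t^{e_i/d}a_i'(t)$ forces $a_i=a_i'$. For surjectivity, take $\overline{\psi}\in\overline{A}_{\infty}^{1/d,(\gamma)}$ and expand $\overline{\psi}^*(x_i)=\sum_{j\ge 0}c_{ij}(t^{1/d})^j$. Applying the Galois-type symmetry $\overline{\psi}(\xi t^{1/d})=\gamma\overline{\psi}(t^{1/d})$ coefficient by coefficient yields $\xi^j c_{ij}=\xi^{e_i}c_{ij}$, hence $c_{ij}=0$ unless $j\equiv e_i\pmod d$. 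Regrouping gives $\overline{\psi}^*(x_i)=t^{e_i/d}\sum_{k\ge 0}c_{i,kd+e_i}t^{k}=t^{e_i/d}a_i(t)$ with $a_i(t)\in k[[t]]$, identifying $\overline{\psi}$ as the image under $\epsilon_\gamma$ of the arc $\overline{\varphi}$ with $\overline{\varphi}^*(x_i)=a_i(t)$.

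There is no real obstacle: the statement is essentially an isotypic decomposition of $k[[t^{1/d}]]$ under $\langle\xi\rangle$. The only point requiring care is matching the character by which $\langle\xi\rangle$ acts on the $i$-th coordinate of an arc in $\overline{A}_{\infty}^{1/d,(\gamma)}$ with the character by which $\gamma$ acts on $x_i$; both equal $\xi^{e_i}$, which is precisely why the exponent $e_i/d$ appears in the definition of $\epsilon_\gamma^*$.
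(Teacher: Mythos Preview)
Your proof is correct and follows essentially the same approach as the paper's: both describe $\epsilon_\gamma$ explicitly on coordinates, note injectivity, and then characterize membership in $\overline{A}_\infty^{1/d,(\gamma)}$ as the condition $\overline{\psi}^*(x_i)\in t^{e_i/d}k[[t]]$. You spell out the surjectivity step via the coefficientwise identity $\xi^j c_{ij}=\xi^{e_i}c_{ij}$, whereas the paper records this equivalence more tersely, but the content is identical.
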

\begin{proof}
First we shall see that $\epsilon_{\gamma} ^*$ induces an injective map 
$\epsilon_{\gamma}: \overline{A} _{\infty} \to \overline{A} _{\infty}^{\frac{1}{d}}$. 
Let $\overline{\varphi} \in \overline{A} _{\infty}$ and let 
$\overline{\varphi}^* : k [t][x_1, \ldots , x_{n}] \to k [[t]]$ be the corresponding $k[t]$-ring homomorphism. 
Then we define $\epsilon_{\gamma} (\overline{\varphi}) \in  \overline{A} _{\infty}^{\frac{1}{d}}$ to be the arc corresponding to 
the $k[t^{\frac{1}{d}}]$-ring homomorphism 
\[
k[t^{\frac{1}{d}}][x_1, \ldots , x_{n}] \to k [[t^{\frac{1}{d}}]]; \quad x_i \mapsto t^{\frac{e_i}{d}} \overline{\varphi}^*(x_i). 
\]
Since the map $\epsilon_{\gamma}: \overline{A} _{\infty} \to \overline{A} _{\infty}^{\frac{1}{d}}$ is injective, 
it is sufficient to show that its image is $\overline{A} _{\infty}^{\frac{1}{d}, (\gamma)}$. 

Let $\overline{\varphi} \in \overline{A} _{\infty}^{\frac{1}{d}}$ and let 
$\overline{\varphi}^* : k[t^{\frac{1}{d}}][x_1, \ldots , x_{n}] \to k [[t^{\frac{1}{d}}]]$ be the corresponding $k[t^{\frac{1}{d}}]$-ring homomorphism. 
Set $f_i := \overline{\varphi}^*(x_i) \in k [[t^{\frac{1}{d}}]]$. 
Then the condition $\overline{\varphi} \in \overline{A} _{\infty}^{\frac{1}{d}, (\gamma)}$ is equivalent to the condition 
that 
\[
\overline{\varphi}^*(x_i) (\xi t^{\frac{1}{d}}) = \overline{\varphi}^*(\gamma \cdot x_i) (t^{\frac{1}{d}})
\]
holds for each $i$. 
This condition is equivalent to $f_i(t^{\frac{1}{d}}) \in t^{\frac{e_i}{d}} k [[t]]$ since we have 
\begin{itemize}
\item $\overline{\varphi}^*(x_i) (\xi t^{\frac{1}{d}}) = f_i (\xi t^{\frac{1}{d}})$, and
\item $\overline{\varphi}^*(\gamma \cdot x_i) (t^{\frac{1}{d}}) 
= \overline{\varphi}^*(\xi ^{e_i} x_i) (t^{\frac{1}{d}}) = \xi ^{e_i} \overline{\varphi}^*(x_i) (t ^{\frac{1}{d}}) = \xi ^{e_i} f_i(t^{\frac{1}{d}})$. 
\end{itemize}
This equivalence shows that the image of $\epsilon_{\gamma}$ is $\overline{A} _{\infty}^{\frac{1}{d}, (\gamma)}$. 
\end{proof}

\begin{lem}\label{lem:geomquot}
\begin{enumerate}
\item $\epsilon_{\gamma}$ is $G$-equivariant. 

\item There is a natural inclusion $\overline{A} _{\infty}/C_{\gamma} \hookrightarrow (\overline{A} / C_{\gamma}) _{\infty}$. 
\end{enumerate}
\end{lem}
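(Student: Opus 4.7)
The plan is to deduce both parts from direct unwinding of the definitions; no deep tools are required beyond one reference already cited in the preceding section.

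For (1), I would work on the level of ring homomorphisms. Writing the action of $\alpha$ by a matrix, $\alpha^* x_i = \sum_j \alpha_{ij} x_j$, and using $(\alpha \cdot \overline{\varphi})^* = \overline{\varphi}^* \circ \alpha^*$, one computes
\[
\epsilon_\gamma(\alpha \cdot \overline{\varphi})^*(x_i) = t^{e_i/d} \sum_j \alpha_{ij}\, \overline{\varphi}^*(x_j), \qquad
(\alpha \cdot \epsilon_\gamma(\overline{\varphi}))^*(x_i) = \sum_j \alpha_{ij}\, t^{e_j/d}\, \overline{\varphi}^*(x_j).
\]
The equivariance then amounts to the identities $\alpha_{ij}(t^{e_i/d} - t^{e_j/d}) = 0$, which follow from the commutation relation $\alpha\gamma = \gamma\alpha$: since $\gamma = \operatorname{diag}(\xi^{e_1},\ldots,\xi^{e_N})$, the entry-wise identity $\alpha_{ij}\xi^{e_j} = \xi^{e_i}\alpha_{ij}$ forces $\alpha_{ij} = 0$ whenever $e_i \neq e_j$. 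Because the target $\overline{A}_\infty^{1/d,(\gamma)}$ is only stable under the centralizer (cf.\ Lemma \ref{lem:conj2}(3)), the effective content of the statement is $C_\gamma$-equivariance, and the hypothesis $\alpha \in C_\gamma$ is exactly what the matrix calculation requires.

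For (2), the quotient morphism $q : \overline{A} \to \overline{A}/C_\gamma$ is $C_\gamma$-invariant, so the functorially induced morphism $q_\infty : \overline{A}_\infty \to (\overline{A}/C_\gamma)_\infty$ is $C_\gamma$-invariant too, and therefore factors through the orbit set to give a canonical map $\overline{A}_\infty/C_\gamma \to (\overline{A}/C_\gamma)_\infty$. To see that this map is injective, I would invoke \cite[Proposition 12.27(2)]{GW10} applied to the finite group $C_\gamma$ acting on $\overline{A}$; this is the same reference already used in the proof of Proposition \ref{prop:DL2} to obtain $Y_\infty/G \hookrightarrow X_\infty$ for the full quotient, and no modification is required here.

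The main obstacle, such as it is, is only clerical: one must track which of $G$ and $C_\gamma$ acts on which side, and align the matrix calculation with the centralizer condition. The key mathematical input in (2) is a standard result from the literature, and (1) is a short matrix computation, so the proof is essentially routine once these observations are in place.
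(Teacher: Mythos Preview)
Your proof is correct and follows the same approach as the paper: direct unwinding of the group actions for (1) and the reference to \cite[Proposition 12.27(2)]{GW10} for (2). The paper's own proof is a terse two sentences with no computation; your explicit matrix argument and your remark that the effective content is $C_\gamma$-equivariance (since the target $\overline{A}_\infty^{1/d,(\gamma)}$ is only $C_\gamma$-stable, not $G$-stable) usefully clarify a point the paper leaves implicit.
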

\begin{proof}
(1) easily follows from the definition of the $G$-actions on $\overline{A} _{\infty}$ and $\overline{A} _{\infty}^{\frac{1}{d}, (\gamma)}$. 
(2) follows from \cite[Proposition 12.27(2)]{GW10}. 
\end{proof}

\begin{prop}[{\cite[2.7]{DL02}}]\label{prop:quotsing}
$\epsilon _{\gamma} ^*$ induces a $k[t]$-ring homomorphism
\[
\lambda^* _{\gamma}: k[t][x_1, \ldots , x_n]^{G} \to k[t][x_1, \ldots , x_n]^{C_{\gamma}}; \quad x_i \mapsto t^{\frac{e_i}{d}} x_i, 
\]
and a morphism $\lambda_{\gamma}: (\overline{A} / C_{\gamma}) _{\infty} \to A _{\infty}$, and the following diagram commutes. 
\[
  \xymatrix{
 \overline{A}_{\infty} \ar@{->>}[d] \ar[r]^-{\epsilon _{\gamma}\rm{(bij.)}} & \overline{A}_{\infty} ^{\frac{1}{d},(\gamma)} \ar@{->>}[d] \ar[r]^-{\rho _{\gamma}} & A_{\infty} \\
\overline{A}_{\infty}/C_{\gamma}  \ar[r]^-{\rm{(bij.)}} \ar@{^{(}-{>}}[d] & \overline{A}_{\infty} ^{\frac{1}{d},(\gamma)} /C_{\gamma} \ar[ru] & \\
(\overline{A}/C_{\gamma})_{\infty}  \ar@/_30pt/[rruu] _{\lambda _{\gamma}} & & \\
  	}
\]
Moreover, the composite map $\overline{A} _{\infty}/C_{\gamma} \to A _{\infty}$ is bijective over $A _{\infty}^{\rm{g}, (\gamma)}$. 
\end{prop}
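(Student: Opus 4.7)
The plan is to proceed in three steps: (a) verify that $\lambda_{\gamma}^*$ is well-defined with the stated target ring $k[t][x_1,\ldots,x_n]^{C_{\gamma}}$; (b) identify the induced morphism $\lambda_{\gamma}$ on arc spaces and check commutativity of the diagram; and (c) deduce the bijectivity over $A_{\infty}^{\rm{g},(\gamma)}$ from Proposition \ref{prop:lift} together with the bijections already established on the left side of the diagram.

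For step (a), given $f \in k[x_1,\ldots,x_n]^G$, the expression $\epsilon_{\gamma}^*(f) = f(t^{e_1/d}x_1,\ldots,t^{e_n/d}x_n)$ lies a priori in $k[t^{1/d}][x_1,\ldots,x_n]$. I would first check that only integer powers of $t$ can occur: since $f$ is in particular $\langle\gamma\rangle$-invariant and $\gamma$ acts by $x_i \mapsto \xi^{e_i}x_i$, every monomial $x_1^{a_1}\cdots x_n^{a_n}$ appearing in $f$ must satisfy $\sum_i a_i e_i \equiv 0 \pmod{d}$, so the resulting exponent $(\sum_i a_i e_i)/d$ of $t^{1/d}$ is an integer. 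Next I would verify $C_{\gamma}$-invariance of the image: since $\gamma$ is diagonalised with entries $\xi^{e_i}$, every $\alpha \in C_{\gamma}$ preserves the eigenspaces $V_j := \operatorname{span}\{x_i \mid e_i \equiv j \pmod{d}\}$ of $\gamma$, and so the substitution $x_i \mapsto t^{e_i/d}x_i$, which scales each $V_j$ by the single scalar $t^{j/d}$, commutes with the $C_{\gamma}$-action. Hence $\epsilon_{\gamma}^*$ sends $C_{\gamma}$-invariants (and in particular $G$-invariants) into $C_{\gamma}$-invariants, defining $\lambda_{\gamma}^*$; the morphism $\lambda_{\gamma}:(\overline{A}/C_{\gamma})_{\infty} \to A_{\infty}$ is then induced by functoriality of the arc space.

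For step (b), the upper square commutes because $\epsilon_{\gamma}$ is $G$-equivariant by Lemma \ref{lem:geomquot}(1) and hence $C_{\gamma}$-equivariant, so it descends to a bijection between $C_{\gamma}$-quotients. The map from $\overline{A}_{\infty}/C_{\gamma}$ to $(\overline{A}/C_{\gamma})_{\infty}$ is the natural inclusion of Lemma \ref{lem:geomquot}(2), and commutativity of the bottom triangle reduces to an unwinding of definitions: any arc $\overline{\varphi}\in\overline{A}_{\infty}$ corresponds to $\overline{\varphi}^*:k[x_1,\ldots,x_n]\to k[[t]]$, and both routes (through $\rho_{\gamma}\circ\epsilon_{\gamma}$ and through $\lambda_{\gamma}$) produce the same composition $k[t][x_1,\ldots,x_n]^G \xrightarrow{\lambda_{\gamma}^*} k[t][x_1,\ldots,x_n]^{C_{\gamma}} \to k[[t]]$ as the corresponding image arc in $A_{\infty}$.

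For step (c), Proposition \ref{prop:lift}(2) states that $\rho'$ is bijective over $X_{\infty}^{\rm{g}}$. Restricting to the component indexed by $\langle\gamma\rangle$ gives that $\overline{A}_{\infty}^{1/d,(\gamma)}/C_{\gamma} \to A_{\infty}$ is bijective onto $A_{\infty}^{\rm{g},(\gamma)}$, and composing with the $C_{\gamma}$-equivariant bijection $\epsilon_{\gamma}$ on the left yields the claimed bijectivity of $\overline{A}_{\infty}/C_{\gamma} \to A_{\infty}$ over $A_{\infty}^{\rm{g},(\gamma)}$. The main obstacle is the $C_{\gamma}$-invariance portion of step (a): integrality of the $t$-exponent is a one-line consequence of $\gamma$-invariance, but the equivariance argument rests on the structural fact that centralizers of a diagonal matrix preserve each of its eigenspaces, which depends on the normal form for $\gamma$ fixed at the start of the subsection.
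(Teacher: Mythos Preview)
Your proof is correct and follows essentially the same approach as the paper's: both obtain the integrality of the $t$-exponents from $\langle\gamma\rangle$-invariance and deduce the final bijectivity statement from Proposition~\ref{prop:lift}(2). You are simply more explicit than the paper, which omits the verification that the image lies in the $C_{\gamma}$-invariants and the diagram chase; your eigenspace argument for the $C_{\gamma}$-equivariance of the substitution is the natural way to fill this gap.
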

\begin{proof}
Note that $\epsilon _{\gamma} ^*$ induces a $k[t]$-ring homomorphism 
$k[t][x_1, \ldots , x_n]^{\langle \gamma \rangle} \to k[t][x_1, \ldots , x_n]$, 
where $\langle \gamma \rangle \subset G$ is the subgroup generated by $\gamma$. 
Then $\lambda^* _{\gamma}$ is its restriction to $k[t][x_1, \ldots , x_n]^{G} \subset k[t][x_1, \ldots , x_n]^{\langle \gamma \rangle}$. 
Then the second assertion on the bijectivity follows from Proposition \ref{prop:lift}(2). 
\end{proof}

\subsection{Arc spaces of quotient varieties}\label{subsection:singDL}
We take over the notations in Subsection \ref{subsection:DLquot}. 
Suppose that $\overline{X} \subset \overline{A}$ is a $G$-invariant subvariety. 
In this subsection, we study the arc space of the quotient variety $X := \overline{X} /G$. 

Let $I_X \subset k[x_1, \ldots , x_n] ^G$ be the defining ideal of $X$ in $A = \operatorname{Spec} k[x_1, \ldots, x_n] ^G$. 
We denote by the same character $I_X$ the ideal of $k[t][x_1, \ldots , x_n] ^G$ generated by the original $I_X$. 
We denote by 
\[
\widetilde{I}_X ^{(\gamma)} \subset k[t][x_1, \ldots , x_n] ^{C_{\gamma}}, \quad 
\overline{I}_X ^{(\gamma)} \subset k[t][x_1, \ldots , x_n]
\]
the ideals generated by $\lambda _{\gamma} ^*(I_X)$ and $\overline{\lambda} _{\gamma} ^*(I_X)$, respectively, 
where we set $\overline{\lambda} _{\gamma} ^*$ as the composition of $\lambda _{\gamma} ^*$ and 
the inclusion $k [t][x_1, \ldots, x_n]^{C_{\gamma}} \to k[t][x_1, \ldots, x_n]$. 
Then we have the following commutative diagram. 
\[
  \xymatrix{
k[t][x_1, \ldots, x_n]^{G} \ar[r] ^{\lambda _{\gamma} ^*} _{x_i \mapsto t^{\frac{e_i}{d}} x_i} \ar@/^20pt/[rr]^{\overline{\lambda} _{\gamma} ^*} \ar@{->>}[d] & k[t][x_1, \ldots, x_n]^{C_{\gamma}} \ar@{->>}[d] \ar[r] & k[t][x_1, \ldots, x_n] \ar@{->>}[d] \\
k[t][x_1, \ldots, x_n]^{G}/I_X \ar[r] ^{\lambda _{\gamma} ^{X*}} & k[t][x_1, \ldots, x_n]^{C_{\gamma}} / \widetilde{I}_X ^{(\gamma)} \ar[r] & k[t][x_1, \ldots, x_n] / \overline{I}_X ^{(\gamma)}
  }
\]

\noindent
We define the arc spaces $\widetilde{X}_{\infty} ^{(\gamma)}$ and $\overline{X}_{\infty} ^{(\gamma)}$ as follows 
(see Subsection \ref{subsection:ktjet} for the definition of the arc spaces for $k[t]$-schemes). 
\begin{align*}
\widetilde{X}_{\infty} ^{(\gamma)} &:= \left( \operatorname{Spec} k[t][x_1, \ldots, x_n]^{C_{\gamma}} / \widetilde{I}_X ^{(\gamma)} \right)_{\infty}, \\
\overline{X}_{\infty} ^{(\gamma)} &:= \left( \operatorname{Spec} k[t][x_1, \ldots, x_n] / \overline{I}_X ^{(\gamma)} \right)_{\infty}. 
\end{align*}

\noindent
Then we have the following diagram of arc paces: 
\[
\xymatrix{
A_{\infty} & (\overline{A}/C_{\gamma})_{\infty} \ar[l]_{\lambda _{\gamma}} & \overline{A}_{\infty} \ar[l] \ar@/_20pt/[ll]_{\overline{\lambda} _{\gamma}} \\
X_{\infty} \ar@{^{(}-{>}}[u] & \widetilde{X}_{\infty} ^{(\gamma)} \ar[l] \ar@{^{(}-{>}}[u] & \overline{X}_{\infty} ^{(\gamma)} \ar[l] \ar@{^{(}-{>}}[u]
}
\]
Here, the vertical arrows are closed immersions by Lemma \ref{lem:closed}(1). 
Moreover there is a natural injective map $\overline{X}_{\infty} ^{(\gamma)} / C_{\gamma} \hookrightarrow \widetilde{X}_{\infty} ^{(\gamma)}$ 
(cf.\ Lemma \ref{lem:geomquot}(2)). 

\begin{prop}\label{prop:hyperquot}
The ring homomorphism $\lambda _{\gamma} ^*$ induces a morphism $\lambda _{\gamma} ^X : \widetilde{X}_{\infty} ^{(\gamma)} \to X_{\infty}$. 
Moreover, the composition $\overline{X}_{\infty} ^{(\gamma)} / C_{\gamma} \hookrightarrow \widetilde{X}_{\infty} ^{(\gamma)} \to X_{\infty}$ 
is bijective over $X_{\infty} \cap A_{\infty}^{\rm{g}, (\gamma)}$. 
\end{prop}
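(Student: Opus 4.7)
The first assertion is essentially built into the construction: by definition, $\widetilde{I}_X^{(\gamma)}$ is the ideal of $k[t][x_1,\ldots,x_n]^{C_\gamma}$ generated by $\lambda_\gamma^*(I_X)$, so the $k[t]$-ring homomorphism $\lambda_\gamma^*$ descends to the quotients, giving $\lambda_\gamma^{X*}: k[t][x_1,\ldots,x_n]^G/I_X \to k[t][x_1,\ldots,x_n]^{C_\gamma}/\widetilde{I}_X^{(\gamma)}$. Taking $\operatorname{Spec}$ and applying the arc functor (together with the identification $(X \times_{\operatorname{Spec} k} \operatorname{Spec} k[t])_\infty \simeq X_\infty$ from Remark~\ref{rmk:bc}) produces the claimed morphism $\lambda_\gamma^X$.

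For the second assertion, my plan is to restrict the bijectivity result of Proposition~\ref{prop:quotsing} from the ambient affine space to the subvariety. The key set-theoretic identity to establish is
\[
\overline{\lambda}_\gamma^{-1}(X_\infty) = \overline{X}_\infty^{(\gamma)} \quad \text{inside } \overline{A}_\infty.
\]
This is a direct translation of definitions: an arc $\overline{\varphi} \in \overline{A}_\infty$ satisfies $\overline{\lambda}_\gamma(\overline{\varphi}) \in X_\infty$ iff $\overline{\varphi}^* \circ \overline{\lambda}_\gamma^*$ annihilates $I_X$, iff $\overline{\varphi}^*$ annihilates the ideal $\overline{I}_X^{(\gamma)}$ generated by $\overline{\lambda}_\gamma^*(I_X)$, iff $\overline{\varphi} \in \overline{X}_\infty^{(\gamma)}$. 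Since $\overline{X}_\infty^{(\gamma)} \subset \overline{A}_\infty$ is $C_\gamma$-stable, this identity passes to the quotient and realizes $\overline{X}_\infty^{(\gamma)}/C_\gamma$ as the preimage of $X_\infty$ under the composition $\overline{A}_\infty/C_\gamma \hookrightarrow (\overline{A}/C_\gamma)_\infty \xrightarrow{\lambda_\gamma} A_\infty$.

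Once this identification is in place, the conclusion is a direct restriction of Proposition~\ref{prop:quotsing}: that proposition says $\overline{A}_\infty/C_\gamma \to A_\infty$ is bijective over $A_\infty^{\rm g,(\gamma)}$, and restricting above the subset $X_\infty \cap A_\infty^{\rm g,(\gamma)}$ yields the desired bijection. The last step is to verify that this restricted map really coincides with the composition $\overline{X}_\infty^{(\gamma)}/C_\gamma \hookrightarrow \widetilde{X}_\infty^{(\gamma)} \xrightarrow{\lambda_\gamma^X} X_\infty$ of the proposition; this follows by inspection of the commutative diagram immediately preceding the statement, since $\lambda_\gamma^X$ is by construction the restriction of $\lambda_\gamma$ to the closed subset $\widetilde{X}_\infty^{(\gamma)} \subset (\overline{A}/C_\gamma)_\infty$.

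I do not anticipate a serious obstacle; the whole argument is essentially bookkeeping, reducing the hyperquotient case to the quotient case already handled in Proposition~\ref{prop:quotsing}. The only point requiring some care is to keep straight the three ambient arc spaces linked by $\overline{A}_\infty \to (\overline{A}/C_\gamma)_\infty \to A_\infty$ together with their respective closed subsets $\overline{X}_\infty^{(\gamma)}$, $\widetilde{X}_\infty^{(\gamma)}$, $X_\infty$, and to exploit the $C_\gamma$-equivariance of all the relevant inclusions when passing between them.
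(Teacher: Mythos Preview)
Your proposal is correct and follows essentially the same route as the paper: both reduce the second assertion to Proposition~\ref{prop:quotsing} by identifying $\overline{X}_\infty^{(\gamma)}$ and $\widetilde{X}_\infty^{(\gamma)}$ as the preimages of $X_\infty$ under $\overline{\lambda}_\gamma$ and $\lambda_\gamma$ respectively. The only cosmetic difference is that the paper cites Lemma~\ref{lem:closed}(2) for the preimage identification, whereas you unwind it directly; your argument is otherwise a faithful (and slightly more detailed) version of the paper's.
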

\begin{proof}
The first assertion is straightforward. 

We denote $\lambda _{\gamma}: (\overline{A}/C_{\gamma})_{\infty} \to A_{\infty}$ and 
$\overline{\lambda} _{\gamma} : \overline{A}_{\infty} \to A_{\infty}$. 
We can identify $\overline{X}_{\infty} ^{(\gamma)}$, $\widetilde{X}_{\infty} ^{(\gamma)}$ and $X_{\infty}$ 
with the closed subspaces of $\overline{A}_{\infty}$, $(\overline{A}/C_{\gamma})_{\infty}$ and $A_{\infty}$, respectively, 
and under these identifications we have
\[
\lambda _{\gamma} ^{-1} (X_{\infty}) = \widetilde{X}_{\infty} ^{(\gamma)}, \qquad 
\overline{\lambda} _{\gamma} ^{-1} (X_{\infty}) = \overline{X}_{\infty} ^{(\gamma)}
\] 
by Lemma \ref{lem:closed}(2). Therefore the second assertion follows from Proposition \ref{prop:quotsing}. 
\end{proof}

\begin{rmk}\label{rmk:Yas16}
In \cite{Yas16}, Yasuda also generalizes the theory of Denef and Loeser to singular varieties. 
The construction in \cite{Yas16} is intrinsic and more general, and it works even in positive characteristics. 
The propositions in this section are covered in the paper \cite{Yas16}. 
The correspondence between the notations in this section and \cite{Yas16} is described below.

Let $\gamma \in G$, and let $E$ be the $G$-cover of $D = \operatorname{Spec} k[[t]]$ corresponding to $\gamma$. 
Then $\overline{X}^{\frac{1}{d},(\gamma)} _{\infty}$ in Subsection \ref{subsection:lift} corresponds to
the set $\operatorname{Hom}_D ^G (E,V)$ of $G$-equivariant $D$-homomorphisms in \cite[Section 3]{Yas16} 
when $V = \overline{X} \times _{k} D$. 
Furthermore, $\overline{X}^{\frac{1}{d},(\gamma)} _{\infty}/C_{\gamma}$ corresponds to
$J_{\infty}^{G,E} V := \operatorname{Hom}_D ^G (E,V)/C_G(H)$ in \cite{Yas16}
when $H = \langle \gamma \rangle$ is the subgroup of $G$ generated by $\gamma$. 

The following diagram in Subsection \ref{subsection:DLquot}
\[
  \xymatrix{
 \overline{A}_{\infty} \ar@{->>}[d] \ar[r]^-{\epsilon _{\gamma}\rm{(bij.)}} & \overline{A}_{\infty} ^{\frac{1}{d},(\gamma)} \ar@{->>}[d] \ar[r]^-{\rho _{\gamma}} & A_{\infty} \\
\overline{A}_{\infty}/C_{\gamma}  \ar[r]^-{\rm{(bij.)}}  & \overline{A}_{\infty} ^{\frac{1}{d},(\gamma)} /C_{\gamma} \ar[ru] & \\
  	}
\]
corresponds to the diagram
\[
  \xymatrix{
J_{\infty} V^{|F|} \ar@{->>}[d] \ar[r]^-{\rm{(bij.)}} & \Xi _F \ar@{->>}[d] \ar[r] & J_{\infty} X \\
J_{\infty} V^{|F|}/C_G(H)  \ar[r]^-{\rm{(bij.)}}  & J_{\infty} ^{G,E} V \ar[ru]_{p_{\infty}} & \\
  	}
\]
in \cite[Section 4]{Yas16} when $V = \overline{A} \times _{k} D$ and $X=V/G$. 

The following diagram in Subsection \ref{subsection:singDL}
\[
\xymatrix{
\overline{A}_{\infty} \ar[r] & \overline{A}_{\infty}/C_{\gamma} \ar[r] & A_{\infty} \\
\overline{X}_{\infty} ^{(\gamma)} \ar[r] \ar@{^{(}-{>}}[u] & \overline{X}_{\infty} ^{(\gamma)}/C_{\gamma} \ar[r] \ar@{^{(}-{>}}[u] & X_{\infty} \ar@{^{(}-{>}}[u]
}
\]
corresponds to 
\[
\xymatrix{
J_{\infty}V^{|F|} \ar[r] & J_{\infty}V^{|F|}/C_{G}(H) \ar[r] & J_{\infty} X \\
J_{\infty}{\sf v}^{|F|} \ar[r] \ar@{^{(}-{>}}[u] & J_{\infty}{\sf v}^{|F|}/C_{G}(H) \ar[r] \ar@{^{(}-{>}}[u] & J_{\infty} {\sf x} \ar@{^{(}-{>}}[u]
}
\]
in \cite[Section 7]{Yas16} when ${\sf v} = \overline{X} \times _k D$ and ${\sf x} = {\sf v}/G$.

We note that ${\sf v}^{|F|}$ in \cite[Section 7]{Yas16} corresponds to 
the dominant component $\overline{X}^{(\gamma)} _{\rm dom}$ of 
$\overline{X}^{(\gamma)} := \operatorname{Spec} \bigr( k[t][x_1, \ldots, x_n] / \overline{I}_X ^{(\gamma)} \bigl)$. 
In Section \ref{section:mld_hyperquot} and later, we will work on $\overline{X}^{(\gamma)}$ itself
instead of ${\sf v}^{|F|} = \overline{X}^{(\gamma)} _{\rm dom}$, although their arc spaces are equal (cf.\ Remark \ref{rmk:Sebag}(3)). 
One of the advantages of working on $\overline{X}^{(\gamma)}$ instead of $\overline{X}^{(\gamma)} _{\rm dom}$ is 
that Lemma \ref{lem:EM8.4} can be applied to $\overline{X}^{(\gamma)}$ in the proof of Theorem \ref{thm:PIA}.
\end{rmk}

\section{Arc space of hyperquotient singularities}\label{section:mld_hyperquot}
In this section, we investigate the minimal log discrepancies of hyperquotient singularities in terms of the arc spaces of $k[t]$-schemes 
(Theorem \ref{thm:mld_hyperquot}). 

\subsection{Arc spaces of quotient singularities}\label{subsection:arc_quot}
In this subsection, we study the arc spaces of quotient singularities. 

Let $d$ be a positive integer and let $\xi$ be a primitive $d$-th root of unity in $k$. 
Let $G \subset \operatorname{GL}_N( k)$ be a finite group with order $d$ which acts on 
$\overline{A} = \mathbb{A}^N _k = \operatorname{Spec} k[x_1,\ldots,x_N]$. 
We denote by 
\[
A := \overline{A}/G
\]
the quotient varieties. Let $Z \subset A$ be the minimal closed subset such that $\overline{A} \to A$ is \'{e}tale outside $Z$. 
We assume that $\operatorname{codim} Z \ge 2$, and hence the quotient map $\overline{A} \to A$ is \'{e}tale in codimension one. 
We fix a positive integer $r$ such that $\omega _A ^{[r]}$ is invertible (cf.\ \cite[2.40]{Kol13}). 

Let $\gamma\in G$ and let $C_\gamma$ be the centralizer of $\gamma$ in $G$. 
We denote by 
\[
A^{(\gamma)} := \overline{A}/C_{\gamma}
\]
the quotient varieties. 
Since $G$ is a finite group, $\gamma$ can be diagonalized 
$\xi^{e_1},\ldots,\xi^{e_N}$ $(0\le e_i\le d-1)$ with a suitable basis $x_1,\ldots,x_N$. 
We set $\operatorname{age}(\gamma) = \frac{1}{d} \sum _{i=1} ^N e_i$ (cf.\ \cite[Definition 3.20]{Kol13}).

Let 
\[
\lambda_{\gamma} ^*: k[t][x_1, \ldots, x_N]^{G}  \to  k[t][x_1, \ldots, x_N]^{C_{\gamma}};\quad 
x_i \mapsto t^{\frac{e_i}{d}}x_i
\]
be the $k[t]$-morphism as in Section \ref{section:DL2}. 
Then, we have the following maps
\[
  \xymatrix{
 k[t][x_1, \ldots, x_N]^{G} \ar[r] ^{\lambda _{\gamma} ^*} _{x_i \mapsto t^{\frac{e_i}{d}}x_i} \ar@/^20pt/[rr]^{\overline{\lambda} _{\gamma} ^*}  &  k[t][x_1, \ldots, x_N]^{C_{\gamma}}  \ar[r]^i &  k[t][x_1, \ldots, x_N],   \\
  }
\]
here $i$ is the inclusion map and $\overline{\lambda}_{\gamma}^*=i\circ\lambda_{\gamma}^*$ is the composite map. 
Then, we have the following morphisms between the corresponding $k[t]$-varieties. 
\[
  \xymatrix{
A '     & {A^{(\gamma)}}' = (\overline{A}/C_{\gamma})'  \ar[l]^{\lambda _{\gamma} \hspace{11mm}} &  \overline{A}' \ar[l]^{\hspace{12mm}q} \ar@/_15pt/[ll]_{\overline{\lambda} _{\gamma} }, 
  }
\]
where we denote the base change $Y' := Y \times _{\operatorname{Spec} k} \operatorname{Spec} k[t]$ for a $k$-variety $Y$.

\begin{lem}[{cf.\ \cite[Lemma 6.5]{Yas16}}]\label{lem:age1}
Let $\alpha \in \overline{A}_{\infty}$ be an arc. 
Set $\alpha ':= (\lambda _{\gamma} \circ q)_{\infty}(\alpha)$. 
Then it follows that 
\[
\operatorname{ord}_{\alpha} (\operatorname{jac}_{\overline{\lambda} _{\gamma}}) 
= \frac{1}{r} \operatorname{ord}_{\alpha '} (\mathfrak n_{r,A}) +
\operatorname{age} (\gamma). 
\]
\end{lem}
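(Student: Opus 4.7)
The plan is to reduce the formula to a Nash-ideal computation via Lemma \ref{lem:order}(2), and then establish that computation by directly differentiating the substitution defining $\overline{\lambda}_\gamma$.

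First, I would apply Lemma \ref{lem:order}(2) to $\overline{\lambda}_\gamma\colon \overline{A}' \to A'$. Since $\overline{A}' \cong \overline{A} \times_k \operatorname{Spec} k[t]$ is smooth over $k[t]$, Remark \ref{rmk:nash_kt} gives $\mathfrak{n}_{r,\overline{A}'} = \mathcal{O}_{\overline{A}'}$, so the lemma simplifies to
\[
r \operatorname{ord}_\alpha(\operatorname{jac}_{\overline{\lambda}_\gamma}) = \operatorname{ord}_\alpha(\mathfrak{n}_{r,\overline{\lambda}_\gamma}).
\]
Combined with $\mathfrak{n}_{r,A'} = \mathfrak{n}_{r,A}\mathcal{O}_{A'}$ (Remark \ref{rmk:nash_kt}) and $\overline{\lambda}_\gamma \circ \alpha = \alpha'$, the claimed formula reduces to the sheaf identity
\[
\mathfrak{n}_{r,\overline{\lambda}_\gamma} = \bigl(t^{r\operatorname{age}(\gamma)}\bigr) \cdot \overline{\lambda}_\gamma^{-1}(\mathfrak{n}_{r,A'})\mathcal{O}_{\overline{A}'}
\]
in $\mathcal{O}_{\overline{A}'}$. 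Note that $r\operatorname{age}(\gamma) \in \mathbb{Z}_{\ge 0}$, since invertibility of $\omega_A^{[r]}$ forces $\det(\gamma)^r = 1$, i.e., $d \mid r\sum_i e_i$, so the right-hand side is a genuine ideal.

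The core computation establishes this ideal identity in explicit coordinates. Let $y_1,\dots,y_M$ generate $k[x_1,\dots,x_N]^G$ as a $k$-algebra; then $\Omega_{A'/k[t]}^N$ is globally generated by the $N$-wedges $dy_{i_1}\wedge\cdots\wedge dy_{i_N}$. Since $\overline{\lambda}_\gamma^*(y_{i_k}) = y_{i_k}(t^{e_1/d}x_1,\dots,t^{e_N/d}x_N)$, differentiating each factor and pulling out the diagonal scaling via multilinearity of the determinant gives
\[
\overline{\lambda}_\gamma^*\bigl(dy_{i_1}\wedge\cdots\wedge dy_{i_N}\bigr) = t^{\operatorname{age}(\gamma)} \cdot J_{i_1\dots i_N}(t^{e/d}x) \cdot \omega_0,
\]
where $\omega_0 = dx_1\wedge\cdots\wedge dx_N$ and $J_{i_1\dots i_N}(x) = \det\!\bigl[\partial y_{i_k}/\partial x_\ell\bigr]_{k,\ell=1}^N$. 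Taking $r$-fold tensor products and passing them through $\theta_r$ yields
\[
\mathfrak{n}_{r,\overline{\lambda}_\gamma} = \bigl(t^{r\operatorname{age}(\gamma)}\bigr) \cdot \bigl(J_{i_1\dots i_N}(t^{e/d}x)\bigr)^r,
\]
where the superscript $r$ denotes the $r$-th power of the ideal generated by all $N$-minors. Applying the same differentiation to $q\colon \overline{A} \to A$ (using $q^*(y_{i_k}) = y_{i_k}$) identifies $\mathfrak{n}_{r,A}\mathcal{O}_{\overline{A}} = (J_{i_1\dots i_N}(x))^r$ as ideals of $k[x]$.

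The main obstacle is the final matching of $(J_{i_1\dots i_N}(t^{e/d}x))^r$ with $\overline{\lambda}_\gamma^{-1}(\mathfrak{n}_{r,A'})\mathcal{O}_{\overline{A}'}$. The subtlety is that $\mathfrak{n}_{r,A}$ lives in the invariant ring $k[x]^G$, whereas the direct computation produces non-invariant minors $J_{i_1\dots i_N}$, and each single $J_{i_1\dots i_N}(t^{e/d}x)$ may involve fractional powers of $t$. The resolution is that the $r$-fold products of minors are $\det(\gamma)^{-r} = 1$-semi-invariant, hence $G$-invariant; applying the Reynolds averaging operator shows that $\mathfrak{n}_{r,A}$ is generated inside $k[x]^G$ by such $G$-invariant $r$-fold products, whose substitutions under $\overline{\lambda}_\gamma^*$ lie in $k[t][x]$ and produce precisely the generators of $(J_{i_1\dots i_N}(t^{e/d}x))^r$. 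This identification, together with the previous steps, yields the lemma.
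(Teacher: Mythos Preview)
Your proof is correct and takes a genuinely different route from the paper's. The paper base-changes to $k[t^{1/d}]$ and factors $\overline{\lambda}_\gamma$ as the diagonal automorphism $s\colon x_i\mapsto t^{e_i/d}x_i$ of $\overline{C}=\operatorname{Spec}k[t^{1/d}][x]$ followed by the quotient map $t\colon\overline{C}\to C=\overline{C}/G$. Additivity (Lemma~\ref{lem:additive}) then gives $\operatorname{ord}(\operatorname{jac}_s)=\operatorname{age}(\gamma)$ by a one-line Jacobian computation, while $\operatorname{ord}(\operatorname{jac}_t)=\tfrac{1}{r}\operatorname{ord}_{\alpha'}(\mathfrak n_{r,A})$ follows from the fact that $t$ is \'etale in codimension one, so that $t^*\omega_{C/k[t^{1/d}]}^{[r]}\to\omega_{\overline{C}/k[t^{1/d}]}^{[r]}$ is an isomorphism. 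You instead stay over $k[t]$ and prove the sheaf identity $\mathfrak n_{r,\overline{\lambda}_\gamma}=(t^{r\operatorname{age}(\gamma)})\cdot\overline{\lambda}_\gamma^*(\mathfrak n_{r,A})\mathcal{O}_{\overline{A}'}$ by an explicit computation with generators $y_i$ of $k[x]^G$; the integrality of the $t$-exponents is handled by the observation that the $N$-minors $J_I$ are $\det$-semi-invariants, so their $r$-fold products are $G$-invariant (hence $\gamma$-invariant, hence their substitution by $x_i\mapsto t^{e_i/d}x_i$ lies in $k[t][x]$). The paper's decomposition is more conceptual and isolates the age contribution geometrically; your argument is more elementary and avoids the auxiliary base change, at the cost of tracking semi-invariance by hand. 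One minor point: the appeal to the Reynolds operator is unnecessary, since the $r$-fold products $\prod_j J_{I_j}$ are already $G$-invariant and generate $\mathfrak n_{r,A}$ directly from the definition of $\eta_r$; also, your notation $(J_I(t^{e/d}x))^r$ for an ideal whose individual generators may involve fractional powers of $t$ is a little loose, but the intended meaning (the ideal in $k[t][x]$ generated by the $G$-invariant $r$-fold products) is clear and correct.
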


\begin{proof}
This follows from \cite[Lemma 6.5]{Yas16} (cf.\ Remark \ref{rmk:Yas16}). 
We note that ${\bf v}_V(E)$ in \cite[Lemma 6.5]{Yas16} is equal to $\operatorname{age}(\gamma)$ 
when $E$ is the $G$-cover of $\operatorname{Spec} k[[t]]$ corresponding to the conjugacy class of $\gamma$ 
(cf.\ \cite[Lemma 4.3]{WY15}). 

For readers' convenience, we give a proof in our setting below. 

Let 
\[
C = \operatorname{Spec} k[t^\frac{1}{d}][x_1,\ldots,x_N]^G, \quad
\overline{C} = \operatorname{Spec} k[t^\frac{1}{d}][x_1,\ldots,x_N]. 
\]
We denote by the same character $\overline{\lambda} _{\gamma}$ for the $k[t^{\frac{1}{d}}]$-morphism 
$\overline{C} \to C$ induced by the original $\overline{\lambda} _{\gamma}: \overline{A}' \to A'$. 
Then $\overline{\lambda} _{\gamma}: \overline{C} \to C$ is decomposed as 
\[
\xymatrix{
\overline{C} \ar[r]^s & \overline{C} \ar[r]^t & C 
}
\]
which correspond to $k[t^{\frac{1}{d}}]$-ring homomorphisms
\[
  \xymatrix{
k[t^\frac{1}{d}][x_1, \ldots, x_N] & k[t^\frac{1}{d}][x_1, \ldots, x_N] \ar[l]^{t^{\frac{e_i}{d}}x_i \leftarrow\!\shortmid x_i} & k[t^\frac{1}{d}][x_1, \ldots, x_N]^{G}. \ar@{^{(}-{>}}[l] 
  }
\]
Let $\beta : \operatorname{Spec} k[[t^\frac{1}{d}]] \to \overline{C}$ be the lift of $\alpha$. 
Then we have the following diagram. 
\[
\xymatrix{
\operatorname{Spec} k[[t]] \ar[r]^(.6){\alpha} & \overline{A}' \ar[r]^q \ar@/^20pt/[rr]^{\overline{\lambda}_{\gamma}} & A^{(\gamma)'} \ar[r]^{\lambda _{\gamma}} & A' \ar[r] & \operatorname{Spec} k[t] \\
\operatorname{Spec} k[[t^{\frac{1}{d}}]] \ar[u] \ar[r]_(.6){\beta} & \overline{C} \ar[u] \ar[r]_s \ar@/_20pt/[rr]_{\overline{\lambda}_{\gamma}} & \overline{C} \ar[r]_t & C \ar[u] \ar[r] & \operatorname{Spec} k[t^{\frac{1}{d}}] \ar[u]
}\]
By abuse of notation, we define the order $\operatorname{ord}_{\beta} ( \operatorname{jac}_{\overline{\lambda} _{\gamma}} )$ by
\[
\operatorname{ord}_{\beta} ( \operatorname{jac}_{\overline{\lambda} _{\gamma}} ) := 
\frac{1}{d} \operatorname{length} \Bigl( \operatorname{Coker} \bigl( (\overline{\lambda}_{\gamma} \circ \beta)^* \Omega _{C/k[t^{1/d}]} \to 
\beta ^* \Omega _{\overline{C}/k[t^{1/d}]} \bigr) \Bigr), 
\]
where the length is considered as a $k[[t^{\frac{1}{d}}]]$-module. 
Here, we note that $\Omega _{\overline{C}/k[t^{1/d}]}$ is locally free because $\overline{C}$ is smooth over $k[t^{\frac{1}{d}}]$. 
Since 
\begin{align*}
&\operatorname{Coker} \Bigl( (\overline{\lambda}_{\gamma} \circ \beta)^* \Omega _{C/k[t^{1/d}]} \to \beta ^* \Omega _{\overline{C}/k[t^{1/d}]} \Bigr) \\ 
&\simeq 
\operatorname{Coker} \Bigl( (\overline{\lambda}_{\gamma} \circ \alpha)^* \Omega _{A'/k[t]} \to \alpha ^* \Omega _{\overline{A}'/k[t]} \Bigr) \otimes_{k[[t]]} k[[t^{\frac{1}{d}}]], 
\end{align*}
we have
\[
\operatorname{ord}_{\alpha} (\operatorname{jac}_{\overline{\lambda} _{\gamma}}) = \operatorname{ord}_{\beta} (\operatorname{jac}_{\overline{\lambda} _{\gamma}}). 
\]
We also define $\operatorname{ord}_{\beta} (\operatorname{jac}_{s})$ and $\operatorname{ord}_{\beta '} (\operatorname{jac}_{t})$ for $\beta ' := s_{\infty} (\beta)$ by the same way. 
Then we have
\[
\operatorname{ord}_{\beta} (\operatorname{jac}_{\overline{\lambda} _{\gamma}}) =
\operatorname{ord}_{\beta} (\operatorname{jac}_{s}) + \operatorname{ord}_{\beta '} (\operatorname{jac}_{t}), \quad 
\operatorname{ord}_{\beta} (\operatorname{jac}_{s}) = \operatorname{age} (\gamma)
\]
by Lemma \ref{lem:additive} and an easy calculation. 
Then it is sufficient to show that $\operatorname{ord}_{\beta '} (\operatorname{jac}_{t}) = \frac{1}{r} \operatorname{ord}_{\alpha '} (\mathfrak n_{r,A})$. 

In the following commutative diagram
\[
\xymatrix{
\bigl( \Omega ^N _{\overline{C}/k[t^{1/d}]} \bigr)^{\otimes r} \ar[d]_{\simeq} & t^* \bigl( \Omega ^N _{C/k[t^{1/d}]} \bigr)^{\otimes r} \ar[d]\ar[l] \\
\bigl( \omega _{\overline{C}/k[t^{1/d}]} \bigr) ^{[r]} & t^* \bigl( \omega _{C/k[t^{1/d}]} \bigr)^{[r]} \ar[l]^{\simeq}, 
}
\]
the vertical map $\bigl( \Omega ^N _{\overline{C}/k[t^{1/d}]} \bigr)^{\otimes r} \to \bigl( \omega _{\overline{C}/k[t^{1/d}]} \bigr) ^{[r]}$ 
is an isomorphism because $\overline{C}$ is smooth over $k[t^{\frac{1}{d}}]$. 
Furthermore, $t^* \bigl( \omega _{C/k[t^{1/d}]} \bigr)^{[r]} \to \bigl( \omega _{\overline{C}/k[t^{1/d}]} \bigr) ^{[r]}$ 
is an isomorphism since $t$ is \'{e}tale in codimension one. 
By combining with Lemma \ref{lem:order}(2), we conclude that 
\[
r \operatorname{ord}_{\beta '} (\operatorname{jac}_{t}) = \operatorname{ord}_{\alpha '} (\mathfrak n_{r,A}). 
\]
We complete the proof. 
\end{proof}

\subsection{Arc spaces of hyperquotient singularities}\label{subsection:arc_hyperquot}
In this subsection, we study the arc spaces of hyperquotient singularities. 

Let $\xi$, $G$, $\gamma$, $C_\gamma$, 
$\overline{A}$, $A$, $Z$, $A^{(\gamma)}$, $r$, 
$\lambda_{\gamma}^*$, $i$, 
$\overline{\lambda}_{\gamma}^*$,
$\lambda_{\gamma}$, $q$, 
$\overline{\lambda}_{\gamma}$
be as in Subsection \ref{subsection:arc_quot}.
Let $f_1,\ldots,f_c \in k[x_1, \ldots, x_N]^{G}$ be a regular sequence which is contained in the maximal ideal at the origin. 
We set 
\[
B := \operatorname{Spec} k[x_1, \ldots, x_N]^{G}/(f_1, \ldots , f_c), \quad 
\overline{B} := \operatorname{Spec} k[x_1, \ldots, x_N]/(f_1, \ldots , f_c). 
\]
Suppose that $B$ is normal. 
Note that $\omega _{B} ^{[r]}$ is invertible since $\omega _A ^{[r]}$ is invertible. 
We define ideals $I$, $\widetilde{I}^{(\gamma)}$ and $\overline{I}^{(\gamma)}$ as 
\begin{align*}
I &:= (f_1,\ldots,f_c)\subset k[t][x_1, \ldots, x_N]^{G}, \\
\widetilde{I}^{(\gamma)} &:= \bigl( \lambda^* _{\gamma} (f_1),\ldots,\lambda^* _{\gamma} (f_c) \bigr) \subset k[t][x_1, \ldots, x_N]^{C_{\gamma}}, \\
\overline{I}^{(\gamma)} &:= \bigl( \overline{\lambda}^* _{\gamma} (f_1),\ldots,\overline{\lambda}^* _{\gamma} (f_c) \bigr) \subset k[t][x_1, \ldots, x_N]. 
\end{align*}
We define $k[t]$-schemes $B'$, $\widetilde{B}^{(\gamma)}$ and $\overline{B}^{(\gamma)}$ as follows
\begin{align*}
B' &= \operatorname{Spec}  k[t][x_1,\ldots,x_N]^G/I, \\
\widetilde{B}^{(\gamma)} &= \operatorname{Spec}  k[t][x_1,\ldots,x_N]^{C_{\gamma}}/ \widetilde{I}^{(\gamma)}, \\
\overline{B}^{(\gamma)} &= \operatorname{Spec}  k[t][x_1,\ldots,x_N]/ \overline{I}^{(\gamma)}. 
\end{align*}
Then we have the following diagram with the induced $k[t]$-morphisms $\mu_{\gamma} ^*$, $\overline{\mu}_{\gamma} ^*$ and $j$. 
\[
  \xymatrix{
 k[t][x_1, \ldots, x_N]^{G} \ar[r]_{\lambda _{\gamma} ^*} \ar@/^20pt/[rr]^{\overline{\lambda} _{\gamma} ^*} \ar@{->>}[d] &  k[t][x_1, \ldots, x_N]^{C_{\gamma}} \ar@{->>}[d] \ar[r]_i &  k[t][x_1, \ldots, x_N] \ar@{->>}[d] \\
 k[t][x_1, \ldots, x_N]^{G}/I \ar[r] ^{\mu _{\gamma} ^*} \ar@/_20pt/[rr]_{\overline{\mu} _{\gamma}^* }&  k[t][x_1, \ldots, x_N]^{C_{\gamma}} / \widetilde{I}^{(\gamma)} \ar[r]^j &  k[t][x_1, \ldots, x_N] / \overline{I}^{(\gamma)}
  }
\]
Furthermore, we have the following morphisms between the corresponding $k[t]$-schemes. 
\[
  \xymatrix{
A' & {A^{(\gamma)}}' \ar[l]^{\lambda _{\gamma}} & \overline{A}' \ar[l]^-q   \ar@/_18pt/[ll]_{\overline{\lambda} _{\gamma}} \\
B' \ar@{^{(}->}[u]^{\sigma} & \widetilde{B}^{(\gamma)} \ar@{^{(}->}[u] \ar[l]_{\mu _{\gamma}} & \overline{B}^{(\gamma)} \ar[l]_-p \ar@/^18pt/[ll]^{\overline{\mu} _{\gamma}} \ar@{^{(}->}[u]_{\tau}
  }
\]

First we give an easy observation on the intersection of $B$ and $Z$. 
\begin{lem}\label{lem:purity}
It follows that $\operatorname{codim}_B (Z \cap B) \ge 2$. 
In particular $\overline{B}\to B$ is  \'{e}tale in codimension one, and $\overline{B}$ is normal.
\end{lem}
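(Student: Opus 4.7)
The plan is to prove that every codimension-one point of $B$ lies outside $Z$, from which $\operatorname{codim}_B(Z \cap B) \ge 2$ follows immediately (since $B$ is irreducible and hence $Z \cap B$ cannot be all of $B$, while any codimension-one component of $Z \cap B$ would have its generic point be a codimension-one point of $B$ contained in $Z$). To this end, let $\mathfrak{p}$ denote a codimension-one point of $B$, viewed as a prime of $R := k[x_1,\ldots,x_N]^G$ containing $J := (f_1,\ldots,f_c)$. Since $R$ is Cohen-Macaulay (by the Hochster-Eagon theorem applied to the finite group $G$ in characteristic zero) and $f_1,\ldots,f_c$ is a regular sequence in $R$, we obtain $\operatorname{ht}(\mathfrak{p}) = c + \operatorname{ht}(\mathfrak{p}/J) = c+1$, so $R_\mathfrak{p}$ is Cohen-Macaulay of dimension $c+1$. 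Because $B$ is normal, $B_\mathfrak{p} = R_\mathfrak{p}/JR_\mathfrak{p}$ is a DVR; lifting a uniformizer together with $f_1,\ldots,f_c$ produces $c+1$ generators of the maximal ideal of $R_\mathfrak{p}$, and it follows that $R_\mathfrak{p}$ is a regular local ring, i.e., $A$ is smooth at $\mathfrak{p}$.

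Next, to conclude $\mathfrak{p} \not\in Z$, I would invoke the local form of the Chevalley-Shephard-Todd theorem. Pick a prime $\overline{\mathfrak{p}}$ of $\overline{R} := k[x_1,\ldots,x_N]$ lying over $\mathfrak{p}$, and let $H \subset G$ be its stabilizer. The standard identification $\widehat{R_\mathfrak{p}} \simeq \bigl(\widehat{\overline{R}_{\overline{\mathfrak{p}}}}\bigr)^H$ realizes the regular local ring $\widehat{R_\mathfrak{p}}$ (completion preserves regularity) as the ring of invariants of the finite group $H$ acting on the complete regular local ring $\widehat{\overline{R}_{\overline{\mathfrak{p}}}}$. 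By Chevalley-Shephard-Todd, $H$ must be generated by pseudo-reflections in $\widehat{\overline{R}_{\overline{\mathfrak{p}}}}$. However, for any nontrivial $g \in G$, the fixed locus of $g$ in $\mathbb{A}^N$ is a linear subspace of codimension $\ge 2$ by the standing hypothesis that $G$ acts freely in codimension one; consequently $G$ contains no pseudo-reflections, forcing $H = \{e\}$. Hence $\overline{A} \to A$ is étale at $\mathfrak{p}$, so $\mathfrak{p} \not\in Z$, which completes the proof of the codimension inequality.

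For the remaining assertions, note that $\overline{B} = \overline{A} \times_A B$ as schemes, so $\overline{B} \to B$ is the base change of $\overline{A} \to A$ and is étale over the open set $B \setminus (Z \cap B)$, whose complement has codimension $\ge 2$; thus $\overline{B} \to B$ is étale in codimension one. Finally, the ideal $(f_1,\ldots,f_c)$ has height $c$ in $\overline{R}$ (heights are preserved by the finite integral extension $R \hookrightarrow \overline{R}$), so $\overline{B}$ is a complete intersection of codimension $c$ in the smooth variety $\overline{A}$, hence Cohen-Macaulay and in particular $S_2$. Combining étaleness in codimension one with the fact that $B$ is regular in codimension one (by normality) shows that $\overline{B}$ is regular in codimension one, and Serre's criterion then gives normality of $\overline{B}$. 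The one delicate ingredient is the application of the local Chevalley-Shephard-Todd theorem; all other steps are routine commutative algebra.
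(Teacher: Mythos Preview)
Your proof has the same overall architecture as the paper's: show that $A$ is regular at every codimension-one point $\mathfrak{p}$ of $B$ (your first paragraph is exactly the argument behind the paper's citation of \cite[tag 00NU]{Sta} for $Z\cap B \subset B_{\rm sing}$), and then deduce $\mathfrak{p}\notin Z$. The difference is in this last deduction. The paper argues globally: purity of the branch locus applied to $\overline{A}\to A$ over the regular locus $A_{\rm reg}$ forces $Z\cap A_{\rm reg}$ to be pure of codimension one, but since $\operatorname{codim}_A Z\ge 2$ this intersection is empty, so $Z=A_{\rm sing}$. Then $R_{\mathfrak{p}}$ regular immediately gives $\mathfrak{p}\notin Z$.

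Your route through Chevalley--Shephard--Todd is morally right, but as written it has a gap you yourself flag as ``delicate.'' Two issues: first, the form of CST you invoke requires $H$ to act on a complete regular local ring with trivial action on the residue field, whereas here $\kappa(\overline{\mathfrak{p}})$ is a function field on which the decomposition group $H$ need not act trivially (you should work with the inertia group $I$ instead). Second, and more substantively, you pass silently from ``$G$ has no pseudo-reflections on $\mathbb{A}^N$'' to ``$H$ has no pseudo-reflections on the cotangent space at $\overline{\mathfrak{p}}$.'' This implication is true for inertia elements but needs an argument: if $g\in I$ then $V(\overline{\mathfrak{p}})\subset\operatorname{Fix}(g)$, so after diagonalizing $g$ the non-trivial eigencoordinates all lie in $\overline{\mathfrak{p}}$ and remain linearly independent in $\overline{\mathfrak{p}}/\overline{\mathfrak{p}}^2$ (they form part of a regular system of parameters), forcing the fixed subspace of $g$ on the local cotangent space to have codimension $\ge 2$. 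Alternatively, and more simply, you could specialize to a general closed point of $V(\mathfrak{p})$ and apply CST there, where the residue field is $k$. Either fix works; the paper's purity argument just sidesteps the whole issue.

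Your treatment of the remaining assertions (base change, Cohen--Macaulayness of $\overline{B}$, Serre's criterion) is correct and matches the paper.
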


\begin{proof}
Since $\operatorname{codim}_A (Z) \ge 2$, it follows that $A_{\rm sing} = Z$ by the purity of the branch locus (cf.\ \cite{Nag59}). 
Since $f_1, \ldots, f_{c} \in A$ is a regular sequence, it follows that $Z \cap B \subset B_{\rm sing}$ (cf.\ \cite[tag 00NU]{Sta}). 
By the normality of $B$,  we have $\operatorname{codim}_B (Z \cap B) \ge 2$. 
Therefore  $\overline{B} \to B$ is \'{e}tale in codimension one. 
Furthermore, we have $\operatorname{codim}_{\overline{B}} (\overline{B}_{\rm sing}) \ge 2$. 
Since $\overline{B}$ is Cohen-Macaulay, the normality follows from Serre's criterion. 
\end{proof}

\begin{rmk}\label{rmk:isotrivial}
Let $\overline{B}^{(\gamma)}_{t \not= 0}$ be the open subscheme of $\overline{B}^{(\gamma)}$ defined by $t \not = 0$. 
We shall see in this remark that we have a surjective \'{e}tale morphism 
\[
(\mathbb{A}^1 \setminus \{ 0 \}) \times  \overline{B} \to \overline{B}^{(\gamma)}_{t \not= 0}. 
\]

First, we have the following natural morphisms. 
\[
\xymatrix{
\overline{B}^{(\gamma)} = \operatorname{Spec} \left( k[t][x_1, \ldots , x_N]/ \bigl( \overline{\lambda}^* _{\gamma} (f_1),\ldots,\overline{\lambda}^* _{\gamma} (f_c) \bigr) \right) \\
\overline{B}^{(\gamma)}_{t \not= 0} := \operatorname{Spec} \left( k[t, t^{-1}][x_1, \ldots , x_N]/ \bigl( \overline{\lambda}^* _{\gamma} (f_1),\ldots,\overline{\lambda}^* _{\gamma} (f_c) \bigr) \right) \ar@{^{(}->}[u] \\
\operatorname{Spec} \left( k[t^{1/d}, t^{-1/d}][x_1, \ldots , x_N]/ \bigl( \overline{\lambda}^* _{\gamma} (f_1),\ldots,\overline{\lambda}^* _{\gamma} (f_c) \bigr) \right) \ar[u]_{\text{\'{e}tale}} \ar[d]^{\simeq} \\
(\mathbb{A}^1 \setminus \{ 0 \}) \times  \overline{B} \simeq \operatorname{Spec} \left( k[t^{1/d}, t^{-1/d}][x_1, \ldots , x_N]/(f_1, \ldots, f_c) \right) 
}
\]
Here the second morphism is \'{e}tale since it is induced by the \'{e}tale homomorphism $k[t, t^{-1}] \to k[t^{\frac{1}{d}}, t^{-\frac{1}{d}}]$. 
The third morphism, which is actually an isomorphism, is induced by the ring isomorphism
\[
k[t^{1/d}, t^{-1/d}][x_1, \ldots , x_N] \to k[t^{1/d}, t^{-1/d}][x_1, \ldots , x_N]; \quad x_i \mapsto t^{\frac{e_i}{d}} x_i, 
\]
whose inverse map is $x_i \mapsto t^{- \frac{e_i}{d}} x_i$. 
Therefore, we have a surjective \'{e}tale morphism 
\[
(\mathbb{A}^1 \setminus \{ 0 \}) \times  \overline{B} \to \overline{B}^{(\gamma)}_{t \not= 0}. 
\]
\end{rmk}

By Remark \ref{rmk:isotrivial}, the $k[t]$-scheme $\overline{B}^{(\gamma)}$ satisfies the condition $(\star \star)_n$ for $n := N - c$. 
Furthermore, $B'$ and $\widetilde{B}^{(\gamma)}$ also satisfy the condition $(\star \star)_n$. 

\begin{rmk}\label{rmk:nonlci}
Note that $\overline{B}^{(\gamma)}$ is not a complete intersection in $\overline{A}'$ in general because 
$\overline{\lambda}^* _{\gamma} (f_1),\ldots,\overline{\lambda}^* _{\gamma} (f_c)$ is not necessarily a regular sequence. 
In fact, $\overline{B}^{(\gamma)}$ is not pure dimensional in general (cf.\ Example \ref{eg:eg1}). Hence we do not have the standard definition of 
the canonical sheaf on $\overline{B}^{(\gamma)}$ (cf.\ Definition \ref{defi:nash_kt}). 
\end{rmk}
We define the following invertible sheaf $L_{\overline{B}^{(\gamma)}}$ on $\overline{B}^{(\gamma)}$ instead. 
\[
L_{\overline{B}^{(\gamma)}} := \overline{\mu} _{\gamma}^* \bigl( \operatorname{det} ^{-1} (I/I^2) \bigr) \otimes _{\mathcal{O}_{\overline{B}^{(\gamma)}}} \tau ^* \omega _{\overline{A}'/k[t]}. 
\]
Here $\operatorname{det} ^{-1} (I/I^2) = \bigl( \bigwedge ^{c} (I/I^2) \bigr)^*$ is an invertible sheaf on $B'$. 
Then $L_{\overline{B}^{(\gamma)}}$ fills the role of the canonical sheaf as follows. 
\begin{lem}\label{lem:L1}
There is a canonical morphism $\eta : \Omega ^n _{\overline{B}^{(\gamma)}/k[t]} \to L_{\overline{B}^{(\gamma)}}$ with the following conditions. 
\begin{enumerate}
\item It follows that $\operatorname{Im} (\eta) = \operatorname{Jac}_{\overline{B}^{(\gamma)}/k[t]} \otimes L_{\overline{B}^{(\gamma)}}$. 

\item There exists a following commutative diagram. 
\[
  \xymatrix{
 \overline{\mu} _{\gamma}^* ( \Omega^n _{B'/k[t]}) \ar[r] \ar[d] & \Omega^n _{\overline{B}^{(\gamma)}/k[t]} \ar[d] \\
 \overline{\mu} _{\gamma}^* \omega _{B'/k[t]} \ar[r] & L _{\overline{B}^{(\gamma)}}.
  }
\]
\end{enumerate}
\end{lem}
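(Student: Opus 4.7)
The plan is to construct $\eta$ by a Koszul-type contraction against the right-exact conormal sequence, verify (1) by an explicit computation in the ambient coordinates, and obtain (2) by performing the analogous construction on $B'$ and comparing the two via the CM adjunction.

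For the construction of $\eta$, I will use the right-exact conormal sequence
\[
\overline{\mu}_\gamma^*(I/I^2) \xrightarrow{\beta} \tau^*\Omega_{\overline{A}'/k[t]} \to \Omega_{\overline{B}^{(\gamma)}/k[t]} \to 0,
\]
where $\beta$ sends the generator of $\overline{\mu}_\gamma^*(I/I^2)$ coming from $f_j$ to $d(\overline{\lambda}_\gamma^*(f_j))$. Since $f_1,\ldots,f_c$ is a regular sequence on $A'$, the sheaf $I/I^2$ is locally free of rank $c$ on $B'$, hence so is its pullback $\overline{\mu}_\gamma^*(I/I^2)$ on $\overline{B}^{(\gamma)}$. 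For a local section $\omega \in \Omega^n_{\overline{B}^{(\gamma)}/k[t]}$, lift it to $\tilde{\omega} \in \bigwedge^n \tau^*\Omega_{\overline{A}'/k[t]}$ and, for a local basis $e_1,\ldots,e_c$ of $\overline{\mu}_\gamma^*(I/I^2)$, set
\[
\eta(\omega) := (e_1 \wedge \cdots \wedge e_c)^{-1} \otimes \bigl(\tilde{\omega} \wedge \beta(e_1) \wedge \cdots \wedge \beta(e_c)\bigr) \in L_{\overline{B}^{(\gamma)}}.
\]
This is independent of the lift because two lifts differ by an element whose wedge with $\beta(e_1)\wedge\cdots\wedge\beta(e_c)$ is a $(c{+}1)$-fold wedge of elements in the image of a rank-$c$ sheaf, hence vanishes; independence of the basis is multilinearity of $\bigwedge^c \beta$.

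For (1), I will work in the coordinates $x_1,\ldots,x_N$: the image of the generator $d\bar{x}_{i_1} \wedge \cdots \wedge d\bar{x}_{i_n}$ under $\eta$ is, up to sign, the $c \times c$ minor of the Jacobian matrix $(\partial \overline{\lambda}_\gamma^*(f_k)/\partial x_j)$ on the complementary columns $\{j_l\}=\{1,\ldots,N\}\setminus\{i_k\}$, times the local generator $(f_1 \wedge \cdots \wedge f_c)^{-1} \otimes (dx_1 \wedge \cdots \wedge dx_N)$ of $L_{\overline{B}^{(\gamma)}}$. These minors generate $\operatorname{Fitt}^n(\Omega_{\overline{B}^{(\gamma)}/k[t]}) = \operatorname{Jac}_{\overline{B}^{(\gamma)}/k[t]}$ from the presentation of $\Omega_{\overline{B}^{(\gamma)}/k[t]}$ via the Jacobian matrix, giving (1).

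For (2), I will perform the analogous Koszul construction on the regular embedding $\sigma: B' \hookrightarrow A'$, obtaining $\nu_{B'}: \Omega^n_{B'/k[t]} \to L_{B'} := \det^{-1}(I/I^2) \otimes \sigma^*\omega_{A'/k[t]}$. Since $A = \overline{A}/G$ is Cohen-Macaulay (Hochster--Eagon in characteristic zero) and $\sigma$ is a regular embedding of codimension $c$, Grothendieck duality gives a canonical isomorphism $L_{B'} \simeq \omega_{B'/k[t]}$ under which $\nu_{B'}$ is identified with the canonical map $\Omega^n_{B'/k[t]} \to \omega_{B'/k[t]}$ of Definition \ref{defi:nash_kt}. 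I then define the bottom map as the composition
\[
\overline{\mu}_\gamma^* \omega_{B'/k[t]} \simeq \overline{\mu}_\gamma^* L_{B'} = \overline{\mu}_\gamma^* \det^{-1}(I/I^2) \otimes \tau^*\overline{\lambda}_\gamma^*\omega_{A'/k[t]} \xrightarrow{\operatorname{id}\otimes \tau^*\kappa} L_{\overline{B}^{(\gamma)}},
\]
where $\kappa: \overline{\lambda}_\gamma^* \omega_{A'/k[t]} \to \omega_{\overline{A}'/k[t]}$ is induced by the cotangent map of $\overline{\lambda}_\gamma$ on the preimage of the smooth locus of $A'$ (using $\omega_{\overline{A}'/k[t]} = \bigwedge^N \Omega_{\overline{A}'/k[t]}$) and extended across its codim-$\geq 2$ complement in $\overline{A}'$ using normality of $\overline{A}'$ together with invertibility of $\omega_{\overline{A}'/k[t]}$. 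Commutativity is then direct: tracing a local section $\overline{\mu}_\gamma^*(d\bar{h}_{l_1} \wedge \cdots \wedge d\bar{h}_{l_n})$ for $h_l \in k[x]^G$ through both paths produces the common element $(f_1 \wedge \cdots \wedge f_c)^{-1} \otimes (d\overline{\lambda}_\gamma^*(h_{l_1}) \wedge \cdots \wedge d\overline{\lambda}_\gamma^*(h_{l_n}) \wedge d\overline{\lambda}_\gamma^*(f_1) \wedge \cdots \wedge d\overline{\lambda}_\gamma^*(f_c))$ in $L_{\overline{B}^{(\gamma)}}$. The main obstacle is managing the non-smoothness of $A'$ in (2)---justifying both the CM adjunction $L_{B'} \simeq \omega_{B'/k[t]}$ and the existence of $\kappa$---which I handle by reflexive arguments, exploiting that $Z \subset A$ has codimension $\geq 2$ and that $\overline{\lambda}_\gamma$ is finite, so singular loci pull back to codim-$\geq 2$ subsets of the smooth scheme $\overline{A}'$.
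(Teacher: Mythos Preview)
Your construction of $\eta$ and the proof of (1) are correct and coincide with the paper's: both build the map from the wedge pairing
\[
\textstyle\bigl(\bigwedge^c \overline{\mu}_\gamma^*(I/I^2)\bigr) \otimes \Omega^n_{\overline{B}^{(\gamma)}/k[t]} \longrightarrow \tau^*\omega_{\overline{A}'/k[t]}
\]
coming from the conormal presentation, and then identify the image with the ideal of $c\times c$ minors of the Jacobian matrix of $\overline{\lambda}_\gamma^*(f_1),\ldots,\overline{\lambda}_\gamma^*(f_c)$, which is $\operatorname{Jac}_{\overline{B}^{(\gamma)}/k[t]}$.

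For (2) your strategy again matches the paper's---use the adjunction isomorphism $\omega_{B'/k[t]}\simeq\det^{-1}(I/I^2)\otimes\sigma^*\omega_{A'/k[t]}$ and tensor with a map $\kappa:\overline{\lambda}_\gamma^*\omega_{A'/k[t]}\to\omega_{\overline{A}'/k[t]}$---but your justification for the existence of $\kappa$ has a gap. You assert that $\overline{\lambda}_\gamma$ is finite and hence that $\overline{\lambda}_\gamma^{-1}(A'_{\mathrm{sing}})$ has codimension $\ge 2$ in $\overline{A}'$, allowing a Hartogs extension. But $\overline{\lambda}_\gamma$ is \emph{not} finite: over $t=0$ it sends $x_i\mapsto 0$ whenever $e_i>0$, so for instance if every $e_i>0$ the entire divisor $\{t=0\}\subset\overline{A}'$ collapses to the origin of $A$, which lies in $Z=A_{\mathrm{sing}}$. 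Thus $\overline{\lambda}_\gamma^{-1}(Z')$ can contain a divisor and the reflexive extension argument does not apply.

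The conclusion is nonetheless correct. One clean fix, parallel to the computation in Lemma~\ref{lem:age1}, is to base change to $k[t^{1/d}]$: there $\overline{\lambda}_\gamma$ factors as the diagonal substitution $s:x_i\mapsto t^{e_i/d}x_i$ on $\overline{C}=\operatorname{Spec} k[t^{1/d}][x_1,\dots,x_N]$ followed by the quotient $t:\overline{C}\to C$, which is \'etale in codimension one. One reads off directly that the induced map on top forms is multiplication by $t^{\operatorname{age}(\gamma)}$ in the natural trivializations, hence regular across $t=0$; this then descends to $k[t]$. Equivalently, work with $\omega^{[r]}$ for $r$ making $\omega_{A'/k[t]}^{[r]}$ invertible and observe that the rational function defining $\kappa^{\otimes r}$ on $\{t\neq 0\}$ is $t^{r\operatorname{age}(\gamma)}$. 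Either way, the key input is a positivity/regularity statement at $t=0$, not a codimension bound.
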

\begin{proof}
First, we shall see that there exists a canonical morphism 
\[
\overline{\mu} _{\gamma}^* \bigl( \operatorname{det} (I/I^2) \bigr) \otimes _{\mathcal{O}_{\overline{B}^{(\gamma)}}} \Omega ^n _{\overline{B}^{(\gamma)}/k[t]} 
\to \tau ^* \omega _{\overline{A}'/k[t]}
\] 
whose image is $\operatorname{Jac}_{\overline{B}^{(\gamma)}/k[t]} \otimes \tau ^* \omega _{\overline{A}'/k[t]}$.
We set 
\[
R := k[t][x_1, \ldots , x_N], \quad \overline{S}:= k[t][x_1, \ldots , x_N] / \overline{I}^{(\gamma)}, \quad S:= k[t][x_1, \ldots , x_N]^G / I. 
\]
Let $N \subset \Omega _{R/k[t]}$ be the $R$-submodule which is generated by $df$ for $f \in \overline{I}^{(\gamma)}$. 
Then we have $\Omega _{\overline{S}/k[t]} \simeq (\Omega _{R/k[t]}/N) \otimes _{R} \overline{S}$. 
Since $\overline{I}^{(\gamma)}$ is generated by $c$ elements, we have $\bigwedge ^{c+1} N = 0$. 
Hence a canonical map $\bigwedge ^c N \otimes _R  \bigwedge ^{N-c} \Omega _{R/k[t]} \to \Omega ^N _{R/k[t]}$ induces 
\[
\bigwedge ^c N \otimes _R \bigwedge ^{N-c} (\Omega _{R/k[t]}/N) \to \Omega ^N _{R/k[t]}. 
\]
By taking $\mathchar`- \otimes _R \overline{S}$ and composing with 
\[
I/I^2 \otimes _S \overline{S} \to \overline{I}^{(\gamma)}/(\overline{I}^{(\gamma)})^2 \xrightarrow{\overline{h} \mapsto d(h) \otimes 1} N \otimes _R \overline{S}, 
\]
we have a canonical map
\[
\Bigl( \bigwedge ^{c} (I/I^2) \otimes _S \overline{S} \Bigr) \otimes _{\overline{S}} \Omega^{N-c} _{\overline{S}/k[t]} \to \Omega ^N _{R/k[t]} \otimes _{R} \overline{S}. 
\]
The $\overline{S}$-module $\bigl( \bigwedge ^{c} (I/I^2) \otimes _S \overline{S} \bigr) \otimes _{\overline{S}} \Omega^{N-c} _{\overline{S}/k[t]}$ is generated 
by the elements of the form 
\[
(\overline{f}_1 \wedge \cdots \wedge \overline{f}_c) \otimes (dx_{i_1} \wedge \cdots \wedge dx_{i_{N-c}}), 
\]
and its image in $\Omega ^N _{R/k[t]} \otimes _{R} \overline{S}$ is 
\[
d \bigl( \overline{\lambda} _{\gamma}^* (f_1) \bigr) \wedge \cdots \wedge d \bigl( \overline{\lambda} _{\gamma}^* (f_c) \bigr) \wedge dx_{i_1} \wedge \cdots \wedge dx_{i_{N-c}}. 
\]
We note that this is equal to 
\[
\pm \Delta \cdot dx_1 \wedge \cdots \wedge dx_N, 
\]
where $\Delta$ is the determinant of the Jacobian matrix respect to 
$\overline{\lambda} _{\gamma}^* (f_i)$ with $1 \le i \le c$ and $\partial x_j$ with $j \in \{1, \ldots , N \} \setminus \{ i_1, \ldots i_{N-c} \}$. 
Hence we have a canonical morphism 
$\overline{\mu} _{\gamma}^* \bigl( \operatorname{det} (I/I^2) \bigr) \otimes _{\mathcal{O}_{\overline{B}^{(\gamma)}}} \Omega ^n _{\overline{B}^{(\gamma)}/k[t]} 
\to \tau ^* \omega _{\overline{A}'/k[t]}$ whose image is $\operatorname{Jac}_{\overline{B}^{(\gamma)}/k[t]} \otimes \tau ^* \omega _{\overline{A}'/k[t]}$. 
We have proved (1). 

For (2), we note that 
a canonical map $\overline{\mu} _{\gamma}^* \omega _{B'/k[t]} \to L _{\overline{B}^{(\gamma)}}$ is induced by 
the isomorphism
\[
\omega _{B'/k[t]} \simeq \operatorname{det}^{-1} ( I / I^2 ) 
\otimes _{\mathcal{O}_{B'}} \sigma ^* \omega _{A'/k[t]} 
\]
obtained by adjunction. Then the commutativity of the diagram is obvious. 
\end{proof}

In the same way as in Definition \ref{defi:nash_kt}, we denote by $\mathfrak{n}'_{1,p}$ and $\mathfrak{n}'_{1,\overline{\mu} _{\gamma}}$ the ideal sheaves on $\overline{B}^{(\gamma)}$ satisfying
\[
\operatorname{Im} \bigl( p^* \Omega^n _{\widetilde{B}^{(\gamma)}/k[t]} \to L_{\overline{B}^{(\gamma)}} \bigr) = \mathfrak{n}'_{1,p} \otimes L_{\overline{B}^{(\gamma)}}, \quad 
\operatorname{Im} \bigl( \overline{\mu} _{\gamma}^* \Omega^n _{B'/k[t]} \to L_{\overline{B}^{(\gamma)}} \bigr) = \mathfrak{n}'_{1,\overline{\mu} _{\gamma}} \otimes L_{\overline{B}^{(\gamma)}}. 
\]
We prove Lemma \ref{lem:L2} and Lemma \ref{lem:age2} on relations on orders which will be used in the proof of Theorem \ref{thm:mld_hyperquot}. 
\begin{lem}\label{lem:L2}
Let $\alpha \in \overline{B}^{(\gamma)} _{\infty}$ be an arc with 
$\operatorname{ord}_{\alpha} \bigl( \operatorname{Jac}_{\overline{B}^{(\gamma)}/k[t]} \bigr) < \infty$. 
Then the following hold. 
\begin{enumerate}
\item $\operatorname{ord}_{\alpha} (\operatorname{jac}_p) + \operatorname{ord}_{\alpha} (\operatorname{Jac}_{\overline{B}^{(\gamma)}/k[t]})
= \operatorname{ord}_{\alpha} (\mathfrak{n}'_{1,p})$. 
\item $\operatorname{ord}_{\alpha} (\operatorname{jac}_{\overline{\mu} _{\gamma}}) + \operatorname{ord}_{\alpha} (\operatorname{Jac}_{\overline{B}^{(\gamma)}/k[t]})
= \operatorname{ord}_{\alpha} (\mathfrak{n}'_{1,\overline{\mu} _{\gamma}})$. 
\end{enumerate}
\end{lem}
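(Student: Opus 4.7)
The plan is to mimic the proof of Lemma \ref{lem:order}(2), using the canonical map $\eta : \Omega^n_{\overline{B}^{(\gamma)}/k[t]} \to L_{\overline{B}^{(\gamma)}}$ of Lemma \ref{lem:L1} as a substitute for the map to $\omega^{[r]}$. For (1), I would consider the composite
\[
p^*\Omega^n_{\widetilde{B}^{(\gamma)}/k[t]} \longrightarrow \Omega^n_{\overline{B}^{(\gamma)}/k[t]} \xrightarrow{\ \eta\ } L_{\overline{B}^{(\gamma)}},
\]
whose image is $\mathfrak{n}'_{1,p}\otimes L_{\overline{B}^{(\gamma)}}$ by definition of $\mathfrak{n}'_{1,p}$. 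Pulling back along $\alpha$ and setting $\alpha' := p_\infty(\alpha)$, I pass to torsion-free quotients (legitimately, since $\alpha^*L_{\overline{B}^{(\gamma)}}$ is an invertible, hence torsion-free, $k[[t]]$-module) to obtain a chain
\[
(\alpha'^*\Omega^n_{\widetilde{B}^{(\gamma)}/k[t]})/T_1 \xrightarrow{\ f\ } (\alpha^*\Omega^n_{\overline{B}^{(\gamma)}/k[t]})/T' \xrightarrow{\ g\ } \alpha^* L_{\overline{B}^{(\gamma)}}
\]
of $k[[t]]$-modules. The desired identity will follow by comparing lengths of cokernels along this chain.

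The hypothesis $\operatorname{ord}_{\alpha}(\operatorname{Jac}_{\overline{B}^{(\gamma)}/k[t]}) < \infty$ combined with Lemma \ref{lem:order}(1) identifies $(\alpha^*\Omega_{\overline{B}^{(\gamma)}/k[t]})/S \simeq k[[t]]^{\oplus n}$, so that the middle term of the chain and $\alpha^* L_{\overline{B}^{(\gamma)}}$ are both free of rank one over $k[[t]]$. By Lemma \ref{lem:L1}(1), the cokernel of $g$ has length $\operatorname{ord}_{\alpha}(\operatorname{Jac}_{\overline{B}^{(\gamma)}/k[t]})$, which is finite, so $g$ must be injective. The additivity of length in the exact sequence $0 \to \operatorname{Coker}(f) \to \operatorname{Coker}(g\circ f) \to \operatorname{Coker}(g) \to 0$ then yields
\[
\operatorname{length}\bigl(\operatorname{Coker}(g\circ f)\bigr) = \operatorname{length}\bigl(\operatorname{Coker}(f)\bigr) + \operatorname{ord}_{\alpha}(\operatorname{Jac}_{\overline{B}^{(\gamma)}/k[t]}),
\]
and the left-hand side equals $\operatorname{ord}_{\alpha}(\mathfrak{n}'_{1,p})$ by construction.

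The main obstacle is to identify $\operatorname{length}(\operatorname{Coker}(f))$ with $\operatorname{ord}_{\alpha}(\operatorname{jac}_p)$. The point is that $\alpha(\eta)$, with $\eta$ the generic point of $\operatorname{Spec} k[[t]]$, lies in the smooth locus of $\overline{B}^{(\gamma)}$ over $k[t]$ (by finiteness of $\operatorname{ord}_{\alpha}(\operatorname{Jac}_{\overline{B}^{(\gamma)}/k[t]})$), hence at a generic point of a dominating component of relative dimension $n$; its image $\alpha'(\eta) = p(\alpha(\eta))$ is therefore a generic point of a dominating component of $\widetilde{B}^{(\gamma)}$, which also has relative dimension $n$. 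Consequently $(\alpha'^*\Omega_{\widetilde{B}^{(\gamma)}/k[t]})/T$ is free of rank $n$, and taking $n$-th exterior powers (a determinant computation for a map of free rank-$n$ modules) shows that its image in the source of $f$ is $k[[t]]$ and that $\operatorname{length}(\operatorname{Coker}(f)) = \operatorname{ord}_{\alpha}(\operatorname{jac}_p)$. If instead $\operatorname{ord}_{\alpha}(\operatorname{jac}_p) = \infty$, the rank of $(\alpha'^*\Omega_{\widetilde{B}^{(\gamma)}/k[t]})/T$ is strictly less than $n$, the composite $g\circ f$ is zero after passing to $n$-th wedges, and so $\operatorname{ord}_{\alpha}(\mathfrak{n}'_{1,p}) = \infty$ as well.

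Part (2) is entirely parallel: I would replace $p$ by $\overline{\mu}_\gamma$ and $\widetilde{B}^{(\gamma)}$ by $B'$, and use the commutative square of Lemma \ref{lem:L1}(2) to factor $\overline{\mu}_\gamma^* \Omega^n_{B'/k[t]} \to L_{\overline{B}^{(\gamma)}}$ through $\Omega^n_{\overline{B}^{(\gamma)}/k[t]}$. Since $B'$ is the base change of the normal variety $B$ from $k$ to $k[t]$, its dominating component has relative dimension $n$ and $(\alpha''^*\Omega_{B'/k[t]})/T''$ is again free of rank $n$ for $\alpha'' := (\overline{\mu}_\gamma)_\infty(\alpha)$, so the same wedge and additivity arguments apply verbatim.
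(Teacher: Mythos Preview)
Your proposal is correct and follows exactly the route the paper intends: the proof of Lemma~\ref{lem:order}(2) carries over once $\omega^{[r]}_{X/k[t]}$ is replaced by $L_{\overline{B}^{(\gamma)}}$ and Lemma~\ref{lem:L1}(1) supplies $\operatorname{Im}(\eta) = \operatorname{Jac}_{\overline{B}^{(\gamma)}/k[t]} \otimes L_{\overline{B}^{(\gamma)}}$ in place of the Nash ideal. Your side discussion of the free rank of $(\alpha'^*\Omega_{\widetilde{B}^{(\gamma)}/k[t]})/T$ is slightly imprecise and in fact unnecessary: the identity $\operatorname{length}(\operatorname{Coker}(f)) = \operatorname{ord}_\alpha(\operatorname{jac}_p)$ follows from the rank of the \emph{image} in $k[[t]]^{\oplus n}$ (via Smith normal form), exactly as in the proof of Lemma~\ref{lem:order}(2), regardless of the rank of the source.
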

\begin{proof}
The same proof of Lemma \ref{lem:order}(2) works by Lemma \ref{lem:L1}(1). 
\end{proof}

\begin{lem}\label{lem:age2}
Let $\alpha \in \overline{B}^{(\gamma)}_{\infty}$ be an arc. 
Set $\alpha ' := \overline{\mu} _{\gamma \infty} (\alpha)$. 
Suppose that $\alpha ' \not \in Z_{\infty}$. 
Then it follows that 
\[
\operatorname{ord}_{\alpha} (\mathfrak{n}'_{1,\overline{\mu}_{\gamma}}) = \frac{1}{r} \operatorname{ord}_{\alpha '} (\mathfrak{n}_{r,B'})
+ 
\operatorname{age}(\gamma).
\]
\end{lem}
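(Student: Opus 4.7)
The plan is to reduce Lemma \ref{lem:age2} to its ambient counterpart Lemma \ref{lem:age1} by exploiting the parallel adjunction descriptions of $\omega^{[r]}_{B'/k[t]}$ and $L^r_{\overline{B}^{(\gamma)}}$. Both sheaves can be written as $\overline{\mu}_\gamma^* \det^{-r}(I/I^2)$ tensored with the pullback of the corresponding canonical sheaf on the ambient smooth space $\overline{A}'$ or the quotient $A'$, so the natural map between them is governed by the pullback of the ambient map $\overline{\lambda}_\gamma^*\omega^{[r]}_{A'/k[t]}\to\omega^r_{\overline{A}'/k[t]}$, whose behavior is precisely what Lemma \ref{lem:age1} controls.

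First I would record the adjunction isomorphism
\[
\omega^{[r]}_{B'/k[t]} \simeq \det{}^{-r}(I/I^2) \otimes \sigma^*\omega^{[r]}_{A'/k[t]}.
\]
This follows from the degree-one version used in the proof of Lemma \ref{lem:L1}(2) by taking $r$-th reflexive powers, together with the identification $(\sigma^*\omega_{A'/k[t]})^{[r]}\simeq \sigma^*\omega^{[r]}_{A'/k[t]}$, which holds because both sides are rank-one reflexive sheaves on $B'$ that agree outside the codimension-$\ge 2$ locus $Z\cap B$ of Lemma \ref{lem:purity}. Pulling back by $\overline{\mu}_\gamma$ and using the commutativity $\sigma \circ \overline{\mu}_\gamma = \overline{\lambda}_\gamma \circ \tau$ together with the definition of $L_{\overline{B}^{(\gamma)}}$, the natural map $\overline{\mu}_\gamma^*\omega^{[r]}_{B'/k[t]}\to L^r_{\overline{B}^{(\gamma)}}$ is identified with $\mathrm{id}\otimes \tau^*c$, where $c:\overline{\lambda}_\gamma^*\omega^{[r]}_{A'/k[t]}\to \omega^r_{\overline{A}'/k[t]}$ is the ambient map. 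Writing $\operatorname{Im}(c) = \mathfrak{J} \cdot \omega^r_{\overline{A}'/k[t]}$, the image of $\mathrm{id}\otimes \tau^*c$ is $\tau^{-1}\mathfrak{J} \cdot L^r_{\overline{B}^{(\gamma)}}$.

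Next I factor the $r$-th tensor power of the map $\overline{\mu}_\gamma^*\Omega^n_{B'/k[t]}\to L_{\overline{B}^{(\gamma)}}$ from Lemma \ref{lem:L1}(2) as the composite
\[
\overline{\mu}_\gamma^*(\Omega^n_{B'/k[t]})^{\otimes r} \to \overline{\mu}_\gamma^*\omega^{[r]}_{B'/k[t]} \to L^r_{\overline{B}^{(\gamma)}}.
\]
The first arrow is the pullback of the map defining $\mathfrak{n}_{r,B'}$, so has image $\overline{\mu}_\gamma^{-1}\mathfrak{n}_{r,B'}$ times the target; the second has image $\tau^{-1}\mathfrak{J}$ times $L^r_{\overline{B}^{(\gamma)}}$ by the previous step. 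Since $L_{\overline{B}^{(\gamma)}}$ is invertible, the composite image equals $(\mathfrak{n}'_{1,\overline{\mu}_\gamma})^r\cdot L^r_{\overline{B}^{(\gamma)}}$, and comparing produces the ideal identity
\[
(\mathfrak{n}'_{1,\overline{\mu}_\gamma})^r = \overline{\mu}_\gamma^{-1}\mathfrak{n}_{r,B'} \cdot \tau^{-1}\mathfrak{J}.
\]

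Finally I evaluate $\operatorname{ord}_{\tau_\infty(\alpha)}(\mathfrak{J})$ by applying the same factorization on the ambient: the composite $\overline{\lambda}_\gamma^*(\Omega^N_{A'/k[t]})^{\otimes r}\to \overline{\lambda}_\gamma^*\omega^{[r]}_{A'/k[t]}\to \omega^r_{\overline{A}'/k[t]}$ yields $\mathfrak{n}_{r,\overline{\lambda}_\gamma} = \overline{\lambda}_\gamma^{-1}\mathfrak{n}_{r,A'}\cdot \mathfrak{J}$, and Lemma \ref{lem:order}(2) applied to $\overline{\lambda}_\gamma$ (with $\overline{A}'$ smooth, so $\mathfrak{n}_{r,\overline{A}'}=\mathcal{O}$) gives $\operatorname{ord}_\beta(\mathfrak{n}_{r,\overline{\lambda}_\gamma}) = r\operatorname{ord}_\beta(\operatorname{jac}_{\overline{\lambda}_\gamma})$ for any arc $\beta$ on $\overline{A}'$. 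Combining with Lemma \ref{lem:age1} rearranges to $\operatorname{ord}_\beta(\mathfrak{J}) = r\operatorname{age}(\gamma)$ whenever $\operatorname{ord}_{\overline{\lambda}_{\gamma\infty}(\beta)}(\mathfrak{n}_{r,A'})$ is finite. The hypothesis $\alpha'\notin Z_\infty$ supplies this finiteness at $\beta = \tau_\infty(\alpha)$, since $A'$ is smooth off $Z$ and $\overline{\lambda}_{\gamma\infty}(\beta) = \sigma_\infty(\alpha')$. Taking orders at $\alpha$ in the ideal identity and dividing by $r$ yields the stated formula. The main obstacle will be the sheaf-theoretic bookkeeping: because $\overline{B}^{(\gamma)}$ is neither normal nor a complete intersection (Remark \ref{rmk:nonlci}), one must verify carefully that the map $\overline{\mu}_\gamma^*\omega^{[r]}_{B'/k[t]}\to L^r_{\overline{B}^{(\gamma)}}$ built from the adjunction is genuinely the $r$-th tensor power of the one in Lemma \ref{lem:L1}(2), and that the image of a tensor power of a surjection onto a line bundle equals the $r$-th power of the image ideal --- both steps are formal once the invertibility of $L_{\overline{B}^{(\gamma)}}$ is used.
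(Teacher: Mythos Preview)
Your proposal is correct and follows essentially the same approach as the paper. Your ideal $\mathfrak{J}$ is exactly the paper's $\mathfrak{r}_{\overline{A}'}$, your $\tau^{-1}\mathfrak{J}$ is the paper's $\mathfrak{r}_{\overline{B}^{(\gamma)}}$, and the key reduction step---showing via adjunction that the map $\overline{\mu}_\gamma^*\omega^{[r]}_{B'/k[t]}\to L^r_{\overline{B}^{(\gamma)}}$ is governed by the pullback of the ambient ideal, and then invoking Lemma~\ref{lem:age1} and Lemma~\ref{lem:order}(2) to identify its order as $r\operatorname{age}(\gamma)$---is precisely the paper's argument, phrased as an identity of ideals rather than of orders.
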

\begin{proof}
We have two diagrams (cf.\ Lemma \ref{lem:L1}(2)). 
\[
  \xymatrix{
A' & \overline{A}' \ar[l]_-{\overline{\lambda} _{\gamma}} && \overline{\mu} _{\gamma}^* ( \Omega^n _{B'/k[t]})^{\otimes r} \ar[r] \ar[d] & (\Omega^n _{\overline{B}^{(\gamma)}/k[t]} )^{\otimes r} \ar[d] \\
B' \ar@{^{(}->}[u]^{\sigma} & \overline{B}^{(\gamma)}, \ar[l]^-{\overline{\mu} _{\gamma}} \ar@{^{(}->}[u]_{\tau} && \overline{\mu} _{\gamma}^* \omega _{B'/k[t]}^{[r]} \ar[r] & L _{\overline{B}^{(\gamma)}}^{[r]}.
  }
\]
We denote by $\mathfrak{r}_{\overline{B}^{(\gamma)}}$ the ideal sheaf on $\overline{B}^{(\gamma)}$ satisfying
\[
\operatorname{Im} \Bigl( \overline{\mu} _{\gamma}^* \omega _{B'/k[t]}^{[r]} \to L _{\overline{B}^{(\gamma)}}^{[r]} \Bigr) = \mathfrak{r}_{\overline{B}^{(\gamma)}} \otimes L _{\overline{B}^{(\gamma)}}^{[r]}. 
\]
We also denote by $\mathfrak{r}_{\overline{A}'}$ the ideal sheaf on $\overline{A}'$ satisfying
\[
\operatorname{Im} \Bigl( \overline{\lambda} _{\gamma}^* \omega _{A'/k[t]}^{[r]} \to \omega _{\overline{A}'/k[t]}^{[r]} \Bigr) = \mathfrak{r}_{\overline{A}'} \otimes \omega _{\overline{A}'/k[t]}^{[r]}. 
\]
By definition of the Nash ideals (Definition \ref{defi:nash_kt}), 
we have 
\[
r \operatorname{ord}_{\alpha} (\mathfrak{n}'_{1,\overline{\mu}_{\gamma}}) 
= \operatorname{ord}_{\alpha} (\mathfrak{r}_{\overline{B}^{(\gamma)}}) + \operatorname{ord}_{\alpha '} (\mathfrak{n}_{r,B'}). 
\]
On the other hand, by Lemma \ref{lem:age1}, we have 
\begin{align*}
r \operatorname{age} (\gamma) 
&= r \operatorname{ord}_{\alpha} (\operatorname{jac}_{\overline{\lambda}_{\gamma}}) - \operatorname{ord}_{\alpha '} (\mathfrak{n}_{r,A'}) \\
&= r \operatorname{ord}_{\alpha} (\mathfrak{n}_{1,\overline{\lambda}_{\gamma}}) - \operatorname{ord}_{\alpha '} (\mathfrak{n}_{r,A'}) \\
&= \operatorname{ord}_{\alpha} (\mathfrak{r}_{\overline{A}'}). 
\end{align*}
Here the second equality follows from Lemma \ref{lem:order}(2) and the fact $\mathfrak{n}_{1,\overline{A}'} = \mathcal{O}_{\overline{A}'}$. 
The third equality follows from the definition of 
the ideals $\mathfrak{n}_{1,\overline{\lambda}_{\gamma}}$, $\mathfrak{n}_{r,A'}$, and $\mathfrak{r}_{\overline{A}'}$, 
and the fact $\operatorname{ord}_{\alpha '} (\mathfrak{n}_{r,A'}) < \infty$, which follows from the assumption $\alpha ' \not \in Z_{\infty}$. 
Hence it is sufficient to show that $\operatorname{ord}_{\alpha} (\mathfrak{r}_{\overline{B}^{(\gamma)}}) = \operatorname{ord}_{\alpha} (\mathfrak{r}_{\overline{A}'})$. 

Since 
\[
\omega _{B'/k[t]} \simeq \operatorname{det}^{-1} ( I / I^2 ) 
\otimes _{\mathcal{O}_{B'}} \sigma ^* \omega _{A'/k[t]} 
\]
holds by the adjunction formula, we have $\mathfrak{r}_{\overline{B}^{(\gamma)}} = \mathfrak{r}_{\overline{A}'} \mathcal{O}_{\overline{B}^{(\gamma)}}$ by definition of $L _{\overline{B}^{(\gamma)}}$.
We complete the proof. 
\end{proof}

\subsection{Minimal log discrepancies of hyperquotient singularities via jet schemes}
In this subsection, we investigate the minimal log discrepancies of hyperquotient singularities in terms of the arc spaces. 
We take over the notation from Subsection \ref{subsection:arc_hyperquot}. 

\begin{thm}\label{thm:mld_hyperquot}
Let $x = 0 \in B$ be the origin, and 
let $\mathfrak{a} \subset  k[x_1,\ldots,x_N]^G/I$ be a non-zero ideal and let $\delta$ be a non-negative real number. 
Then 
\[
\operatorname{mld}_x(B,\mathfrak{a}^\delta) = 
\inf _{w, b_1 \in \mathbb{Z}_{\ge 0}, \gamma \in G}
\bigl \{ 
\operatorname{codim} (C_{w, \gamma, b_1}) + \operatorname{age}(\gamma)  - b_1-\delta w
\bigr \}
\]
holds for 
\[
C_{w, \gamma, b_1} = 
\operatorname{Cont}^w  (\mathfrak{a} \mathcal{O}_{\overline{B}^{(\gamma)}}) \cap 
\operatorname{Cont}^{\ge 1} (\mathfrak{m}_x \mathcal{O}_{\overline{B}^{(\gamma)}})\cap \operatorname{Cont}^{b_1}(\operatorname{Jac}_{\overline{B}^{(\gamma)}/k[t]}), 
\]
where $\mathfrak{m}_x \subset \mathcal{O}_B$ is the maximal ideal corresponding to the closed point $x \in B$.
\end{thm}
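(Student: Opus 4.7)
The plan is to translate the Ein--Mustaţă--Yasuda description of $\operatorname{mld}_x(B,\mathfrak{a}^\delta)$ on $B_\infty$ to the twisted arc spaces $\overline{B}^{(\gamma)}_\infty$ via the Denef--Loeser decomposition (Proposition \ref{prop:hyperquot}) and the codimension formula of Proposition \ref{prop:DL2}, converting orders by means of Lemma \ref{lem:L2}(2) and Lemma \ref{lem:age2}. Identifying $B_\infty$ with $B'_\infty$ as in Remark \ref{rmk:bc} and using the invertibility of $\omega_B^{[r]}$, the characterization of \cite{EMY03, EM09} gives
\[
\operatorname{mld}_x(B,\mathfrak{a}^\delta) = \inf_{C} \Bigl\{ \operatorname{codim}_{B_\infty}(C) - \tfrac{1}{r}\operatorname{ord}_C(\mathfrak{n}_{r,B}) - \delta\operatorname{ord}_C(\mathfrak{a}) \Bigr\},
\]
where $C$ ranges over irreducible cylinders in $B_\infty$ meeting $\operatorname{Cont}^{\geq 1}(\mathfrak{m}_x)$ on which the orders of $\mathfrak{n}_{r,B}$, $\mathfrak{a}$, and $\operatorname{Jac}_{B}$ are finite and constant.

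For such a $C$, the condition $\operatorname{ord}_C(\mathfrak{n}_{r,B})<\infty$ forces the generic arc of $C$ into the \'etale locus of $\overline{A}\to A$, so by Proposition \ref{prop:hyperquot} there is a unique conjugacy class $\langle\gamma\rangle$ and a $C_\gamma$-invariant cylinder $\widetilde{C}\subset \overline{B}^{(\gamma)}_\infty$ whose image under $\overline{\mu}_\gamma$ agrees with $C$ up to a thin set. Using Proposition \ref{prop:negligible} to discard thin pieces and stratify $\widetilde{C}$ according to the (constant) orders of $\mathfrak{a}\mathcal{O}_{\overline{B}^{(\gamma)}}$, of $\mathfrak{m}_x\mathcal{O}_{\overline{B}^{(\gamma)}}$, and of $\operatorname{Jac}_{\overline{B}^{(\gamma)}/k[t]}$, I may assume $\widetilde{C}\subset C_{w,\gamma,b_1}$ for some $w,b_1\in\mathbb{Z}_{\ge 0}$, and then automatically $\operatorname{ord}_C(\mathfrak{a})=w$ and $\operatorname{ord}_C(\mathfrak{m}_x)\ge 1$.

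Now apply Proposition \ref{prop:DL2} to $\overline{\mu}_\gamma:\overline{B}^{(\gamma)}\to B'$ equipped with the $C_\gamma$-action, yielding
\[
\operatorname{codim}_{B_\infty}(C) = \operatorname{codim}_{\overline{B}^{(\gamma)}_\infty}(\widetilde{C}) + \operatorname{ord}_{\widetilde{C}}(\operatorname{jac}_{\overline{\mu}_\gamma}),
\]
and combine Lemma \ref{lem:L2}(2) with Lemma \ref{lem:age2} (the latter being applicable because $\widetilde{C}$ lies generically outside $Z_\infty$) to get
\[
\tfrac{1}{r}\operatorname{ord}_C(\mathfrak{n}_{r,B}) = \operatorname{ord}_{\widetilde{C}}(\operatorname{jac}_{\overline{\mu}_\gamma}) + b_1 - \operatorname{age}(\gamma).
\]
Subtracting these two identities cancels the $\operatorname{jac}_{\overline{\mu}_\gamma}$ term, leaving
\[
\operatorname{codim}_{B_\infty}(C) - \tfrac{1}{r}\operatorname{ord}_C(\mathfrak{n}_{r,B}) - \delta w = \operatorname{codim}_{\overline{B}^{(\gamma)}_\infty}(\widetilde{C}) + \operatorname{age}(\gamma) - b_1 - \delta w.
\]

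Taking the infimum over irreducible subcylinders $\widetilde{C}\subset C_{w,\gamma,b_1}$ replaces $\operatorname{codim}(\widetilde{C})$ by $\operatorname{codim}(C_{w,\gamma,b_1})$, and then varying $(w,\gamma,b_1)$ gives the inequality $\ge$ in the theorem. The reverse inequality is obtained by running the computation backwards: starting from any triple $(w,\gamma,b_1)$ and an irreducible cylinder of minimal codimension inside $C_{w,\gamma,b_1}$, its push-forward under $\overline{\mu}_\gamma$ is a cylinder of $B_\infty$ by Proposition \ref{prop:DL2}(1), and the same numerical identity then feeds it into the first-step formula. The main obstacle will be the bookkeeping in the middle paragraph: one has to verify that thin loci in $B_\infty$ and in $\overline{B}^{(\gamma)}_\infty$ really do not affect the infimum, that the conjugacy class $\langle\gamma\rangle$ is unambiguously attached to $C$, and that Lemma \ref{lem:age2} may be invoked despite $\overline{B}^{(\gamma)}$ being typically neither normal nor a complete intersection (Remark \ref{rmk:nonlci})---this last point is precisely why the auxiliary invertible sheaf $L_{\overline{B}^{(\gamma)}}$ of Lemma \ref{lem:L1} intervenes in place of a canonical sheaf.
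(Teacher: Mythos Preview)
Your overall architecture is the paper's: start from the Ein--Musta\c{t}\u{a} formula on $B_\infty$, lift to $\overline{B}^{(\gamma)}_\infty$ via Proposition~\ref{prop:hyperquot}, and convert the Nash-ideal term into $\operatorname{age}(\gamma)-b_1$ using Lemmas~\ref{lem:L2} and~\ref{lem:age2}. The numerics in your displayed identities are correct.

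The gap is in the step where you ``apply Proposition~\ref{prop:DL2} to $\overline{\mu}_\gamma:\overline{B}^{(\gamma)}\to B'$ equipped with the $C_\gamma$-action.'' Proposition~\ref{prop:DL2} is stated for a quotient map $Y\to Y/G$, and $B'$ is \emph{not} $\overline{B}^{(\gamma)}/C_\gamma$; the $C_\gamma$-quotient is (a closed subscheme of) $\widetilde{B}^{(\gamma)}$, and $\mu_\gamma:\widetilde{B}^{(\gamma)}\to B'$ is a further, genuinely different, morphism. What makes Proposition~\ref{prop:DL2} work is the injectivity of $Y_\infty/G\to X_\infty$, and for $\overline{\mu}_\gamma$ you only have this outside $Z_\infty$ (Proposition~\ref{prop:hyperquot}); the proof of Proposition~\ref{prop:DL2} does not accommodate that as written. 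The paper resolves this by factoring $\overline{\mu}_\gamma=\mu_\gamma\circ p$ and treating the two legs separately: Proposition~\ref{prop:DL2} is applied to $p$ (where the needed injectivity $\overline{B}^{(\gamma)}_\infty/C_\gamma\hookrightarrow\widetilde{B}^{(\gamma)}_\infty$ holds unconditionally), and Proposition~\ref{prop:EM6.2} is applied to $\mu_\gamma$ (injective on $p_\infty(\,\cdot\,)$ outside $Z_\infty$, the thin part being discarded via Proposition~\ref{prop:negligible} and Lemma~\ref{lem:purity}). This forces a finer stratification than yours: in addition to $w$ and $b_1$ one must fix the orders $b_2,b_3$ of $\operatorname{Jac}_{\widetilde{B}^{(\gamma)}/k[t]}$ and $\operatorname{Jac}_{B'/k[t]}$ (so that the hypotheses of Propositions~\ref{prop:DL2} and~\ref{prop:EM6.2} are met on each stratum) and the orders $m_1,m_2$ of $\mathfrak{n}'_{1,p}$ and $\mathfrak{n}'_{1,\overline{\mu}_\gamma}$ (so that $\operatorname{ord}(\operatorname{jac}_p)=m_1-b_1$ and $\operatorname{ord}(\operatorname{jac}_{\mu_\gamma})=m_2-m_1$ are constant). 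Your single identity $\operatorname{codim}(C)=\operatorname{codim}(\widetilde{C})+\operatorname{ord}(\operatorname{jac}_{\overline{\mu}_\gamma})$ is then recovered by summing the two steps and using Lemma~\ref{lem:additive}, but it is not a direct consequence of Proposition~\ref{prop:DL2}.
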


\begin{proof}
By \cite[Theorem 7.4]{EM09}, we have
\[
\operatorname{mld}_x(B,\mathfrak{a}^\delta)=
\inf _{w, m_3 \in \mathbb{Z}_{\ge 0}} 
\left\{\hspace{-1mm}  \begin{array}{c} 
\operatorname{codim}\left( \operatorname{Cont}^w(\mathfrak a) \cap \operatorname{Cont}^{m_3}(\mathfrak n_{r,B}) 
	\cap \operatorname{Cont}^{\ge 1}(\mathfrak m_x) \right) \\
-\frac{m_3}{r}-\delta w
\end{array}
\right\}.
\]
Note that  $\mathfrak{n}_{r, B'} = \mathfrak{n}_{r,B} \mathcal{O}_{B'}$ holds by Remark \ref{rmk:nash_kt} and 
$B'_m = B_m$ holds for $m\in \mathbb Z_{\ge 0} \cup \{ \infty \}$ by Remark \ref{rmk:bc}.

Let $w,b_1,b_2,b_3,m_1,m_2,m_3 \in \mathbb{Z}_{\ge 0}$ and $\gamma \in G$. 
We denote by $D_{w,\gamma,b_1,b_2,b_3,m_1,m_2,m_3} \subset \overline{B}^{(\gamma)}_{\infty}$ the cylinder
\begin{align*}
\operatorname{Cont}^w ( \mathfrak{a}\mathcal{O}_{\overline{B}^{(\gamma)}} ) \cap 
\operatorname{Cont}^{\ge 1} ( \mathfrak{m}_x \mathcal{O}_{\overline{B}^{(\gamma)}}) 
\cap \operatorname{Cont}^{b_1}(\operatorname{Jac}_{\overline{B}^{(\gamma)}/k[t]}) 
\cap \operatorname{Cont}^{b_2}( \operatorname{Jac}_{\widetilde{B}^{(\gamma)}/k[t]} \mathcal{O}_{\overline{B}^{(\gamma)}}) \\
\cap \operatorname{Cont}^{b_3}  (\operatorname{Jac}_{B'/k[t]} \mathcal{O}_{\overline{B}^{(\gamma)}})
\cap \operatorname{Cont}^{m_1}(\mathfrak{n}'_{1,p})
\cap \operatorname{Cont}^{m_2}(\mathfrak{n}'_{1,\overline{\mu}_{\gamma}})
\cap \operatorname{Cont}^{m_3}(\mathfrak{n}_{r,B'} \mathcal{O}_{\overline{B}^{(\gamma)}}).
\end{align*}

By Propositions \ref{prop:lift} and \ref{prop:hyperquot}, we have
\begin{align*}
&\operatorname{Cont}^w(\mathfrak a) \cap 
\operatorname{Cont}^{m_3}(\mathfrak n_{r,B}) \cap 
\operatorname{Cont}^{\ge 1}(\mathfrak m_x) \setminus Z_\infty \\
&{}= \bigsqcup _{\langle \gamma \rangle \in \operatorname{Conj}(G)} 
\mu_{\gamma \infty} \circ p_{\infty} \bigl( \operatorname{Cont}^w (\mathfrak{a} \mathcal{O}_{\overline{B}^{(\gamma)}}) \cap 
\operatorname{Cont}^{m_3} ( \mathfrak{n}_{r,B} \mathcal{O}_{\overline{B}^{(\gamma)}}) \cap 
\operatorname{Cont}^{\ge 1} (\mathfrak{m}_x \mathcal{O}_{\overline{B}^{(\gamma)}}) \bigr) \setminus Z_\infty. 
\end{align*}
Note that $\mu_{\gamma \infty} \circ p_{\infty}(C)$ is a thin set of $B'_{\infty}$ 
for any thin set $C$ of $\overline{B}^{(\gamma)}_{\infty}$.
Hence we have 
\begin{align*}
& \operatorname{codim} \left( \operatorname{Cont}^w(\mathfrak a) \cap 
\operatorname{Cont}^{m_3}(\mathfrak n_{r,B}) \cap 
\operatorname{Cont}^{\ge 1}(\mathfrak m_x) \right) \\
&{}=
\min_{\gamma,b_1,b_2,b_3,m_1,m_2} \operatorname{codim} \bigl( \mu_{\gamma \infty} \circ p_{\infty} ( D_{w,\gamma,b_1,b_2,b_3,m_1,m_2,m_3} ) \bigr) 
\end{align*}
by Proposition \ref{prop:negligible}. 
On the other hand, again by Proposition \ref{prop:negligible}, we have 
\[
\operatorname{codim} (C_{w, \gamma, b_1}) = 
\min_{b_2,b_3,m_1,m_2,m_3} \operatorname{codim} \bigl( D_{w,\gamma,b_1,b_2,b_3,m_1,m_2,m_3} \bigr).
\]

By Lemma \ref{lem:L2}, we have 
\[
\operatorname{ord}_{\alpha} (\operatorname{jac}_p) = m_1 - b_1
\]
for any $\alpha \in D_{w, \gamma, b_1, b_2, b_3,m_1,m_2,m_3}$. 
Furthermore, $\operatorname{ord}(\operatorname{Jac}_{\overline{B}^{(\gamma)}/k[t]})$ and $\operatorname{ord}(\operatorname{Jac}_{\widetilde{B}^{(\gamma)}/k[t]})$ 
take constant values $b_1$ and $b_2$ on $D_{w,\gamma, b_1, b_2, b_3,m_1,m_2,m_3}$ and $p_{\infty}(D_{w,\gamma, b_1, b_2, b_3,m_1,m_2,m_3})$, respectively. 
Hence by applying Proposition \ref{prop:DL2} to $p$, we have 
\[
\operatorname{codim} \bigl( p_{\infty}(D_{w,\gamma, b_1, b_2, b_3,m_1,m_2,m_3}) \bigr) = 
\operatorname{codim} \bigl( D_{w,\gamma, b_1, b_2, b_3,m_1,m_2,m_3} \bigr) + m_1 - b_1. 
\]

Note that $\operatorname{codim}_B (B \cap Z) \ge 2$ by Lemma \ref{lem:purity} and that $\mu_{\gamma \infty}|_{p_{\infty} (D_{w,\gamma, b_1, b_2, b_3,m_1,m_2,m_3})}$ 
is injective outside $Z_{\infty}$ by Proposition \ref{prop:hyperquot}. 
By Lemma \ref{lem:additive} and Lemma \ref{lem:L2}, we have 
\[
\operatorname{ord}_{\alpha} (\operatorname{jac}_{\mu _{\gamma}}) = (m_2 - b_1) - (m_1 - b_1) = m_2 - m_1
\]
for any $\alpha \in p_{\infty}(D_{w,\gamma, b_1, b_2, b_3,m_1,m_2,m_3})$. 
Furthermore, $\operatorname{ord}(\operatorname{Jac}_{B'/k[t]})$ takes a constant value $b_3$ on 
$(\mu _{\gamma} \circ p)_{\infty} (D_{w,\gamma, b_1, b_2, b_3,m_1,m_2,m_3})$. 
Hence by applying Proposition \ref{prop:EM6.2} to $\mu _{\gamma}$, we have 
\begin{align*}
&\operatorname{codim} \bigl( (\mu _{\gamma} \circ p)_{\infty}(D_{w,\gamma, b_1, b_2, b_3,m_1,m_2,m_3}) \bigr) \\
&{}= \operatorname{codim} \bigl( p_{\infty} (D_{w,\gamma, b_1, b_2, b_3,m_1,m_2,m_3}) \bigr) + m_2 - m_1. 
\end{align*}

By Lemma \ref{lem:age2}, we have 
\begin{align*}
m_2 - \frac{m_3}{r} = \operatorname{age}(\gamma) 
\end{align*}
if $(\mu _{\gamma} \circ p)_{\infty} (D_{w, \gamma, b_1, b_2, b_3,m_1,m_2,m_3}) \setminus Z_{\infty}$ is non-empty. 
Thus we obtain 
\begin{align*}
&\operatorname{codim} (D_{w, \gamma, b_1, b_2, b_3,m_1,m_2,m_3}) + \operatorname{age}(\gamma)  - b_1 \\
&=
\operatorname{codim} \bigl( (\mu _{\gamma} \circ p)_{\infty} (D_{w, \gamma, b_1, b_2, b_3,m_1,m_2,m_3}) \bigr) - \frac{m_3}{r}. 
\end{align*}

Therefore we obtain the desired formula: 
\begin{align*}
&\operatorname{mld}_x(B,\mathfrak{a}^\delta)\\
&=
\inf _{w, m_3} \left\{ 
\operatorname{codim}\left( \operatorname{Cont}^w(\mathfrak a) \cap \operatorname{Cont}^{m_3}(\mathfrak n_{r,B})
\cap \operatorname{Cont}^{\ge 1}(\mathfrak m_x) \right)
-\frac{m_3}{r}-\delta w
 \right\} \\
&= \inf _{\gamma, w, b_1, b_2, b_3,m_1,m_2,m_3}
\left\{ 
\operatorname{codim}\bigl( (\mu _{\gamma} \circ p)_{\infty} (D_{w,\gamma, b_1, b_2, b_3,m_1,m_2,m_3}) \bigr) 
-\frac{m_3}{r}-\delta w
 \right\} \\
&= \inf _{\gamma, w, b_1, b_2, b_3,m_1,m_2,m_3}
\bigl \{ 
\operatorname{codim}( D_{w,\gamma, b_1, b_2, b_3,m_1,m_2,m_3} ) 
+ \operatorname{age}(\gamma)  - b_1-\delta w
 \bigr \} \\ 
&=\inf _{\gamma, w, b_1}
\bigl \{
\operatorname{codim} (C_{w, \gamma, b_1}) + \operatorname{age}(\gamma)  - b_1-\delta w
\bigr \}
\end{align*}
We complete the proof. 
\end{proof}

\begin{cor}\label{cor:mld_hyperquot2}
In the same setting as in Theorem \ref{thm:mld_hyperquot}, it follows that
\[
\operatorname{mld}_x(B,\mathfrak{a}^\delta) = 
\inf _{w, b_1 \in \mathbb{Z}_{\ge 0}, \gamma \in G}
\bigl \{ 
\operatorname{codim} (C'_{w, \gamma, b_1}) + \operatorname{age}(\gamma)  - b_1-\delta w
\bigr \}
\]
for 
\[
C' _{w, \gamma, b_1} = 
\operatorname{Cont}^{\ge w}  (\mathfrak{a} \mathcal{O}_{\overline{B}^{(\gamma)}}) \cap 
\operatorname{Cont}^{\ge 1} (\mathfrak{m}_x \mathcal{O}_{\overline{B}^{(\gamma)}}) \cap 
\operatorname{Cont}^{b_1}(\operatorname{Jac}_{\overline{B}^{(\gamma)}/k[t]}). 
\]
\end{cor}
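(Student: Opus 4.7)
The plan is to derive the corollary from Theorem \ref{thm:mld_hyperquot} by a decomposition argument using Proposition \ref{prop:negligible}. Let $X_1$ denote the right-hand side of Theorem \ref{thm:mld_hyperquot} and $X_2$ the right-hand side of the corollary; I aim to show $X_1 = X_2$. The easy direction $X_2 \le X_1$ follows at once from the inclusion $C_{w,\gamma,b_1} \subset C'_{w,\gamma,b_1}$, which gives $\operatorname{codim}(C'_{w,\gamma,b_1}) \le \operatorname{codim}(C_{w,\gamma,b_1})$ term by term.

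For the reverse inequality, I would write
\[
C'_{w,\gamma,b_1} = \Bigl( \bigsqcup_{w' \ge w} C_{w',\gamma,b_1} \Bigr) \sqcup E_{w,\gamma,b_1},
\]
where $E_{w,\gamma,b_1}$ is the locus of arcs on which $\mathfrak{a}\mathcal{O}_{\overline{B}^{(\gamma)}}$ vanishes identically, i.e., arcs factoring through $V := V(\mathfrak{a}\mathcal{O}_{\overline{B}^{(\gamma)}})$. First I would check that $V$ is a proper closed subscheme of $\overline{B}^{(\gamma)}$: since $\mathfrak{a}$ is a non-zero ideal and the finite map $\overline{B}\to B$ is \'{e}tale in codimension one by Lemma \ref{lem:purity}, the \'{e}tale description of $\overline{B}^{(\gamma)}$ outside $t=0$ in Remark \ref{rmk:isotrivial} forces the dominant components of $V$ to have relative dimension at most $n-1$ over $\operatorname{Spec} k[t]$, so $E_{w,\gamma,b_1}\subset V_{\infty}$ is a thin set in the sense of Definition \ref{def:thin}. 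Then Proposition \ref{prop:negligible} applied to the countable disjoint decomposition yields
\[
\operatorname{codim}(C'_{w,\gamma,b_1}) = \min_{w' \ge w} \operatorname{codim}(C_{w',\gamma,b_1}),
\]
and this minimum over $\mathbb{Z}_{\ge 0} \cup \{\infty\}$ is attained at some $w^* = w^*(w,\gamma,b_1) \ge w$.

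Finally I would exploit $\delta \ge 0$ together with $w^* \ge w$ to obtain
\[
\operatorname{codim}(C'_{w,\gamma,b_1}) + \operatorname{age}(\gamma) - b_1 - \delta w \ge \operatorname{codim}(C_{w^*,\gamma,b_1}) + \operatorname{age}(\gamma) - b_1 - \delta w^*,
\]
and taking the infimum over $(w,\gamma,b_1)$ will produce $X_2 \ge X_1$, which combined with Theorem \ref{thm:mld_hyperquot} completes the proof. The main obstacle will be verifying that $E_{w,\gamma,b_1}$ (or a closed subscheme of $\overline{B}^{(\gamma)}$ containing it) satisfies condition $(\star\star)_{\ell}$ for some $\ell \le n-1$ as required by Definition \ref{def:thin}; this amounts to a careful dimension count for the dominant components of $V$, possibly after replacing $V$ by a union of equidimensional pieces. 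The remaining manipulations with infima and the use of Proposition \ref{prop:negligible} are then formal.
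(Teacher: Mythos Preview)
Your proposal is correct and follows essentially the same route as the paper: both directions use the inclusion $C_{w,\gamma,b_1}\subset C'_{w,\gamma,b_1}$, the identity $\operatorname{codim}(C'_{w,\gamma,b_1})=\min_{w'\ge w}\operatorname{codim}(C_{w',\gamma,b_1})$ from Proposition~\ref{prop:negligible}, and the sign of $\delta$. The paper simply cites Proposition~\ref{prop:negligible} for that identity without spelling out the thinness of the residual locus, whereas you make this step explicit; your dimension count via Remark~\ref{rmk:isotrivial} is the right way to justify it.
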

\begin{proof}
Let $C _{w, \gamma, b_1}$ be the cylinder in Theorem \ref{thm:mld_hyperquot}. 
We fix $\gamma \in G$ and $b_1 \in \mathbb{Z}_{\ge 0}$. 
Since we have $C_{w, \gamma, b_1} \subset C'_{w, \gamma, b_1}$, it follows that 
\[
\inf _w \left \{ \operatorname{codim} \left( C_{w, \gamma, b_1} \right) - \delta w \right \} \ge 
\inf _w \left \{ \operatorname{codim} \left( C'_{w, \gamma, b_1} \right) - \delta w \right \}. 
\]
We fix $w' \in \mathbb{Z}_{\ge 0}$. 
Then it follows that
\[
\operatorname{codim} \left( C' _{w', \gamma, b_1} \right) - \delta w' 
= \min_{w \ge w'} \{ \operatorname{codim} \left( C_{w, \gamma, b_1} \right) \} - \delta w'
\ge \inf _w \left \{ \operatorname{codim} \left( C_{w, \gamma, b_1} \right) - \delta w \right \}. 
\]
Here the first equality follows from Proposition \ref{prop:negligible}, and the last inequality follows from $\delta \ge 0$. 
Therefore we have the opposite inequality
\[
\inf _w \left \{ \operatorname{codim} \left( C_{w, \gamma, b_1} \right) - \delta w \right \} \le 
\inf _w \left \{ \operatorname{codim} \left( C'_{w, \gamma, b_1} \right) - \delta w \right \}. 
\]
We complete the proof. 
\end{proof}

\begin{rmk}\label{rmk:multi_index}
Our formula can be easily extended to $\mathbb{R}$-ideals $\mathfrak{a} = \prod _{i=1} ^r \mathfrak{a}_i ^{\delta_i}$, 
where $\mathfrak a_1, \ldots, \mathfrak a_r$ are ideals and $\delta_1, \ldots ,\delta_r$ are non-negative real numbers. 
In this setting, we have 
\begin{align*}
&\operatorname{mld}_x \bigl( B,\prod _{i=1} ^r \mathfrak{a}_i ^{\delta_i} \bigr) \\
&= 
\inf _{w_1, \ldots, w_r, b_1 \in \mathbb{Z}_{\ge 0}, \gamma \in G}
\left \{ 
\operatorname{codim} (C_{w_1, \ldots , w_r, \gamma, b_1}) + \operatorname{age}(\gamma)  - b_1 - \sum _{i=1} ^r \delta_i w_i
\right \} \\
&=
\inf _{w_1, \ldots, w_r, b_1 \in \mathbb{Z}_{\ge 0}, \gamma \in G}
\left \{ 
\operatorname{codim} (C'_{w_1, \ldots , w_r, \gamma, b_1}) + \operatorname{age}(\gamma)  - b_1 - \sum _{i=1} ^r \delta_i w_i
\right \}
\end{align*}
for 
\begin{align*}
C_{w_1, \ldots , w_r, \gamma, b_1} &= 
\Bigl( \bigcap _{i=1} ^{r} \operatorname{Cont}^{w_i}  (\mathfrak{a}_i \mathcal{O}_{\overline{B}^{(\gamma)}}) \Bigr) \cap 
\operatorname{Cont}^{\ge 1} (\mathfrak{m}_x \mathcal{O}_{\overline{B}^{(\gamma)}}) \cap \operatorname{Cont}^{b_1}(\operatorname{Jac}_{\overline{B}^{(\gamma)}/k[t]}), \\
C' _{w_1, \ldots , w_r, \gamma, b_1} &= 
\Bigl( \bigcap _{i=1} ^{r} \operatorname{Cont}^{\ge w_i}  (\mathfrak{a}_i \mathcal{O}_{\overline{B}^{(\gamma)}}) \Bigr) \cap 
\operatorname{Cont}^{\ge 1} (\mathfrak{m}_x \mathcal{O}_{\overline{B}^{(\gamma)}}) \cap \operatorname{Cont}^{b_1}(\operatorname{Jac}_{\overline{B}^{(\gamma)}/k[t]}). 
\end{align*}
\end{rmk}

We state Theorem \ref{thm:mld_hyperquot} in the case of quotient singularities. 

\begin{cor}\label{cor:mld_quot}
Let $x = 0 \in A$ be the origin, and 
let $\mathfrak{a} \subset  k[x_1,\ldots,x_N]^G$ be a non-zero ideal and $\delta$ be a non-negative real number. 
Then 
\[
\operatorname{mld}_x(A, \mathfrak{a}^\delta) = 
\inf _{w \in \mathbb{Z}_{\ge 0}, \gamma \in G}
\bigl \{ 
\operatorname{codim} (C_{w, \gamma}) + \operatorname{age}(\gamma) - \delta w
\bigr \}
\]
holds for 
\[
C_{w, \gamma} = 
\operatorname{Cont}^w  (\mathfrak{a} \mathcal{O}_{\overline{A}'}) \cap 
\operatorname{Cont}^{\ge 1} (\mathfrak{m}_x \mathcal{O}_{\overline{A}'}), 
\]
where $\mathfrak{m}_x \subset \mathcal{O}_A$ is the maximal ideal corresponding to the closed point $x \in A$. 
Furthermore, the same statements of Corollary \ref{cor:mld_hyperquot2} and Remark \ref{rmk:multi_index} also hold. 
\end{cor}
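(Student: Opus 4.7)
The strategy is to obtain Corollary \ref{cor:mld_quot} as the specialization of Theorem \ref{thm:mld_hyperquot} to the trivial case $c = 0$, where no defining equations are cut out inside the quotient $A$. Setting $c = 0$ makes the ideals $I$, $\widetilde{I}^{(\gamma)}$, and $\overline{I}^{(\gamma)}$ all vanish, so that $B = A$, $\widetilde{B}^{(\gamma)} = (\overline{A}/C_\gamma)'$, and $\overline{B}^{(\gamma)} = \overline{A}'$ in the notation of Subsection \ref{subsection:arc_hyperquot}. The hypotheses of Theorem \ref{thm:mld_hyperquot} (normality of $B$, invertibility of $\omega_{B}^{[r]}$, and $\operatorname{codim}_B(Z \cap B) \ge 2$ supplied by Lemma \ref{lem:purity}) reduce here to the normality and $\mathbb{Q}$-Gorensteinness of $A$ together with $\operatorname{codim}_A Z \ge 2$, all of which are part of the standing assumptions of Subsection \ref{subsection:arc_quot}.

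\textbf{Key collapse.} The crucial observation is that $\overline{A}' = \mathbb{A}_{k[t]}^N$ is smooth over $\operatorname{Spec} k[t]$ of relative dimension $N$, and hence $\operatorname{Jac}_{\overline{A}'/k[t]} = \mathcal{O}_{\overline{A}'}$. Consequently the cylinder $\operatorname{Cont}^{b_1}(\operatorname{Jac}_{\overline{A}'/k[t]})$ equals $\overline{A}'_\infty$ when $b_1 = 0$ and is empty when $b_1 \ge 1$, so in the infimum on the right-hand side of Theorem \ref{thm:mld_hyperquot} only the term with $b_1 = 0$ can contribute. The remaining cylinder $C_{w, \gamma, 0}$ is precisely the $C_{w, \gamma}$ appearing in Corollary \ref{cor:mld_quot}, and the formula
\[
\operatorname{mld}_x(A, \mathfrak{a}^\delta) = \inf_{w \in \mathbb{Z}_{\ge 0},\, \gamma \in G} \bigl\{ \operatorname{codim}(C_{w,\gamma}) + \operatorname{age}(\gamma) - \delta w \bigr\}
\]
then follows at once.

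\textbf{Variants.} The analogue of Corollary \ref{cor:mld_hyperquot2}, in which $\operatorname{Cont}^w$ is replaced by $\operatorname{Cont}^{\ge w}$, and the multi-index version corresponding to Remark \ref{rmk:multi_index} for $\mathbb{R}$-ideals of the form $\prod_{i=1}^r \mathfrak{a}_i^{\delta_i}$, are obtained by the same specialization applied to the hyperquotient statements. Indeed, the $b_1 = 0$ simplification is identical, and the monotonicity argument (using $\delta \ge 0$ together with Proposition \ref{prop:negligible}) that gives Corollary \ref{cor:mld_hyperquot2} from Theorem \ref{thm:mld_hyperquot} carries over verbatim after the specialization. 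I do not anticipate any genuine obstacle in this corollary, because the substantive work has already been done in proving Theorem \ref{thm:mld_hyperquot}; the only point worth checking is that the invocation of Proposition \ref{prop:DL2} applied to the quotient map $p \colon \overline{B}^{(\gamma)} \to \widetilde{B}^{(\gamma)}$ becomes entirely transparent here, since $p$ specializes to the quotient morphism $\overline{A}' \to (\overline{A}/C_\gamma)'$ by a finite group acting linearly on a smooth $k[t]$-scheme.
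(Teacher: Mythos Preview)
Your proposal is correct and follows essentially the same approach as the paper: specialize Theorem \ref{thm:mld_hyperquot} to the case $c=0$ (so that $B=A$ and $\overline{B}^{(\gamma)}=\overline{A}'$), and then use that $\overline{A}'$ is smooth over $k[t]$ to conclude $\operatorname{Jac}_{\overline{A}'/k[t]}=\mathcal{O}_{\overline{A}'}$, forcing $b_1=0$. The paper's proof is the same two-line argument; your version simply spells out more of the bookkeeping.
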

\begin{proof}
Since $\overline{A}'$ is smooth over $k[t]$, we have $\operatorname{Jac}_{\overline{A}'/k[t]} = \mathcal{O}_{\overline{A}'}$. 
Therefore the assertion follows from Theorem \ref{thm:mld_hyperquot}. 
\end{proof}

\noindent
As a corollary, we obtain the following Reid-Tai type formula on minimal log discrepancies. 
See also Remark \ref{rmk:RT} for a proof using the resolution of singularities.

\begin{cor}[{cf.\ \cite[Question 2]{Bor97}}]\label{cor:qmld=cqmld}
For $\gamma \in G$, we denote by $\langle \gamma \rangle$ the subgroup of $G$ generated by $\gamma$. 
Let $A^{\langle \gamma \rangle} = \mathbb{A}^N/\langle \gamma \rangle$, and 
let $x_{\gamma} \in A^{\langle \gamma \rangle}$ be the image of the origin of $\mathbb{A}^N$. 
Let $\mathfrak{a}$ be an $\mathbb{R}$-ideal sheaf on $A$. 
Then it follows that
\[
\operatorname{mld}_x (A, \mathfrak{a}) = 
\min_{\gamma \in G} \operatorname{mld}_{x_\gamma}
\bigl( A^{\langle \gamma \rangle}, \mathfrak{a} \mathcal{O}_{A^{\langle \gamma \rangle}} \bigr). 
\]
\end{cor}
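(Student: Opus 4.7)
The plan is to derive Corollary \ref{cor:qmld=cqmld} directly from Corollary \ref{cor:mld_quot}, using its natural extension to $\mathbb{R}$-ideals recorded in Remark \ref{rmk:multi_index}. The key observation is that the summand $\operatorname{codim}(C_{w,\gamma}) + \operatorname{age}(\gamma) - \sum_i \delta_i w_i$ appearing in the arc-space formula depends only on $\gamma$ itself, not on the ambient group containing it.

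First I would verify this intrinsic nature. Fix $\gamma \in G$ and set $d' := |\langle \gamma \rangle|$. Writing the eigenvalues of $\gamma$ as $\zeta^{e_i}$ for $\zeta$ a primitive $d'$-th root of unity and as $\xi^{f_i}$ for $\xi$ a primitive $d$-th root with $d = |G|$, the relation $\zeta = \xi^{d/d'}$ yields $e_i/d' = f_i/d$ and $\operatorname{age}(\gamma) = \sum e_i/d' = \sum f_i/d$. Hence the twisted map $\overline{\lambda}_\gamma^*\colon x_i \mapsto t^{f_i/d} x_i$ coincides with the analogous map $x_i \mapsto t^{e_i/d'} x_i$ obtained from $\langle \gamma \rangle$, and the factorization $\overline{A}' \to (A^{\langle \gamma \rangle})' \to A'$ commutes. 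Consequently both $\operatorname{age}(\gamma)$ and the cylinder
\[
C_{w,\gamma} = \operatorname{Cont}^w(\mathfrak{a} \mathcal{O}_{\overline{A}'}) \cap \operatorname{Cont}^{\ge 1}(\mathfrak{m}_x \mathcal{O}_{\overline{A}'})
\]
(where $\mathfrak{a}\mathcal{O}_{\overline{A}'}$ is the same whether computed via $A$ or via $A^{\langle \gamma \rangle}$, by the commutativity of the factorization) are identical in the formulas for $A$ and for $A^{\langle \gamma \rangle}$. I also note that because $G$ acts freely in codimension one no element of $G$ is a pseudo-reflection, so neither is any power $\gamma^k$; hence $\langle \gamma \rangle$ also acts freely in codimension one and Corollary \ref{cor:mld_quot} applies to $A^{\langle \gamma \rangle}$ as well.

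Writing $F(w,\gamma) := \operatorname{codim}(C_{w,\gamma}) + \operatorname{age}(\gamma) - \sum_i \delta_i w_i$, Corollary \ref{cor:mld_quot} with Remark \ref{rmk:multi_index} then yields
\[
\operatorname{mld}_x(A,\mathfrak{a}) = \inf_{w,\gamma \in G} F(w,\gamma), \qquad \operatorname{mld}_{x_{\gamma_0}}\bigl(A^{\langle \gamma_0 \rangle}, \mathfrak{a} \mathcal{O}_{A^{\langle \gamma_0 \rangle}}\bigr) = \inf_{w,\gamma \in \langle \gamma_0 \rangle} F(w,\gamma)
\]
for every $\gamma_0 \in G$. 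The conclusion then reduces to the trivial set-theoretic identity $\inf_{\gamma \in G} g(\gamma) = \min_{\gamma_0 \in G} \inf_{\gamma \in \langle \gamma_0 \rangle} g(\gamma)$ (applied to $g(\gamma) := \inf_w F(w,\gamma)$), which holds because each $\gamma \in G$ lies in $\langle \gamma \rangle$, so the union $\bigcup_{\gamma_0 \in G} \langle \gamma_0 \rangle$ is all of $G$. There is no serious obstacle: the content is entirely packaged in Corollary \ref{cor:mld_quot}, and the only real task is the bookkeeping verification that $\operatorname{age}(\gamma)$, $\overline{\lambda}_\gamma$, and the pullback $\mathfrak{a}\mathcal{O}_{\overline{A}'}$ are intrinsic to $\gamma$ alone.
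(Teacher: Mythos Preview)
Your approach is essentially the same as the paper's: both proofs derive the corollary from Corollary \ref{cor:mld_quot} (and its $\mathbb{R}$-ideal extension) by verifying that the summand $\operatorname{codim}(C_{w,\gamma}) + \operatorname{age}(\gamma) - \sum_i \delta_i w_i$ is intrinsic to $\gamma$, then reducing to the set-theoretic identity $\inf_{\gamma \in G} = \min_{\gamma_0} \inf_{\gamma \in \langle \gamma_0 \rangle}$.

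There is one small point you glossed over that the paper makes explicit. In the formula for $A^{\langle \gamma \rangle}$, the contact condition is $\operatorname{Cont}^{\ge 1}(\mathfrak{m}_{x_\gamma}\mathcal{O}_{\overline{A}'})$, not $\operatorname{Cont}^{\ge 1}(\mathfrak{m}_x\mathcal{O}_{\overline{A}'})$; the maximal ideals $\mathfrak{m}_x \subset k[x_1,\ldots,x_N]^G$ and $\mathfrak{m}_{x_\gamma} \subset k[x_1,\ldots,x_N]^{\langle \gamma \rangle}$ are different, and the commutativity of the factorization only identifies $\mathfrak{m}_x\mathcal{O}_{\overline{A}'}$ with $(\mathfrak{m}_x\mathcal{O}_{A^{\langle \gamma \rangle}})\mathcal{O}_{\overline{A}'}$. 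The paper closes this by noting $\sqrt{\mathfrak{m}_x\mathcal{O}_{A^{\langle \gamma \rangle}}} = \mathfrak{m}_{x_\gamma}$, which forces the two $\operatorname{Cont}^{\ge 1}$ loci to coincide. This is an easy fix, but it is the one piece of content you did not write down. Conversely, your explicit remark that $\langle \gamma \rangle$ still acts freely in codimension one (so Corollary \ref{cor:mld_quot} applies to $A^{\langle \gamma \rangle}$) is a point the paper leaves implicit.
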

\begin{proof}
We fix $\gamma \in G$ and $\gamma ' \in \langle \gamma \rangle$. 
Then we have a $k[t]$-ring homomorphism
\[
\overline{\lambda}_{\gamma '}^*: k[t][x_1, \ldots , x_N]^G \longrightarrow k[t][x_1,\dots,x_N]
\]
applying the explanation in Subsection \ref{subsection:arc_quot} to $G$ and $\gamma '$. 
We also have a $k[t]$-ring homomorphism
\[
\overline{\lambda}_{\gamma '}^*: k[t][x_1, \ldots , x_N]^{\langle \gamma \rangle} \longrightarrow k[t][x_1,\dots,x_N]
\]
for $\langle \gamma \rangle$ and $\gamma '$, where we use the same symbol 
$\overline{\lambda}_{\gamma '}^*$ by abuse of notation. 
We note by definition that these two maps are compatible with the inclusion
\[
k[t][x_1, \ldots , x_N]^G \hookrightarrow k[t][x_1, \ldots , x_N]^{\langle \gamma \rangle}, 
\]
which is induced by $k[x_1, \ldots , x_N]^G \hookrightarrow k[x_1, \ldots , x_N]^{\langle \gamma \rangle}$. 

Let 
\[
\mathfrak{m}_x \subset k[x_1,\dots,x_N]^G = \mathcal{O}_A, \quad
\mathfrak{m}_{x_{\gamma}} \subset k[x_1, \ldots , x_N]^{\langle \gamma \rangle} = \mathcal{O}_{A^{\langle \gamma \rangle}}
\] 
be the maximal ideals corresponding to $x \in A$ and $x_{\gamma} \in A^{\langle \gamma \rangle}$, respectively.
Since we have 
\[
\sqrt{\mathfrak{m}_x \mathcal{O}_{A^{\langle \gamma \rangle}}} = \mathfrak{m}_{x_{\gamma}}, 
\]
it follows that
\[
\operatorname{Cont}^{\ge 1} (\mathfrak{m}_x \mathcal{O}_{\overline{A}'})= \operatorname{Cont}^{\ge 1} (\mathfrak{m}_{x_\gamma} \mathcal{O}_{\overline{A}'}).
\]
Therefore $C_{w, \gamma'}$ defined in Corollary \ref{cor:mld_quot} for $A$, $\gamma' \in G$ and $\mathfrak{a}$ 
is exactly same as $C_{w, \gamma'}$ defined for $A^{\langle \gamma \rangle}$, 
$\gamma ' \in \langle \gamma \rangle$ and $\mathfrak{a} \mathcal{O}_{A^{\langle \gamma \rangle}}$. 
Therefore the assertion 
\[
\operatorname{mld}_x (A, \mathfrak{a}) = 
\min_{\gamma \in G} \operatorname{mld}_{x_\gamma}
\bigl( A^{\langle \gamma \rangle}, \mathfrak{a} \mathcal{O}_{A^{\langle \gamma \rangle}} \bigr)
\]
follows from Corollary \ref{cor:mld_quot}. 
\end{proof}

\begin{rmk}\label{rmk:RT}
We can prove more general statement following the argument in \cite{Rei80} (cf.\ \cite[2.42, Theorem 3.21]{Kol13}). 
\begin{itemize}
\item Let $V$ be a $\mathbb{Q}$-Gorenstein normal variety and let $G$ be a finite group acting on $V$. 
Let $x \in V$ be a closed point and let $x' \in V/G$ be its image. 
Let $V^{\langle \gamma \rangle} = V/\langle \gamma \rangle$, and 
let $x_{\gamma} \in V^{\langle \gamma \rangle}$ be the image of $x$. 
Let $\mathfrak{a}$ be an $\mathbb{R}$-ideal sheaf on $V/G$. 
Then it follows that
\[
\operatorname{mld}_{x'} (V/G, \mathfrak{a}) = 
\min_{\gamma \in G} \operatorname{mld}_{x_\gamma}
\bigl( V^{\langle \gamma \rangle}, \mathfrak{a} \mathcal{O}_{V^{\langle \gamma \rangle}} \bigr). 
\]
\end{itemize}
By \cite[2.42.4]{Kol13}, the inequality 
\[
\operatorname{mld}_{x'} (V/G, \mathfrak{a}) \le 
\min_{\gamma \in G} \operatorname{mld}_{x_\gamma}
\bigl( V^{\langle \gamma \rangle}, \mathfrak{a} \mathcal{O}_{V^{\langle \gamma \rangle}} \bigr)
\]
is easy. 
Let $X' \to X := V/G$ be a resolution of singularities. 
We may assume that some divisor $E$ on $X'$ computes $\operatorname{mld}_{x'}(X, \mathfrak{a})$, that is, 
$E$ satisfies 
\begin{itemize} 
\item $c_X(E)=\{x'\}$ and 
\item $\operatorname{mld}_{x'}(X, \mathfrak{a}) = a_E(X, \mathfrak{a})$ if $\operatorname{mld}_{x'}(X, \mathfrak{a}) \ge 0$, and 
$a_E(X, \mathfrak{a}) < 0$ otherwise. 
\end{itemize}
Let $V'$ be the normalization of $X'$ in the field of fraction $k(V)$ of $V$. 
Let $F$ be a divisor on $V'$ which dominates $E$. 
We note that $G$ acts on $V'$ and we have $X' = V'/G$. 
Let $G_F$ be the subgroup of $G$ that consists of an element $g \in G$ which fixes $F$ point-wise. 
Then by \cite[2.42.4]{Kol13} we have 
\[
a_E (V/G, \mathfrak{a}) = a_{F'} (V/G_F, \mathfrak{a} \mathcal{O}_{V/G_F}), 
\]
where $F'$ is the image of $F$ on $V/G_F$. 
Since $G_F$ is a cyclic group (cf.\ \cite[2.5, 2.6]{IR96}), we get the opposite inequality. 
\end{rmk}

\noindent
As an application of Corollary \ref{cor:qmld=cqmld}, we can prove the ACC conjecture for quotient singularities (cf.\ \cite{Mor}). 

\begin{thm}\label{thm:acc_quot}
Let $n$ be a positive integer.
The set
\[
A_{\rm quot}(n) := 
\{ \operatorname{mld}_x(X) \mid \text{$X$ has a quotient singularity at $x$ and $\dim X= n$} \}
\]
satisfies the ascending chain condition.
\end{thm}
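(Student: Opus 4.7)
The strategy is to reduce the problem to cyclic quotient singularities via Corollary~\ref{cor:qmld=cqmld}, translate the mld into a purely combinatorial quantity using Corollary~\ref{cor:mld_quot}, and then verify the resulting combinatorial ascending chain condition.

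First, by Corollary~\ref{cor:qmld=cqmld}, for any finite $G\subset \operatorname{GL}_n(k)$ acting freely in codimension one,
\[
\operatorname{mld}_x\bigl(\mathbb{A}^n/G\bigr) \;=\; \min_{\gamma\in G}\operatorname{mld}_{x_\gamma}\bigl(\mathbb{A}^n/\langle\gamma\rangle\bigr),
\]
so every element of $A_{\rm quot}(n)$ is the mld of some \emph{cyclic} quotient singularity of dimension $n$. It therefore suffices to prove ACC for the set $A_{\rm cyc}(n)\subseteq A_{\rm quot}(n)$ consisting of mlds of cyclic quotient singularities in dimension $n$.

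Second, for a cyclic quotient $X=\mathbb{A}^n/\langle \gamma_0\rangle$ with $\gamma_0=\operatorname{diag}(\xi^{e_1},\ldots,\xi^{e_n})$ of order $d$ acting freely in codimension one, I would specialize Corollary~\ref{cor:mld_quot} to $\mathfrak{a}=\mathcal{O}_A$ and $\delta=0$. Only $w=0$ contributes, and a direct computation of the contact loci $C_{0,\gamma}=\operatorname{Cont}^{\ge 1}(\mathfrak{m}_x\mathcal{O}_{\overline A'})$ shows that $\operatorname{codim}(C_{0,\gamma})=0$ for every non-identity $\gamma\in \langle\gamma_0\rangle$ (for $\gamma\ne 1$ the twisting $x_i\mapsto t^{e_i/d}x_i$ forces the contact condition on every arc avoiding the origin of $\overline A$), while the identity contributes $n+0=n$, which is dominated by the other terms. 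This recovers the classical Reid--Tai formula
\[
\operatorname{mld}_x(X) \;=\; \min_{1\le j\le d-1}\operatorname{age}(\gamma_0^j),\qquad \operatorname{age}(\gamma_0^j) \;=\; \frac{1}{d}\sum_{i=1}^n\bigl(je_i\bmod d\bigr).
\]

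Third, I would establish ACC for the resulting combinatorial set
\[
S_n \;:=\; \Bigl\{\,\min_{1\le j\le d-1}\tfrac{1}{d}\sum_{i=1}^n(je_i\bmod d)\,:\,d\ge 1,\ (e_1,\ldots,e_n)\text{ with action free in codim one}\Bigr\}.
\]
Suppose for contradiction that $\alpha_1<\alpha_2<\cdots$ is a strictly ascending infinite sequence in $S_n$. Since $\alpha_k\le n$ and only finitely many rationals with any fixed denominator lie in $[0,n]$, the corresponding orders $d_k$ must tend to infinity along a subsequence. Writing $\alpha_k=\sum_i a_{i,k}$ with $a_{i,k}:=(j_ke_{i,k}\bmod d_k)/d_k\in[0,1)$ realizing the minimum, compactness yields, after passing to a further subsequence, a limit $(a_1,\ldots,a_n)\in[0,1]^n$ with $\sum_i a_i=\lim_k\alpha_k=:\alpha$. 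The minimality condition $\sum_i\{\ell a_{i,k}\}\ge \alpha_k$ for every $\ell$ not a multiple of $d_k/\gcd(d_k,j_k)$, combined with the free-in-codimension-one hypothesis (which prevents too many $a_{i,k}$ from being integral), would then be exploited via an equidistribution argument to produce, for large $k$, an auxiliary power $\ell_k$ of $\gamma_k$ whose age is strictly less than $\alpha_{k+1}$ but arbitrarily close to $\alpha$, contradicting strict ascent.

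The main obstacle is the final combinatorial step. The delicate point is to leverage both the minimality across all powers of $\gamma_k$ and the free-in-codimension-one assumption to control the asymptotics of the normalized weight tuples $(a_{i,k})$ and extract the desired contradiction; analogous ACC statements for cyclic quotient singularities are treated in \cite{Mor}, and I would adapt those combinatorial arguments here.
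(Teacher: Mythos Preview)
Your reduction to cyclic quotients via Corollary~\ref{cor:qmld=cqmld} is exactly the paper's first step. From there, however, the paper avoids all combinatorics: it simply observes that a cyclic quotient singularity is toric (cf.\ \cite[(4.3)]{Rei85}) and invokes Ambro's ACC for toric minimal log discrepancies \cite[Theorem~1]{Amb06}. That finishes the proof in one line.

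Your route instead tries to prove ACC for cyclic quotients from scratch, and there are two problems. First, the computation in your second step is not quite right: when $\gamma\ne 1$ has some eigenvalue equal to $1$ (i.e.\ some $e_i=0$), the contact condition $\operatorname{Cont}^{\ge 1}(\mathfrak m_x\mathcal O_{\overline A'})$ forces $\alpha^*(x_i)(0)=0$ for precisely those coordinates, so $\operatorname{codim}(C_{0,\gamma})=\#\{i:e_i(\gamma)=0\}$, not $0$. The formula that actually drops out of Corollary~\ref{cor:mld_quot} is $\operatorname{mld}_x=\min_{\gamma\in G}\bigl(\operatorname{age}(\gamma)+\#\{i:e_i(\gamma)=0\}\bigr)$, not the bare Reid--Tai age. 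Second, and more seriously, your third step is not a proof: the ``equidistribution argument'' is a gesture, and you yourself concede that the main obstacle lies there and defer to \cite{Mor}. Extracting a contradiction from a convergent subsequence of weight tuples while simultaneously controlling \emph{all} powers $\ell$ (not just the minimizing one) and using free-in-codimension-one is genuinely delicate; nothing you wrote shows how to do it. The paper's citation of \cite{Amb06} is a complete substitute for this missing argument, and is the intended route.
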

\begin{proof}
By Corollary \ref{cor:qmld=cqmld} (or Remark \ref{rmk:RT}), the set $A_{\rm quot}(n)$ is equal to 
\[
A_{\rm cquot}(n):= \{ \operatorname{mld}_x(X) \mid \text{$X$ has a cyclic quotient singularity at $x$ and $\dim X= n$} \}.
\]
Since a cyclic quotient singularity is a toric singularity (cf.\ \cite[(4.3)]{Rei85}), 
$A_{\rm cquot}$ satisfies the ascending chain condition by \cite[Theorem 1]{Amb06}. 
We complete the proof. 
\end{proof}

\section{Precise inversion of adjunction formula for quotient singularities}\label{section:PIA}
In this section, we prove the precise inversion of adjunction formula for the quotient of a complete intersection singularity 
by a finite linear group action (Theorem \ref{thm:PIA}). 

Let $\xi$, $G$, $\gamma$, $C_\gamma$, $Z$, 
$\overline{A}$, $A$, $A^{(\gamma)}$, $r$, 
$\overline{\lambda}_{\gamma}^*$,
$\lambda_{\gamma}$, $q$, 
$\overline{\lambda}_{\gamma}$, 
$f_i$, $I$, $\widetilde{I}^{(\gamma)}$, $\overline{I}^{(\gamma)}$, 
$B$, $\widetilde{B}^{(\gamma)}$, $\overline{B}^{(\gamma)}$, $p$, $\tau$, 
$L_{\overline{B}^{(\gamma)}}$, $\mathfrak{n}'_{1,p}$ and $\mathfrak{n}'_{1,\overline{\mu} _{\gamma}}$ 
be as in Section \ref{section:mld_hyperquot}.

\begin{thm}\label{thm:PIA}
Let $\mathfrak{a} \subset  k[x_1,\ldots,x_N]^G$ be a non-zero ideal and let $\delta$ be a non-negative real number. 
We define $\mathfrak{b} = \mathfrak{a} \bigl( k[x_1,\ldots,x_N]^G/(f_1,\ldots,f_c) \bigr)$. 
Suppose that $\mathfrak{b} \not = 0$. 
Let $x = 0 \in A$ be the origin. 
Suppose that $B$ is klt. Then 
\[
\operatorname{mld}_x \bigl(A,(f_1\cdots f_c) \mathfrak{a} ^{\delta} \bigr) 
= \operatorname{mld}_x (B,\mathfrak{b}^{\delta})
\]
holds. 
\end{thm}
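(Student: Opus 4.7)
The plan is to recast both sides of the claimed equality as infima over arc-space cylinders and then match them via Lemma \ref{lem:EM8.4}. By the multi-index version of Corollary \ref{cor:mld_quot} (applicable here because $\operatorname{Jac}_{\overline{A}'/k[t]}=\mathcal{O}_{\overline{A}'}$; compare Remark \ref{rmk:multi_index}), the left-hand side equals
\[
\inf_{\gamma,\,w_1,\ldots,w_c,\,w}\Bigl\{\operatorname{codim}(\widetilde{C})+\operatorname{age}(\gamma)-\sum_{i=1}^{c}w_i-\delta w\Bigr\},
\]
where
\[
\widetilde{C}=\bigcap_{i=1}^{c}\operatorname{Cont}^{\ge w_i}\bigl(\overline{\lambda}_{\gamma}^{*}(f_i)\bigr)\cap\operatorname{Cont}^{\ge w}\bigl(\overline{\lambda}_{\gamma}^{*}(\mathfrak{a})\bigr)\cap\operatorname{Cont}^{\ge 1}(\mathfrak{m}_x\mathcal{O}_{\overline{A}'}),
\]
and Corollary \ref{cor:mld_hyperquot2} gives the right-hand side as $\inf_{\gamma,w,b_1}\{\operatorname{codim}(C')+\operatorname{age}(\gamma)-b_1-\delta w\}$ with
\[
C'=\operatorname{Cont}^{\ge w}(\mathfrak{b}\mathcal{O}_{\overline{B}^{(\gamma)}})\cap\operatorname{Cont}^{\ge 1}(\mathfrak{m}_x\mathcal{O}_{\overline{B}^{(\gamma)}})\cap\operatorname{Cont}^{b_1}(\operatorname{Jac}_{\overline{B}^{(\gamma)}/k[t]}).
\]
The commutative diagram of Subsection \ref{subsection:arc_hyperquot} gives $\tau^{*}\overline{\lambda}_{\gamma}^{*}(\mathfrak{a})=\overline{\mu}_{\gamma}^{*}(\mathfrak{b})$, so the orders of $\overline{\lambda}_{\gamma}^{*}(\mathfrak{a})$ along arcs of $\overline{B}^{(\gamma)}$ coincide with those of $\mathfrak{b}\mathcal{O}_{\overline{B}^{(\gamma)}}$.

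For the inversion inequality $\operatorname{mld}_{x}(A,f_1\cdots f_c\mathfrak{a}^{\delta})\ge\operatorname{mld}_{x}(B,\mathfrak{b}^{\delta})$, I fix an input $(\gamma,w_1,\ldots,w_c,w)$ to the LHS infimum and, without loss of generality, replace $\widetilde{C}$ by one of its irreducible locally closed components $C$ of the same codimension. Because $\overline{\lambda}_{\gamma}^{*}(f_i)$ vanishes identically on $\overline{B}^{(\gamma)}$, the $f_i$-conditions are automatic on $\overline{B}^{(\gamma)}_{\infty}$, whence
\[
C\cap\overline{B}^{(\gamma)}_{\infty}=\operatorname{Cont}^{\ge w}(\mathfrak{b}\mathcal{O}_{\overline{B}^{(\gamma)}})\cap\operatorname{Cont}^{\ge 1}(\mathfrak{m}_x\mathcal{O}_{\overline{B}^{(\gamma)}}).
\]
The klt hypothesis on $B$ enters through Claim \ref{claim:not_thin}, which forces $\overline{B}^{(\gamma)}_{\infty}$ to be non-thin, so that $\operatorname{Jac}_{\overline{B}^{(\gamma)}/k[t]}$ attains finite order on this intersection; by Proposition \ref{prop:negligible} one then selects $b_1$ with $\operatorname{codim}(C\cap\overline{B}^{(\gamma)}_{\infty}\cap\operatorname{Cont}^{b_1}(\operatorname{Jac}))=\operatorname{codim}(C\cap\overline{B}^{(\gamma)}_{\infty})$. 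Applying Lemma \ref{lem:EM8.4} to the subscheme $\overline{B}^{(\gamma)}\subset\overline{A}'$ cut out by the $c$ elements $\overline{\lambda}_{\gamma}^{*}(f_i)$ yields
\[
\operatorname{codim}(C\cap\overline{B}^{(\gamma)}_{\infty})\le\operatorname{codim}(C)+b_1-\sum_{i=1}^{c}w_i,
\]
and after adding $\operatorname{age}(\gamma)-\delta w$ to both sides the RHS value at $(\gamma,w,b_1)$ is bounded above by the LHS value at $(\gamma,w_1,\ldots,w_c,w)$, which upon taking infima gives the desired inequality.

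For the reverse adjunction inequality I argue dually. Given RHS data $(\gamma,w,b_1)$ together with a generic arc $\alpha\in C'$, consider $\tau_{\infty}(\alpha)\in\overline{A}'_{\infty}$ and use Proposition \ref{prop:EM4.4} at a sufficiently high truncation level to describe how jets of $\overline{A}'$ lying over a given jet of $\overline{B}^{(\gamma)}$ decompose according to the orders $w_i$ of the transverse equations $\overline{\lambda}_{\gamma}^{*}(f_i)$. Summing the fiber dimensions furnishes, for a suitable tuple $(w_1,\ldots,w_c)$ reflecting the generic behaviour of $\alpha$, an irreducible cylinder $\widetilde{C}\subset\overline{A}'_{\infty}$ whose members lift arcs of $C'$ and which satisfies $\operatorname{codim}(\widetilde{C})-\sum_{i}w_i\le\operatorname{codim}(C')-b_1$; taking infima yields $\operatorname{mld}_{x}(A,f_1\cdots f_c\mathfrak{a}^{\delta})\le\operatorname{mld}_{x}(B,\mathfrak{b}^{\delta})$.

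The hard part will be the inversion direction and, within it, verifying the hypotheses of Lemma \ref{lem:EM8.4}: because $\overline{B}^{(\gamma)}$ is in general neither normal nor a complete intersection and its arc space can a priori be a thin subset of itself, every arc could have infinite Jacobian order, collapsing the comparison entirely. Circumventing this is the whole purpose of Claim \ref{claim:not_thin}, whose proof invokes the rational chain connectedness of the fibers of a resolution of $\overline{B}^{(\gamma)}$ (Hacon--McKernan) together with the existence of sections of rationally connected families over a curve (Graber--Harris--Starr); it is precisely at this step that the klt hypothesis on $B$ is indispensable.
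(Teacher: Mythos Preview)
Your inversion direction is the paper's proof almost verbatim: express both mlds via Corollaries~\ref{cor:mld_quot} and~\ref{cor:mld_hyperquot2}, pick an irreducible component of the ambient cylinder, invoke Claim~\ref{claim:not_thin} to guarantee a finite Jacobian order $b_1$ on the intersection with $\overline{B}^{(\gamma)}_\infty$, and feed this into Lemma~\ref{lem:EM8.4}. The only cosmetic differences are that the paper bundles $(f_1\cdots f_c)$ into a single order $w$ rather than separate $w_i$'s, and chooses $b_1$ as the minimum order rather than via Proposition~\ref{prop:negligible}; both variants work.

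Two points of precision. First, your displayed equality $C\cap\overline{B}^{(\gamma)}_\infty=\operatorname{Cont}^{\ge w}(\mathfrak{b})\cap\operatorname{Cont}^{\ge 1}(\mathfrak{m}_x)$ is only an inclusion once $C$ is an irreducible \emph{component} of $\widetilde{C}$; fortunately the inclusion is the direction you need. Second, you write that Claim~\ref{claim:not_thin} ``forces $\overline{B}^{(\gamma)}_\infty$ to be non-thin''; that weaker statement would not suffice, since you need finite Jacobian order on the specific intersection $C\cap\overline{B}^{(\gamma)}_\infty$. What Claim~\ref{claim:not_thin} actually asserts (and what the paper proves via the $k^\times$-action showing the trivial arc lies in every component $C$) is that $C\cap\overline{B}^{(\gamma)}_\infty$ itself is non-thin. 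Your last paragraph shows you understand this, so this is a phrasing issue rather than a gap.

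For the adjunction direction $\operatorname{mld}_x(A,(f_1\cdots f_c)\mathfrak{a}^\delta)\le\operatorname{mld}_x(B,\mathfrak{b}^\delta)$, the paper does not argue via arc spaces at all: it simply cites the classical adjunction inequality (iterating the Cartier-divisor case $c$ times). Your proposed dual argument via Proposition~\ref{prop:EM4.4} is too vague to evaluate---arcs on $\overline{B}^{(\gamma)}$ have $\operatorname{ord}(\overline{\lambda}^*_\gamma(f_i))=\infty$, so ``a suitable tuple $(w_1,\ldots,w_c)$ reflecting the generic behaviour of $\alpha$'' has no obvious meaning, and you have not constructed the cylinder $\widetilde{C}$ or verified its codimension bound. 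Since this is the easy direction, just invoke adjunction as the paper does.
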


\begin{proof}
Since 
\[
\operatorname{mld}_x \bigl( A,(f_1\cdots f_c)\mathfrak{a} ^{\delta} \bigr) \le \operatorname{mld} _x (B,\mathfrak{b}^{\delta})
\]
is true in general by adjunction, 
it is enough to prove the opposite inequality
\[
\operatorname{mld}_x \bigl( A,(f_1\cdots f_c)\mathfrak{a}^{\delta} \bigr) \ge \operatorname{mld}_x (B,\mathfrak{b}^{\delta}).
\]

By Corollary \ref{cor:mld_hyperquot2} (cf.\ Remark \ref{rmk:multi_index}), we have
\[
\operatorname{mld}_x \bigl( A,(f_1\cdots f_c)\mathfrak{a}^{\delta} \bigr) =  \inf_{w,v\in\mathbb Z_{\ge 0},\gamma\in G} 
\bigl \{ \operatorname{codim}_{\overline{A}'_\infty} (C_{w,v,\gamma}) + \operatorname{age}(\gamma) - w - \delta v  \bigr \}, 
\]
where we set $C_{w,v,\gamma} \subset \overline{A}'_{\infty}$ as 
\[
C_{w,v,\gamma} := \operatorname{Cont}^{\ge w} (f_1 \cdots f_c \mathcal{O}_{\overline{A}'}) \cap 
\operatorname{Cont}^{\ge v} (\mathfrak{a} \mathcal{O}_{\overline{A}'}) 
\cap \operatorname{Cont}^{\ge 1} (\mathfrak{m}_x \mathcal{O}_{\overline{A}'}). 
\]
We fix $w,v \in \mathbb{Z}_{\ge 0}$ and $\gamma \in G$. 

First we note that the following claim holds. The claim will be proved in the end of the proof. 
\begin{claim}\label{claim:not_thin}
$C' \cap \overline{B}^{(\gamma)}_{\infty}$ is not a thin set of $\overline{B}^{(\gamma)}_{\infty}$ for any irreducible component $C'$ of $C_{w,v,\gamma}$. 
\end{claim}

Let $C'_{w,v,\gamma} \subset C_{w,v,\gamma}$ be an irreducible component satisfying 
\[
\operatorname{codim}_{\overline{A}'_\infty} (C_{w,v,\gamma}) = \operatorname{codim}_{\overline{A}'_\infty} (C'_{w,v,\gamma}). 
\]
Set 
\[
b_1 := \min _{\alpha \in C' _{w,v,\gamma} \cap \overline{B}^{(\gamma)}_{\infty}} \operatorname{ord}_{\alpha} (\operatorname{Jac}_{\overline{B}^{(\gamma)}/k[t]}). 
\]
Note that $b_1 < \infty$ by Claim \ref{claim:not_thin}. 
Then by Lemma \ref{lem:EM8.4}, we have 
\begin{align*}
&\operatorname{codim}_{\overline{A}'_\infty} \bigl( C'_{w,v,\gamma} \cap \operatorname{Cont}^{\le b_1}\bigl( (\tau^*)^{-1} \operatorname{Jac}_{\overline{B}^{(\gamma)}/k[t]} \bigr) \bigr) - w \\
&\ge \operatorname{codim}_{\overline{B}^{(\gamma)}_{\infty}} \bigl( C' _{w,v,\gamma} \cap \overline{B}^{(\gamma)}_{\infty} \cap \operatorname{Cont}^{b_1}(\operatorname{Jac}_{\overline{B}^{(\gamma)}/k[t]}) \bigr) - b_1. 
\end{align*}
Since $C'_{w,v,\gamma}$ is an irreducible closed cylinder, 
its non-empty open subcylinder has the same codimension. Therefore 
\[
\operatorname{codim}_{\overline{A}'_\infty} \bigl( C'_{w,v,\gamma} \cap \operatorname{Cont}^{\le b_1} \bigl( (\tau^*)^{-1} \operatorname{Jac}_{\overline{B}^{(\gamma)}/k[t]} \bigr) \bigr)
= \operatorname{codim}_{\overline{A}'_\infty} ( C'_{w,v,\gamma}). 
\]
Hence we have 
\begin{align*}
\operatorname{codim}_{\overline{A}'_\infty} (C_{w,v,\gamma}) - w 
\ge 
\operatorname{codim}_{\overline{B}^{(\gamma)}_{\infty}} \bigl( C _{w,v,\gamma} \cap \overline{B}^{(\gamma)}_{\infty} \cap \operatorname{Cont}^{b_1}(\operatorname{Jac}_{\overline{B}^{(\gamma)}/k[t]}) \bigr) - b_1. 
\end{align*}
Since 
\begin{align*}
& C _{w,v,\gamma} \cap \overline{B}^{(\gamma)}_{\infty} \cap \operatorname{Cont}^{b_1}(\operatorname{Jac}_{\overline{B}^{(\gamma)}/k[t]}) \\ 
& \subset \operatorname{Cont}^{\ge v} (\mathfrak{b} \mathcal{O}_{\overline{B}^{(\gamma)}}) 
\cap \operatorname{Cont}^{\ge 1} (\mathfrak{m}_x \mathcal{O}_{\overline{B}^{(\gamma)}}) 
\cap \operatorname{Cont}^{b_1}(\operatorname{Jac}_{\overline{B}^{(\gamma)}/k[t]}), 
\end{align*}
by applying Corollary \ref{cor:mld_hyperquot2} (cf.\ Remark \ref{rmk:multi_index}) to $B$, we obtain the desired formula 
\begin{align*}
&\operatorname{mld}_x \bigl( A,(f_1\cdots f_c)\mathfrak{a}^{\delta} \bigr) \\
&= \inf_{w,v\in\mathbb Z_{\ge 0},\gamma\in G} 
\bigl \{ \operatorname{codim}_{\overline{A}'_\infty} (C_{w,v,\gamma}) + \operatorname{age}(\gamma) - w - \delta v \bigr \} \\
&\ge 
\inf_{v,b \in\mathbb Z_{\ge 0},\gamma\in G} 
\left \{ \operatorname{codim}_{\overline{B}^{(\gamma)}_{\infty}} 
\left( 
\begin{array}{c}
\operatorname{Cont}^{\ge v} (\mathfrak{b} \mathcal{O}_{\overline{B}^{(\gamma)}}) 
\cap \operatorname{Cont}^{\ge 1} (\mathfrak{m}_x \mathcal{O}_{\overline{B}^{(\gamma)}}) \\
\cap \operatorname{Cont}^{b} \bigl( \operatorname{Jac}_{\overline{B}^{(\gamma)}/k[t]} \bigr) 
\end{array}
\right) 
- b - \delta v  \right \} \\
&= \operatorname{mld}_x (B,\mathfrak{b}^{\delta}). 
\end{align*}
Therefore, it is sufficient to show Claim \ref{claim:not_thin}. 
\begin{proof}[Proof of Claim \ref{claim:not_thin}]
First we introduce some notation. 
For an arc $\alpha \in \overline{A}'_{\infty}$, we denote $g_{i}^{\alpha} := \alpha ^* (x_i) \in k[[t]]$, 
where $\alpha ^* : k[t][x_1, \ldots , x_N] \to k[[t]]$ be the corresponding ring homomorphism. 
We denote by $\beta \in \overline{A}'_{\infty}$ the trivial arc which is determined by $g_{i}^{\beta} := 0$. 
Let $T \subset \overline{A}'$ be the closed subscheme defined by the ideal $(x_1, \ldots , x_N) \subset k[t][x_1, \ldots , x_N]$. 
Then we have $T_{\infty} = \{ \beta \}$. 

Since $\overline{I}^{(\gamma)} \subset (x_1, \ldots , x_N)$, it follows that $\beta \in T_{\infty} \subset \overline{B}^{(\gamma)}_{\infty}$. 
Let $W \subset \overline{B}^{(\gamma)}$ be the irreducible component dominating $\operatorname{Spec} k[t]$, and 
let $h: W' \to W$ be a resolution of singularities of $W$. Let $T' := h^{-1} (T)$. 
Since $B$ is klt by assumption, we note that $\overline{B}$ is also klt by Lemma \ref{lem:purity}. 
Therefore $W$ is klt outside $t = 0$ by Remark \ref{rmk:isotrivial}, 
and hence $T' \to T$ has a section by \cite[Corollary 1.7(2)]{HM07}. 
Hence there exists an arc $\beta ' \in T'_{\infty} \subset W' _{\infty}$ such that $h _{\infty} (\beta ') = \beta$. 

Then by Lemma \ref{lem:thin2}, it is suffices to show that $\beta \in C'$. 
In order to prove it, we introduce a $k$-action on the arc space $\overline{A}'_{\infty}$ as follows. 
Let $\alpha \in \overline{A}'_{\infty}$ and $a \in k$. Then we define $a \cdot \alpha$ by 
\[
g_{i}^{a \cdot \alpha} (x_i) := a^{e_i} g_i ^{\alpha} (a^d t). 
\]
We adopt the convention that $a^{e_i} = 1$ when $a = 0$ and $e_i = 0$. 
Then for $f \in k[x_1, \ldots , x_N]^G$, it is easy to see that $v(t) = u(a^d t)$ when we set
\[
u(t) := \alpha^* \bigl( \overline{\lambda}^*_{\gamma}(f) \bigr), \quad 
v(t) := (a \cdot \alpha)^* \bigl( \overline{\lambda}^*_{\gamma}(f) \bigr) \in k[[t]]
\]
for $\alpha \in \overline{A}'_{\infty}$ and $a \in k$. 
Therefore 
\[
\operatorname{ord}_{\alpha} \bigl( \overline{\lambda}^*_{\gamma}(f) \bigr) = \operatorname{ord}_{a \cdot \alpha} \bigl( \overline{\lambda}^*_{\gamma}(f) \bigr)
\]
holds if $a \in k^{\times}$. 
Hence any cylinder of form $\operatorname{Cont}^{\ge c} (\mathfrak{c} \mathcal{O}_{\overline{B}^{(\gamma)}})$ with 
an ideal $\mathfrak{c} \subset k[x_1, \ldots , x_N]^G$ is invariant under the $k$-action. 
Therefore $C_{w,v,\gamma}$ and its irreducible component $C'$ are invariant under the $k$-action. 
Then the assertion $\beta \in C'$ follows from the observation that $\beta = 0 \cdot \alpha$ holds 
for any $\alpha \in \operatorname{Cont}^{\ge 1} (\mathfrak{m}_x \mathcal{O}_{\overline{B}^{(\gamma)}})$. 
\end{proof}
We complete the proof of the theorem. 
\end{proof}

\begin{rmk}\label{rmk:LC}
In Theorem \ref{thm:PIA}, we assume that $B$ is klt. This assumption is essentially used in the proof of Claim \ref{claim:not_thin}. 

If $B$ is not klt, then the arc space $\overline{B}^{(\gamma)}_{\infty}$ itself might be a thin set of $\overline{B}^{(\gamma)}_{\infty}$. 
Let 
\[
\overline{B} = \operatorname{Spec} k[x_1, x_2, x_3]/(x_1^3 + x_2^3 + x_3^3), 
\quad (d, e_1, e_2, e_3) = (3,0,1,2). 
\]
Then we have 
\[
\overline{B}^{(\gamma)} = \operatorname{Spec} k[t][x_1, x_2, x_3]/(x_1^3 + t x_2^3 + t^2 x_3^3), 
\]
and it follows that $\overline{B}^{(\gamma)}_{\infty} = \{ \beta \}$, where $\beta$ is the trivial arc corresponding to the origin. 
Therefore $\overline{B}^{(\gamma)}_{\infty}$ turns out to be a thin set of $\overline{B}^{(\gamma)}_{\infty}$. 
\end{rmk}

\begin{rmk}\label{rmk:multi_index2}
Theorem \ref{thm:PIA} can be generalized to $\mathbb{R}$-ideals due to Remark \ref{rmk:multi_index}. 
Let $\mathfrak{a}$ be an $\mathbb{R}$-ideal on $A$. 
Then 
\[
\operatorname{mld} _x \bigl( A, (f_1\cdots f_c) \mathfrak{a} \bigr) 
= \operatorname{mld} _x (B , \mathfrak{a} \mathcal{O}_B)
\]
holds. 
\end{rmk}

\begin{rmk}\label{rmk:local}
All the statements in Section \ref{section:mld_hyperquot} are still true 
when we replace by 
$k[x_1, \ldots, x_N]$ by its localization $k[x_1, \ldots, x_N]_g$ at a
$G$-invariant element $g$ which does not vanish at the origin. 
Therefore Theorem \ref{thm:PIA} is also true for the local setting, that is, when 
$B$ is locally defined by a regular sequence $f_1, \ldots, f_c$ at the origin and 
has only klt singularity at the origin. 
\end{rmk}

\section{Proof of the main theorems}\label{section:maintheorem}

As a corollary of Theorem \ref{thm:PIA}, 
we prove the PIA conjecture for hyperquotient singularities. 
\begin{cor}\label{cor:PIA_general}
Suppose that a finite subgroup $G \subset {\rm GL}_N(k)$ acts on $\mathbb{A}_k^N$ freely in codimension one. 
Let $X := \mathbb{A}_k^N / G$ be the quotient variety and let $x \in X$ be the image of the origin of $\mathbb{A}_k^N$. 
Let $Y$ be a subvariety of $X$ through $x$ of codimension $c$, and 
let $\mathfrak{a}$ be an $\mathbb{R}$-ideal sheaf on $Y$. 
Suppose that $Y$ is locally defined by $c$ equations at $x$ in $X$. 
Let $D$ be a Cartier prime divisor on $Y$ through $x$ with a klt singularity at $x \in D$. 
Suppose that $D$ is not contained in the cosupport of the $\mathbb{R}$-ideal sheaf $\mathfrak{a}$. 
Then it follows that
\[
\operatorname{mld}_x \bigl( Y, \mathfrak{a} \mathcal{O}_Y (-D) \bigr) = 
\operatorname{mld}_x (D, \mathfrak{a} \mathcal{O}_D). 
\]
\end{cor}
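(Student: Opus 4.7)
The plan is to deduce the corollary by two applications of Theorem \ref{thm:PIA} (in the local form of Remark \ref{rmk:local} and the $\mathbb{R}$-ideal extension of Remark \ref{rmk:multi_index2}): one expressing $\operatorname{mld}_x(Y,\mathfrak{a}\mathcal{O}_Y(-D))$ as an mld on $X$ via inversion of adjunction for $Y \subset X$, and one expressing $\operatorname{mld}_x(D,\mathfrak{a}\mathcal{O}_D)$ as the \emph{same} mld on $X$ via inversion of adjunction for $D \subset X$. Once both mld's on $X$ are identified, the corollary follows immediately.

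I would first work locally at $x$: choose $G$-invariant elements $f_1,\ldots,f_c$ in a suitable localization of $k[x_1,\ldots,x_N]^G$ defining $Y$ near $x$, and a $G$-invariant lift $\tilde g$ of a local defining equation $g$ of $D$ in $Y$. Since $\mathcal{O}_{X,x}$ is Cohen--Macaulay (as $X$ is the quotient of smooth $\mathbb{A}^N$ by a finite group in characteristic zero) and $Y$ has codimension $c$ in $X$, the sequence $f_1,\ldots,f_c$ is regular at $x$; since $g$ is a nonzerodivisor in $\mathcal{O}_{Y,x}$ ($D$ being Cartier), $f_1,\ldots,f_c,\tilde g$ is then a regular sequence in $\mathcal{O}_{X,x}$ defining $D$ locally in $X$. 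Let $\tilde{\mathfrak{a}}$ be any $\mathbb{R}$-ideal on $X$ with $\tilde{\mathfrak{a}}\mathcal{O}_Y = \mathfrak{a}$; the hypothesis that $D$ is not contained in the cosupport of $\mathfrak{a}$ guarantees $\tilde{\mathfrak{a}}\mathcal{O}_D \neq 0$.

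Applying Theorem \ref{thm:PIA} with the regular sequence $f_1,\ldots,f_c,\tilde g$ (so $B = D$) and the $\mathbb{R}$-ideal $\tilde{\mathfrak{a}}$ on $X$ gives
\[
\operatorname{mld}_x\bigl(X,\, f_1\cdots f_c\,\tilde g\,\tilde{\mathfrak{a}}\bigr) \;=\; \operatorname{mld}_x(D,\mathfrak{a}\mathcal{O}_D),
\]
whose klt hypothesis is directly the assumption on $D$. Applying Theorem \ref{thm:PIA} with the regular sequence $f_1,\ldots,f_c$ (so $B = Y$) and the $\mathbb{R}$-ideal $\tilde g\cdot\tilde{\mathfrak{a}}$ on $X$ gives
\[
\operatorname{mld}_x\bigl(X,\, f_1\cdots f_c\,\tilde g\,\tilde{\mathfrak{a}}\bigr) \;=\; \operatorname{mld}_x\bigl(Y,\, g\cdot\mathfrak{a}\bigr) \;=\; \operatorname{mld}_x\bigl(Y,\, \mathfrak{a}\mathcal{O}_Y(-D)\bigr),
\]
using that $g\mathcal{O}_Y = \mathcal{O}_Y(-D)$ locally at $x$. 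Comparing these two identities yields the corollary.

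The main obstacle is verifying the klt hypothesis on $Y$ at $x$ required for the second application above, which is not directly part of the hypotheses. I would extract it from the klt assumption on $D$ via classical precise inversion of adjunction for Cartier divisors (see e.g.\ \cite{KM98}): since $D$ is a Cartier prime divisor on the (implicitly normal $\mathbb{Q}$-Gorenstein) $Y$ and is klt at $x$, the pair $(Y,D)$ is plt near $x$, whence $Y$ itself is klt at $x$. The implicit normality and $\mathbb{Q}$-Gorenstein conditions on $Y$ near $D$ follow from the corresponding properties of $D$ together with $D$ being Cartier in a Cohen--Macaulay ambient scheme, and should be addressed at the start of the argument.
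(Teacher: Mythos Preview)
Your proposal is correct and follows essentially the same approach as the paper's proof: lift $\mathfrak{a}$ to an $\mathbb{R}$-ideal $\mathfrak{b}$ on $X$, choose local equations $f_1,\ldots,f_c$ for $Y$ and a lift $g$ of the equation for $D$, then apply Theorem \ref{thm:PIA} twice to identify both sides with $\operatorname{mld}_x\bigl(X,(f_1\cdots f_c\cdot g)\mathfrak{b}\bigr)$. The paper likewise deduces that $Y$ is klt at $x$ from the klt hypothesis on $D$ via classical inversion of adjunction, citing \cite[Theorem 5.50]{KM98}; your additional remarks on the Cohen--Macaulay property of $X$, the regularity of the sequence $f_1,\ldots,f_c,g$, and the implicit normality and $\mathbb{Q}$-Gorenstein conditions on $Y$ are correct elaborations of details the paper leaves to the reader or absorbs into its standing setup.
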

\begin{proof}
Let $R := k[x_1, \ldots , x_N]^G$ be the invariant ring. 
Take an $\mathbb{R}$-ideal sheaf $\mathfrak{b}$ on $X$ such that $\mathfrak{a} = \mathfrak{b} \mathcal{O}_Y$, 
and take local equations $f_1, \ldots , f_c \in \mathcal{O}_{X,x}$ of $Y$ in $X$. 
Furthermore, take $g \in \mathcal{O}_{X,x}$ such that its image $\overline{g} \in \mathcal{O}_{Y,x}$ defines $D$. 
We note that $Y$ has a klt singularity at $x$ by inversion of adjunction (cf.\ \cite[Theorem 5.50]{KM98}). 
Then it follows that
\[
\operatorname{mld}_x \bigl( Y, \mathfrak{a} \mathcal{O}_Y (-D) \bigr) = 
\operatorname{mld}_x \bigl( X, (f_1 \cdots f_{c} \cdot g)\mathfrak{b} \bigr) =
\operatorname{mld}_x ( D, \mathfrak{a} \mathcal{O}_D )
\]
by applying Theorem \ref{thm:PIA} twice (cf.\ Remark \ref{rmk:local}). 
\end{proof}

\begin{thm}\label{thm:LSC_general}
Suppose that a finite subgroup $G \subset {\rm GL}_N(k)$ acts on $\mathbb{A}_k^N$ freely in codimension one. 
Let $X := \mathbb{A}_k^N / G$ be the quotient variety. 
Let $Y$ be a subvariety of $X$ of codimension $c$ which has only klt singularities, and 
let $\mathfrak{a}$ be an $\mathbb{R}$-ideal sheaf on $Y$. 
Suppose that $Y$ is locally defined by $c$ equations in $X$. 
Then the function 
\[
|Y| \to \mathbb{R}_{\ge 0} \cup \{ - \infty \}; \quad y \mapsto \operatorname{mld}_y(Y,\mathfrak{a})
\]
is lower semi-continuous, where we denote by $|Y|$ the set of all closed points of $Y$ with the Zariski topology. 
\end{thm}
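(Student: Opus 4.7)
The plan is to reduce the statement for the hyperquotient $Y$ to the case of the ambient quotient variety $X = \mathbb{A}^N_k/G$, for which lower semi-continuity of the mld is known by case (\ref{conj:LSC}.4), established in \cite{Nak16}. The bridge is Theorem \ref{thm:PIA}, which relates the mld on $Y$ to an mld on $X$ augmented by the Cartier factor $f_1 \cdots f_c$.

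Concretely, I would fix a closed point $y_0 \in Y$ and, after shrinking $X$ to an affine open neighborhood $U$ of $y_0$, arrange that $Y \cap U$ is cut out by a regular sequence $f_1, \ldots, f_c \in \mathcal{O}_X(U)$ and that the $\mathbb{R}$-ideal $\mathfrak{a}$ extends to an $\mathbb{R}$-ideal sheaf $\widetilde{\mathfrak{a}}$ on $U$ with $\widetilde{\mathfrak{a}} \mathcal{O}_{Y \cap U} = \mathfrak{a}|_{Y \cap U}$. Because $Y$ has only klt singularities by hypothesis, Theorem \ref{thm:PIA} together with Remark \ref{rmk:multi_index2} yields, at every closed point $y \in Y \cap U$, the pointwise identity
\[
\operatorname{mld}_y (Y, \mathfrak{a}) = \operatorname{mld}_y \bigl( X, (f_1 \cdots f_c) \widetilde{\mathfrak{a}} \bigr).
\]
To apply Theorem \ref{thm:PIA} at a general closed point and not only at the image of the origin of $\mathbb{A}^N$, I would pick a preimage $\bar{y} \in \mathbb{A}^N$ of $y$ and work on a $G_{\bar{y}}$-invariant affine neighborhood of $\bar{y}$ that is disjoint from its other $G$-translates; translating $\bar{y}$ to the origin puts this neighborhood in the standard hyperquotient form $(\mathbb{A}^N / G_{\bar{y}}, 0)$ with $G_{\bar{y}} \subset {\rm GL}_N(k)$, and Remark \ref{rmk:local} allows the localized version of Theorem \ref{thm:PIA} to be invoked there.

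Finally, the function $y \mapsto \operatorname{mld}_y \bigl( X, (f_1 \cdots f_c)\widetilde{\mathfrak{a}} \bigr)$ is lower semi-continuous on $U$ by case (\ref{conj:LSC}.4), since $X$ has only quotient singularities. Restricting this LSC function to the closed subset $Y \cap U$ yields lower semi-continuity of $y \mapsto \operatorname{mld}_y(Y, \mathfrak{a})$ in a neighborhood of $y_0$, and since $y_0 \in Y$ was arbitrary, LSC holds on all of $Y$. The technical heart of the argument is Theorem \ref{thm:PIA} itself, proved in Section \ref{section:PIA}; given that identity, the reduction here is essentially a local bookkeeping argument, and the only genuine care needed is the uniform pointwise application of PIA described above.
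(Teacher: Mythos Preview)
Your proposal is correct and matches the paper's own proof essentially line for line: both reduce to the known LSC for quotient singularities \cite{Nak16} by establishing the pointwise identity $\operatorname{mld}_y(Y,\mathfrak{a}) = \operatorname{mld}_y\bigl(X,(f_1\cdots f_c)\widetilde{\mathfrak{a}}\bigr)$ via Theorem~\ref{thm:PIA}, and both handle an arbitrary closed point $y$ by passing to the stabilizer quotient $\mathbb{A}^N/G_{\bar y}$ (which is \'etale over $X$ near $y$) and translating $\bar y$ to the origin so that the hypotheses of Theorem~\ref{thm:PIA} and Remark~\ref{rmk:local} apply.
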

\begin{proof}
We take over the notation of the proof of Corollary \ref{cor:PIA_general}. 
Since the lower semi-continuity holds for $X$ by \cite[Corollary 1.3]{Nak16}, 
it is sufficient to show that 
\[
\operatorname{mld}_y \bigl( Y, \mathfrak{a} \bigr) = 
\operatorname{mld}_y \bigl( X, (f_1 \cdots f_{c})\mathfrak{b} \bigr)
\]
for any $y \in Y$. 

We fix a closed point $y \in Y$. 
Take a closed point $y' \in \mathbb{A}_k^N$ whose image in $Y$ is $y$. 
Let $G_{y'} := \{ g \in G \mid g(y') = y'\}$ be the stabilizer group of $y'$. 
Then $\mathbb{A}^N / G_{y'} \to \mathbb{A}^N / G$ is \'{e}tale at $y$. 
We note that the minimal log discrepancy is preserved under an \'{e}tale map. 
Hence by replacing $X = \mathbb{A}_k^N / G$ by $\mathbb{A}^N / G_{y'}$ and changing the coordinate of $\mathbb{A}^N$, 
we may assume that $y'$ is the origin and the group action is still linear. 
Then we have 
\[
\operatorname{mld}_y \bigl( Y, \mathfrak{a} \bigr) = 
\operatorname{mld}_y \bigl( X, (f_1 \cdots f_{c})\mathfrak{b} \bigr)
\]
by Theorem \ref{thm:PIA}, which completes the proof. 
\end{proof}


\begin{bibdiv}
\begin{biblist*}

\bib{92}{collection}{
   title={Flips and abundance for algebraic threefolds},
   note={Papers from the Second Summer Seminar on Algebraic Geometry held at
   the University of Utah, Salt Lake City, Utah, August 1991;
   Ast\'{e}risque No. 211 (1992) (1992)},
   publisher={Soci\'{e}t\'{e} Math\'{e}matique de France, Paris},
   date={1992}, 
}

\bib{Amb99}{article}{
   author={Ambro, Florin},
   title={On minimal log discrepancies},
   journal={Math. Res. Lett.},
   volume={6},
   date={1999},
   number={5-6},
   pages={573--580},
}

\bib{Amb06}{article}{
   author={Ambro, Florin},
   title={The set of toric minimal log discrepancies},
   journal={Cent. Eur. J. Math.},
   volume={4},
   date={2006},
   number={3},
   pages={358--370},
}

\bib{Bor97}{article}{
   author={Borisov, Alexandr},
   title={Minimal discrepancies of toric singularities},
   journal={Manuscripta Math.},
   volume={92},
   date={1997},
   number={1},
   pages={33--45},
}

\bib{DL99}{article}{
   author={Denef, Jan},
   author={Loeser, Fran{\c{c}}ois},
   title={Germs of arcs on singular algebraic varieties and motivic
   integration},
   journal={Invent. Math.},
   volume={135},
   date={1999},
   number={1},
   pages={201--232},
}

\bib{DL02}{article}{
   author={Denef, Jan},
   author={Loeser, Fran\c{c}ois},
   title={Motivic integration, quotient singularities and the McKay
   correspondence},
   journal={Compositio Math.},
   volume={131},
   date={2002},
   number={3},
   pages={267--290},
}

\bib{EM04}{article}{
   author={Ein, Lawrence},
   author={Musta{\c{t}}{\v{a}}, Mircea},
   title={Inversion of adjunction for local complete intersection varieties},
   journal={Amer. J. Math.},
   volume={126},
   date={2004},
   number={6},
   pages={1355--1365},
}

\bib{EM09}{article}{
   author={Ein, Lawrence},
   author={Musta\c{t}\u{a}, Mircea},
   title={Jet schemes and singularities},
   conference={
      title={Algebraic geometry---Seattle 2005. Part 2},
   },
   book={
      series={Proc. Sympos. Pure Math.},
      volume={80},
      publisher={Amer. Math. Soc., Providence, RI},
   },
   date={2009},
   pages={505--546},
}

\bib{EMY03}{article}{
   author={Ein, Lawrence},
   author={Musta{\c{t}}{\u{a}}, Mircea},
   author={Yasuda, Takehiko},
   title={Jet schemes, log discrepancies and inversion of adjunction},
   journal={Invent. Math.},
   volume={153},
   date={2003},
   number={3},
   pages={519--535},
}

\bib{Eis95}{book}{
   author={Eisenbud, David},
   title={Commutative algebra},
   series={Graduate Texts in Mathematics},
   volume={150},
   note={With a view toward algebraic geometry},
   publisher={Springer-Verlag, New York},
   date={1995},
}

\bib{GW10}{book}{
   author={G{\"o}rtz, Ulrich},
   author={Wedhorn, Torsten},
   title={Algebraic geometry I},
   series={Advanced Lectures in Mathematics},
   publisher={Vieweg + Teubner, Wiesbaden},
   date={2010},
}

\bib{GHS03}{article}{
   author={Graber, Tom},
   author={Harris, Joe},
   author={Starr, Jason},
   title={Families of rationally connected varieties},
   journal={J. Amer. Math. Soc.},
   volume={16},
   date={2003},
   number={1},
   pages={57--67},
}

\bib{HM07}{article}{
   author={Hacon, Christopher D.},
   author={Mckernan, James},
   title={On Shokurov's rational connectedness conjecture},
   journal={Duke Math. J.},
   volume={138},
   date={2007},
   number={1},
   pages={119--136},
}

\bib{Har77}{book}{
   author={Hartshorne, Robin},
   title={Algebraic geometry},
   note={Graduate Texts in Mathematics, No. 52},
   publisher={Springer-Verlag, New York-Heidelberg},
   date={1977},
}

\bib{HLS}{article}{
   author={Han, Jingjun},
      author={Liu, Jihao},
         author={Shokurov, V. V.},
   title={ACC for minimal log discrepancies of exceptional singularities},
   eprint={arXiv:1903.04338v2}
}

\bib{IR96}{article}{
   author={Ito, Yukari},
   author={Reid, Miles},
   title={The McKay correspondence for finite subgroups of ${\rm SL}(3,\bold
   C)$},
   conference={
      title={Higher-dimensional complex varieties},
      address={Trento},
      date={1994},
   },
   book={
      publisher={de Gruyter, Berlin},
   },
   date={1996},
   pages={221-240},
}

\bib{Kaw14}{article}{
   author={Kawakita, Masayuki},
   title={Discreteness of log discrepancies over log canonical triples on a
   fixed pair},
   journal={J. Algebraic Geom.},
   volume={23},
   date={2014},
   number={4},
   pages={765--774},
}

\bib{Kaw15}{article}{
   author={Kawakita, Masayuki},
   title={A connectedness theorem over the spectrum of a formal power series
   ring},
   journal={Internat. J. Math.},
   volume={26},
   date={2015},
   number={11},
   pages={1550088, 27},
}

\bib{Kaw21}{article}{
   author={Kawakita, Masayuki},
   title={On equivalent conjectures for minimal log discrepancies on smooth
   threefolds},
   journal={J. Algebraic Geom.},
   volume={30},
   date={2021},
   number={1},
   pages={97--149},
}

\bib{Kol07}{book}{
   author={Koll\'{a}r, J\'{a}nos},
   title={Lectures on resolution of singularities},
   series={Annals of Mathematics Studies},
   volume={166},
   publisher={Princeton University Press, Princeton, NJ},
   date={2007},
}

\bib{Kol13}{book}{
   author={Koll{\'a}r, J{\'a}nos},
   title={Singularities of the minimal model program},
   series={Cambridge Tracts in Mathematics},
   volume={200},
   note={With a collaboration of S\'andor Kov\'acs},
   publisher={Cambridge University Press, Cambridge},
   date={2013},
}

\bib{KM98}{book}{
   author={Koll{\'a}r, J{\'a}nos},
   author={Mori, Shigefumi},
   title={Birational geometry of algebraic varieties},
   series={Cambridge Tracts in Mathematics},
   volume={134},
   publisher={Cambridge University Press, Cambridge},
   date={1998},
}

\bib{Liu}{book}{
   author={Liu, Qing},
   title={Algebraic geometry and arithmetic curves},
   series={Oxford Graduate Texts in Mathematics},
   volume={6},
   note={Translated from the French by Reinie Ern\'{e};
   Oxford Science Publications},
   publisher={Oxford University Press, Oxford},
   date={2002},
}

\bib{Mat89}{book}{
   author={Matsumura, Hideyuki},
   title={Commutative ring theory},
   series={Cambridge Studies in Advanced Mathematics},
   volume={8},
   edition={2},
   note={Translated from the Japanese by M. Reid},
   publisher={Cambridge University Press, Cambridge},
   date={1989},
}

\bib{Mor}{article}{
   author={Moraga, Joaquín},
   title={Small quotient minimal log discrepancies},
   eprint={arXiv:2008.13311v1}
}

\bib{MN18}{article}{
   author={Musta\c{t}\u{a}, Mircea},
   author={Nakamura, Yusuke},
   title={A boundedness conjecture for minimal log discrepancies on a fixed
   germ},
   conference={
      title={Local and global methods in algebraic geometry},
   },
   book={
      series={Contemp. Math.},
      volume={712},
      publisher={Amer. Math. Soc., Providence, RI},
   },
   date={2018},
   pages={287--306},
}

\bib{Nag59}{article}{
   author={Nagata, Masayoshi},
   title={On the purity of branch loci in regular local rings},
   journal={Illinois J. Math.},
   volume={3},
   date={1959},
   pages={328--333},
}

\bib{Nak16}{article}{
   author={Nakamura, Yusuke},
   title={On semi-continuity problems for minimal log discrepancies},
   journal={J. Reine Angew. Math.},
   volume={711},
   date={2016},
   pages={167--187},
}

\bib{Nak16b}{article}{
   author={Nakamura, Yusuke},
   title={On minimal log discrepancies on varieties with fixed Gorenstein
   index},
   journal={Michigan Math. J.},
   volume={65},
   date={2016},
}

\bib{CLNS}{book}{
   author={Chambert-Loir, Antoine},
   author={Nicaise, Johannes},
   author={Sebag, Julien},
   title={Motivic integration},
   series={Progress in Mathematics},
   volume={325},
   publisher={Birkh\"{a}user/Springer, New York},
   date={2018},
}

\bib{Rei80}{article}{
   author={Reid, Miles},
   title={Canonical $3$-folds},
   conference={
      title={Journ\'{e}es de G\'{e}ometrie Alg\'{e}brique d'Angers, Juillet
      1979/Algebraic Geometry, Angers, 1979},
   },
   book={
      publisher={Sijthoff \& Noordhoff, Alphen aan den Rijn---Germantown, Md.},
   },
   date={1980},
   pages={273--310},
}

\bib{Rei85}{article}{
   author={Reid, Miles},
   title={Young person's guide to canonical singularities},
   conference={
      title={Algebraic geometry, Bowdoin, 1985},
      address={Brunswick, Maine},
      date={1985},
   },
   book={
      series={Proc. Sympos. Pure Math.},
      volume={46},
      publisher={Amer. Math. Soc., Providence, RI},
   },
   date={1987},
   pages={345--414},
}

\bib{Seb04}{article}{
   author={Sebag, Julien},
   title={Int\'{e}gration motivique sur les sch\'{e}mas formels},
   language={French, with English and French summaries},
   journal={Bull. Soc. Math. France},
   volume={132},
   date={2004},
   number={1},
   pages={1--54},
}

\bib{Shi}{article}{
   author={Shibata, Kohsuke},
   title={Minimal log discrepancies in positive characteristic},
   eprint={arXiv:1912.04665}
}

\bib{Sho04}{article}{
   author={Shokurov, V. V.},
   title={Letters of a bi-rationalist. V. Minimal log discrepancies and
   termination of log flips},
   journal={Tr. Mat. Inst. Steklova},
   volume={246},
   date={2004},
   number={Algebr. Geom. Metody, Svyazi i Prilozh.},
   pages={328--351},
   translation={
      journal={Proc. Steklov Inst. Math.},
      date={2004},
      number={3 (246)},
      pages={315--336},
   },
}

\bib{Sta}{article}{
author={The Stacks Project Authors},
   title={The Stacks Project},
   eprint={https://stacks.math.columbia.edu}
}

\bib{WY15}{article}{
   author={Wood, Melanie Machett},
   author={Yasuda, Takehiko},
   title={Mass formulas for local Galois representations and quotient
   singularities. I: a comparison of counting functions},
   journal={Int. Math. Res. Not. IMRN},
   date={2015},
   number={23},
   pages={12590--12619},
}

\bib{Yas04}{article}{
   author={Yasuda, Takehiko},
   title={Twisted jets, motivic measures and orbifold cohomology},
   journal={Compos. Math.},
   volume={140},
   date={2004},
   number={2},
   pages={396--422},
}

\bib{Yas06}{article}{
   author={Yasuda, Takehiko},
   title={Motivic integration over Deligne-Mumford stacks},
   journal={Adv. Math.},
   volume={207},
   date={2006},
   number={2},
   pages={707--761},
}

\bib{Yas16}{article}{
   author={Yasuda, Takehiko},
   title={Wilder McKay correspondences},
   journal={Nagoya Math. J.},
   volume={221},
   date={2016},
   number={1},
   pages={111--164},
}

\bib{Yas}{article}{
   author={Yasuda, Takehiko},
   title={Motivic integration over wild Deligne-Mumford stacks},
   eprint={arXiv:1908.02932v2}
}

\end{biblist*}
\end{bibdiv}

\end{document}